\DeclareMathAlphabet\gothic{U}{euf}{m}{n}
\DeclareFontFamily{U}{mathx}{}
\DeclareFontShape{U}{mathx}{m}{n}{<-> mathx10}{}
\DeclareSymbolFont{mathx}{U}{mathx}{m}{n}
\DeclareMathAccent{\widehat}{0}{mathx}{"70}
\DeclareMathAccent{\widecheck}{0}{mathx}{"71}
\def\eqnarray{\stepcounter{equation}\let\@currentlabel=\theequation
\global\@eqnswtrue
\tabskip\@centering\let\\=\@eqncr
$$\halign to \displaywidth\bgroup\hfil\global\@eqcnt\z@
  $\displaystyle\tabskip\z@{##}$&\global\@eqcnt\@ne
  \hfil$\displaystyle{{}##{}}$\hfil
  &\global\@eqcnt\tw@ $\displaystyle{##}$\hfil
  \tabskip\@centering&\llap{##}\tabskip\z@\cr}
\def\endeqnarray{\@@eqncr\egroup
      \global\advance\c@equation\m@ne$$\global\@ignoretrue}
\def\@yeqncr{\@ifnextchar [{\@xeqncr}{\@xeqncr[5pt]}}
\begin{document}
\bibliographystyle{tom}

\newtheorem{lemma}{Lemma}[section]
\newtheorem{thm}[lemma]{Theorem}
\newtheorem{cor}[lemma]{Corollary}
\newtheorem{prop}[lemma]{Proposition}

\theoremstyle{definition}

\newtheorem{remark}[lemma]{Remark}
\newtheorem{exam}[lemma]{Example}
\newtheorem{definition}[lemma]{Definition}

\newcommand{\gota}{\gothic{a}}
\newcommand{\gotb}{\gothic{b}}
\newcommand{\gotc}{\gothic{c}}
\newcommand{\gote}{\gothic{e}}
\newcommand{\gotf}{\gothic{f}}
\newcommand{\gotg}{\gothic{g}}
\newcommand{\gothh}{\gothic{h}}
\newcommand{\gotk}{\gothic{k}}
\newcommand{\gotm}{\gothic{m}}
\newcommand{\gotn}{\gothic{n}}
\newcommand{\gotp}{\gothic{p}}
\newcommand{\gotq}{\gothic{q}}
\newcommand{\gotr}{\gothic{r}}
\newcommand{\gots}{\gothic{s}}
\newcommand{\gott}{\gothic{t}}
\newcommand{\gotu}{\gothic{u}}
\newcommand{\gotv}{\gothic{v}}
\newcommand{\gotw}{\gothic{w}}
\newcommand{\gotz}{\gothic{z}}
\newcommand{\gotA}{\gothic{A}}
\newcommand{\gotB}{\gothic{B}}
\newcommand{\gotG}{\gothic{G}}
\newcommand{\gotL}{\gothic{L}}
\newcommand{\gotS}{\gothic{S}}
\newcommand{\gotT}{\gothic{T}}

\newcounter{teller}
\renewcommand{\theteller}{(\alph{teller})}
\newenvironment{tabel}{\begin{list}%
{\rm  (\alph{teller})\hfill}{\usecounter{teller} \leftmargin=1.1cm
\labelwidth=1.1cm \labelsep=0cm \parsep=0cm}
                      }{\end{list}}

\newcounter{tellerr}
\renewcommand{\thetellerr}{(\roman{tellerr})}
\newenvironment{tabeleq}{\begin{list}%
{\rm  (\roman{tellerr})\hfill}{\usecounter{tellerr} \leftmargin=1.1cm
\labelwidth=1.1cm \labelsep=0cm \parsep=0cm}
                         }{\end{list}}

\newcounter{tellerrr}
\renewcommand{\thetellerrr}{(\Roman{tellerrr})}
\newenvironment{tabelR}{\begin{list}%
{\rm  (\Roman{tellerrr})\hfill}{\usecounter{tellerrr} \leftmargin=1.1cm
\labelwidth=1.1cm \labelsep=0cm \parsep=0cm}
                         }{\end{list}}

\newcounter{proofstep}
\newcommand{\nextstep}{\refstepcounter{proofstep}\vertspace \par 
          \noindent{\bf Step \theproofstep} \hspace{5pt}}
\newcommand{\firststep}{\setcounter{proofstep}{0}\nextstep}

\newcommand{\Ni}{\mathds{N}}
\newcommand{\Qi}{\mathds{Q}}
\newcommand{\Ri}{\mathds{R}}
\newcommand{\Ci}{\mathds{C}}
\newcommand{\Ti}{\mathds{T}}
\newcommand{\Zi}{\mathds{Z}}
\newcommand{\Fi}{\mathds{F}}

\renewcommand{\proofname}{{\bf Proof}}

\newcommand{\vertspace}{\vskip10.0pt plus 4.0pt minus 6.0pt}

\newcommand{\simh}{{\stackrel{{\rm cap}}{\sim}}}
\newcommand{\ad}{{\mathop{\rm ad}}}
\newcommand{\Ad}{{\mathop{\rm Ad}}}
\newcommand{\alg}{{\mathop{\rm alg}}}
\newcommand{\clalg}{{\mathop{\overline{\rm alg}}}}
\newcommand{\Aut}{\mathop{\rm Aut}}
\newcommand{\arccot}{\mathop{\rm arccot}}
\newcommand{\capp}{{\mathop{\rm cap}}}
\newcommand{\rcapp}{{\mathop{\rm rcap}}}
\newcommand{\diam}{\mathop{\rm diam}}
\newcommand{\divv}{\mathop{\rm div}}
\newcommand{\dom}{\mathop{\rm dom}}
\newcommand{\codim}{\mathop{\rm codim}}
\newcommand{\RRe}{\mathop{\rm Re}}
\newcommand{\IIm}{\mathop{\rm Im}}
\newcommand{\tr}{{\mathop{\rm Tr \,}}}
\newcommand{\Tr}{{\mathop{\rm Tr \,}}}
\newcommand{\Vol}{{\mathop{\rm Vol}}}
\newcommand{\card}{{\mathop{\rm card}}}
\newcommand{\rank}{\mathop{\rm rank}}
\newcommand{\supp}{\mathop{\rm supp}}
\newcommand{\sgn}{\mathop{\rm sgn}}
\newcommand{\essinf}{\mathop{\rm ess\,inf}}
\newcommand{\esssup}{\mathop{\rm ess\,sup}}
\newcommand{\Int}{\mathop{\rm Int}}
\newcommand{\lcm}{\mathop{\rm lcm}}
\newcommand{\loc}{{\rm loc}}
\newcommand{\HS}{{\rm HS}}
\newcommand{\Trn}{{\rm Tr}}
\newcommand{\n}{{\rm N}}
\newcommand{\WOT}{{\rm WOT}}

\newcommand{\at}{@}

\newcommand{\mod}{\mathop{\rm mod}}
\newcommand{\spann}{\mathop{\rm span}}
\newcommand{\one}{\mathds{1}}

\hyphenation{groups}
\hyphenation{unitary}

\newcommand{\tfrac}[2]{{\textstyle \frac{#1}{#2}}}

\newcommand{\ca}{{\cal A}}
\newcommand{\cb}{{\cal B}}
\newcommand{\cc}{{\cal C}}
\newcommand{\cd}{{\cal D}}
\newcommand{\ce}{{\cal E}}
\newcommand{\cf}{{\cal F}}
\newcommand{\ch}{{\cal H}}
\newcommand{\chs}{{\cal HS}}
\newcommand{\ci}{{\cal I}}
\newcommand{\ck}{{\cal K}}
\newcommand{\cl}{{\cal L}}
\newcommand{\cm}{{\cal M}}
\newcommand{\cn}{{\cal N}}
\newcommand{\co}{{\cal O}}
\newcommand{\cp}{{\cal P}}
\newcommand{\cs}{{\cal S}}
\newcommand{\ct}{{\cal T}}
\newcommand{\cx}{{\cal X}}
\newcommand{\cy}{{\cal Y}}
\newcommand{\cz}{{\cal Z}}

\thispagestyle{empty}

\vspace*{1cm}
\begin{center}
{\Large\bf  Commutator estimates and Poisson bounds for  \\[2mm]
Dirichlet-to-Neumann operators  \\[2mm]
with variable coefficients\\[10mm]
\large A.F.M. ter Elst$^1$ and E.M. Ouhabaz$^2$}

\end{center}

\vspace{2mm}

\begin{center}
{\bf Abstract}
\end{center}

\begin{list}{}{\leftmargin=1.8cm \rightmargin=1.8cm \listparindent=10mm 
   \parsep=0pt}
\item
We consider the Dirichlet-to-Neumann operator $\cn$ associated with a general elliptic operator
\[
\ca u = - \sum_{k,l=1}^d \partial_k (c_{kl}\, \partial_l u)
+ \sum_{k=1}^d \Big( c_k\, \partial_k u - \partial_k (b_k\, u) \Big) +c_0\, u
\in \cd'(\Omega)
\]
with possibly complex coefficients. 
We study three problems: \\
1) Boundedness on $C^\nu$ and on $L_p$ of the   commutator $[\cn, M_g]$, 
where $M_g$ denotes the multiplication operator
by a  smooth function $g$. \\
2) H\"older and $L_p$-bounds for  the  harmonic lifting associated with~$\ca$.  \\
3) Poisson bounds for the heat kernel of $\cn$. \\
We solve these problems  in the case where  the coefficients are H\"older continuous and the  underlying domain
is bounded and of class $C^{1+\kappa}$ for some $\kappa > 0$.  
For the Poisson bounds we assume in addition that the coefficients are real-valued. 
We also prove gradient estimates for  the heat kernel and the 
Green function $G$ of the elliptic operator with Dirichlet boundary conditions. 
\end{list}

\noindent
Mathematics Subject Classification: 35K08, 47B47.

\vspace{3mm}
\noindent
{\small Keywords: Dirichlet-to-Neumann operator, Poisson bounds, commutator estimates, 
harmonic lifting, 
elliptic operators with complex coefficients, heat kernel bounds, 
gradient estimates for Green functions. }

\vspace{5mm}

\noindent
{\bf Home institutions:}    \\[3mm]
\begin{tabular}{@{}cl@{\hspace{10mm}}cl}
1. & Department of Mathematics  & 
  2. & Institut de Math\'ematiques de Bordeaux \\
& University of Auckland   & 
  & Universit\'e de Bordeaux, UMR 5251,  \\
& Private bag 92019 & 
  &351, Cours de la Lib\'eration  \\
& Auckland 1142 & 
  &  33405 Talence \\
& New Zealand  & 
  & France\\
  & terelst@math.auckland.ac.nz&
 & Elmaati.Ouhabaz@math.u-bordeaux.fr \\[8mm]
\end{tabular}

\newpage

\section{Introduction and the main results} \label{Scom1}

Let $\Omega \subset \Ri^d$ be an open set with $d \geq 2$ and  
let $c_{kl}, b_k, c_k, c_0 \in L_\infty(\Omega) = L_\infty(\Omega, \Ci)$ for all
$k,l \in \{ 1,\ldots,d \}$. 
We assume the usual ellipticity condition, that is,  there is a $\mu > 0$ such that 
\begin{equation}
\RRe \sum_{k,l=1}^d c_{kl}(x) \, \xi_k \, \overline{\xi_l} \geq \mu \, |\xi|^2
\label{ecomS1;1}
\end{equation}
for all $\xi \in \Ci^d$ and ($x$-a.e.) $x \in \Omega$.
Define the sesquilinear form $\gota \colon W^{1,2}(\Omega) \times W^{1,2}(\Omega) \to \Ci$
by 
\begin{equation}\label{eScom1;1}
\gota(u,v) 
= \sum_{k,l=1}^d \int_\Omega c_{kl} \, (\partial_l u) \,  \, \overline{\partial_k v}
   + \sum_{k=1}^d \int_\Omega b_k \, u \, \overline{\partial_k v}
   + \sum_{k=1}^d \int_\Omega c_k \, (\partial_k u) \, \overline v
   + \int_\Omega c_0 \, u \, \overline v.  
   \end{equation} 
It is a basic fact that this form is continuous and quasi-coercive on $W^{1,2}(\Omega)$.
Its associated operator $A_N$ is the elliptic operator with Neumann type boundary conditions. 
Consider  $\gota_D = \gota|_{W^{1,2}_0(\Omega) \times W^{1,2}_0(\Omega)}$, 
the restriction of $\gota$ to the subspace $W^{1,2}_0(\Omega)$.
Its associated operator $A_D$ is the corresponding elliptic operator with 
Dirichlet boundary conditions.

Further define $\ca \colon W^{1,2}(\Omega) \to  \cd'(\Omega)$ by 
\[
\langle \ca u,v \rangle_{\cd'(\Omega) \times \cd(\Omega)}
= \gota(u,v)
.  \]
The operator $\ca$ is given by 
\[
\ca u = - \sum_{k,l=1}^d \partial_k (c_{kl}\, \partial_l u)
+ \sum_{k=1}^d \Big( c_k\, \partial_k u - \partial_k (b_k\, u) \Big) +c_0\, u
\in \cd'(\Omega)
.  \]
A function $u \in W^{1,2}(\Omega)$ is called {\bf harmonic} (or {\bf $\ca$-harmonic}) if $\ca u = 0$.

Next suppose $\Omega$ is bounded and has Lipschitz boundary $\Gamma = \partial \Omega$. 
We provide $\Gamma$ with the $(d-1)$-dimensional Hausdorff measure.
Let $\Tr \colon W^{1,2}(\Omega) \to L_2(\Gamma)$ be the trace map.
Note that $\Tr W^{1,2}(\Omega) = H^{1/2}(\Gamma)$.
If $0 \not\in \sigma(A_D)$, then there exists a unique operator 
$\gamma \colon H^{1/2}(\Gamma) \to W^{1,2}(\Omega)$
such that $\Tr \gamma(\varphi) = \varphi$ and $\gamma(\varphi)$ is
harmonic for all $\varphi \in H^{1/2}(\Gamma)$.
We call $\gamma$ the {\bf harmonic lifting}.

Next we define the weak conormal derivative $\partial_n^\gota$.
If $u \in W^{1,2}(\Omega)$ and $\psi \in L_2(\Gamma)$, then we say that 
$u \in D(\partial_n^\gota)$ and $\partial_n^\gota u = \psi$ if 
$\ca u \in L_2(\Omega)$ and 
\[
\gota(u,v) - \int_\Omega (\ca u) \, \overline v
= \int_\Gamma \psi \, \overline{\Tr v}
\]
for all $v \in W^{1,2}(\Omega)$.
It is a consequence of the Stone--Weierstra\ss\ theorem that 
the space $\Tr W^{1,2}(\Omega)$ is dense in $L_2(\Gamma)$ and 
hence the function $\psi$ is unique.
We call $\psi$ the {\bf (weak) conormal derivative} of $u$.
If for example  $c_{kl},b_k \in W^{1,\infty}(\Omega)$ 
for all $k,l \in \{ 1,\ldots,d \} $ and the set $\Omega$ is bounded and Lipschitz, then
\[
\partial_n^\gota u 
= \sum_{k,l=1}^d n_k \, c_{kl} \, \Tr(\partial_l u)
   + \sum_{k=1}^d n_k \, b_k \, \Tr u
\]
for all $u \in W^{2,2}(\Omega)$, where $n = (n_1,\ldots,n_d)$ is the outer normal on~$\Gamma$.

If $0 \not\in \sigma(A_D)$ and the set $\Omega$ is 
bounded and Lipschitz, then we define the {\bf Dirichlet-to-Neumann operator}
$\cn$ in $L_2(\Gamma)$ by 
\[
\cn = \partial_n^\gota \circ \gamma
.  \]
So if $\varphi,\psi \in L_2(\Gamma)$, then $\varphi \in D(\cn)$ 
and $\cn \varphi = \psi$ if and only if there exists a harmonic function
$u \in W^{1,2}(\Omega)$ such that $\Tr u = \varphi$ and 
$\partial_n^\gota u = \psi$.
An alternative characterisation is as follows.
Let $\varphi,\psi \in L_2(\Gamma)$, then $\varphi \in D(\cn)$ 
and $\cn \varphi = \psi$ if and only if there exists a 
$u \in W^{1,2}(\Omega)$ such that $\Tr u = \varphi$ and 
\[
\gota(u,v) 
= \int_\Gamma \psi \, \overline{\Tr v}
\]
for all $v \in W^{1,2}(\Omega)$.

Actually, $\cn$ can also be defined as the associated operator with the sesquilinear form
$\gotb \colon H^{1/2}(\Gamma) \times H^{1/2}(\Gamma) \to \Ci$ given by
\[ 
\gotb(\varphi, \eta) = \gota(\gamma(\varphi),\gamma(\eta))
.  \]
It turns out that this form is densely defined, continuous and quasi-coercive.
Therefore, as an operator associated with a sesquilinear form, $\cn$ is 
densely defined, quasi-accretive  and $-\cn$ generates a 
strongly continuous holomorphic semigroup on $L_2(\Gamma)$.
If  $c_{kl} = \overline{c_{lk}}$, $c_k = \overline{b_k}$ and $c_0$ is real, for all $k,l \in  \{ 1,\ldots,d \}$,
then $\cn$ is a self-adjoint operator.
For all these basic facts, we refer to \cite{EO6}, \cite{EO4} and the references therein. 
As an example, if $\Omega$ is the open unit ball and $- A = \Delta$ is the 
Laplacian on $\Omega$, then $\cn = \sqrt{- \Delta_{LB} + c_d^2} - c_d$,
where $\Delta_{LB}$ is the Laplace--Beltrami operator on the unit sphere and
$c_d = \frac{d-2}{2}$ (see (C.29) in \cite{Tay5}).

In this paper we are interested in a number of important properties of $\cn$, 
among them $L_p(\Gamma)$ and  H\"older continuity  
estimates for the commutator $[\cn, M_g]$ of $\cn$ with the multiplication operator $M_g$
with a function $g$.
For the harmonic lifting $\gamma$ we are interested in 
$L_p(\Gamma) \to L_p(\Omega)$ and  H\"older continuity  estimates.
We emphasize that we consider these problems for 
non-symmetric $\ca$ and $\cn$ with complex coefficients.
Further we are interested in 
$L_p \to L_q$ boundedness for the semigroup $(e^{-t\cn})_{t > 0}$ 
and Poisson upper bounds for the heat kernel 
of $\cn$ when the coefficients are real.
No results of these types are available in this setting in the literature. 
On a domain of class $C^{1+\kappa}$ we solve some of these problems and  
we state now our main results.

\begin{thm} \label{tcom101}
Let $\kappa \in (0,1)$.
Let $\Omega \subset \Ri^d$ be a bounded open set of class $C^{1+\kappa}$.  
Suppose $c_{kl}, b_k, c_k \in C^\kappa(\Omega)$,
and $c_0 \in L_\infty(\Omega)$ for all $k,l \in \{ 1,\ldots,d \} $.
Further suppose that the ellipticity condition (\ref{ecomS1;1}) is satisfied. 
Suppose that $0 \not\in \sigma(A_D)$.
Then we have  the following assertions.
\begin{tabel}
\item \label{tcom101-2}
For all $\nu \in (0,\kappa)$ there exists a $c > 0$ such that 
\[
\|[\cn, M_g]\|_{C^\nu(\Gamma) \to C^\nu(\Gamma)} \leq c
\]
for all $g \in W^{2,\infty}(\Ri^d)$ such that 
$\|\partial_k g\|_\infty \leq 1$ and $\|\partial_k \, \partial_l g\|_\infty \leq 1$
for all $k,l \in \{ 1,\ldots,d \} $.
\item \label{tcom101-1}
For all $p \in (1,\infty)$ there exists a $c > 0$ such that 
\[
\|[\cn, M_g]\|_{L_p(\Gamma) \to L_p(\Gamma)} \leq c
\]
for all $g \in W^{2,\infty}(\Ri^d)$ such that 
$\|\partial_k g\|_\infty \leq 1$ and $\|\partial_k \, \partial_l g\|_\infty \leq 1$
for all $k,l \in \{ 1,\ldots,d \} $. 
\item \label{tcom101-3}
The operator $[\cn, M_g]$ is of weak type $(1,1)$
for all $g \in W^{2,\infty}(\Ri^d)$.
\end{tabel}
\end{thm}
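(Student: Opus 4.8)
The plan is to realise $[\cn, M_g]$ as an operator that, modulo controllable lower‑order terms, has the same structure as $\cn$ itself, and then to bootstrap the mapping properties from known regularity theory for the elliptic operator $\ca$ and its harmonic lifting. First I would compute the commutator on the form level. For $\varphi \in D(\cn)$ with harmonic lifting $u = \gamma(\varphi)$, one has $\cn(g|_\Gamma \, \varphi) = \partial_n^\gota(\gamma(g|_\Gamma \varphi))$, and $\gamma(g|_\Gamma \varphi) \neq g\,u$ in general because $g\,u$ fails to be harmonic. Writing $v = g\,u - \gamma(g|_\Gamma \varphi)$, we have $\Tr v = 0$, so $v \in W^{1,2}_0(\Omega)$, and $\ca v = \ca(g u)$ since $\ca u = 0$. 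A Leibniz‑rule computation gives $\ca(g u) = g\,\ca u + (\text{first‑order terms in } u \text{ with coefficients built from } \nabla g, \nabla^2 g, c_{kl}, b_k, c_k) = \sum_k h_k\,\partial_k u + h_0\, u$ for suitable $h_k, h_0 \in C^\kappa(\Omega) \cap L_\infty$ depending linearly on the first and second derivatives of $g$. Hence $v = A_D^{-1}\big(\sum_k h_k \partial_k u + h_0 u\big)$. Then, using the identity $\partial_n^\gota(g u) - g\,\partial_n^\gota u = \sum_k n_k c_{kl}(\partial_l g)\Tr u$ (a boundary Leibniz term, valid since the normal component of the coefficient part is what survives), one arrives at a formula of the shape
\[
[\cn, M_g]\varphi \;=\; \big(\textstyle\sum_{k,l} n_k\, c_{kl}\,(\partial_l g)\big)\big|_\Gamma \, \varphi \;-\; \partial_n^\gota\!\Big( A_D^{-1}\big(\textstyle\sum_k h_k\,\partial_k u + h_0\, u\big)\Big).
\]
The first term is multiplication by a $C^\kappa(\Gamma)$ function, hence bounded on $C^\nu(\Gamma)$ for $\nu \le \kappa$, on every $L_p(\Gamma)$, and of weak type $(1,1)$; so the whole game is the second term.

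Next I would analyse the map $\varphi \mapsto w := A_D^{-1}\big(\sum_k h_k \partial_k u + h_0 u\big)$ where $u = \gamma(\varphi)$, and then apply the conormal derivative $\partial_n^\gota$. The key point is a gain of one derivative: the input $\sum_k h_k \partial_k u + h_0 u$ lies in $L_p(\Omega)$ (indeed in $C^{\nu-1}$‑type spaces) whenever $u \in W^{1,p}$ resp.\ $u \in C^{1+\nu}$, and elliptic regularity for $A_D$ on a $C^{1+\kappa}$ domain with $C^\kappa$ coefficients — which I would invoke from the earlier sections of the paper (the Green‑function and gradient estimates advertised in the introduction) — yields $w \in W^{2,p}(\Omega)$, resp.\ $w \in C^{2}(\Omega)$ up to the boundary with the appropriate Hölder modulus. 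Since $\Tr w = 0$ and $w$ is this regular, $\partial_n^\gota w = \sum_{k,l} n_k c_{kl} \Tr(\partial_l w)$ is well defined and bounded $W^{2,p}(\Omega) \to W^{1-1/p,p}(\Gamma) \hookrightarrow L_p(\Gamma)$, resp.\ $C^{1+\nu}(\overline\Omega) \to C^\nu(\Gamma)$. Chaining with the boundedness of the harmonic lifting $\gamma \colon C^\nu(\Gamma) \to C^{1+\nu}(\overline\Omega)$ and $\gamma \colon L_p(\Gamma) \to$ (a space mapping into $W^{1,p}(\Omega)$), both of which should be established earlier in the paper, gives parts \ref{tcom101-2} and \ref{tcom101-1}.

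For the weak type $(1,1)$ statement \ref{tcom101-3} I would argue that $[\cn, M_g]$, being of the form (bounded multiplication) $+$ (operator with a kernel enjoying standard Calderón–Zygmund estimates on the space of homogeneous type $(\Gamma, |\cdot|, dS)$), falls under Calderón–Zygmund theory: the second term is, by the regularity gain above, an operator of order $\le 0$ on $\Gamma$ whose Schwartz kernel satisfies the Hörmander integral condition; interpolating the $L_2$ boundedness (from part \ref{tcom101-1} with $p=2$) with the kernel bounds yields weak type $(1,1)$. Here no coefficient regularity beyond $c_{kl},b_k,c_k \in C^\kappa$ and $c_0 \in L_\infty$ is needed because the extra derivative is spent on $g$, not on the coefficients.

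The main obstacle is the regularity step for $w = A_D^{-1}(\ldots)$ and the legitimacy of taking its conormal derivative: one must show that the first‑order expression $\sum_k h_k \partial_k u + h_0 u$ built from the harmonic function $u$ really lands in the right space (this uses $u \in C^{1+\nu}(\overline\Omega)$ for the Hölder case and $u \in W^{1,p}(\Omega)$ with $\nabla u \in L_p$ for the $L_p$ case — i.e.\ global gradient bounds for the harmonic lifting on a $C^{1+\kappa}$ domain with merely Hölder coefficients), and then that $A_D^{-1}$ gains the full two derivatives up to the boundary, which is exactly the $C^{1+\kappa}$‑domain Schauder / $L_p$ estimate. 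I would expect these inputs to be precisely the gradient estimates for the heat kernel and Green function $G$ of $A_D$ promised in the abstract, so the proof here reduces to assembling them; the bookkeeping of which coefficient‑times‑derivative terms appear in $h_k, h_0$ and checking their $C^\kappa$ norms are bounded by $\|\partial g\|_\infty + \|\partial^2 g\|_\infty$ is routine but must be done carefully to get the uniformity in $g$.
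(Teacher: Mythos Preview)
Your approach has a genuine gap at the regularity step. You assume that the harmonic lifting satisfies $\gamma \colon C^\nu(\Gamma) \to C^{1+\nu}(\overline\Omega)$ and $\gamma \colon L_p(\Gamma) \to W^{1,p}(\Omega)$, and that $A_D^{-1}$ gains two full derivatives up to the boundary (into $W^{2,p}$ or $C^2$). None of these hold at the regularity level of this paper. On a $C^{1+\kappa}$ domain with merely $C^\kappa$ principal coefficients there is no $W^{2,p}$ or $C^2$ theory for $A_D$; what the paper actually proves (Propositions~\ref{pcom213}--\ref{pcom214}, Proposition~\ref{pcom209}) is only that $\partial_k A_D^{-1}$ maps $L_p \to L_p$ and $L_\infty \to C^\kappa$, i.e.\ one derivative. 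More critically, the harmonic extension of a $C^\nu(\Gamma)$ function is only $C^\nu(\overline\Omega)$ (Theorem~\ref{tcom102}\ref{tcom102-1}), not $C^{1+\nu}$: already for the Laplacian on a half-space, $\nabla u$ blows up like $d(x,\Gamma)^{\nu-1}$ when $\Tr u \in C^\nu$. Likewise $\gamma$ maps $L_p(\Gamma)$ only into $L_p(\Omega)$ (Theorem~\ref{tcom102}\ref{tcom102-2}), not into $W^{1,p}(\Omega)$. Hence the input $\sum_k h_k\,\partial_k u + h_0 u$ to $A_D^{-1}$ is not in the space you need, and the chain $\varphi \mapsto u \mapsto w \mapsto \partial_n^\gota w$ does not close.

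The paper takes a completely different route. It first shows that $\cn$ has a Schwartz kernel $K_\cn$ with $|K_\cn(z,w)| \le c\,|z-w|^{-d}$ and a $C^\kappa$ H\"older bound (Theorem~\ref{tcom205}), so that the commutator kernel $K_g(z,w) = (g(w)-g(z))\,K_\cn(z,w)$ satisfies standard Calder\'on--Zygmund estimates of order $d-1$ (Lemma~\ref{lcom303}). The $C^\nu$ bound is then proved \emph{directly} by a near/far splitting of the singular integral, the delicate ``near'' piece being controlled by the a priori estimate $\|\cn((g\Psi)|_\Gamma)\|_{L_\infty} \le c'$ for localised test functions $\Psi$ (Lemma~\ref{lcom302}, resting on Lemma~\ref{lcom901}). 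The $L_2$ bound is \emph{not} obtained from PDE regularity at all: it comes from Krein's lemma, which interpolates the $C^\nu$ bound for $T_g$ and for its formal adjoint $[\cn^*,M_g]$ to produce $L_2 \to L_2$ boundedness. Only then does Calder\'on--Zygmund theory give weak $(1,1)$ and $L_p$ for $p \neq 2$. So the paper bypasses precisely the regularity you are trying to invoke.
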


In order to be more precise, for Statement~\ref{tcom101-2} we prove that for all 
$g \in W^{2,\infty}(\Ri^d)$ and $\nu \in (0,\kappa)$
there is a bounded operator $\widetilde T_g \colon C^\nu(\Gamma) \to C^\nu(\Gamma)$
such that for all $\varphi \in D(\cn) \cap C^\nu(\Gamma)$ one 
has $g|_\Gamma \, \varphi \in D(\cn) \cap C^\nu(\Gamma)$
and $[\cn, M_g] \varphi = \widetilde T_g \varphi$.
A similar formulation is valid for the other statements. 
We note that even in the case where $A$ is (minus) the Laplacian, 
the $C^\nu$-estimate for  the commutator is new. 
These commutator estimates play an important role in the proof of the Poisson bounds
for the heat kernel of the Dirichlet-to-Neumann operator.

For the harmonic lifting we have the following result.

\begin{thm} \label{tcom102}
Let $\kappa \in (0,1)$.
Let $\Omega \subset \Ri^d$ be a bounded open set of class $C^{1+\kappa}$.  
Suppose $c_{kl}, b_k, c_k \in C^\kappa(\Omega)$,
and $c_0 \in L_\infty(\Omega)$ for all $k,l \in \{ 1,\ldots,d \} $.
Further suppose that the ellipticity condition (\ref{ecomS1;1}) is satisfied. 
Suppose that $0 \not\in \sigma(A_D)$.
Then one has the following.
\begin{tabel}
\item \label{tcom102-1}
For all $\nu \in (0,\kappa)$ the harmonic lifting 
extends consistently to a continuous map from $C^\nu(\Gamma)$ into $C^\nu(\Omega)$.
\item \label{tcom102-2}
For all $p \in [1,\infty)$ the harmonic lifting extends consistently to a 
continuous map from $L_p(\Gamma)$ into $L_p(\Omega)$.
\end{tabel}
\end{thm}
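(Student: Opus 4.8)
The plan is to establish both statements by interpolation and duality, bootstrapping from the $L_2$ theory of the harmonic lifting together with elliptic regularity. For Statement~\ref{tcom102-1}, I would start from the fact that $\gamma$ maps $H^{1/2}(\Gamma)$ into $W^{1,2}(\Omega)$ and that $u = \gamma(\varphi)$ solves $\ca u = 0$ in $\Omega$ with $\Tr u = \varphi$. The key input is global Schauder theory for the divergence-form operator $\ca$ on the $C^{1+\kappa}$ domain: since $c_{kl},b_k,c_k \in C^\kappa(\overline\Omega)$, an $\ca$-harmonic function with boundary trace in $C^\nu(\Gamma)$ lies in $C^\nu(\overline\Omega)$ for $\nu \in (0,\kappa)$, with the quantitative estimate $\|u\|_{C^\nu(\overline\Omega)} \le c\,(\|\varphi\|_{C^\nu(\Gamma)} + \|u\|_{L_2(\Omega)})$; the $L_2$-term is then absorbed using $\|u\|_{L_2(\Omega)} = \|\gamma(\varphi)\|_{L_2(\Omega)} \le c\,\|\varphi\|_{H^{1/2}(\Gamma)} \le c\,\|\varphi\|_{C^\nu(\Gamma)}$ (the last embedding valid because $\Gamma$ is compact and $(d-1)$-dimensional). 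This gives $\|\gamma(\varphi)\|_{C^\nu(\overline\Omega)} \le c\,\|\varphi\|_{C^\nu(\Gamma)}$ for $\varphi \in H^{1/2}(\Gamma) \cap C^\nu(\Gamma)$, and since this subspace is dense in $C^\nu(\Gamma)$ in a suitable sense (or at least in $C^{\nu'}(\Gamma)$ for $\nu' > \nu$, which suffices after a second interpolation), the continuous extension follows, with consistency automatic because the extension agrees with $\gamma$ on the dense subspace.

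For Statement~\ref{tcom102-2}, I would treat the two endpoints separately and interpolate. At $p = 2$ the bound $\|\gamma(\varphi)\|_{L_2(\Omega)} \le c\,\|\varphi\|_{L_2(\Gamma)}$ is the nontrivial statement; this does not follow merely from $\gamma \colon H^{1/2}(\Gamma) \to W^{1,2}(\Omega)$ but instead from a duality argument: for $f \in L_2(\Omega)$, solving the adjoint Dirichlet problem $\ca^* w = f$ with $w \in W^{1,2}_0(\Omega) \cap W^{2,2}(\Omega)$ (regularity valid on $C^{1+\kappa} \supset C^{1,1}$? — here one only needs $W^{2,2}$, which holds on $C^{1+\kappa}$ domains with $C^\kappa$ coefficients via the standard $H^2$-estimate when the domain is at least $C^{1,1}$; if $\kappa < 1$ one instead works with the conormal derivative $\partial_n^{\gota^*} w \in L_2(\Gamma)$ directly) one computes $\int_\Omega \gamma(\varphi)\,\overline f = \gota(\gamma(\varphi), w) - \langle \ldots\rangle = \pm\int_\Gamma \varphi\,\overline{\partial_n^{\gota^*} w}$, and then $\|\partial_n^{\gota^*} w\|_{L_2(\Gamma)} \le c\,\|f\|_{L_2(\Omega)}$ by the regularity theory, giving the claim. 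The endpoint $p = \infty$ (or rather $p$ near $\infty$) follows from Statement~\ref{tcom102-1} since $C^\nu(\Omega) \hookrightarrow L_\infty(\Omega)$ and $C^\nu(\Gamma) \supset$ ... — more carefully, one gets $L_\infty(\Gamma) \to L_\infty(\Omega)$ from the maximum principle (for real coefficients) or, in the complex case, from the De Giorgi--Nash--Moser $L_\infty$-bound $\|u\|_{L_\infty(\Omega)} \le c\,(\|u\|_{L_2} + \|\Tr u\|_{L_\infty(\Gamma)})$ for $\ca$-harmonic $u$, combined again with the $L_2$-bound from the previous step. Riesz--Thorin interpolation between $p=2$ and the $L_\infty \to L_\infty$ bound then yields $L_p(\Gamma) \to L_p(\Omega)$ for $p \in [2,\infty)$, and a further duality step — using that the adjoint of $\gamma$ (as an operator $L_p(\Gamma) \to L_p(\Omega)$) is expressible through the harmonic lifting $\gamma^*$ for $\ca^*$ composed with a conormal-derivative-type map — handles $p \in [1,2)$; alternatively one interpolates the $L_\infty$-bound with a direct $L_1(\Gamma) \to L_1(\Omega)$ estimate obtained from the $L_\infty$-bound for $\gamma^*$ by duality.

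The main obstacle is the $L_2(\Gamma) \to L_2(\Omega)$ bound for $\gamma$, which is genuinely the heart of the matter: the naive estimate only controls $\|\gamma(\varphi)\|_{L_2(\Omega)}$ by $\|\varphi\|_{H^{1/2}(\Gamma)}$, a strictly stronger norm, so one must exploit a gain in regularity or a duality pairing that trades the $1/2$-derivative on $\Gamma$ against interior regularity of the adjoint solution. In the complex non-symmetric setting we cannot invoke a maximum principle or a variational characterization, so the argument must rest on the global $W^{2,2}$ (equivalently $H^{3/2}$ on $\Gamma$) regularity of the adjoint Dirichlet problem and the continuity of the conormal derivative $\partial_n^{\gota^*} \colon W^{2,2}(\Omega) \to L_2(\Gamma)$ — facts that require the $C^{1+\kappa}$ hypothesis on $\Omega$ and the $C^\kappa$ hypothesis on the coefficients in an essential way, and whose proof (or precise citation) is the technical core. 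Once that endpoint is in hand, everything else is interpolation, duality, and the Schauder/De Giorgi--Nash--Moser estimates, which are by now standard on smooth domains.
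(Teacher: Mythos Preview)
Your duality strategy for Statement~\ref{tcom102-2} is exactly the one the paper uses: the identity $(\gamma(\varphi),v)_{L_2(\Omega)} = -(\varphi,\partial_n^{\gota^*}(A_D^*)^{-1}v)_{L_2(\Gamma)}$ (the paper's Lemma~\ref{lcom206}) reduces the $L_p(\Gamma)\to L_p(\Omega)$ bound for $\gamma$ to an $L_q(\Omega)\to L_q(\Gamma)$ bound for $\partial_n^{\gota^*}(A_D^*)^{-1}$. But your justification of that bound has a gap: $W^{2,2}$ regularity for $A_D^*$ is \emph{not} available on a $C^{1+\kappa}$ domain with $\kappa<1$, and your fallback ``work with the conormal derivative directly'' is precisely the hard part. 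The paper obtains the estimate by first proving pointwise Gaussian bounds for $\nabla_x H_t(x,y)$ (Theorem~\ref{tcom201}), which integrate over $\Gamma$ to give $\|\partial_k e^{-tA_D}\|_{L_p(\Omega)\to L_p(\Gamma)}\le a\,t^{-1/2-1/(2p)}e^{\omega t}$ (Proposition~\ref{pcom201.5}); the Laplace transform then yields the bound for $\partial_k A_D^{-1}$ and hence for the conormal derivative, for every $p\in(1,\infty]$ at once (Proposition~\ref{pcom209}\ref{pcom209-3}). This bypasses $W^{2,2}$ entirely and makes your interpolation and $L_\infty$-endpoint step unnecessary --- which is fortunate, since your De~Giorgi--Nash--Moser argument for the $L_\infty$ bound fails for complex coefficients.

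For Statement~\ref{tcom102-1} the gap is more serious. The ``global Schauder'' estimate you invoke, $\|u\|_{C^\nu(\overline\Omega)}\le c(\|\varphi\|_{C^\nu(\Gamma)}+\|u\|_{L_2})$ for an $\ca$-harmonic $u$ with $C^\nu$ boundary trace, is not a standard citable result in the complex-coefficient setting: classical Schauder theory gives $C^{1+\nu}$ solutions from $C^{1+\nu}$ boundary data, while $C^\nu\to C^\nu$ for $\nu<1$ is a De~Giorgi--Nash type boundary regularity statement, which again fails for general complex coefficients. (Also, your embedding $C^\nu(\Gamma)\hookrightarrow H^{1/2}(\Gamma)$ is false for $\nu\le 1/2$, though this is repairable once part~\ref{tcom102-2} is known.) The paper proves~\ref{tcom102-1} by a direct kernel argument: the Green function bounds of Theorem~\ref{tcom202.5} give an explicit kernel $K_\gamma(x,z)$ for $\gamma$ with $|K_\gamma(x,z)|\le \tilde c\,|x-z|^{-(d-1)}$ and a $C^\kappa$ H\"older bound (Theorem~\ref{tcom204}\ref{tcom204-3}). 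One writes $(T\varphi)(x)-(T\varphi)(x_0)$, subtracts $\Phi(x_0)\,\gamma(\one_\Gamma)$ for an extension $\Phi$ of $\varphi$, splits the remaining integral with a cutoff $\chi_r$ at scale $r=|x-x_0|$, and estimates the far piece via the H\"older kernel bound and the near pieces via the pointwise kernel bound together with a crucial uniform estimate $|(\gamma_\lambda(\chi_r(\cdot-y)|_\Gamma))(y)|\le c'$ on the lift of a bump function (Lemma~\ref{lcom307}). That lemma, proved through the $C^{1+\kappa}$ elliptic estimates of Section~\ref{Scom3} (Lemma~\ref{lcom308}), is the genuine replacement for the Schauder black box you are reaching for.
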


Concerning $L_p \to L_q$ boundedness and Poisson bounds, we deal with operators with 
real-valued coefficients.

\begin{thm}\label{tcom103}
Let $\kappa \in (0,1)$.
Let $\Omega \subset \Ri^d$ be a bounded open set of class $C^{1+\kappa}$.  
Suppose $c_{kl}, b_k, c_k \in C^\kappa(\Omega)$,
and $c_0 \in L_\infty(\Omega)$ for all $k,l \in \{ 1,\ldots,d \} $.
Further suppose that the ellipticity condition (\ref{ecomS1;1}) is satisfied. 
Suppose also  that all these coefficients are real-valued and that $0 \not\in \sigma(A_D)$.
Then the semigroup generated by $-\cn$ has a kernel $K$ and  there exist
$c > 0$ and $\omega_0 \in \Ri$ such that 
\[
| K_t(z,w) | 
\leq \frac{c \, t^{-(d-1)} \, e^{\omega_0 t}}
         {\displaystyle \Big( 1 + \frac{|z-w|}{t} \Big)^d }
\]
for all $z, w \in \Gamma$ and $t > 0$. 
\end{thm}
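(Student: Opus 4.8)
The estimate to be proved is a Poisson-type (generalised Gaussian) heat-kernel bound for $\cn$, and the plan is to obtain it by combining an ultracontractivity estimate $\|e^{-t\cn}\|_{L_1(\Gamma)\to L_\infty(\Gamma)}\le c\,t^{-(d-1)}e^{\omega_0 t}$ with an $L_2$-off-diagonal estimate for the semigroup, and then feeding these into the by now standard machinery that converts such a pair into a pointwise kernel bound. The hypothesis that all coefficients be real-valued enters only at one place: it guarantees Gaussian upper bounds, together with matching gradient bounds, for the heat kernel of the Dirichlet realisation $A_D$ (Gaussian bounds being known to fail for general complex elliptic operators in higher dimensions); these bounds underlie the $L_2$- and H\"older-mapping properties of the harmonic lifting $\gamma$ and of the weak conormal derivative $\partial_n^\gota$, hence are built into the operator $\cn=\partial_n^\gota\circ\gamma$.

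\emph{Ultracontractivity.} The operator $\cn$ is associated with the form $\gotb$ on $H^{1/2}(\Gamma)$, which is continuous and quasi-coercive. Since $\Gamma$ is a compact $(d-1)$-dimensional manifold there is a Sobolev embedding $H^{1/2}(\Gamma)\hookrightarrow L_q(\Gamma)$ (with $q=2(d-1)/(d-2)$ when $d\ge3$ and any finite $q$ when $d=2$); interpolating with $L_1(\Gamma)$ and invoking the quasi-coercivity of $\gotb$ yields a Nash inequality of the form $\|\varphi\|_2^{2+2/(d-1)}\le c\,(\RRe\gotb(\varphi,\varphi)+\omega\|\varphi\|_2^2)\,\|\varphi\|_1^{2/(d-1)}$. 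Because the coefficients are real, $e^{-t\cn}$ is a positive semigroup, so after the rescaling $e^{-\omega_0 t}e^{-t\cn}$ it is a contraction semigroup on every $L_p(\Gamma)$, and the classical Nash iteration then gives $\|e^{-t\cn}\|_{L_2(\Gamma)\to L_\infty(\Gamma)}\le c\,t^{-(d-1)/2}e^{\omega_0 t}$. The Dirichlet-to-Neumann operator of the adjoint elliptic operator satisfies the same hypotheses, so the same bound holds for its semigroup; taking adjoints and writing $e^{-t\cn}=e^{-(t/2)\cn}e^{-(t/2)\cn}$ gives the stated $L_1\to L_\infty$ bound. In particular $e^{-t\cn}$ has a bounded measurable kernel $K_t$ with $|K_t(z,w)|\le c\,t^{-(d-1)}e^{\omega_0 t}$, which is the assertion on the diagonal.

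\emph{Off-diagonal estimate.} Here I would use Theorem~\ref{tcom101}\ref{tcom101-1} with $p=2$. By linearity of $g\mapsto[\cn,M_g]$ it yields $\|[\cn,M_g]\|_{L_2(\Gamma)\to L_2(\Gamma)}\le c\max_{k,l}(\|\partial_k g\|_\infty,\|\partial_k\partial_l g\|_\infty)$ for all $g\in W^{2,\infty}(\Ri^d)$. Given Borel sets $V_1,V_2\subseteq\Gamma$ at distance $\rho$, choose $g\in W^{2,\infty}(\Ri^d)$ equal to $1$ near $V_1$ and to $0$ near $V_2$, with $\|\partial_k g\|_\infty\lesssim\rho^{-1}$ and $\|\partial_k\partial_l g\|_\infty\lesssim\rho^{-2}$. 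Then $\one_{V_1}M_g=\one_{V_1}$ and $M_g\one_{V_2}=0$, and the Duhamel formula $M_g e^{-t\cn}-e^{-t\cn}M_g=\int_0^t e^{-(t-s)\cn}[\cn,M_g]e^{-s\cn}\,ds$ together with the holomorphy bound $\|e^{-s\cn}\|_{L_2\to L_2}\le c\,e^{\omega_0 s}$ gives $\|\one_{V_1}e^{-t\cn}\one_{V_2}\|_{L_2\to L_2}\le c\,e^{\omega_0 t}\min(1,t\rho^{-1}+t\rho^{-2})$. For $\rho$ bounded away from $0$ this is $\lesssim e^{\omega_0 t}(1+\rho/t)^{-1}$; the regime of small $\rho$, where the second-order term dominates, is handled by localising and exploiting that near a boundary point $\Gamma$ is, after rescaling, a small $C^{1+\kappa}$-perturbation of a hyperplane.

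\emph{From $L_2$ estimates to the pointwise bound, and the main obstacle.} With ultracontractivity and the off-diagonal estimate in hand, the pointwise bound follows from the standard argument for generalised Gaussian estimates: iterate $e^{-t\cn}=(e^{-t\cn/k})^{k}$, decompose $\Gamma$ dyadically into annuli around $z$ (resp.\ around $w$), bound each resulting product of kernels by combining the off-diagonal $L_2$ factors with the $L_2\to L_\infty$ and $L_1\to L_2$ bounds above, and sum; choosing $k$ suitably one gets $|K_t(z,w)|\le c_M\,t^{-(d-1)}(1+|z-w|/t)^{-M}e^{\omega_0 t}$ for every $M$, and $M=d$ is the claim. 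The subtle and decisive point is precisely this passage: $\cn$ is a \emph{nonlocal} operator, so the commutator/Duhamel argument yields off-diagonal $L_2$ decay only of order one (and, as the model case $\cn=\sqrt{-\Delta}$ on a half-space shows, no better), and extracting from this order-one decay together with the ultracontractivity exponent $d-1$ the pointwise decay of order $d$ requires the full iteration-and-summation argument rather than a one-step Davies perturbation. This nonlocality is also what forces the bound to be of Poisson and not of Gaussian type; a secondary technical nuisance is making the commutator bound in the previous step uniform down to arbitrarily small scales, which is where the $C^{1+\kappa}$-regularity of $\Omega$ is genuinely used.
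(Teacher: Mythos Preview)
Your overall template is reasonable but diverges from the paper and has two genuine gaps. First, you misidentify where real-valuedness is used: the Gaussian and gradient bounds for $A_D$ (Theorem~\ref{tcom201}) hold for complex H\"older coefficients, so all of Sections~\ref{Scom3}--\ref{Scom9}, including Theorem~\ref{tcom101}, are already available in the complex case. Real-valuedness enters only when proving that $e^{-t\cn}$ is bounded on $L_\infty(\Gamma)$, and your claim that positivity of the semigroup yields $L_p$-quasi-contractivity is unjustified: positivity alone does not control $\|S_t\one_\Gamma\|_\infty$. The paper instead takes the strictly positive principal eigenfunction $f_0$ of the Neumann realisation, shows that the order interval $\{0\le u\le f_0\}$ is invariant under the Neumann semigroup, and transfers this invariance to $e^{-t\cn}$ via the trace (Propositions~\ref{pcom9801}--\ref{pcom9802}); only then does the extrapolation of Lemma~\ref{lemma-poi1} give the on-diagonal bound.

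Second, the paper does not pass through an $L_2$ off-diagonal estimate at all. It iterates the commutator identity to express $\delta_g^d(S_t)=[M_g,[\ldots,[M_g,S_t]\ldots]]$ as a sum of time-integrals of products $S_{t_{k+1}t}\,\delta_g^{j_k}(\cn)\cdots\delta_g^{j_1}(\cn)\,S_{t_1 t}$, bounds each $\delta_g^j(\cn)$ from $L_p$ to $L_q$ using the kernel estimate $|(g(z)-g(w))^j K_\cn(z,w)|\le c|z-w|^{-(d-j)}$ (Riesz potentials for $j\ge 2$; Theorem~\ref{tcom101}\ref{tcom101-1} for $j=1$), inserts the $L_p\to L_q$ semigroup bounds, and obtains $\|\delta_g^d(S_t)\|_{1\to\infty}\le c\,t$. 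This reads as $|(g(z)-g(w))^d K_t(z,w)|\le c\,t$, and one then takes $g$ \emph{linear}, which lies in $W_2$ with $\nabla^2 g=0$, to get $|z-w|^d|K_t(z,w)|\le c\,t$ directly. Your route instead uses a cutoff $g$ at scale $\rho$, where $\|\nabla^2 g\|_\infty\sim\rho^{-2}$ dominates for $\rho<1$ and the resulting off-diagonal estimate degenerates; the second derivative is genuinely needed in Theorem~\ref{tcom101} (via Lemma~\ref{lcom302}), and ``localising near a flat model'' is not a proof. Even granting an order-one estimate $\|\one_{V_1}S_t\one_{V_2}\|_{2\to 2}\le c(1+\rho/t)^{-1}$, there is no standard machinery that upgrades this, together with ultracontractivity, to pointwise decay of order~$d$: the usual results require Davies--Gaffney decay of all polynomial orders, and writing $S_t=(S_{t/k})^k$ does not manufacture extra orders from a single order-one input. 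The paper's device of going straight to the $d$-fold commutator and optimising over linear $g$ is precisely what sidesteps both difficulties.
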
 

Note that $L_p(\Gamma) \to L_q(\Gamma)$ estimates for $e^{-t \cn}$ are satisfied 
for all $1 \le p \le q \le \infty$ and are 
implicit in this theorem. 
We also note that this later theorem remains valid for complex $c_0$. 
We explain this in  Section~\ref{Scom10}, where the proof of the theorem is given.
We believe that the same 
result holds for complex coefficients  as in the first two theorems,
but this turns out to be a challenging problem which  
remains to be solved. 

The Dirichlet-to-Neumann map is an important operator which appears 
in electrical impedance tomography, Calder\'on's inverse problems, spectral theory 
and many other subjects in mathematics and modeling.

The famous inverse problem of Calder\'on consists in determining the 
potential $V$ or the diffusion coefficient $c(x)$ from the knowledge of the 
Dirichlet-to-Neumann operator when  $\ca = -\Delta + V$ (Schr\"odinger operator) 
or $\ca = -\sum_{k=1}^d \partial _k (c(x) \partial_k)$ for the isotropic case.
This problem is nowadays quite well understood at least when the 
Dirichlet-to-Neumann operator is known on the whole boundary.
The anisotropic case, i.e., when $\ca = - \sum_{k,l=1}^d \partial_k(c_{kl} \partial_l)$, 
is more complicate and one can only hope for uniqueness up to a diffeomorphism.
We refer to the papers of Lee and Uhlmann \cite{LeeUhlmann} and 
Astala, P\"aiv\"arinta and Lassas \cite{APL} and the references therein.
For a weaker version of Calder\'on's problem with general second-order 
operator $\ca$ as we consider here, we refer to 
Behrndt and Rohleder \cite{BehR} or Ouhabaz \cite{Ouh8}.
The use of the Dirichlet-to-Neumann operator in spectral theory is a 
subject which has attracted attention.
We refer the reader to the papers of Friedlander \cite{Friedlander}, 
Arendt and Mazzeo \cite{ArM2} 
and the more recent paper by Girouard et al.\ \cite{GKLP}, or for Stokes operators
as in  Denis and ter Elst  \cite{DeE}. 
It is also used in scattering theory,
see for example Behrndt, Malamud and Neidhardt \cite{BMN}.
For  modelling of electrical impedance tomography and historical notes on 
Calder\'on's inverse problem we refer to Uhlmann \cite{uhlmann2009}.   
The Dirichlet-to-Neumann operator with variable coefficients plays an 
important role in modelling and analysis of water waves, see e.g.\ Lannes \cite{Lannes}.
It  is also used in the theory of homogenization 
and analysis of elliptic systems with rapidly oscillating coefficients 
(see e.g.\ Kenig, Lin and Shen \cite{KLS} and the references therein). 
When $\ca = -\Delta$ and $g(x) = x_k$, then  $L_p$-estimates for the commutator 
$[\cn, M_g]$ were proved in \cite{KLS} 
for $C^1$-domains.
The result there is also valid with $p= 2$ for any  bounded Lipschitz domain.
This later case 
was extended to the case of variable coefficients by Shen  \cite{She2}, 
i.e., $\ca = -\sum_{k,l} \partial_k (c_{kl} \, \partial_l)$.
It is assumed 
in \cite{She2} that the coefficients are symmetric ($c_{kl} = c_{lk}$), 
H\"older continuous and real-valued.
The function $g$ is any  smooth function on $\Gamma$. 
All the assumptions on the coefficients made in \cite{She2} play 
an important role in the proof.
Based on this $L_2$-result from \cite{She2}, it was proved by ter Elst and Ouhabaz  
in  \cite{EO6} that for $C^{1+\kappa}$-domains, 
this commutator is of weak type $(1,1)$ and it is bounded on $L_p(\Gamma)$ 
for all $p \in (1, \infty)$.
In addition it was allowed 
there to add a real-valued potential $V$ to the elliptic operator~$\ca$.
No terms of order one or non-symmetric 
principal $c_{kl}$, however,  were considered in \cite{EO6}.
Recently, $L_2$-estimates for the commutator were proved  in the setting of the 
half-space by Hofmann and Zhang \cite{HofmannZhang} for complex coefficients 
which are a small perturbation of real symmetric coefficients. 
Xu, Zhao and Zhou \cite{XuZhaoZhou} extended Shen's commutator result to the case of Stokes systems.

Concerning Poisson bounds we mention the results from \cite{EO4} and \cite{EO6}.
The first paper deals with the case of the 
Dirichlet-to-Neumann operator associated with the positive Laplacian and 
a positive measurable potential on a 
$C^\infty$ bounded domain.
Without potential the Dirichlet-to-Neumann operator
$\cn$ is a pseudo-differential operator and we used there $L_p$-estimates of 
Coifman and Meyer \cite{CM2} for such operators. 
Related results in this setting were also proved by Grimperlein and Grubb \cite{GG}.
The case of variable coefficients is treated in \cite{EO6} for
$C^{1+\kappa}$-domains and real-valued symmetric H\"older coefficients.
In \cite{EO6} we relied on the $L_2$-estimate of \cite{She2} for the commutator together with  H\"older 
bounds for the Green function of the elliptic operator on $L_2(\Omega)$ 
in order to extend the commutator to $L_p(\Gamma)$.
Once this extension was proved, we combined the commutator estimates
with several $L_p \to L_q$ estimates 
for the semigroup $(e^{-t\cn})_{t > 0}$ in order to obtain 
$L_1 \to L_\infty$ estimates for  iterated commutators of $e^{-t \cn}$  and 
$ M_g$ and then optimised over~$g$.
One extremely 
helpful fact with real coefficients (without lower-order terms) is that the 
semigroup $(e^{-t\cn})_{t > 0}$ is (sub-)Markovian and hence by an extrapolation 
argument together with the standard Sobolev inequality on the 
boundary one obtains appropriate $L_2 \to L_\infty$ estimates for $e^{-t\cn}$ 
(and hence $L_p \to L_q$ estimates for all $1 \leq p \leq q \leq \infty$
by interpolation and duality).
This method is no longer applicable for operators with terms of order one 
or operators with only principal part but with complex coefficients.  

Considering the above problems in the setting of non-symmetric operators 
with real or complex coefficients (as in the first two theorems) 
is not a simple extension of  the methods used in the setting of symmetric  real-valued coefficients.
We had to develop new arguments since there are no  
available $L_2$-estimates which could serve us as a starting point.
We rely heavily on Gaussian and H\"older bounds for the 
heat kernel of the elliptic operator $A_D$ with Dirichlet boundary conditions.
These bounds are then used to prove  bounds 
(and H\"older bounds) for the Schwartz kernel of the harmonic lifting.
They are also used to prove that the Schwartz 
kernel $K_g$ of the commutator $[\cn, M_g]$ satisfies bounds
\[ 
|K_g(z,w) | \leq \frac{c}{|z-w|^{d-1}}
\]
and 
\[
| K_g(z,w) - K_g(z',w') | 
\leq c \, \frac{ (|z-z'| + |w-w'|)^\kappa}{ |z-w|^{d-1 + \kappa}}
\]
for all $z, z', w, w' \in \Gamma$ with 
$z \neq w$, $z' \neq w'$ and $|z-z'| + |w-w'| \leq \frac{1}{2} \, |z-w|$.
These bounds  serve us as a keystone to prove the boundedness of $[\cn, M_g]$ on  $C^\nu(\Gamma)$.
The boundedness on 
$L_2(\Gamma)$ is then obtained by the help of Krein's lemma.
The extension to $L_p(\Gamma)$ for $p \in (1, \infty)$ is obtained
by Calder\'on--Zygmund theory.
Using techniques of \cite{EO8} one can transfer certain invariant closed convex sets 
for the semigroup on $L_2(\Omega)$ generated by (minus-) the Neumann elliptic operator $A_N$
 to an invariant closed convex set 
for the semigroup generated by $-\cn$ on $L_2(\Gamma)$.
In that way we show bounds 
$\|e^{-t \cn}\|_{L_\infty(\Gamma) \to L_\infty(\Gamma)}
\leq M \, e^{\omega t}$
uniformly for $t \in (0,\infty)$ if the coefficients are real valued.
As in \cite{EO4} and \cite{EO6}, we use commutator estimates and $L_p \to L_q$ 
estimates of 
the semigroup $(e^{-t\cn})_{t > 0}$ (which are also  new in this setting) to prove 
$L_1 \to L_\infty$ estimates for the iterated commutator 
$[M_g, [\ldots ,[M_g, e^{-t\cn}] \ldots ]]$ of order $d$.
The Poisson bound is then obtained easily by optimising over the function~$g$. 

Concerning heat kernel and Green function bounds for the elliptic operator, 
we use various estimates in Morrey and 
Campanato spaces.
This part does not differ substantially from \cite{EO6} although a few arguments 
from that paper use self-adjointness of the operator $A_D$  and we have 
to provide different arguments. 

The outline of this paper is as follows.
In Section~\ref{Scom2} we give the definition  of Morrey and Campanato spaces and  make precise some notation which will be used throughout the paper.
Section~\ref{Scom3} is devoted to various elliptic regularity results.
Many of the results there are similar to those in \cite{EO6}, the difference 
is that we deal here with possibly non self-adjoint operators with in general complex coefficients.
Also as in \cite{EO6}, we use these elliptic regularity results to obtain Gaussian bounds, space-derivative bounds and H\"older continuity
for the heat kernel of the elliptic operator $A_D$.
This is done in Section~\ref{Scom4}.
Section~\ref{Scom5} is devoted to various regularity bounds for the Green function of $A_D$.
In Section~\ref{Scom6} we prove pointwise bounds and H\"older bounds for the kernel of the harmonic lifting~$\gamma$.  
We also prove in Theorem~\ref{tcom204}\ref{tcom204-4} the
$L_p(\Gamma) \to L_p(\Omega)$ bounds of Theorem~\ref{tcom102}\ref{tcom102-2} for~$\gamma$.
We emphasise that several results in Sections~\ref{Scom3}, \ref{Scom4} and \ref{Scom5},
and a few in Section~\ref{Scom6}, are also valid for unbounded $\Omega$
satisfying a local $C^{1+\kappa}$-property in a uniform manner.
The H\"older continuity bounds for $\gamma$ as stated in Theorem~\ref{tcom102}\ref{tcom102-1}
are proved in Section~\ref{Scom7}.
In  Section~\ref{Scom8} we prove global bounds and H\"older continuity for the 
Schwartz kernel of the Dirichlet-to-Neumann operator~$\cn$.
The commutator estimates of Theorem~\ref{tcom101} are proved in Section~\ref{Scom9}.
The Poisson bounds of Theorem~\ref{tcom103} together with some additional 
results are proved in Section~\ref{Scom10}.

\subsection*{Acknowledgements.}  
This project started in 2020 during a visit of the second named author at the 
university of Auckland and continued during two visits of the first named author 
at the university of Bordeaux.
 Both authors thank these institutions
for financial supports.
The second named author wishes to thank Yuri Tomilov who has drawn  his attention to 
Krein's lemma during a conference at the university of Bordeaux several years ago.

\section{Notation} \label{Scom2}

Since we consider $d \geq 2$ as fixed throughout the paper, we will not write that 
a given constant depends on~$d$.
Define  
\[
W_1 = \{ g \in W^{1,\infty}(\Ri^d, \Ri) : \|\nabla g\|_\infty \leq 1 \} 
\]
and 
\[
W_2 = \{ g \in W^{2,\infty}(\Ri^d, \Ri) : \|\nabla g\|_\infty \leq 1
\mbox{ and } \|\partial_k \, \partial_l g\|_\infty \leq 1 \mbox{ for all }
k,l \in \{ 1,\ldots,d \} \} 
.  \]

Let $\Omega \subset \Ri^d$ be an open set.
Note that we do not require $\Omega$ to be bounded.
For all $\nu \in (0,1]$ define $|||\cdot|||_{C^\nu(\Omega)} \colon C(\Omega) \to [0,\infty]$
by 
\[
|||u|||_{C^\nu(\Omega)}
= \sup_{\scriptstyle x,y \in \Omega \atop
        \scriptstyle 0 < |x-y| \leq 1 }
     \frac{|u(x) - u(y)|}{|x-y|^\nu}
\]
and set $C^\nu(\Omega) = \{ u \in C(\Omega) : |||u|||_{C^\nu(\Omega)} < \infty \} $.
Moreover, define 
\[
\|u\|_{C^\nu(\Omega)} 
= |||u|||_{C^\nu(\Omega)} + \|u\|_{L_\infty(\Omega)}
\in [0,\infty]
.  \]
We emphasise that elements in $C^\nu(\Omega)$ may be unbounded in general.

If $x \in \overline \Omega$ and $r > 0$, then define $\Omega(x,r) = \Omega \cap B(x,r)$.
For all $\gamma \in [0,d]$, $x \in \overline \Omega$ and $R_e \in (0,1]$
define $\|\cdot\|_{M,\gamma,x,\Omega,R_e} \colon L_2(\Omega) \to [0,\infty]$ by
\[
\|u\|_{M,\gamma,x,\Omega,R_e}
= \sup_{r \in (0,R_e]} \Big( r^{-\gamma} \int_{\Omega(x,r)} |u|^2 \Big)^{1/2}
.  \]
Moreover, define the {\bf Morrey norm}
$\|\cdot\|_{M,\gamma,R_e} \colon L_2(\Omega) \to [0,\infty]$ by 
\[
\|u\|_{M,\gamma,R_e} 
= \sup_{x \in \overline \Omega} \|u\|_{M,\gamma,x,\Omega,R_e}
.  \]
Set 
\[
M_\gamma(\Omega)
= \{ u \in L_2(\Omega) : \|u\|_{M,\gamma,1} < \infty \}
.  \]
Further, for all $\gamma \in [0,d+2]$, $x \in \overline \Omega$ and $R_e \in (0,1]$
define $|||\cdot|||_{\cm,\gamma,x,\Omega,R_e} \colon L_2(\Omega) \to [0,\infty]$ by
\[
|||u|||_{\cm,\gamma,x,\Omega,R_e}
= \sup_{r \in (0,R_e]} \Big( r^{-\gamma} \int_{\Omega(x,r)} |u - \langle u \rangle_{\Omega(x,r)}|^2 \Big)^{1/2}
.  \]
We define the {\bf Campanato seminorm} $|||\cdot|||_{\cm,\gamma,R_e} \colon L_2(\Omega) \to [0,\infty]$ by 
\[
|||u|||_{\cm,\gamma,R_e} 
= \sup_{x \in \overline \Omega} |||u|||_{\cm,\gamma,x,\Omega,R_e}
.  \]
Set 
\[
\cm_\gamma(\Omega)
= \{ u \in L_2(\Omega) : |||u|||_{\cm,\gamma,1} < \infty \}
.  \]
For explicit estimates between the various seminorms we refer to \cite{ERe2} Lemma~3.1.
If $R_e = 1$, then we drop the symbol $R_e$ in the notation.

Define 
\[
E = (-4,4)^d
\quad \mbox{and} \quad
E^- = (-4,4)^{d-1} \times (-4,0)
\]
the open cube in $\Ri^d$ and 
its lower half $E^-$.
The {\bf midplate} is $P = E \cap \{ x \in \Ri^d : x_d = 0 \} $.
Let $\kappa \in (0,1)$, $K > 0$ and $\Omega \subset \Ri^d$.
We say that $\Omega$ is of {\bf class $C^{1+\kappa}$ with parameter $K$}
if for all $x \in \partial \Omega$ there exists an open 
neighbourhood $U$ of $x$ and a 
$C^{1+\kappa}$-diffeomorphism $\Phi$ from $U$ onto $E$ such that 
\begin{itemize}
\item
$\Phi(x) = 0$, 
\item
$\Phi(U \cap \Omega) = E^-$
\item
$\Phi(U \cap \partial \Omega) = P$,
\item
$K$ is larger than the Lipschitz constant for $\Phi$ and $\Phi^{-1}$, and 
\item
$|||(D \Phi)_{ij}|||_{C^\kappa} \leq K$ and 
$|||(D (\Phi^{-1}))_{ij}|||_{C^\kappa} \leq K$ for all $i,j \in \{ 1,\ldots,d \} $
where $D \Phi$ denotes the derivative of $\Phi$.
\end{itemize}
We emphasise that $\Omega$ does not have to be bounded.
If $\Omega$ is of class $C^{1+\kappa}$ with parameter $K$, then there is a 
$\tilde c > 0$, depending only on $K$, such that 
$|\Omega(x,r)| \geq \tilde c \, r^d$ for all $x \in \overline \Omega$ and $r \in (0,1]$.
Further if $\Omega$ is of class $C^{1+\kappa}$ and $\nu \in (0,1]$, 
then every $u \in C^\nu(\Omega)$ can be extended to an element $\tilde u \in C^\nu(\overline \Omega)$.
We will identify $u$ and $\tilde u$ as no confusion is possible.
In particular, we occasionally write $u|_\Gamma$ for $\tilde u|_\Gamma$.

\smallskip

If $\gota \colon W^{1,2}(\Omega) \times W^{1,2}(\Omega) \to \Ci$ is as in 
(\ref{eScom1;1}) and $\lambda \in \Ri$, then we define the shifted form 
$\gota_\lambda \colon W^{1,2}(\Omega) \times W^{1,2}(\Omega) \to \Ci$ by 
\[
\gota_\lambda(u,v) = \gota(u,v) + \lambda \, (u,v)_{L_2(\Omega)}
.  \]
For a function $H$ of two variables, we use the notation 
$\partial^{(1)}_k H$ and $\partial^{(2)}_l H$ to denote the 
partial derivative with respect to the first and second variable in the $k$-th
and $l$-th direction, respectively. 
 
Finally, for a given bounded operator from an $L_p(X)$ space to an $L_q(Y)$ space, its norm is denoted  
$\|T\|_{L_p(X) \to L_q(Y)}$ or simply $\| T \|_{p\to q}$ if $X = Y$.

\section{Elliptic regularity} \label{Scom3}

We start with some elementary estimates regarding Morrey and Campanato seminorms.

\begin{lemma} \label{lcom217}
\mbox{}
\begin{tabel}
\item \label{lcom217-1}
Let $0 < R_1 \leq R_2 \leq 1$.
Then there is a $c > 0$ such that 
\[
\|u\|_{M,\gamma,R_1}
\leq \|u\|_{M,\gamma,R_2}
\leq c \, \|u\|_{M,\gamma,R_1}
\]
for all open $\Omega \subset \Ri^d$, $\gamma \in [0,d]$
and $u \in L_2(\Omega)$.
\item \label{lcom217-2}
If $\gamma \in [0,d+2]$, $0 < R_1 \leq R_2 \leq 1$ and $\Omega \subset \Ri^d$ is open, then 
\[
|||u|||_{\cm,\gamma,R_1}
\leq |||u|||_{\cm,\gamma,R_2}
\leq |||u|||_{\cm,\gamma,R_1} + R_1^{-\gamma} \, \|u\|_{M,0}
\]
for all $u \in L_2(\Omega)$.
\end{tabel}
\end{lemma}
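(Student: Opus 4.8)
The plan is to treat each part by splitting the supremum defining the seminorm over the range of radii at $R_1$: on $(0,R_1]$ the relevant quantities are controlled by the $R_1$-seminorm essentially by definition, and on $(R_1,R_2]$ one uses a crude pointwise bound. The left-hand inequalities in (a) and (b) are immediate, since passing from cutoff $R_2$ to $R_1 \le R_2$ only shrinks the index set $(0,R_e]$ of the defining supremum, so its value can only decrease.

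For the right-hand inequality in (a), I would fix $x \in \overline\Omega$ and $r \in (R_1,R_2]$ (the case $r \le R_1$ being already covered by $\|u\|_{M,\gamma,x,\Omega,R_1} \le \|u\|_{M,\gamma,R_1}$) and cover $\Omega(x,r)$ by controllably many balls of radius $R_1$ with centres in $\Omega$: pick a maximal $(R_1/2)$-separated set $y_1,\dots,y_N$ in $\Omega(x,r)$, so that $\Omega(x,r)\subseteq\bigcup_{j=1}^N B(y_j,R_1)$ by maximality, while disjointness of the balls $B(y_j,R_1/4)$ together with a volume comparison inside $B(x,2R_2)$ forces $N \le (5R_2/R_1)^d$, a bound depending only on $d$, $R_1$ and $R_2$. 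Then, using $y_j\in\overline\Omega$, $\gamma\ge 0$ and $R_1\le r$,
\[
r^{-\gamma}\int_{\Omega(x,r)}|u|^2
\le r^{-\gamma}\sum_{j=1}^N\int_{\Omega(y_j,R_1)}|u|^2
\le N\Big(\tfrac{R_1}{r}\Big)^\gamma\|u\|_{M,\gamma,R_1}^2
\le N\,\|u\|_{M,\gamma,R_1}^2 ,
\]
and taking the supremum over $r\in(0,R_2]$ and $x\in\overline\Omega$ gives the claim with $c=(5R_2/R_1)^{d/2}$.

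For the right-hand inequality in (b), again fix $x\in\overline\Omega$ and $r\in(R_1,R_2]$. Since the mean $\langle u\rangle_{\Omega(x,r)}$ minimises the $L_2(\Omega(x,r))$-distance to constants, $\int_{\Omega(x,r)}|u-\langle u\rangle_{\Omega(x,r)}|^2\le\int_{\Omega(x,r)}|u|^2\le\int_{\Omega(x,1)}|u|^2\le\|u\|_{M,0}^2$, where the middle step uses $r\le R_2\le 1$. Hence $r^{-\gamma}\int_{\Omega(x,r)}|u-\langle u\rangle_{\Omega(x,r)}|^2\le R_1^{-\gamma}\|u\|_{M,0}^2$; combining this with the bound $|||u|||_{\cm,\gamma,x,\Omega,R_1}\le|||u|||_{\cm,\gamma,R_1}$ valid on the complementary range $r\le R_1$, and then taking suprema over $r$ and $x$, yields
\[
|||u|||_{\cm,\gamma,R_2}
\le|||u|||_{\cm,\gamma,R_1}+R_1^{-\gamma/2}\,\|u\|_{M,0}
\le|||u|||_{\cm,\gamma,R_1}+R_1^{-\gamma}\,\|u\|_{M,0} ,
\]
the last inequality using $R_1\le 1$ and $\gamma\ge 0$.

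There is no serious obstacle here; the only step that requires a little care is the covering argument in (a), namely checking that the number $N$ of $R_1$-balls is bounded uniformly in $\Omega$, $x$, $r$ and $u$ (through $d$, $R_1$ and $R_2$ only), and that the centres can be chosen in $\overline\Omega$ so that the local Morrey seminorm at each centre is dominated by the global one.
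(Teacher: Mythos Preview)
Your proof is correct and follows essentially the same approach as the paper: a covering argument for (a) and a crude pointwise bound on the range $r\in(R_1,R_2]$ for (b). You have simply filled in the details that the paper omits (it only sketches the covering in one sentence and calls (b) ``trivial''), and you even observe the sharper intermediate bound $R_1^{-\gamma/2}\,\|u\|_{M,0}$ in~(b) before relaxing it to match the stated inequality.
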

\begin{proof}
`\ref{lcom217-1}'.
By an elementary estimate there is an $N \in \Ni$ such that 
for all $x \in \overline \Omega$ there exist $x_1,\ldots,x_N \in \overline \Omega$
such that $\Omega(x,1) \subset \bigcup_{n=1}^N \Omega(x_n,R_1)$.
Then Statement~\ref{lcom217-1} follows.

Statement~\ref{lcom217-2} is trivial.
\end{proof}

\begin{lemma} \label{lcom218}
Let $\kappa \in (0,1)$, $K > 0$ and $\gamma \in [0,d]$.
Then there are $c > 0$ and $R_e \in (0,1]$ such that the following is valid.

Let $\Omega \subset \Ri^d$ be an open set of class $C^{1+\kappa}$ with parameter $K$.
Then
\[
|||u|||_{\cm,\gamma+2,R_e}
\leq c \, \|\nabla u\|_{M,\gamma,R_e}
\]
for all $u \in W^{1,2}(\Omega)$.
\end{lemma}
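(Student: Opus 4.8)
The plan is to estimate the Campanato seminorm $|||u|||_{\cm,\gamma+2,R_e}$ by controlling, for each ball $\Omega(x,r)$, the mean oscillation $\int_{\Omega(x,r)} |u - \langle u\rangle_{\Omega(x,r)}|^2$ in terms of $\int_{\Omega(x,r)} |\nabla u|^2$, uniformly in $x\in\overline\Omega$ and $r\in(0,R_e]$. The natural tool is a Poincaré inequality on the sets $\Omega(x,r)$: if one knows that there is a constant $c_P$ with
\[
\int_{\Omega(x,r)} |u - \langle u\rangle_{\Omega(x,r)}|^2
\leq c_P \, r^2 \int_{\Omega(x,r)} |\nabla u|^2
\]
for all such $x$ and $r$, then multiplying by $r^{-(\gamma+2)}$ and taking the supremum over $r\in(0,R_e]$ immediately gives
\[
|||u|||_{\cm,\gamma+2,R_e}^2
\leq c_P \sup_{r\in(0,R_e]} r^{-\gamma}\int_{\Omega(x,r)} |\nabla u|^2
= c_P\,\|\nabla u\|_{M,\gamma,x,\Omega,R_e}^2,
\]
and then taking the supremum over $x$ yields the claim with $c = c_P^{1/2}$.

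So the whole matter reduces to establishing a \emph{scale-invariant} Poincaré inequality on the truncated sets $\Omega(x,r)$ with a constant depending only on $\kappa$ and $K$. Here one exploits the $C^{1+\kappa}$ structure. For $x$ well inside $\Omega$ (say $B(x,r)\subset\Omega$) this is the classical Poincaré inequality on a ball, with a dimensional constant. For $x$ near or on the boundary one uses the local charts $\Phi\colon U\to E$ from the definition of class $C^{1+\kappa}$: after rescaling so that $r$ is comparable to $1$ and composing with $\Phi$, the set $\Omega(x,r)$ is mapped onto a domain sandwiched (up to the bi-Lipschitz constant $K$) between fixed half-balls in $E^-$, which are Lipschitz domains of controlled geometry, and the Poincaré inequality holds there with a constant depending only on that geometry; pulling back costs only powers of $K$ and Jacobian bounds. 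The threshold $R_e$ is chosen small enough (depending on $K$) that every $\Omega(x,r)$ with $r\le R_e$ sits inside a single chart neighbourhood, and the lower volume bound $|\Omega(x,r)|\geq\tilde c\,r^d$ (noted in Section~\ref{Scom2}) guarantees the averages and Poincaré constants behave uniformly. One should also recall that $|||u|||_{\cm,\gamma+2,R_e}$ is defined using the average $\langle u\rangle_{\Omega(x,r)}$, which is the minimiser of $v\mapsto\int_{\Omega(x,r)}|u-v|^2$ over constants $v$, so replacing it by the average over a comparable ball only worsens the bound by a harmless constant; this lets one patch the interior and boundary cases together.

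The main obstacle is the uniformity of the Poincaré constant over \emph{all} $x\in\overline\Omega$ and \emph{all} $r\in(0,R_e]$ simultaneously, i.e. a genuinely scale- and location-independent estimate on the family $\{\Omega(x,r)\}$. This is where the $C^{1+\kappa}$ hypothesis with a fixed parameter $K$ is essential: it furnishes charts whose bi-Lipschitz and $C^\kappa$-norms are bounded by $K$, so that after dilation all the relevant domains fall into a compact family of Lipschitz domains with a common Poincaré constant. A clean way to organise this is a standard covering/compactness argument: cover $\overline\Omega$ by finitely many chart neighbourhoods (possible if $\Omega$ is bounded, but the local-uniformity hypothesis suffices in general), prove the rescaled Poincaré inequality in each chart with a constant depending only on $K$, and note that interior balls are handled by the classical inequality. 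A referee-safe alternative is simply to cite a known scale-invariant Poincaré inequality on $C^1$ (or Lipschitz) domains with uniform constants; everything else in the argument is the elementary rescaling computation sketched above.
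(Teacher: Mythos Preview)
Your approach is correct and is essentially the same as the paper's: both reduce the Campanato--Morrey estimate to a scale-invariant Poincar\'e inequality on the sets $\Omega(x,r)$, obtained via the $C^{1+\kappa}$ charts and rescaling. The paper simply cites \cite{EO6} Lemma~3.12 for this Poincar\'e inequality and invokes Lemma~\ref{lcom217}\ref{lcom217-1} to pass from $\|\nabla u\|_{M,\gamma,1}$ to $\|\nabla u\|_{M,\gamma,R_e}$, whereas your direct argument lands on $\|\nabla u\|_{M,\gamma,R_e}$ immediately and is slightly more self-contained.
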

\begin{proof}
It follows from \cite{EO6} Lemma~3.12 (also if $\gamma = d$) 
and two coordinate transformations
that $|||u|||_{\cm,\gamma+2,R_e} \leq c \, \|\nabla u\|_{M,\gamma,1}$
for suitable $R_e \in (0,1]$ and $c > 0$.
Then use Lemma~\ref{lcom217}\ref{lcom217-1}.
\end{proof}

We emphasise that $\Omega$ does not have to be bounded in the 
next four propositions.

\begin{prop} \label{pcom211}
Let $\kappa \in (0,1)$, $\mu,M,K > 0$, $\gamma \in [0,d)$ and $\delta \in [0,2]$ 
with $\gamma + \delta < d$.
Then there exists a $c > 0$ such that the following is valid.

Let $\Omega \subset \Ri^d$ be an open set of class $C^{1+\kappa}$ with parameter $K$.
For all $k,l \in \{ 1,\ldots,d \} $ let $c_{kl}\in C^\kappa(\Omega) \cap L_\infty(\Omega)$
with $\|c_{kl}\|_{C^\kappa(\Omega)} \leq M$.
Suppose $\RRe \sum_{k,l=1}^d c_{kl}(x) \, \xi_k \, \overline{\xi_l} \geq \mu \, |\xi|^2$
for all $\xi \in \Ci^d$ and $x \in \Omega$.
Let $f_0,f_1,\ldots,f_d \in L_2(\Omega)$ and $u \in W^{1,2}_0(\Omega)$.
Suppose
\[
\sum_{k,l=1}^d \int_\Omega c_{kl} \, (\partial_l u) \,  \, \overline{\partial_k v}
= (f_0,v)_{L_2(\Omega)} + \sum_{k=1}^d (f_k, \partial_k v)_{L_2(\Omega)}
\]
for all $v \in W^{1,2}_0(\Omega)$.
Then 
\[
\|\nabla u\|_{M,\gamma + \delta}
\leq c \, \Big( \varepsilon^{2-\delta} \, \|f_0\|_{M,\gamma}
   + \sum_{k=1}^d \|f_k\|_{M,\gamma + \delta}
   + \varepsilon^{-(\gamma + \delta)} \, \|\nabla u\|_{L_2(\Omega)}
           \Big)
\]
for all $\varepsilon \in (0,1]$.
\end{prop}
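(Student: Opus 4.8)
The plan is to reduce to a fixed local model near the boundary (and an interior model), establish a Morrey-type Caccioppoli inequality with the indicated powers of $\varepsilon$, and then freeze coefficients in the Campanato/Morrey framework. First I would cover $\overline\Omega$ by balls: interior balls where $\Omega(x,r)=B(x,r)$, and boundary balls where, after composing with the $C^{1+\kappa}$-diffeomorphism $\Phi$ of the definition of class $C^{1+\kappa}$, the situation becomes a uniformly elliptic equation on the half-cube $E^-$ with $C^\kappa$ coefficients and a Dirichlet condition on the midplate $P$; the chain rule turns $\sum c_{kl}\partial_l u\,\overline{\partial_k v}$ into the same type of form with new coefficients still in $C^\kappa$ with norms controlled by $M$ and $K$, and the right-hand side data $f_k$ transform into data of the same Morrey class (here one uses that $D\Phi, D\Phi^{-1}$ are bounded and $C^\kappa$). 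So it suffices to prove the estimate for the half-space model and the interior model, uniformly in the data, and then take the supremum over $x\in\overline\Omega$ and $r\in(0,R_e]$ that defines $\|\cdot\|_{M,\gamma+\delta}$, adjusting $R_e$ to be small enough that every ball of radius $\le R_e$ lands inside one chart.

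Next I would carry out the core scaling/freezing argument on a fixed ball $B(x_0,r)$. Split $u=w+h$ where $h$ is $A_0$-harmonic (with $A_0$ the constant-coefficient operator obtained by freezing $c_{kl}$ at $x_0$) with the same boundary trace as $u$ on $B(x_0,r)$, and $w\in W^{1,2}_0(B(x_0,r))$ solves the inhomogeneous problem with right-hand side coming from $f_0,\dots,f_d$ together with the commutator term $\bigl(c_{kl}(\cdot)-c_{kl}(x_0)\bigr)\partial_l u$. For $h$ one has the classical decay estimate $\int_{B(x_0,\rho)}|\nabla h|^2 \le C(\rho/r)^d \int_{B(x_0,r)}|\nabla h|^2$ from constant-coefficient regularity (the half-space version with the Dirichlet condition is standard). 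For $w$, testing against $w$ itself and using Hölder and Young's inequality yields
\[
\int_{B(x_0,r)}|\nabla w|^2
\le C\Big( r^2\!\!\int_{B(x_0,r)}|f_0|^2 + \sum_{k=1}^d\int_{B(x_0,r)}|f_k|^2 + r^{2\kappa}\!\!\int_{B(x_0,r)}|\nabla u|^2\Big),
\]
where the factor $r^2$ in front of $f_0$ reflects the Poincaré inequality on $W^{1,2}_0(B(x_0,r))$ and $r^{2\kappa}$ is the oscillation bound $\|c_{kl}-c_{kl}(x_0)\|_{L_\infty(B(x_0,r))}\le M r^\kappa$. Feeding these two estimates into an iteration over dyadic radii (the standard lemma on quasi-decreasing quantities satisfying $\phi(\rho)\le C[(\rho/r)^d+\theta]\phi(r)+\psi(r)$) absorbs the bad $r^{2\kappa}$ term — here one needs $\gamma+\delta<d$ so that the target growth exponent is subcritical — and produces $r^{-(\gamma+\delta)}\int_{\Omega(x_0,r)}|\nabla u|^2 \le C(\dots)$ with the data measured in the appropriate Morrey norms at the scaled exponent; for $f_0$ the two extra powers of $r$ turn $\gamma$ into $\gamma+\delta$ only up to a factor $r^{2-\delta}\le\varepsilon^{2-\delta}$ once one restricts the Morrey sup to radii $\le\varepsilon$, and $\|\nabla u\|_{L_2(\Omega)}$ enters with the worst-case prefactor $\varepsilon^{-(\gamma+\delta)}$ from the scales $r\in(\varepsilon,R_e]$ where no decay is available. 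Combining the two regimes $r\le\varepsilon$ and $\varepsilon<r\le R_e$ gives exactly the claimed inequality.

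The main obstacle I anticipate is the bookkeeping of the $\varepsilon$-dependence: one must split the supremum defining $\|\nabla u\|_{M,\gamma+\delta}$ at the threshold $r=\varepsilon$, run the Campanato iteration only on the small scales while controlling the constant uniformly down to $r\to 0$, and on the large scales $r\in(\varepsilon,R_e]$ simply bound $r^{-(\gamma+\delta)}\int_{\Omega(x,r)}|\nabla u|^2\le \varepsilon^{-(\gamma+\delta)}\|\nabla u\|_{L_2(\Omega)}^2$, while making sure the $f_0$-contribution carries the factor $\varepsilon^{2-\delta}$ and not merely a fixed constant; this is where the hypothesis $\delta\in[0,2]$ is used (so $2-\delta\ge 0$). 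A secondary technical point is that the coordinate transformations in the boundary charts must not destroy the Morrey classes of the data, which is ensured by Lemma~\ref{lcom218} together with the $C^\kappa$-control of $D\Phi$; and one must check that $h$, the frozen-coefficient harmonic replacement on a half-ball, genuinely satisfies the full $O((\rho/r)^d)$ decay up to the flat boundary, which is the half-space reflection argument for constant-coefficient elliptic systems. Everything else — the covering, the absorption lemma, Young's inequality — is routine, and the uniformity of $c$ and $R_e$ in $\Omega$ follows because all estimates depend on $\Omega$ only through $\kappa,\mu,M,K$.
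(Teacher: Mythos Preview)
Your proposal is correct and is precisely the standard coefficient--freezing/Campanato iteration that underlies this kind of Morrey estimate; the paper itself does not reprove it but simply cites \cite{EO6}, Propositions~3.9 and~3.10, which carry out essentially the argument you sketch (localise via the $C^{1+\kappa}$ charts, freeze the principal coefficients, use the constant--coefficient decay $\int_{B_\rho}|\nabla h|^2\le C(\rho/r)^d\int_{B_r}|\nabla h|^2$ together with the Caccioppoli inequality for the corrector $w$, and run the usual iteration lemma with $\gamma+\delta<d$). Your treatment of the $\varepsilon$--bookkeeping---splitting the Morrey supremum at $r=\varepsilon$, using the trivial bound $r^{-(\gamma+\delta)}\le\varepsilon^{-(\gamma+\delta)}$ for $r>\varepsilon$, and reading off the factor $\varepsilon^{2-\delta}$ from the Poincar\'e gain $r^2\|f_0\|_{M,\gamma}^2 \cdot r^{\gamma}=r^{2-\delta}\cdot r^{\gamma+\delta}\|f_0\|_{M,\gamma}^2$ at the top scale---is exactly right.

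One minor correction: your appeal to Lemma~\ref{lcom218} for the stability of Morrey norms under the boundary charts is misplaced; that lemma is the Poincar\'e--type embedding $|||u|||_{\cm,\gamma+2}\le c\|\nabla u\|_{M,\gamma}$, not a change--of--variables statement. What you actually need there is just the elementary fact that a bi--Lipschitz map sends $\Omega(x,r)$ into a set contained in a ball of comparable radius and has Jacobian bounded above and below, so $\|\cdot\|_{M,\gamma}$ is preserved up to constants depending only on~$K$.
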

\begin{proof}
This follows from \cite{EO6} Propositions~3.9 and 3.10.
\end{proof}

Many statements in this article are formulated as in the previous proposition 
in the sense that the constants in the estimates  depend only on some fixed parameters. 
The main reason to do so is that at some places we will use  approximation arguments.  
The approximation process of  the coefficients by smooth ones requires uniform estimates 
in order to take  the limit.

\begin{prop} \label{pcom212}
Let $\kappa \in (0,1)$ and $\mu,M,K > 0$.
Then there exists a $c > 0$ such that the following is valid.

Let $\Omega \subset \Ri^d$ be an open set of class $C^{1+\kappa}$ with parameter $K$.
For all $k,l \in \{ 1,\ldots,d \} $ let $c_{kl} \in C^\kappa(\Omega) \cap L_\infty(\Omega)$,
with $\|c_{kl}\|_{C^\kappa(\Omega)} \leq M$.
Suppose $\RRe \sum_{k,l=1}^d c_{kl}(x) \, \xi_k \, \overline{\xi_l} \geq \mu \, |\xi|^2$
for all $\xi \in \Ci^d$ and $x \in \Omega$.
Let $f_0,f_1,\ldots,f_d \in L_2(\Omega)$ and $u \in W^{1,2}_0(\Omega)$.
Suppose
\[
\sum_{k,l=1}^d \int_\Omega c_{kl} \, (\partial_l u) \,  \, \overline{\partial_k v}
= (f_0,v)_{L_2(\Omega)} + \sum_{k=1}^d (f_k, \partial_k v)_{L_2(\Omega)}
\]
for all $v \in W^{1,2}_0(\Omega)$.
Then 
\[
\sum_{k=1}^d |||\partial_k u|||_{\cm,d + 2 \kappa}
\leq c \, \Big( \|f_0\|_{M,d + 2 \kappa - 2}
   + \sum_{k=1}^d (|||f_k|||_{\cm,d + 2 \kappa} + \|f_k\|_{L_2(\Omega)})
   + \|\nabla u\|_{L_2(\Omega)}
           \Big)
.  \]
\end{prop}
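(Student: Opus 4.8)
The plan is to derive the Campanato estimate for $\nabla u$ from the Morrey-type estimate of Proposition~\ref{pcom211} by a localisation and freezing-of-coefficients argument, following the standard Campanato approach to Schauder estimates adapted to the $C^{1+\kappa}$-boundary situation. First I would fix a point $x_0 \in \overline\Omega$ and a radius $r \in (0,R_e]$ (with $R_e$ small, depending only on $K$ and $\kappa$), and on the ball $\Omega(x_0,r)$ compare $u$ with the solution $w$ of the \emph{constant-coefficient} problem obtained by freezing $c_{kl}$ at $x_0$ (or, near the boundary, at the corresponding boundary point), with the same boundary data on $\partial(\Omega(x_0,r))$. For the frozen problem one has the classical decay estimate $\int_{\Omega(x_0,\rho)} |\nabla w - \langle \nabla w\rangle|^2 \leq c\,(\rho/r)^{d+2}\int_{\Omega(x_0,r)}|\nabla w - \langle\nabla w\rangle|^2$ for $\rho \le r$, which is exactly where the $C^{1+\kappa}$-regularity of the boundary chart $\Phi$ enters: after flattening via $\Phi$ the constant-coefficient operator becomes a variable-coefficient one whose coefficients lie in $C^\kappa$, so one only gets the decay up to exponent $d+2\kappa$ rather than $d+2$, which is why the target seminorm is $|||\cdot|||_{\cm,d+2\kappa}$ and not something better.

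The difference $v = u - w$ solves a divergence-form equation whose right-hand side collects (i) the original data $f_0,\ldots,f_d$ and (ii) the commutator term $\sum_{k,l}(c_{kl}(\cdot) - c_{kl}(x_0))\,\partial_l u$, which by the $C^\kappa$-bound $\|c_{kl}\|_{C^\kappa(\Omega)} \le M$ is controlled in $L_2(\Omega(x_0,r))$ by $c\,r^\kappa \,\|\nabla u\|_{M,\gamma+\delta}$-type quantities. Plugging the Morrey bounds of Proposition~\ref{pcom211} (applied with suitable $\gamma,\delta$ so that $\gamma+\delta$ lies just below $d$, e.g.\ $\gamma+\delta = d-\varepsilon'$ and also $\gamma = d+2\kappa-2$ for the $f_0$-contribution) into the energy estimate for $v$ gives $\int_{\Omega(x_0,\rho)}|\nabla u - \langle\nabla u\rangle|^2$ bounded by a combination of $(\rho/r)^{d+2\kappa}$ times the same quantity at scale $r$, plus $r^{d+2\kappa}$ times $\big(\|f_0\|_{M,d+2\kappa-2} + \sum_k(|||f_k|||_{\cm,d+2\kappa} + \|f_k\|_{L_2}) + \|\nabla u\|_{L_2(\Omega)}\big)^2$. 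A standard iteration lemma (Campanato/Giaquinta-type: if $\phi(\rho) \le A[(\rho/r)^\alpha + \eta]\phi(r) + B r^\beta$ with $\beta < \alpha$ and $\eta$ small, then $\phi(\rho) \le C(\rho^\beta)(\ldots)$) then absorbs the error and yields $r^{-(d+2\kappa)}\int_{\Omega(x_0,r)}|\nabla u - \langle\nabla u\rangle_{\Omega(x_0,r)}|^2 \le c(\cdots)^2$ uniformly in $x_0$ and $r \le R_e$, i.e.\ the bound on $|||\partial_k u|||_{\cm,d+2\kappa,R_e}$. Finally Lemma~\ref{lcom217}\ref{lcom217-2} upgrades from scale $R_e$ to scale $1$, using the crude bound $\|\nabla u\|_{M,0} \le c\,\|\nabla u\|_{L_2(\Omega)}$, which is already present on the right-hand side.

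The main obstacle I expect is the boundary case: near $\partial\Omega$ one must flatten the domain with the chart $\Phi$, which turns the frozen constant-coefficient Dirichlet problem on the half-ball into a problem with merely $C^\kappa$ principal coefficients, so the clean $(\rho/r)^{d+2}$ decay is unavailable and one must instead invoke the half-space Campanato decay estimate for $C^\kappa$-coefficient operators with homogeneous Dirichlet data on the flat part — this is the analogue of \cite{EO6} Lemma~3.12 / Lemma~\ref{lcom218} above — and carefully track that the coordinate transformations do not spoil the Morrey norms (this is where the uniform-parameter formulation, with all constants depending only on $\kappa,\mu,M,K$, is essential, since the reference \cite{EO6} supplies exactly such uniform statements). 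Keeping the dependence on $\varepsilon$ in Proposition~\ref{pcom211} under control — choosing $\varepsilon \sim r$ at each scale — and verifying that the resulting geometric series still closes is the second delicate bookkeeping point, but it is routine once the decay estimate is in hand. Much of this parallels \cite{EO6}, so I would organise the proof as a reduction: cite the interior and boundary decay estimates from there, and concentrate on the (non-self-adjoint, complex-coefficient) adaptations, which affect only the energy/Gårding inequalities and not the structure of the iteration.
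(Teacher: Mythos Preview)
Your approach is sound in principle --- the freezing--of--coefficients plus Campanato iteration scheme you sketch is precisely what underlies the results of \cite{EO6} that the paper invokes --- but the paper's own proof is organised quite differently and is much shorter. Rather than redoing the localisation/iteration argument, the paper simply \emph{quotes} two inequalities from \cite{EO6} (Propositions~3.7 and~3.8): one Campanato estimate at level $d+\kappa$ with $\|\nabla u\|_{M,d-\kappa}$ on the right, and one at level $d+2\kappa$ with $\|\nabla u\|_{M,d}$ on the right. The whole point of the paper's proof is then a short \emph{bootstrap} to handle that $\|\nabla u\|_{M,d}$ term, which Proposition~\ref{pcom211} cannot reach directly (it requires $\gamma+\delta<d$). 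The bootstrap runs: Proposition~\ref{pcom211} controls $\|\nabla u\|_{M,d-\kappa}$; the $d+\kappa$ Campanato estimate then puts $\partial_k u$ into $\cm_{d+\kappa}(\Omega)\subset C^{\kappa/2}(\Omega)$; the Campanato--H\"older embedding (averaging at radius $1$, via \cite{ERe2} Lemma~3.1(b)) extracts a pointwise bound $\|\partial_k u\|_{L_\infty}$ in terms of the data and $\|\nabla u\|_{L_2}$; finally $\|\nabla u\|_{M,d}\le\sqrt{\omega_d}\,\|\nabla u\|_{L_\infty}$ is fed into the $d+2\kappa$ inequality to conclude.

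What each approach buys: yours is self-contained and explains \emph{why} the exponent $d+2\kappa$ (and not $d+2$) appears, but it reproves material already available in \cite{EO6}. The paper's argument is a few lines of bootstrap on top of citations, and isolates the one new wrinkle --- crossing the threshold $\gamma+\delta=d$ by passing through an $L_\infty$ bound on $\nabla u$ --- which your sketch does not make explicit.
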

\begin{proof}
It follows from \cite{EO6} Proposition~3.7 (in particular the first 
displayed formula on page 4240) and \cite{EO6} Proposition~3.8 that 
there are $c_1,c_2 > 0$, depending only on $\kappa$, $\mu$, $M$, and $K$,
such that 
\begin{equation}
\sum_{k=1}^d |||\partial_k u|||_{\cm,d + \kappa}
\leq c_1 \, \Big( \|f_0\|_{M,d + \kappa - 2}
   + \sum_{k=1}^d |||f_k|||_{\cm,d + \kappa}
   + \|\nabla u\|_{M, d-\kappa}
           \Big)
\label{epcom212;2}
\end{equation}
and 
\begin{equation}
\sum_{k=1}^d |||\partial_k u|||_{\cm,d + 2 \kappa}
\leq c_2 \, \Big( \|f_0\|_{M,d + 2 \kappa - 2}
   + \sum_{k=1}^d |||f_k|||_{\cm,d + 2 \kappa}
   + \|\nabla u\|_{M, d}
           \Big)
.
\label{epcom212;1}
\end{equation}
We may assume that $\|f_0\|_{M,d + 2 \kappa - 2} < \infty$ and
$|||f_k|||_{\cm,d + 2 \kappa} < \infty$ for all $k \in \{ 1,\ldots,d \} $.
By Proposition~\ref{pcom211} and \cite{ERe2} Lemma~3.1(a) it follows that 
$\|\nabla u\|_{M, d-\kappa} < \infty$ and then (\ref{epcom212;2}) implies that 
$\partial_k u \in \cm_{d + \kappa}(\Omega) \subset C^{\kappa/2}(\Omega)$ 
for all $k \in \{ 1,\ldots,d \} $ by \cite{ERe2} Lemma~3.1(c).
By \cite{ERe2} Lemma~3.1(b) there exists a suitable $c_3 > 0$
such that 
\[
|\langle v \rangle_{\Omega(x,\rho)} - v(x)|
\leq c_3 \, \rho^{\kappa / 2} \, |||v|||_{\cm, d + \kappa}
\]
for all $x \in \Omega$, $\rho \in (0,1]$ and $v \in C(\Omega)$.
Choosing $\rho = 1$ gives
$|v(x)| \leq \sqrt{\omega_d} \, \|v\|_{L_2(\Omega)} + c_3 \, |||v|||_{\cm, d + \kappa}$.
Hence if $k \in \{ 1,\ldots,d \} $ and $x \in \Omega$, then 
\begin{eqnarray*}
|(\partial_k u)(x)|
& \leq & \sqrt{\omega_d} \, \|\nabla u\|_{L_2(\Omega)} 
   + c_3 \, c_1 \, \Big( \|f_0\|_{M,d + \kappa - 2}
   + \sum_{m=1}^d |||f_m|||_{\cm,d + \kappa}
   + \|\nabla u\|_{M, d-\kappa}
           \Big)  \\
& \leq & c_4  \, \Big( \|f_0\|_{M,d + \kappa - 2}
   + \sum_{m=1}^d (|||f_m|||_{\cm,d + \kappa} + \|f_m\|_{L_2(\Omega)})
   + \|\nabla u\|_{L_2(\Omega)}
           \Big)
\end{eqnarray*}
for a suitable $c_4 > 0$, where we used Proposition~\ref{pcom211}
to estimate $\|\nabla u\|_{M, d-\kappa}$
and \cite{ERe2} Lemma~3.1(a) to estimate $\|f_m\|_{M, d-\kappa}$.
Now $\|\partial_k u\|_{M,d} \leq \sqrt{\omega_d} \, \|\partial_k u\|_{L_\infty(\Omega)}$.
Using (\ref{epcom212;1}) completes the proof.
\end{proof}

We next consider operators with lower-order terms.

\begin{prop} \label{pcom213}
Let $\kappa \in (0,1)$, $\mu,M,K > 0$, $\gamma \in [0,d)$ and $\delta \in [0,2]$ 
with $\gamma + \delta < d$.
Then there exists a $c > 0$ such that the following is valid.

Let $\Omega \subset \Ri^d$ be an open set of class $C^{1+\kappa}$ with parameter $K$.
For all $k,l \in \{ 1,\ldots,d \} $ let $c_{kl}\in C^\kappa(\Omega) \cap L_\infty(\Omega)$
and let $b_k, c_k, c_0 \in L_\infty(\Omega)$, with $\|c_{kl}\|_{C^\kappa(\Omega)} \leq M$,
$\|b_k\|_{L_\infty(\Omega)} \leq M$, $\|c_k\|_{L_\infty(\Omega)} \leq M$
and $\|c_0\|_{L_\infty(\Omega)} \leq M$.
Suppose 
$\RRe \sum_{k,l=1}^d c_{kl}(x) \, \xi_k \, \overline{\xi_l} \geq \mu \, |\xi|^2$
for all $\xi \in \Ci^d$ and $x \in \Omega$.
Define $\gota$ as in {\rm (\ref{eScom1;1})}.

Let $f_0,f_1,\ldots,f_d \in L_2(\Omega)$ and $u \in W^{1,2}_0(\Omega)$.
Suppose
\[
\gota(u,v) 
= (f_0,v)_{L_2(\Omega)} + \sum_{k=1}^d (f_k, \partial_k v)_{L_2(\Omega)}
\]
for all $v \in W^{1,2}_0(\Omega)$.
Then 
\begin{equation}
\|\nabla u\|_{M,\gamma + \delta}
\leq c \, \Big( \varepsilon^{2-\delta} \, \|f_0\|_{M,\gamma}
   + \sum_{k=1}^d \|f_k\|_{M,\gamma + \delta}
   + \varepsilon^{-(\gamma + \delta)} \, \|u\|_{W^{1,2}(\Omega)}
           \Big)
\label{epcom213;1}
\end{equation}
for all $\varepsilon \in (0,1]$.

If, in addition, $\Omega$ is bounded and
\[
\frac{\mu}{2} \, \|u\|_{W^{1,2}(\Omega)}^2
\leq \RRe \gota(u,u)
,  \]
then there exists a $\tilde c > 0$, depending only on 
$\kappa$, $\mu$, $M$, $K$, $\gamma$, $\delta$ and $\diam \Omega$, such that 
\[
\|\nabla u\|_{M,\gamma + \delta}
\leq \tilde c \, \Big( \|f_0\|_{M,\gamma} + \sum_{k=1}^d \|f_k\|_{M,\gamma + \delta} \Big)
.  \]
\end{prop}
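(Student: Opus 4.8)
The plan is to treat the lower-order terms in $\gota$ as additional right-hand side data and then invoke Proposition~\ref{pcom211}, which handles the pure principal-part case. First I would rewrite the variational identity $\gota(u,v) = (f_0,v) + \sum_k (f_k, \partial_k v)$ in the form
\[
\sum_{k,l=1}^d \int_\Omega c_{kl}\,(\partial_l u)\,\overline{\partial_k v}
= (\tilde f_0, v)_{L_2(\Omega)} + \sum_{k=1}^d (\tilde f_k, \partial_k v)_{L_2(\Omega)}
\]
for all $v \in W^{1,2}_0(\Omega)$, where the new data absorb the first- and zeroth-order contributions: $\tilde f_0 = f_0 - c_0\,u - \sum_k c_k\,\partial_k u$ and $\tilde f_k = f_k - b_k\,u$. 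Since $|b_k|, |c_k|, |c_0| \le M$ pointwise, one has pointwise domination $|\tilde f_0| \le |f_0| + M|u| + M|\nabla u|$ and $|\tilde f_k| \le |f_k| + M|u|$, hence the Morrey seminorms satisfy
\[
\|\tilde f_0\|_{M,\gamma} \le \|f_0\|_{M,\gamma} + M\,\|u\|_{M,\gamma} + M\,\|\nabla u\|_{M,\gamma},
\qquad
\|\tilde f_k\|_{M,\gamma+\delta} \le \|f_k\|_{M,\gamma+\delta} + M\,\|u\|_{M,\gamma+\delta}.
\]

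Next I would apply Proposition~\ref{pcom211} (with the same $\gamma$, $\delta$) to get, for all $\varepsilon \in (0,1]$,
\[
\|\nabla u\|_{M,\gamma+\delta}
\le c_1\Big(\varepsilon^{2-\delta}\,\|\tilde f_0\|_{M,\gamma}
 + \sum_{k=1}^d \|\tilde f_k\|_{M,\gamma+\delta}
 + \varepsilon^{-(\gamma+\delta)}\,\|\nabla u\|_{L_2(\Omega)}\Big).
\]
Substituting the bounds on the $\tilde f$'s produces on the right-hand side terms $c_1 M\,\varepsilon^{2-\delta}\,\|\nabla u\|_{M,\gamma}$ and $c_1 M\,\varepsilon^{2-\delta}\,\|u\|_{M,\gamma} + c_1 M \sum_k \|u\|_{M,\gamma+\delta}$. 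The first of these must be absorbed into the left-hand side; here I would use that $\gamma \le \gamma+\delta$ together with Morrey monotonicity (\cite{ERe2} Lemma~3.1(a), which gives $\|\nabla u\|_{M,\gamma} \le C\,\|\nabla u\|_{M,\gamma+\delta}$ for $\gamma \le \gamma+\delta$), choosing $\varepsilon$ small enough (depending only on $\kappa,\mu,M,K,\gamma,\delta$, and harmlessly shrinking the admissible range of $\varepsilon$) so that $c_1 M C\,\varepsilon^{2-\delta} \le \tfrac12$. The leftover $\|u\|_{M,\cdot}$ terms are dominated by $\sqrt{\omega_d}\,\|u\|_{L_\infty(\Omega)}$ when $\gamma = d$... but more robustly, since $\gamma+\delta < d$ we may simply bound $\|u\|_{M,\gamma} \le \|u\|_{M,\gamma+\delta} \le \|u\|_{L_2(\Omega)}$ (as $r^{-(\gamma+\delta)}\int_{\Omega(x,r)}|u|^2 \le \int_\Omega |u|^2$ once... no: more carefully, $\|u\|_{M,\gamma+\delta} \le c\,\|u\|_{L_2(\Omega)}$ holds because $\gamma+\delta \ge 0$ and $R_e \le 1$, so $r^{-(\gamma+\delta)} \le$ a constant times a Morrey-to-$L_2$ comparison via \cite{ERe2} Lemma~3.1). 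Collecting everything, after absorption and using $\|\nabla u\|_{L_2(\Omega)} \le \|u\|_{W^{1,2}(\Omega)}$, one arrives at (\ref{epcom213;1}) with a possibly enlarged constant, valid for all $\varepsilon \in (0,1]$ after undoing the shrinking (the estimate for small $\varepsilon$ implies the one for all $\varepsilon \in (0,1]$ since the left side is $\varepsilon$-independent and the right side increases as $\varepsilon$ shrinks in the relevant term).

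For the second, stronger statement, assume in addition that $\Omega$ is bounded and $\tfrac{\mu}{2}\|u\|_{W^{1,2}(\Omega)}^2 \le \RRe\gota(u,u)$. Taking $v = u$ in the hypothesis gives $\RRe\gota(u,u) = \RRe\big((f_0,u) + \sum_k (f_k,\partial_k u)\big) \le \|f_0\|_{L_2}\|u\|_{L_2} + \sum_k \|f_k\|_{L_2}\|\partial_k u\|_{L_2} \le \big(\|f_0\|_{L_2} + \sum_k \|f_k\|_{L_2}\big)\|u\|_{W^{1,2}(\Omega)}$, whence $\|u\|_{W^{1,2}(\Omega)} \le \tfrac{2}{\mu}\big(\|f_0\|_{L_2(\Omega)} + \sum_k \|f_k\|_{L_2(\Omega)}\big)$. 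Since $\Omega$ is bounded with $\diam\Omega =: D$, the $L_2$ norms are controlled by Morrey norms up to a factor depending on $D$: covering $\Omega$ by finitely many balls of radius $1$ (the number depending only on $D$) gives $\|f_0\|_{L_2(\Omega)} \le c(D)\,\|f_0\|_{M,\gamma}$ and $\|f_k\|_{L_2(\Omega)} \le c(D)\,\|f_k\|_{M,\gamma+\delta}$ (using $\gamma,\gamma+\delta \ge 0$ so $r^{-\gamma} \ge$ bounded below on $r \in (0,1]$ is the wrong direction — rather one uses that on a ball of radius $1$, $\int |f|^2 = 1^{\gamma}\cdot 1^{-\gamma}\int |f|^2 \le \|f\|_{M,\gamma,\cdot}^2$). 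Plugging this into (\ref{epcom213;1}) with the fixed choice $\varepsilon = 1$ yields the claimed bound with $\tilde c$ depending only on $\kappa,\mu,M,K,\gamma,\delta$, and $\diam\Omega$.

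The main obstacle I anticipate is the bookkeeping in the absorption step: one must verify that the coefficient $c_1 M C\,\varepsilon^{2-\delta}$ multiplying $\|\nabla u\|_{M,\gamma+\delta}$ (after using Morrey monotonicity to upgrade the index $\gamma$ to $\gamma+\delta$) can indeed be made $\le \tfrac12$ by a choice of $\varepsilon$ that depends only on the fixed parameters, and that doing so does not destroy the ``for all $\varepsilon \in (0,1]$'' conclusion — this is fine because the final inequality's right-hand side is monotone in $\varepsilon$ in the right way, but it needs to be stated carefully. Everything else is a routine combination of Proposition~\ref{pcom211}, the pointwise domination of the lower-order terms, the Morrey seminorm comparisons in \cite{ERe2} Lemma~3.1, and the coercivity testing $v = u$.
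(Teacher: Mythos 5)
The opening move — absorbing the lower-order terms into new data $\tilde f_0 = f_0 - c_0 u - \sum_l c_l\partial_l u$ and $\tilde f_k = f_k - b_k u$ and invoking Proposition~\ref{pcom211} — is exactly what the paper does. The divergence, and the genuine gap, is in how you control the resulting $\|u\|_{M,\cdot}$ and $\|\nabla u\|_{M,\gamma}$ contributions.

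\textbf{The claim $\|u\|_{M,\gamma+\delta} \le c\,\|u\|_{L_2(\Omega)}$ is false.} For any exponent $\beta > 0$ one can have $u \in L_2$ with $\|u\|_{M,\beta} = \infty$: take $u(x) = |x-x_0|^{-\alpha}$ near an interior point $x_0$ with $(d-\beta)/2 < \alpha < d/2$, so that $u \in L_2$ but $r^{-\beta}\int_{B(x_0,r)}|u|^2 \sim r^{d-2\alpha-\beta} \to \infty$ as $r \to 0$. Morrey norms with positive exponent are \emph{stronger} than $L_2$, not weaker; the inequality you wrote down inverts the comparison. The correct route is via Lemma~\ref{lcom218}: for $W^{1,2}_0$ functions, $\|u\|_{M,\gamma+\delta} \lesssim |||u|||_{\cm,\gamma+\delta} + \|u\|_{L_2} \lesssim \|\nabla u\|_{M,(\gamma+\delta-2)^+} + \|u\|_{L_2}$. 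When $\gamma+\delta \le 2$ this does give $\|u\|_{M,\gamma+\delta}\lesssim\|u\|_{W^{1,2}}$ as you want; but when $\gamma+\delta>2$ you are left with a Morrey norm of $\nabla u$ with exponent $\gamma+\delta-2>0$, which is not a priori controlled by $\|u\|_{W^{1,2}}$. This gap cannot be patched by your absorption trick because the $\|u\|_{M,\gamma+\delta}$ term, entering through $\tilde f_k$, carries \emph{no} $\varepsilon^{2-\delta}$ factor at all.

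\textbf{The absorption of $\|\nabla u\|_{M,\gamma}$ also breaks at $\delta = 2$.} That term carries the prefactor $\varepsilon^{2-\delta}$, which equals $1$ when $\delta = 2$ no matter how small $\varepsilon$ is; so you cannot make $c_1 d M\,\varepsilon^{2-\delta}\le\tfrac12$ there. The case $\delta = 2$ (requiring $d\ge 3$) is included in the hypotheses and cannot be discarded.

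The paper circumvents both obstacles by a bootstrap on the Morrey exponent rather than a single application of Proposition~\ref{pcom211}. It formulates the hypothesis $P(\gamma)$ (your target estimate at exponent $\gamma$), proves $P(0)$ directly (Lemma~\ref{lcom219}, where $\|\nabla u\|_{M,0}\le\|\nabla u\|_{L_2}$ and $\|u\|_{M,\delta}\lesssim\|u\|_{W^{1,2}}$ via $|||u|||_{\cm,\delta}\le|||u|||_{\cm,2}\lesssim\|\nabla u\|_{M,0}$, so no absorption is ever needed), and then proves $P(\gamma)\Rightarrow P(\gamma+\tilde\delta)$ for $\tilde\delta\in[0,2]$ (Lemma~\ref{lcom220}), where the problematic terms $\|\nabla u\|_{M,\gamma+\tilde\delta}$ and $\|u\|_{M,\gamma+\tilde\delta+\delta}$ are controlled by the induction hypothesis itself (with $\varepsilon=1$), not by smallness of $\varepsilon$. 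Iterating reaches any $\gamma\in[0,d)$. Your coercivity argument for the second assertion (testing $v=u$ and covering $\Omega$ by finitely many unit balls) is correct and matches the paper, but it rests on the first estimate, which your proposal does not actually establish.
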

\begin{proof}
Let $f_0,f_1,\ldots,f_d \in L_2(\Omega)$ and $u \in W^{1,2}_0(\Omega)$.
Suppose
\[
\gota(u,v) 
= (f_0,v)_{L_2(\Omega)} + \sum_{k=1}^d (f_k, \partial_k v)_{L_2(\Omega)}
\]
for all $v \in W^{1,2}_0(\Omega)$.
Then
\[
\sum_{k,l=1}^d \int_\Omega c_{kl} \, (\partial_l u) \,  \, \overline{\partial_k v}
= (\tilde f_0, v)_{L_2(\Omega)} + \sum_{k=1}^d (\tilde f_k, \partial_k v)_{L_2(\Omega)}
\]
for all $v \in W^{1,2}_0(\Omega)$, where
\[
\tilde f_0 = f_0 - \sum_{l=1}^d c_l \, \partial_l u - c_0 \, u
\quad \mbox{and} \quad
\tilde f_k = f_k - b_k \, u
\]
for all $k \in \{ 1,\ldots,d \} $.

For all $\gamma \in [0,d)$ let $P(\gamma)$ be the hypothesis
\begin{quote}
for all $\delta \in [0,2]$ with $\gamma + \delta < d$ there exists a $c > 0$, 
depending only on $\kappa$, $\mu$, $M$, $K$, $\gamma$ and $\delta$, 
such that for all $\varepsilon \in (0,1]$ one has the estimate
\[
\|\nabla u\|_{M,\gamma + \delta}
\leq c \, \Big( \varepsilon^{2-\delta} \, \|f_0\|_{M,\gamma}
   + \sum_{k=1}^d \|f_k\|_{M,\gamma + \delta}
   + \varepsilon^{-(\gamma + \delta)} \, \|u\|_{W^{1,2}(\Omega)}
           \Big).
\]
\end{quote}

We start with the base case.

\begin{lemma} \label{lcom219}
The hypothesis $P(0)$ is valid.
\end{lemma}
\begin{proof}
Let $R_e \in (0,1]$ and $c > 0$ be as in Lemma~\ref{lcom218}
with the choice $\gamma = 0$.
By \cite{ERe2} Lemma~3.1(a) there is a suitable $c_1 > 0$ such that 
\[
\|u\|_{M,\delta,R_e}
\leq c_1 \, (|||u|||_{\cm,\delta,R_e} + \|u\|_{L_2(\Omega)})
.  \]
Then 
\begin{eqnarray*}
\|u\|_{M,\delta,R_e}
& \leq & c \, c_1 \, \|\nabla u\|_{M,0,R_e} + c_1 \, \|u\|_{L_2(\Omega)}  \\
& \leq & (1+c) \, c_1 \, \|u\|_{W^{1,2}(\Omega)}
.
\end{eqnarray*}
Hence by Lemma~\ref{lcom217}\ref{lcom217-1} there is a suitable $c_2 > 0$
such that 
\[
\|u\|_{M,\delta}
\leq c_2 \, \|u\|_{W^{1,2}(\Omega)}
.  \]
By Proposition~\ref{pcom211} there exists a suitable $c_3 > 0$ such that 
\[
\|\nabla u\|_{M,\delta}
\leq c_3 \, \Big( \varepsilon^{2-\delta} \, \|\tilde f_0\|_{M,0}
   + \sum_{k=1}^d \|\tilde f_k\|_{M,\delta}
   + \varepsilon^{-\delta} \, \|\nabla u\|_{L_2(\Omega)}
           \Big)
\]
for all $\varepsilon \in (0,1]$.
Now 
\begin{eqnarray*}
\|\tilde f_0\|_{M,0}
& \leq & \|f_0\|_{M,0} + d \, M \, \|\nabla u\|_{M,0} + M \, \|u\|_{M,0}  \\
& \leq & \|f_0\|_{M,0} + (d \, M + M \, c_2) \,  \|u\|_{W^{1,2}(\Omega)}
\end{eqnarray*}
and if $k \in \{ 1,\ldots,d \} $, then 
\[
\|\tilde f_k\|_{M,\delta}
\leq \|f_k\|_{M,\delta} + M \, \|u\|_{M,\delta}
\leq \|f_k\|_{M,\delta} + c_2 \, M \, \|u\|_{W^{1,2}(\Omega)}
.  \]
Then the lemma follows.
\end{proof}

The induction step is in the next lemma.

\begin{lemma} \label{lcom220}
Let $\gamma \in [0,d)$ and suppose that $P(\gamma)$ is valid.
Let $\tilde \delta \in [0,2]$ and suppose that $\gamma + \tilde \delta < d$.
Then $P(\gamma + \tilde \delta)$ is valid.
\end{lemma}
\begin{proof}
Let $\delta \in [0,2]$ and suppose that $\gamma + \tilde \delta + \delta < d$.
By hypothesis $P(\gamma)$ there is a suitable $c_1 > 0$ such that 
\[
\|\nabla u\|_{M,\gamma + \delta}
\leq c_1 \, \Big( \varepsilon^{2-\delta} \, \|f_0\|_{M,\gamma}
   + \sum_{k=1}^d \|f_k\|_{M,\gamma + \delta}
   + \varepsilon^{-(\gamma + \delta)} \, \|u\|_{W^{1,2}(\Omega)}
           \Big)
\]
for all $\varepsilon \in (0,1]$.
By Lemma~\ref{lcom218} there are $c_2 > 0$ and $R_e \in (0,1]$ such that 
\[
|||v|||_{\cm,\gamma + \delta + 2,R_e}
\leq c_2 \, \|\nabla v\|_{M,\gamma + \delta, R_e}
\]
for all $v \in W^{1,2}(\Omega)$.
By \cite{ERe2} Lemma~3.1(a) there is a suitable $c_3 > 0$ such that 
\[
\|u\|_{M,\gamma + \delta + \tilde \delta,R_e}
\leq c_3 \, (|||u|||_{\cm,\gamma + \delta + \tilde \delta,R_e} + \|u\|_{L_2(\Omega)})
.  \]
Then 
\begin{eqnarray*}
\|u\|_{M,\gamma + \delta + \tilde \delta, R_e}
& \leq & c_3 \, (|||u|||_{\cm,\gamma + \delta + 2,R_e} + \|u\|_{L_2(\Omega)})  \\
& \leq & c_3 \, c_2 \, \|\nabla u\|_{M,\gamma + \delta, R_e}
   + c_3 \, \|u\|_{L_2(\Omega)}  \\
& \leq & c_3 \, (1 + c_2 \, c_1) \, \Big( \varepsilon^{2-\delta} \, \|f_0\|_{M,\gamma}
   + \sum_{k=1}^d \|f_k\|_{M,\gamma + \delta}
   + \varepsilon^{-(\gamma + \delta)} \, \|u\|_{W^{1,2}(\Omega)}
                              \Big)  
\end{eqnarray*}
for all $\varepsilon \in (0,1]$.
Hence by Lemma~\ref{lcom217}\ref{lcom217-1} there is a suitable $c_4 > 0$ such that 
\begin{equation}
\|u\|_{M,\gamma + \delta + \tilde \delta}
\leq c_4 \, \Big( \varepsilon^{2-\delta} \, \|f_0\|_{M,\gamma}
   + \sum_{k=1}^d \|f_k\|_{M,\gamma + \delta}
   + \varepsilon^{-(\gamma + \delta)} \, \|u\|_{W^{1,2}(\Omega)}
                              \Big)
\label{elcom220;5}
\end{equation}
for all $\varepsilon \in (0,1]$.
By Proposition~\ref{pcom211} there exists a suitable $c_5 > 0$ such that 
\begin{equation}
\|\nabla u\|_{M,\gamma + \tilde \delta + \delta}
\leq c_5 \, \Big( \varepsilon^{2-\delta} \, \|\tilde f_0\|_{M,\gamma + \tilde \delta}
   + \sum_{k=1}^d \|\tilde f_k\|_{M,\gamma + \tilde \delta + \delta}
   + \varepsilon^{-(\gamma + \tilde \delta + \delta)} \, \|\nabla u\|_{L_2(\Omega)}
           \Big)
\label{elcom220;1}
\end{equation}
for all $\varepsilon \in (0,1]$.
Moreover, the hypothesis $P(\gamma)$ applied with $\tilde \delta$ instead of $\delta$
and with $\varepsilon = 1$ gives that there is a suitable $c_6 > 0$ such that 
\[
\|\nabla u\|_{M, \gamma + \tilde \delta}
\leq c_6 \, \Big( \|f_0\|_{M,\gamma} + \sum_{k=1}^d \|f_k\|_{M,\gamma + \tilde \delta} 
     + \|u\|_{W^{1,2}(\Omega)}
            \Big)
.  \]
Now 
\begin{eqnarray}
\|\tilde f_0\|_{M,\gamma + \tilde \delta}
& \leq & \|f_0\|_{M,\gamma + \tilde \delta} 
     + d \, M \, \|\nabla u\|_{M,\gamma + \tilde \delta}
     + M \, \|u\|_{M,\gamma + \tilde \delta} \nonumber  \\
& \leq & \|f_0\|_{M,\gamma + \tilde \delta} 
   + d \, M \, c_6 \, \Big( \|f_0\|_{M,\gamma} + \sum_{k=1}^d \|f_k\|_{M,\gamma + \tilde \delta} 
                           + \|u\|_{W^{1,2}(\Omega)}
            \Big)  \nonumber  \\*
& & {}
   + c_4 \, M \, \Big( \varepsilon^{2-\delta} \, \|f_0\|_{M,\gamma}
   + \sum_{k=1}^d \|f_k\|_{M,\gamma + \delta}
   + \varepsilon^{-(\gamma + \delta)} \, \|u\|_{W^{1,2}(\Omega)}
                              \Big)  \nonumber   \\
& \leq & c_7 \, \Big( \|f_0\|_{M,\gamma + \tilde \delta} 
        + \sum_{k=1}^d \|f_k\|_{M,\gamma + \tilde \delta + \delta}
        + \|u\|_{W^{1,2}(\Omega)} \Big) ,
\label{elcom220;2}
\end{eqnarray}
where $c_7 = 1 + c_6 \, d \, M + c_4 \, M$.
If $k \in \{ 1,\ldots,d \} $, then 
\begin{eqnarray}
\|\tilde f_k\|_{M,\gamma + \tilde \delta + \delta}
& \leq & \|f_k\|_{M,\gamma + \tilde \delta + \delta}
   + M \, \|u\|_{M,\gamma + \tilde \delta + \delta}  \nonumber  \\
& \leq & \|f_k\|_{M,\gamma + \tilde \delta + \delta}  \nonumber \\*
& & \hspace*{10mm} {}
   + c_4 \, M \, \Big( \varepsilon^{2-\delta} \, \|f_0\|_{M,\gamma}
        + \sum_{m=1}^d \|f_m\|_{M,\gamma + \delta}
        + \varepsilon^{-(\gamma + \delta)} \, \|u\|_{W^{1,2}(\Omega)}
                              \Big) 
. 
\label{elcom220;3}
\end{eqnarray}
Inserting (\ref{elcom220;2}) and (\ref{elcom220;3}) into (\ref{elcom220;1})
gives the lemma.
\end{proof}

Now we complete the proof of Proposition~\ref{pcom213}.

\medskip

The estimate (\ref{epcom213;1}) follows from Lemma~\ref{lcom219}
and iteration of Lemma~\ref{lcom220}.

Now suppose in addition that $\Omega$ is bounded and 
\[
\frac{\mu}{2} \, \|u\|_{W^{1,2}(\Omega)}^2
\leq \RRe \gota(u,u)
.  \]
Then there exists an $N \in \Ni$,
depending only on $\diam \Omega$, such that $\overline \Omega$ can be covered by 
at most $N$ balls of radius 1 with centre in~$\Omega$.
Therefore 
\begin{eqnarray*}
\|u\|_{W^{1,2}(\Omega)}
& \leq & \frac{2}{\mu} \, ( \|f_0\|_{L_2(\Omega)} + \sum_{k=1}^d \|f_k\|_{L_2(\Omega)} )  \\
& \leq & \frac{2N}{\mu} \, ( \|f_0\|_{M,0} + \sum_{k=1}^d \|f_k\|_{M,0} )  \\
& \leq & \frac{2N}{\mu} \, ( \|f_0\|_{M,\gamma} + \sum_{k=1}^d \|f_k\|_{M,\gamma + \delta} )
.  
\end{eqnarray*}
The proof is complete.
\end{proof}

For a similar estimate in the Campanato region we need an explicit bound 
on products of two H\"older continuous functions.

\begin{lemma} \label{lcom221}
Let $\gamma \in (d,d+2]$, $\Omega \subset \Ri^d$ open,
$u \in C^{(\gamma - d)/2}(\Omega) \cap L_\infty(\Omega)$ and $v \in L_2(\Omega)$.
Then 
\begin{eqnarray*}
|||u \, v|||_{\cm,\gamma}^2
& \leq & 2 \|u\|_{L_\infty(\Omega)}^2 \, |||v|||_{\cm,\gamma}^2
   + 2^{1 + \gamma - d} \, \omega_d \, \|v\|_{L_\infty(\Omega)}^2 \, |||u|||_{C^{(\gamma - d)/2}(\Omega)}^2  \\*
& & \hspace*{10mm} {}
   + 2^\gamma \, \omega_d \, \|u\|_{L_\infty(\Omega)}^2 \, \|v\|_{L_\infty(\Omega)}^2
.  
\end{eqnarray*}
\end{lemma}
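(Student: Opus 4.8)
The plan is to bound, for each fixed $x \in \overline\Omega$ and each $r \in (0,1]$, the quantity $r^{-\gamma}\int_{\Omega(x,r)} |uv - \langle uv\rangle_{\Omega(x,r)}|^2$ by the right-hand side of the asserted inequality, and then to take the supremum over $x$ and $r$ in the definition of $|||uv|||_{\cm,\gamma}$ (note $uv \in L_2(\Omega)$ since $u \in L_\infty(\Omega)$ and $v \in L_2(\Omega)$). Write $\nu = (\gamma-d)/2 \in (0,1]$, and for fixed $x,r$ abbreviate $Q = \Omega(x,r)$ and $\bar v = \langle v\rangle_Q$. Since $x \in \overline\Omega$, the set $Q = \Omega \cap B(x,r)$ is a nonempty open subset of $B(x,r)$, so $0 < |Q| \le \omega_d\,r^d$ and all averages below are well defined. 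I would then distinguish the two cases $r > 1/2$ and $r \le 1/2$.

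For $r > 1/2$ a crude estimate suffices: since $\langle uv\rangle_Q$ minimises the $L_2(Q)$-distance to the constants, taking the constant $0$ gives $\int_Q |uv - \langle uv\rangle_Q|^2 \le \int_Q |uv|^2 \le \|u\|_{L_\infty(\Omega)}^2\,\|v\|_{L_\infty(\Omega)}^2\,|Q| \le \omega_d\,\|u\|_{L_\infty(\Omega)}^2\,\|v\|_{L_\infty(\Omega)}^2$, where the last step uses $r \le 1$. Since $r^{-\gamma} < 2^\gamma$, this gives $r^{-\gamma}\int_Q |uv - \langle uv\rangle_Q|^2 \le 2^\gamma\,\omega_d\,\|u\|_{L_\infty(\Omega)}^2\,\|v\|_{L_\infty(\Omega)}^2$, which is at most the right-hand side of the lemma (the first two terms there being nonnegative).

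For $r \le 1/2$ I would start from the identity $uv - \langle uv\rangle_Q = \bar v\,(u - \langle u\rangle_Q) + (g - \langle g\rangle_Q)$, where $g := (v - \bar v)\,u$; this isolates the oscillation of $v$ inside $g$ (with $u$ entering only as a bounded multiplier) and the oscillation of $u$ in the first summand. Using $|a+b|^2 \le 2|a|^2 + 2|b|^2$, integrating over $Q$, and using $\int_Q |g - \langle g\rangle_Q|^2 \le \int_Q |g|^2$ (again because $0$ is a competing constant), one gets $\int_Q |uv - \langle uv\rangle_Q|^2 \le 2|\bar v|^2 \int_Q |u - \langle u\rangle_Q|^2 + 2\int_Q |g|^2$. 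Here $\int_Q |g|^2 \le \|u\|_{L_\infty(\Omega)}^2 \int_Q |v - \bar v|^2 \le \|u\|_{L_\infty(\Omega)}^2\,r^\gamma\,|||v|||_{\cm,\gamma}^2$ directly from the definition of the Campanato seminorm; moreover $|\bar v| \le \|v\|_{L_\infty(\Omega)}$, and for every $y \in Q$ one has $|u(y) - \langle u\rangle_Q| \le |Q|^{-1}\int_Q |u(y) - u(z)|\,dz \le \sup_{z\in Q}|u(y)-u(z)| \le |||u|||_{C^\nu(\Omega)}(2r)^\nu$, the last inequality because $y,z \in Q \subseteq B(x,r)$ forces $|y-z| < 2r \le 1$, so the Hölder estimate applies. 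Hence $\int_Q |u - \langle u\rangle_Q|^2 \le |Q|\,(2r)^{2\nu}\,|||u|||_{C^\nu(\Omega)}^2 \le 2^{2\nu}\,\omega_d\,r^{d+2\nu}\,|||u|||_{C^\nu(\Omega)}^2 = 2^{\gamma-d}\,\omega_d\,r^\gamma\,|||u|||_{C^\nu(\Omega)}^2$, and collecting the two contributions and dividing by $r^\gamma$ yields $r^{-\gamma}\int_Q |uv - \langle uv\rangle_Q|^2 \le 2\,\|u\|_{L_\infty(\Omega)}^2\,|||v|||_{\cm,\gamma}^2 + 2^{1+\gamma-d}\,\omega_d\,\|v\|_{L_\infty(\Omega)}^2\,|||u|||_{C^\nu(\Omega)}^2$, again at most the right-hand side of the lemma.

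Taking the supremum over $r \in (0,1]$ and $x \in \overline\Omega$ then gives the claimed bound on $|||uv|||_{\cm,\gamma}^2$. I do not anticipate a genuine difficulty here; the one delicate point is the case split at $r = 1/2$, which is exactly what ensures that every distance occurring inside $\Omega(x,r)$ is at most $1$ — so that the $C^\nu(\Omega)$-seminorm, defined only for arguments at distance at most $1$, is applicable — and which is also what makes the three numerical constants come out precisely as stated, a single unified estimate producing worse ones.
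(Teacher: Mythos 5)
Your proof is correct and produces exactly the stated constants. It matches the paper's argument in all essentials --- the case split at $r=1/2$, the trivial $L_\infty\times L_\infty$ bound for $r\in(1/2,1]$, the use of $|a+b|^2\le 2|a|^2+2|b|^2$, and the observation that $2r\le 1$ is what makes the $C^{(\gamma-d)/2}(\Omega)$ seminorm applicable on $Q$ --- and differs only in how the oscillation of $uv$ is split. The paper averages $u(y)(v(y)-v(z))+v(z)(u(y)-u(z))$ over $z\in Q$ and estimates the two inner summands, while you use the exact identity $uv-\langle uv\rangle_Q=\bar v\,(u-\langle u\rangle_Q)+(g-\langle g\rangle_Q)$ with $g=(v-\bar v)u$ and then drop $\langle g\rangle_Q$ by the minimizing-constant property. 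The two decompositions are term-by-term equivalent once the $z$-average is carried out (the paper's first inner term becomes exactly $g$, and its second is bounded pointwise by $\|v\|_{L_\infty(\Omega)}\,|||u|||_{C^{(\gamma-d)/2}(\Omega)}\,(2r)^{(\gamma-d)/2}$, the same bound you derive for $\bar v\,(u-\langle u\rangle_Q)$), so this is the same proof reorganized slightly rather than a genuinely different route.
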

\begin{proof}
Let $x \in \Omega$ and $r \in (0,\frac{1}{2}]$.
Then 
\begin{eqnarray*}
\int_{\Omega(x,r)} |u \, v - \langle u \, v \rangle_{\Omega(x,r)}|^2
& \leq & 2 \int_{\Omega(x,r)} \Big| \frac{1}{|\Omega(x,r)|} \, 
       \int_{\Omega(x,r)} u(y) \, (v(y) - v(z)) \, dz \Big|^2 \, dy  \\*
& & \hspace*{10mm} {}
   + 2 \int_{\Omega(x,r)} \Big| \frac{1}{|\Omega(x,r)|} \, 
       \int_{\Omega(x,r)} v(z) \, (u(y) - u(z)) \, dz \Big|^2 \, dy  \\
& \leq & 2 \|u\|_{L_\infty(\Omega)}^2 \, r^\gamma \, |||v|||_{\cm,\gamma}^2\\*
& & \hspace*{10mm} {}
   + 2 |\Omega(x,r)| \, 
         \Big( \|v\|_{L_\infty(\Omega)} \, |||u|||_{C^{(\gamma - d)/2}(\Omega)} \, 
               (2r)^{(\gamma - d)/2} \Big)^2  \\
& \leq & \Big( 2 \|u\|_{L_\infty(\Omega)}^2 \, |||v|||_{\cm,\gamma}^2
   + 2^{1 + \gamma - d} \, \omega_d \, \|v\|_{L_\infty(\Omega)}^2 \, |||u|||_{C^{(\gamma - d)/2}(\Omega)}^2
     \Big) \, r^\gamma
.
\end{eqnarray*}
Then the lemma follows easily.
\end{proof}

\begin{prop} \label{pcom214}
Let $\kappa \in (0,1)$ and $\mu,M,K > 0$.
Then there exists a $c > 0$ such that the following is valid.

Let $\Omega \subset \Ri^d$ be an open set of class $C^{1+\kappa}$ with parameter $K$.
For all $k,l \in \{ 1,\ldots,d \} $ let $c_{kl}, b_k\in C^\kappa(\Omega) \cap L_\infty(\Omega)$,
and let $c_k, c_0 \in L_\infty(\Omega)$, with $\|c_{kl}\|_{C^\kappa(\Omega)} \leq M$,
$\|b_k\|_{C^\kappa(\Omega)} \leq M$, $\|c_k\|_{L_\infty(\Omega)} \leq M$
and $\|c_0\|_{L_\infty(\Omega)} \leq M$.
Suppose $\RRe \sum_{k,l=1}^d c_{kl}(x) \, \xi_k \, \overline{\xi_l} \geq \mu \, |\xi|^2$
for all $\xi \in \Ci^d$ and $x \in \Omega$.
Define $\gota$ as in {\rm (\ref{eScom1;1})}.
Let $f_0,f_1,\ldots,f_d \in L_2(\Omega)$ and $u \in W^{1,2}_0(\Omega)$.
Suppose
\[
\gota(u,v) 
= (f_0,v)_{L_2(\Omega)} + \sum_{k=1}^d (f_k, \partial_k v)_{L_2(\Omega)}
\]
for all $v \in W^{1,2}_0(\Omega)$.
Then 
\[
\sum_{k=1}^d |||\partial_k u|||_{\cm,d + 2 \kappa}
\leq c \, \Big( \|f_0\|_{M,d + 2 \kappa - 2}
   + \sum_{k=1}^d ( |||f_k|||_{\cm,d + 2 \kappa} + \|f_k\|_{L_2(\Omega)} )
   + \|u\|_{W^{1,2}(\Omega)}
           \Big)
.  \]

If, in addition, $\Omega$ is bounded and
\begin{equation}
\frac{\mu}{2} \, \|u\|_{W^{1,2}(\Omega)}^2
\leq \RRe \gota(u,u)
,  
\label{epcom214;1}
\end{equation}
then there exists a $\tilde c > 0$, depending only on 
$\kappa$, $\mu$, $M$, $K$ and $\diam \Omega$, such that 
\[
\sum_{k=1}^d \|\partial_k u\|_{C^\kappa(\Omega)}
\leq \tilde c \, \Big( \|f_0\|_{M,d + 2 \kappa - 2}
   + \sum_{k=1}^d \|f_k\|_{C^\kappa(\Omega)} 
                 \Big)
.  \]
\end{prop}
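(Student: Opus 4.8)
The plan is to reduce to the divergence-form case without lower-order terms, where Proposition~\ref{pcom212} applies, and then control the perturbation. Exactly as in the proof of Proposition~\ref{pcom213}, the hypothesis on $u$ is equivalent to
\[
\sum_{k,l=1}^d \int_\Omega c_{kl}\,(\partial_l u)\,\overline{\partial_k v}
= (\tilde f_0,v)_{L_2(\Omega)} + \sum_{k=1}^d (\tilde f_k,\partial_k v)_{L_2(\Omega)}
\qquad (v \in W^{1,2}_0(\Omega)),
\]
with $\tilde f_0 = f_0 - \sum_{l=1}^d c_l\,\partial_l u - c_0\,u$ and $\tilde f_k = f_k - b_k\,u$, and $\tilde f_0,\dots,\tilde f_d \in L_2(\Omega)$. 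Applying Proposition~\ref{pcom212} to this equation gives a $c_1 > 0$, depending only on $\kappa,\mu,M,K$, with
\[
\sum_{k=1}^d |||\partial_k u|||_{\cm,d+2\kappa}
\leq c_1\Big( \|\tilde f_0\|_{M,d+2\kappa-2}
+ \sum_{k=1}^d \big( |||\tilde f_k|||_{\cm,d+2\kappa} + \|\tilde f_k\|_{L_2(\Omega)} \big)
+ \|\nabla u\|_{L_2(\Omega)} \Big),
\]
so it remains to bound the modified data by the data of the original problem plus $\|u\|_{W^{1,2}(\Omega)}$.

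First, since $0 < 2\kappa \leq d+2\kappa-2 < d$, Proposition~\ref{pcom213} (with $\gamma = d-2$, $\delta = 2\kappa$, $\varepsilon = 1$) bounds $\|\nabla u\|_{M,d+2\kappa-2}$ by $c(\|f_0\|_{M,d-2} + \sum_k \|f_k\|_{M,d+2\kappa-2} + \|u\|_{W^{1,2}(\Omega)})$, where $\|f_0\|_{M,d-2} \leq \|f_0\|_{M,d+2\kappa-2}$ and, by \cite{ERe2} Lemma~3.1(a) together with the monotonicity of the Campanato seminorm in the index, $\|f_k\|_{M,d+2\kappa-2} \leq c(|||f_k|||_{\cm,d+2\kappa} + \|f_k\|_{L_2(\Omega)})$. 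Next, Lemma~\ref{lcom218} with index $d-2+2\kappa$ and Lemma~\ref{lcom217}\ref{lcom217-2} give $|||u|||_{\cm,d+2\kappa} \leq c(\|\nabla u\|_{M,d+2\kappa-2} + \|u\|_{L_2(\Omega)})$, hence $|||u|||_{\cm,d+2\kappa}$ is controlled by the same quantities; by \cite{ERe2} Lemma~3.1(c) and~3.1(b) (the latter with $\rho = 1$, using $|\Omega(x,1)| \geq \tilde c$) this yields $u \in C^\kappa(\Omega) \cap L_\infty(\Omega)$ with $\|u\|_{L_\infty(\Omega)}$ similarly controlled. The product estimates now follow: $\|c_l\,\partial_l u\|_{M,d+2\kappa-2} \leq M\,\|\partial_l u\|_{M,d+2\kappa-2}$, $\|c_0\,u\|_{M,d+2\kappa-2} \leq M\sqrt{\omega_d}\,\|u\|_{L_\infty(\Omega)}$, $\|\tilde f_k\|_{L_2(\Omega)} \leq \|f_k\|_{L_2(\Omega)} + M\,\|u\|_{L_2(\Omega)}$, and, by Lemma~\ref{lcom221} applied with the H\"older factor $b_k$ (using $\|b_k\|_{C^\kappa(\Omega)} \leq M$) and the $L_2$-factor $u$, $|||b_k\,u|||_{\cm,d+2\kappa} \leq c(|||u|||_{\cm,d+2\kappa} + \|u\|_{L_\infty(\Omega)})$. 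Assembling these and using $\|\nabla u\|_{L_2(\Omega)} \leq \|u\|_{W^{1,2}(\Omega)}$ proves the first displayed estimate of the proposition.

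For the second statement assume $\Omega$ bounded and~(\ref{epcom214;1}). Testing the original equation with $v = u$ yields $\|u\|_{W^{1,2}(\Omega)} \leq \tfrac{2}{\mu}(\|f_0\|_{L_2(\Omega)} + \sum_k \|f_k\|_{L_2(\Omega)})$; covering $\overline\Omega$ by $N$ unit balls (with $N$ depending only on $\diam \Omega$) gives $\|f_j\|_{L_2(\Omega)} \leq \sqrt N\,\|f_j\|_{M,0}$, and on a bounded set $\|f_0\|_{M,0} \leq \|f_0\|_{M,d+2\kappa-2}$ while $\|f_k\|_{M,0}$, $\|f_k\|_{L_2(\Omega)}$ and $|||f_k|||_{\cm,d+2\kappa}$ are all $\leq c\,\|f_k\|_{C^\kappa(\Omega)}$ with $c$ depending only on $\kappa$ and $\diam \Omega$. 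Inserting these into the first estimate bounds $\sum_k |||\partial_k u|||_{\cm,d+2\kappa}$ by $\tilde c(\|f_0\|_{M,d+2\kappa-2} + \sum_k \|f_k\|_{C^\kappa(\Omega)})$. Finally $\cm_{d+2\kappa}(\Omega) \subset C^\kappa(\Omega)$ by \cite{ERe2} Lemma~3.1(c), and an $L_\infty$-bound for $\partial_k u$ follows from \cite{ERe2} Lemma~3.1(b) with $\rho = 1$ together with $\|\nabla u\|_{L_2(\Omega)} \leq \|u\|_{W^{1,2}(\Omega)}$ and the bound just obtained for $\|u\|_{W^{1,2}(\Omega)}$; combining these gives $\sum_k \|\partial_k u\|_{C^\kappa(\Omega)} \leq \tilde c(\|f_0\|_{M,d+2\kappa-2} + \sum_k \|f_k\|_{C^\kappa(\Omega)})$.

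The step I expect to be most delicate is securing the \emph{uniform} $L_\infty$-bound for $u$ that feeds the product estimate Lemma~\ref{lcom221}: since $\Omega$ need not be bounded in the first part, one cannot merely invoke $C^\kappa(\Omega) \subset L_\infty(\Omega)$ but must run the bootstrap Proposition~\ref{pcom213} $\to$ Lemma~\ref{lcom218} $\to$ Campanato embedding $\to$ mean-value bound and verify at each stage that the constants depend only on $\kappa,\mu,M,K$ (and $\diam\Omega$ in the second part). Keeping the various Morrey/Campanato index shifts mutually consistent is the other point that needs care; everything else is a routine assembly of the estimates already established above.
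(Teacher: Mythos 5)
Your proof is correct and follows essentially the same route as the paper: reduce to Proposition~\ref{pcom212} by absorbing the lower-order terms into $\tilde f_0,\tilde f_k$, control $\|\nabla u\|_{M,d+2\kappa-2}$ and $\|u\|_{M,d+2\kappa-2}$ via Proposition~\ref{pcom213}, obtain the $L_\infty$-bound on $u$ through Lemma~\ref{lcom218} and the Campanato embeddings from \cite{ERe2}, use Lemma~\ref{lcom221} for the product $b_k\,u$, and in the bounded case recover $\|u\|_{W^{1,2}}$ from coercivity as at the end of the proof of Proposition~\ref{pcom213}. The only inessential deviation is your choice $(\gamma,\delta)=(d-2,2\kappa)$ in Proposition~\ref{pcom213}, which yields $\|f_0\|_{M,d-2}$ and requires an extra monotonicity step, whereas the paper takes $\delta=0$ and gets $\|f_0\|_{M,d+2\kappa-2}$ directly; both are fine.
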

\begin{proof}
We proceed as in the proof of Proposition~\ref{pcom213}.
Again 
\[
\sum_{k,l=1}^d \int_\Omega c_{kl} \, (\partial_l u) \,  \, \overline{\partial_k v}
= (\tilde f_0, v)_{L_2(\Omega)} + \sum_{k=1}^d (\tilde f_k, \partial_k v)_{L_2(\Omega)}
\]
for all $v \in W^{1,2}_0(\Omega)$, where
\[
\tilde f_0 = f_0 - \sum_{l=1}^d c_l \, \partial_l u - c_0 \, u
\quad \mbox{and} \quad
\tilde f_k = f_k - b_k \, u
\]
for all $k \in \{ 1,\ldots,d \} $.
By Proposition~\ref{pcom212} there is a suitable $c_1 > 0$ such that 
\[
\sum_{k=1}^d |||\partial_k u|||_{\cm,d + 2 \kappa}
\leq c_1 \, \Big( \|\tilde f_0\|_{M,d + 2 \kappa - 2}
   + \sum_{k=1}^d ( |||\tilde f_k|||_{\cm,d + 2 \kappa} + \|\tilde f_k\|_{L_2(\Omega)} )
   + \|\nabla u\|_{L_2(\Omega)}
           \Big)
.  \]
Now 
\[
\|\tilde f_0\|_{M,d + 2 \kappa - 2}
\leq \|f_0\|_{M,d + 2 \kappa - 2} + d \, M \, \|\nabla u\|_{M,d + 2 \kappa - 2}
     + M \, \|u\|_{M,d + 2 \kappa - 2}
.  \]
Since $d + 2 \kappa - 2 < d$ it follows from Proposition~\ref{pcom213} that there 
is a suitable $c_2 > 0$ such that 
\[
\|\nabla u\|_{M,d + 2 \kappa - 2}
\leq c_2 \, (\|f_0\|_{M, d + 2 \kappa - 2} + \sum_{k=1}^d \|f_k\|_{M, d + 2 \kappa - 2}
     + \|u\|_{W^{1,2}(\Omega)} )
.  \]
Arguing as in (\ref{elcom220;5}) there is a suitable $c_3 > 0$ such that 
\[
\|u\|_{M,d + 2 \kappa  - 2}
\leq c_3 \, (\|f_0\|_{M, d + 2 \kappa - 2} + \sum_{k=1}^d \|f_k\|_{M, d + 2 \kappa - 2}
     + \|u\|_{W^{1,2}(\Omega)} )
.  \]
If $k \in \{ 1,\ldots,d \} $, then 
\[
|||\tilde f_k|||_{\cm,d + 2 \kappa}
\leq |||f_k|||_{\cm,d + 2 \kappa} + |||b_k \, u|||_{\cm,d + 2 \kappa}
.  \]
By Lemma~\ref{lcom221} and \cite{ERe2} Lemma~3.1(c) there is a suitable $c_4 > 0$ such that 
\[
|||b_k \, u|||_{\cm,d + 2 \kappa}
\leq c_4 \, \|b_k\|_{C^\kappa(\Omega)} \, 
     ( |||u|||_{\cm,d + 2 \kappa} + \|u\|_{L_\infty(\Omega)} )
.  \]
By Lemma~\ref{lcom218} there are suitable $c_5 > 0$ and $R_e \in (0,1]$
such that 
\begin{eqnarray*}
|||u|||_{\cm,d + 2 \kappa, R_e}
& \leq & c_5 \, \|\nabla u\|_{M, d + 2 \kappa - 2}  \\
& \leq & c_2 \, c_5 (\|f_0\|_{M, d + 2 \kappa - 2} + \sum_{k=1}^d \|f_k\|_{M, d + 2 \kappa - 2}
     + \|u\|_{W^{1,2}(\Omega)} )
.  
\end{eqnarray*}
By \cite{ERe2} Lemma~3.1(b) there is a suitable $c_6 > 0$ such that 
\[
|\langle u \rangle_{\Omega(x,\rho)} - u(x)|
\leq c_6 \, \rho^\kappa \, |||u|||_{\cm,d + 2 \kappa, R_e}
\]
for all $\rho \in (0,R_e]$ and $x \in \Omega$.
Therefore 
\begin{equation}
\|u\|_{L_\infty(\Omega)}
\leq \sqrt{\omega_d} \, R_e^{d/2} \, \|u\|_{L_2(\Omega)}
   + c_6 \, R_e^\kappa \, |||u|||_{\cm,d + 2 \kappa, R_e}
.  
\label{epcom214;2}
\end{equation}
Estimating as before gives the first part of the proposition.

If $\Omega$ is bounded and (\ref{epcom214;1}) is valid,
then one can bound $\|u\|_{W^{1,2}(\Omega)}$ as at the end of the 
proof of Proposition~\ref{pcom213}.
The $C^\kappa(\Omega)$-norm of $\partial_k u$ can be bounded by a suitable 
multiple of $|||\partial_k u|||_{\cm, d+2\kappa} + \|\partial_k u\|_{L_\infty(\Omega)}$
and then as in (\ref{epcom214;2}) by 
$|||\partial_k u|||_{\cm, d+2\kappa} + \|\partial_k u\|_{L_2(\Omega)}$.
Further one can bound the Campanato seminorm $|||f_k|||_{\cm, d+2\kappa}$
by a suitable multiple of $\|f_k\|_{C^\kappa(\Omega)}$.
\end{proof}

\section{H\"older-Gaussian bounds} \label{Scom4}

This section is devoted to bounds and regularity of the heat kernel of the 
elliptic operator $A_D$ subject to Dirichlet boundary conditions. 
We prove Gaussian bounds, H\"older regularity as well as $L_p$ estimates on  the boundary. 

The set $\Omega$ is allowed to be unbounded in the next theorem.

\begin{thm} \label{tcom201}
Let $\kappa,\tau' \in (0,1)$ and $\mu,M,\tau,K > 0$.
Then there exist $a,b > 0$ and $\omega \in \Ri$ such that 
the following is valid.

Let $\Omega \subset \Ri^d$ be an open set of class $C^{1+\kappa}$ with  parameter $K$.
For all $k,l \in \{ 1,\ldots,d \} $ let $c_{kl}, b_k, c_k \in C^\kappa(\Omega) \cap L_\infty(\Omega)$,
and let $c_0 \in L_\infty(\Omega)$, with $\|c_{kl}\|_{C^\kappa(\Omega)} \leq M$,
$\|b_k\|_{C^\kappa(\Omega)} \leq M$, $\|c_k\|_{C^\kappa(\Omega)} \leq M$
and $\|c_0\|_{L_\infty(\Omega)} \leq M$.
Suppose $\RRe \sum_{k,l=1}^d c_{kl}(x) \, \xi_k \, \overline{\xi_l} \geq \mu \, |\xi|^2$
for all $\xi \in \Ci^d$ and $x \in \Omega$.
Then there exists 
a function $(t,x,y) \mapsto H_t(x,y)$ from 
$(0,\infty) \times \Omega \times \Omega$ into $\Ci$
such that the following is valid.
\begin{tabel}
\item \label{tcom201-1}
The function $(t,x,y) \mapsto H_t(x,y)$ is continuous 
from $(0,\infty) \times \Omega \times \Omega$ into $\Ci$.
\item \label{tcom201-3}
For all $t \in (0,\infty)$ the
function $H_t$ is once differentiable in each variable and the 
derivative with respect to one variable is differentiable in the 
other variable.
Moreover, for every multi-index $\alpha,\beta$ with 
$0 \leq |\alpha|,|\beta| \leq 1$ one has 
\begin{equation}
|(\partial_x^\alpha \, \partial_y^\beta \, H_t)(x,y)|
\leq a \, t^{-d/2} \, t^{-(|\alpha| + |\beta|)/2} \, 
      e^{-b \, \frac{|x-y|^2}{t}} \, e^{\omega t}
\label{etcom201;1}
\end{equation}
and 
\begin{eqnarray*}
\lefteqn{
|(\partial_x^\alpha \, \partial_y^\beta \, H_t)(x+h,y+k) 
     - (\partial_x^\alpha \, \partial_y^\beta \, H_t)(x,y)|
} \hspace{20mm}  \\*
& \leq & a \, t^{-d/2} \, t^{-(|\alpha| + |\beta|)/2} \, 
    \left( \frac{|h| + |k|}{\sqrt{t} + |x-y|} \right)^\kappa \, 
      e^{-b \, \frac{|x-y|^2}{t}}  \, e^{\omega t}
\end{eqnarray*}
for all $x,y \in \Omega$ and $h,k \in \Ri^d$ with $x+h,y+k \in \Omega$ and 
$|h| + |k| \leq \tau \, \sqrt{t} + \tau' \, |x-y|$.
\item \label{tcom201-2}
For all $t \in (0,\infty)$ the function $H_t$ is the kernel of 
the operator $e^{-t A_D}$.
\item \label{tcom201-6}
$\|\partial^\alpha \,  e^{-t A_D}\|_{L_p(\Omega) \to L_q(\Omega)}
\leq a \, t^{- |\alpha| / 2} \, t^{- \frac{d}{2} (\frac{1}{p} - \frac{1}{q}) } \, e^{\omega t}$
for all $t > 0$, multi-index $\alpha$ and $p,q \in [1,\infty]$ with $p \leq q$
and $|\alpha| \leq 1$.
\item \label{tcom201-5}
$\|\partial^\alpha \,  e^{-t A_D}\|_{L_p(\Omega) \to C^\nu(\Omega)}
\leq a \, t^{-|\alpha| / 2} t^{- \frac{d}{2p}} \, t^{- \nu / 2} \, e^{\omega t}$
for all $t > 0$, $p \in [1,\infty]$, multi-index $\alpha$ 
and $\nu \in (0,\kappa]$ with $|\alpha| \leq 1$.
\item \label{tcom201-4}
If, in addition,
\[
\frac{\mu}{2} \|u\|_{W^{1,2}(\Omega)}^2
\leq \RRe \gota(u,u)
\]
for all $u \in W^{1,2}(\Omega)$, then 
one can choose $\omega = - \frac{\mu}{4} < 0$.
\end{tabel}
\end{thm}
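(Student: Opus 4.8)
The plan is to reduce to the coercive case, prove $L_2$ off-diagonal (Davies--Gaffney) bounds for $e^{-tA_D}$, $\nabla e^{-tA_D}$ and $\partial_t e^{-tA_D}$ by Davies' exponential perturbation method, and then use the elliptic regularity of Section~\ref{Scom3} to upgrade this $L_2$ information to pointwise, gradient and H\"older bounds on the heat kernel. This follows the same route as \cite{EO6}; the point is that none of the ingredients needs self-adjointness of $A_D$. First I would reduce to the coercive case: by the ellipticity hypothesis and Young's inequality there is a $\lambda = \lambda(\mu,M) \geq 0$ such that the shifted form $\gota_\lambda$ satisfies $\frac{\mu}{2}\|u\|_{W^{1,2}(\Omega)}^2 \leq \RRe\gota_\lambda(u,u)$ for all $u \in W^{1,2}(\Omega)$, its coefficients still obeying the hypotheses with $M$ replaced by $M + \lambda$. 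Since $e^{-tA_D} = e^{\lambda t}\, e^{-t(A_D + \lambda)}$, it suffices to prove the bounds with $\omega = -\frac{\mu}{4}$ under the extra coercivity hypothesis of Statement~\ref{tcom201-4}; the general statement then follows with $\omega = \lambda - \frac{\mu}{4}$, and Statement~\ref{tcom201-4} is a by-product. Shifting once more by $-\frac{\mu}{4}$ (which preserves $W^{1,2}$-coercivity, with $\frac{\mu}{4}$ in place of $\frac{\mu}{2}$, and leaves the semigroup accretive and bounded on $L_2(\Omega)$) reduces the target to $\omega = 0$, in which setting $-A_D$ generates a bounded holomorphic semigroup with $\|e^{-tA_D}\|_{2\to2}\leq 1$, $\|A_D^n e^{-tA_D}\|_{2\to2}\leq c_n t^{-n}$ and $\|\nabla e^{-tA_D}\|_{2\to2}\leq c\,t^{-1/2}$ (the last from $\frac{\mu}{4}\|\nabla u\|_2^2\leq\RRe(A_D u,u)$ with $u=e^{-tA_D}f$ and holomorphy).

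Next I would establish Davies--Gaffney bounds. Conjugating the form by $e^{\rho\psi}$ with $\psi$ bounded and $\|\nabla\psi\|_\infty\leq1$, and using only coercivity and the bounds $\|c_{kl}\|_\infty,\|b_k\|_\infty,\|c_k\|_\infty,\|c_0\|_\infty\leq M$, one gets $\|e^{\rho\psi}\,e^{-tA_D}\,e^{-\rho\psi}\|_{2\to2}\leq e^{c\rho^2 t}$ and analogous bounds (with an extra factor $t^{-1/2}$, resp.\ $t^{-1}$) for $\nabla e^{-tA_D}$ and $\partial_t e^{-tA_D}=-A_D e^{-tA_D}$; choosing $\psi$ a truncation of $|\cdot-y_0|$ and optimising in $\rho$ yields, for $U_1,U_2\subset\Omega$ and $f$ supported in $U_1$, bounds such as $\|\one_{U_2}\,e^{-tA_D}f\|_2\leq c\,e^{-b\,\mathrm{dist}(U_1,U_2)^2/t}\|f\|_2$, and likewise for $\nabla e^{-tA_D}f$ and $\partial_t e^{-tA_D}f$.

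The core step is to convert this into pointwise and H\"older bounds on the kernel using Section~\ref{Scom3}. For $u=e^{-tA_D}f$ one has $\gota(u,v)=-(\partial_t u,v)_{L_2(\Omega)}$ for all $v\in W^{1,2}_0(\Omega)$; after flattening the boundary near $x_0$ by a $C^{1+\kappa}$-chart onto the half-cube $E^-$ (or working in an interior ball), rescaling to unit scale, multiplying $u$ by a cutoff supported in $\Omega(x_0,r)$ and invoking Propositions~\ref{pcom213} and \ref{pcom214}, one obtains at each scale $r\in(0,1]$ an estimate bounding $\|u\|_{L_\infty(\Omega(x_0,r/2))}$, $r\,\|\nabla u\|_{L_\infty(\Omega(x_0,r/2))}$ and $r^{1+\kappa}\sum_k|||\partial_k u|||_{C^\kappa(\Omega(x_0,r/2))}$ by $c\,r^{-d/2}\big(\|u\|_{W^{1,2}(\Omega(x_0,r))}+r^2\|\partial_t u\|_{L_2(\Omega(x_0,r))}\big)$. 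Taking $r=\min(\sqrt t,1)$ and $f=\one_{\Omega(y_0,r)}h$ with $|x_0-y_0|=R$, the Davies--Gaffney bounds make the right-hand side at most $c\,r^{-d/2}e^{-bR^2/t}\|h\|_2$; dualising in $h$ and running the same argument in the $y$-variable for the adjoint operator $A_D^*$ — which is of the same type, via $H_t(x,y)=\overline{H^*_t(y,x)}$ with $H^*$ the kernel of $e^{-tA_D^*}$ — gives $|H_t(x,y)|\leq c\,t^{-d/2}e^{-b|x-y|^2/t}$ after absorbing polynomial factors into the exponential. The case $R\lesssim\sqrt t$ gives in particular $\|e^{-tA_D}\|_{1\to\infty}\leq c\,t^{-d/2}$, hence the existence of a bounded continuous kernel, which is Statements~\ref{tcom201-1} and \ref{tcom201-2}. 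The one-sided derivative and H\"older estimates for $\partial_x^\alpha H_t$ with $|\alpha|\leq1$ come from the same argument using the $\nabla u$- and $C^\kappa$-parts of the local estimate; the two-sided bounds of Statement~\ref{tcom201-3} then follow from the splitting $(\partial_x^\alpha\partial_y^\beta H_t)(x,y)=\int_\Omega(\partial_x^\alpha H_{t/2})(x,z)\,(\partial_y^\beta H_{t/2})(z,y)\,dz$ and Gaussian convolution, and the H\"older--Gaussian bound by reducing an increment in $(x,y)$ to one in a single variable and applying the $C^\kappa$ estimate on a ball of radius comparable to $\sqrt t+|x-y|$ that still lies at comparable distance from the data. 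Finally Statements~\ref{tcom201-6} and \ref{tcom201-5} are immediate from Statement~\ref{tcom201-3} by Schur's inequality and by using the first-variable H\"older bound, after undoing the shifts.

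The main obstacle is this core step: producing the localised, up-to-the-boundary form of Propositions~\ref{pcom213}--\ref{pcom214} at an arbitrary scale $r\leq1$ (flattening by the $C^{1+\kappa}$-charts, checking that the transformed coefficients stay H\"older with controlled norms, and rescaling), the cutoff argument needed to replace global $W^{1,2}(\Omega)$-norms by local ones when $\Omega$ is unbounded, and the bookkeeping of scales that makes the H\"older--Gaussian bound valid on the whole range $|h|+|k|\leq\tau\sqrt t+\tau'|x-y|$ — in particular handling increments comparable to $|x-y|$ by a chain of balls when $\Omega$ is not convex. These points are handled essentially as in \cite{EO6}, with the few uses of self-adjointness of $A_D$ there replaced by arguments, as above, that treat $A_D$ and $A_D^*$ symmetrically.
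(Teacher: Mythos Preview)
Your proposal is correct and follows essentially the same approach as the paper: the paper's proof of parts \ref{tcom201-1}--\ref{tcom201-3} simply defers to \cite{EO6} Theorem~3.1 (Davies' exponential perturbation plus Morrey--Campanato regularity, exactly as you outline), noting that Propositions~\ref{pcom213} and~\ref{pcom214} in Section~\ref{Scom3} now supply the required elliptic estimates for non-symmetric operators with lower-order terms on possibly unbounded $\Omega$; and its derivation of parts \ref{tcom201-6}, \ref{tcom201-5} and \ref{tcom201-4} from the Gaussian and H\"older--Gaussian bounds matches yours. The only cosmetic difference is that the paper keeps $\omega = -\frac{\mu}{4}$ rather than shifting twice to $\omega = 0$.
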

\begin{proof}
`\ref{tcom201-1}--\ref{tcom201-3}'. 
If $b_k = c_k = 0$, then this is Theorem~3.1 in \cite{EO6}. 
It is stated there for bounded $\Omega$ but 
we proved in the previous section all the necessary results which allow us to adopt 
the proof of \cite{EO6} to the setting of this theorem.
Cf.\ Theorem~3.16 in \cite{EO6}.

`\ref{tcom201-4}'. 
Note that the coercivity bound implies that 
$\|e^{-t A_D}\|_{L_2(\Omega) \to L_2(\Omega)}^2 \leq e^{- \mu t}$ for all $t > 0$.
We can repeat the same arguments to obtain uniform bounds (with respect to $t$) 
for $A_D + \omega$ with $\omega = - \frac{\mu}{4}$.

`\ref{tcom201-6}'.
The Gaussian bounds (\ref{etcom201;1}) give
\[
\|\partial^\alpha \,  e^{-t A_D}\|_{L_p(\Omega) \to L_p(\Omega)}
\leq a \, t^{-|\alpha|/2} \, e^{\omega t} \, \int_{\Ri^d} 
    t^{-d/2} \,  e^{-b \, \frac{|x-y|^2}{t}} \, dx
= a \, (\pi/b)^{d/2} \, t^{-|\alpha|/2} \, e^{\omega t} 
\]
for all $p \in [1,\infty]$.
Using H\"older's estimate or Young's inequality one estimates similarly 
\[
|(\partial^\alpha \,  e^{-t A_D} u)(x)|
\leq a \, \Big( (\pi/b)^{d/2} \vee 1 \Big) \, t^{-|\alpha|/2} \, 
     t^{-\frac{d}{2p}} \, e^{\omega t} \, \|u\|_{L_p(\Omega)}
\]
for all $p \in [1,\infty]$, $u \in L_p(\Omega)$ and $x \in \Omega$.
Then 
\[
\|\partial^\alpha \,  e^{-t A_D}\|_{L_p(\Omega) \to L_q(\Omega)}
\leq a \, \, \Big( (\pi/b)^{d/2} \vee 1 \Big) \, t^{-|\alpha| / 2} \, 
          t^{-\frac{d}{2} (\frac{1}{p} - \frac{1}{q}) } \, e^{\omega t}
\]
by interpolation for all $p,q \in [1,\infty]$ with $p \leq q$
and $|\alpha| \leq 1$.
Note that the constants $a$, $b$ and in particular $\omega$ are the same 
as in Statement~\ref{tcom201-3}.

`\ref{tcom201-5}'.
Choosing $\tau = 1$, then the bounds in Statement~\ref{tcom201-3} are valid 
with certain values of $a$, $b$ and $\omega$ for all $h$ whenever $|h| \leq \sqrt{t}$. 
Hence if $x,x' \in \Omega$ with $|x-x'| \leq \sqrt{t}$, 
then as in the proof of Statement~\ref{tcom201-6} the H\"older Gaussian bounds give
\[
|(\partial^\alpha e^{-t A_D} u)(x) - (\partial^\alpha e^{-t A_D} u)(x')|
\leq a \, \Big( (\pi/b)^{d/2} \vee 1 \Big) \, t^{-|\alpha|/2} \, t^{-\frac{d}{2p}} \, 
    \Big( \frac{|x-x'|}{\sqrt{t}} \Big)^\kappa \, e^{\omega t} \, \|u\|_{L_p(\Omega)}
.  \]
Alternatively, if $|x-x'| \geq \sqrt{t}$, then 
\begin{eqnarray*}
|(\partial^\alpha e^{-t A_D} u)(x) - (\partial^\alpha e^{-t A_D} u)(x')|
& \leq & 2 \Big( \frac{|x-x'|}{\sqrt{t}} \Big)^\kappa \, \|\partial^\alpha e^{-t A_D} u\|_{L_\infty(\Omega)}  \\
& \leq & 2 a \, \Big( (\pi/b)^{d/2} \vee 1 \Big) \, t^{-|\alpha|/2} \, t^{-\frac{d}{2p}} \, 
    \Big( \frac{|x-x'|}{\sqrt{t}} \Big)^\kappa \, e^{\omega t} \, \|u\|_{L_p(\Omega)}
\end{eqnarray*}
and the bound from $L_p(\Omega)$ into $C^\kappa(\Omega)$ follows.
Finally, the bounds in Statement~\ref{tcom201-3} are obviously valid with $\nu$ instead of $\kappa$ 
for any $\nu \in (0, \kappa)$.  
Thus we obtain the bound from $L_p(\Omega)$ into $C^\nu(\Omega)$.
\end{proof}

For bounded $\Omega$ one also has estimates for the semigroup
from $L_p(\Omega)$ into $L_p(\Gamma)$.

\begin{prop} \label{pcom201.5}
Let $\kappa \in (0,1)$.
Let $\Omega \subset \Ri^d$ be a bounded open set of class~$C^{1+\kappa}$.
For all $k,l \in \{ 1,\ldots,d \} $ let $c_{kl}, b_k, c_k \in C^\kappa(\Omega)$,
and let $c_0 \in L_\infty(\Omega)$.
Suppose there is a $\mu > 0$ such that 
$\RRe \sum_{k,l=1}^d c_{kl}(x) \, \xi_k \, \overline{\xi_l} \geq \mu \, |\xi|^2$
for all $\xi \in \Ci^d$ and $x \in \Omega$.
Then there exist $a,\omega > 0$ such that 
\[
\|\partial_k \,  e^{-t A_D}\|_{L_p(\Omega) \to L_p(\Gamma)}
\leq a \, t^{- \frac{1}{2} - \frac{1}{2p}} \, e^{\omega t}
\]
for all $k \in \{ 1,\ldots,d \} $, $t > 0$ and $p \in [1,\infty]$.
\end{prop}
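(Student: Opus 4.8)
The plan is to represent $\partial_k\, e^{-tA_D}$ through its Schwartz kernel and then interpolate between the endpoints $p=1$ and $p=\infty$. Since $\Omega$ is bounded and of class $C^{1+\kappa}$, it is of class $C^{1+\kappa}$ with parameter $K$ for some $K>0$, and $M := \max_{k,l}\big(\|c_{kl}\|_{C^\kappa(\Omega)},\|b_k\|_{C^\kappa(\Omega)},\|c_k\|_{C^\kappa(\Omega)},\|c_0\|_{L_\infty(\Omega)}\big)$ is finite; so Theorem~\ref{tcom201} applies and yields the kernel $H_t$ of $e^{-tA_D}$ together with constants $a,b>0$ and $\omega\in\Ri$, which we may take with $\omega>0$. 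Using Theorem~\ref{tcom201}\ref{tcom201-3} with $|\alpha|=1$ and $|\beta|=0$ I would start from
\[
|(\partial_{x_k}H_t)(x,y)| \leq a\, t^{-d/2}\, t^{-1/2}\, e^{-b\,\frac{|x-y|^2}{t}}\, e^{\omega t}
\qquad(x,y\in\Omega,\ t>0),
\]
and note that, by the H\"older--Gaussian estimate in the same statement (alternatively by Theorem~\ref{tcom201}\ref{tcom201-5}), for $u\in L_1(\Omega)$ the function $x\mapsto\int_\Omega(\partial_{x_k}H_t)(x,y)\,u(y)\,dy$ belongs to $C^\nu(\Omega)$ and hence extends continuously to $\overline\Omega$. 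Thus the operator in the statement is $u\mapsto\Tr(\partial_k e^{-tA_D}u)$, with $\Tr(\partial_k e^{-tA_D}u)(z)=\int_\Omega(\partial_{x_k}H_t)(z,y)\,u(y)\,dy$ for $z\in\Gamma$, and this formula defines a consistent operator on all of $L_1(\Omega)\supseteq L_p(\Omega)$, the set $\Omega$ being bounded.

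Next I would treat the two endpoints by a Schur-type argument. For $p=\infty$, integrating the kernel bound in the variable $y$ and using $\int_\Omega t^{-d/2}e^{-b|z-y|^2/t}\,dy\leq(\pi/b)^{d/2}$ gives $\|\partial_k e^{-tA_D}\|_{L_\infty(\Omega)\to L_\infty(\Gamma)}\leq a\,(\pi/b)^{d/2}\,t^{-1/2}\,e^{\omega t}$. For $p=1$ I would integrate in $z$ over $\Gamma$; since $\Gamma$ is the boundary of a bounded Lipschitz domain there is $C_0>0$, depending only on $\Omega$, with $\sigma(\Gamma\cap B(y,r))\leq C_0\,r^{d-1}$ for all $y\in\overline\Omega$ and $r>0$. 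Decomposing $\Gamma$ into $\Gamma\cap B(y,\sqrt t)$ and the dyadic shells $\Gamma\cap\big(B(y,2^{j+1}\sqrt t)\setminus B(y,2^{j}\sqrt t)\big)$, $j\geq0$, bounding the contribution of the $j$-th shell by $C_0\,2^{(j+1)(d-1)}\,e^{-b4^j}\,t^{-1/2}$ and summing the convergent series, one obtains a constant $C$, depending only on $\Omega$ and $b$, with
\[
\int_\Gamma t^{-d/2}\,e^{-b\,\frac{|z-y|^2}{t}}\,d\sigma(z)\leq C\,t^{-1/2}
\qquad(y\in\overline\Omega,\ t>0).
\]
Combined with the kernel bound this gives $\|\partial_k e^{-tA_D}\|_{L_1(\Omega)\to L_1(\Gamma)}=\sup_{y\in\Omega}\int_\Gamma|(\partial_{x_k}H_t)(z,y)|\,d\sigma(z)\leq a\,C\,t^{-1}\,e^{\omega t}$.

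Finally I would interpolate: the Riesz--Thorin theorem applied to the two endpoint bounds yields, for every $p\in(1,\infty)$,
\[
\|\partial_k e^{-tA_D}\|_{L_p(\Omega)\to L_p(\Gamma)}\leq a\,\big(C\vee(\pi/b)^{d/2}\big)\,t^{-\frac1p-\frac12\left(1-\frac1p\right)}\,e^{\omega t}=a\,\big(C\vee(\pi/b)^{d/2}\big)\,t^{-\frac12-\frac1{2p}}\,e^{\omega t},
\]
which together with the two endpoint cases is the assertion. The one step that genuinely requires care is the surface Gaussian estimate $\int_\Gamma t^{-d/2}e^{-b|z-y|^2/t}\,d\sigma(z)\lesssim t^{-1/2}$ of the previous paragraph; everything else is a direct consequence of the Gaussian kernel bounds of Theorem~\ref{tcom201} and of elementary interpolation. (The hypothesis $0\notin\sigma(A_D)$ is not actually used in this argument, but is retained to match the standing assumptions of the paper.)
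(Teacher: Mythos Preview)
Your proof is correct and follows the same overall scheme as the paper: invoke the Gaussian kernel bounds of Theorem~\ref{tcom201}, establish the endpoints $p=1$ and $p=\infty$ by Schur-type estimates, and interpolate. The $p=\infty$ endpoint and the interpolation step are identical to the paper's.

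The only difference is how the $p=1$ endpoint is handled. The paper fixes $z_0\in\Gamma$, rotates so that a neighbourhood of $z_0$ in $\Gamma$ is the graph of a $C^1_b$-function $\tau\colon\Ri^{d-1}\to\Ri$, drops the factor $e^{-b|\tau(\xi)-y_d|^2/t}\leq 1$, integrates the remaining $(d-1)$-dimensional Gaussian in the tangential variable $\xi$ explicitly, and then covers $\Gamma$ by finitely many such charts. You instead use the Ahlfors regularity $\sigma(\Gamma\cap B(y,r))\leq C_0\,r^{d-1}$ together with a dyadic shell decomposition to obtain $\int_\Gamma t^{-d/2}e^{-b|z-y|^2/t}\,d\sigma(z)\lesssim t^{-1/2}$ directly. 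Your argument is a touch more abstract and would apply verbatim to any $(d-1)$-Ahlfors regular measure on $\overline\Omega$, while the paper's chart computation makes the dependence on the Lipschitz constant of the boundary explicit. Both routes are standard and of comparable length.
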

\begin{proof}
By Theorem~\ref{tcom201} the semigroup $(e^{-t A_D})_{t > 0}$ has a kernel
$(H_t)_{t > 0}$ which is differentiable in the first entry and there are 
$a,b,\omega > 0$ such that 
\[
|(\partial^{(1)}_k H_t)(x,y)|
\leq a \, t^{-d/2} \, t^{-1/2} \, 
      e^{-b \, \frac{|x-y|^2}{t}} \, e^{\omega t}
\]
for all $t > 0$, $x,y \in \Omega$ and $k \in \{ 1,\ldots,d \} $.

Let $k \in \{ 1,\ldots,d \} $.
We use that 
$\|\partial_k \,  e^{-t A_D}\|_{L_1(\Omega) \to L_1(\Gamma)}
\leq \sup_{y \in \Omega} \int_\Gamma |(\partial^{(1)}_k H_t)(z,y)| \, dz$.
Let $z_0 \in \Gamma$.
After a suitable rotation and translation
we may assume that there are $\delta > 0$ 
and a $C^1_b$-function $\tau \colon \Ri^{d-1} \to \Ri$ such that $z_0 = 0$
and $\Gamma \cap [-\delta,\delta]^d = \{ (\xi,\tau(\xi)) : \xi \in [-\delta,\delta]^{d-1} \} $.
Let $y = (\tilde y,y_d) \in \Omega$.
Then 
\begin{eqnarray*}
\lefteqn{
\int_{\Gamma \cap [-\delta,\delta]^d} |(\partial^{(1)}_k H_t)(z,y)| \, dz
} \hspace*{10mm} \\*
& \leq & \int_{[-\delta,\delta]^{d-1}} 
    a \, t^{-d/2} \, t^{-1/2} \, 
      e^{-b \, \frac{|\xi - \tilde y|^2}{t}} \, e^{-b \, \frac{|\tau(\xi) - y_d|^2}{t}} \, 
   e^{\omega t} \, \sqrt{1 + |(\nabla \tau)(\xi)|^2} \, d\xi  \\
& \leq & \int_{\Ri^{d-1}} 
    a \, t^{-d/2} \, t^{-1/2} \, 
      e^{-b \, \frac{|\xi - \tilde y|^2}{t}} \,  
   e^{\omega t} \, \sqrt{1 + \|\nabla \tau\|_\infty^2} \, d\xi  \\
& = & a \, t^{-1} \, (\pi / b)^{(d-1)/2} \, \sqrt{1 + \|\nabla \tau\|_\infty^2} \, e^{\omega t}
.
\end{eqnarray*}
Covering $\Gamma$ with a finite number of these type of sets, one deduces that 
there is an $a_1 > 0$ such that 
\[
\|\partial_k \,  e^{-t A_D}\|_{L_1(\Omega) \to L_1(\Gamma)}
\leq \sup_{y \in \Omega} \int_\Gamma |(\partial^{(1)}_k H_t)(z,y)| \, dz
\leq a_1 \, t^{-1} \, e^{\omega t}
\] 
for all $t > 0$.

Much easier one obtains
\[
\|\partial_k \,  e^{-t A_D}\|_{L_\infty(\Omega) \to L_\infty(\Gamma)}
\leq \sup_{z \in \Gamma} \, \int_\Omega |(\partial^{(1)}_k H_t)(z,y)| \, dy
\leq a \, (\pi/b)^{d/2} t^{-1/2} \, e^{\omega t}
\]
for all $t > 0$.
Then the proposition follows by interpolation.
\end{proof}

The idea of using Morrey and Campanato  spaces in order to obtain Gaussian upper bounds and  H\"older regularity for heat kernels of elliptic operators on $\Ri^d$  was used in  \cite{aus1}. 
We refer to \cite{EO6} for more details and other references on this subject.

\section{H\"older bounds for the Green function} \label{Scom5}

We consider now the  Green function associated with the elliptic operator. 
We prove upper bounds and H\"older continuity. 
We present two versions. 
The first is valid for possibly unbounded $\Omega$, but requires an
additional coercivity condition.
The second version does not require the additional  coercivity condition,
but then we use that $\Omega$ is bounded.

\begin{thm} \label{tcom202}
Let $\kappa \in (0,1)$ and $\mu,M,K  > 0$.
Then there exists a $c > 0$ such that the following is valid.

Let $\Omega \subset \Ri^d$ be an open set of class $C^{1+\kappa}$ with parameter $K$.
For all $k,l \in \{ 1,\ldots,d \} $ let $c_{kl}, b_k, c_k \in C^\kappa(\Omega) \cap L_\infty(\Omega)$,
and let $c_0 \in L_\infty(\Omega)$
with $\|c_{kl}\|_{C^\kappa(\Omega)} \leq M$,
$\|b_k\|_{C^\kappa(\Omega)} \leq M$, $\|c_k\|_{C^\kappa(\Omega)} \leq M$
and $\|c_0\|_{L_\infty(\Omega)} \leq M$.
Suppose 
$\RRe \sum_{k,l=1}^d c_{kl}(x) \, \xi_k \, \overline{\xi_l} \geq \mu \, |\xi|^2$
for all $\xi \in \Ci^d$ and $x \in \Omega$.
Further suppose that 
\begin{equation}
\frac{\mu}{2} \, \|u\|_{W^{1,2}(\Omega)}^2
\leq \RRe \gota(u,u)
\label{etcom202;1}
\end{equation}
for all $u \in W^{1,2}(\Omega)$.

Then the operator $A_D^{-1}$ has 
a kernel $G \colon \{ (x,y) \in \Omega \times \Omega : x \neq y \} \to \Ci$, 
which is differentiable in each variable and the derivative is 
differentiable in the other variable.
Moreover, for every multi-index $\alpha,\beta$ with 
$0 \leq |\alpha|,|\beta| \leq 1$ the function
$\partial_x^\alpha \, \partial_y^\beta \, G$ 
extends to a continuous function on 
$ \{ (x,y) \in \overline \Omega \times \overline \Omega : x \neq y \} $
with estimates
\[
|(\partial_x^\alpha \, \partial_y^\beta \, G)(x,y)| 
\leq \left\{ \begin{array}{ll}
c \, |x-y|^{-(d-2+|\alpha| + |\beta|)}  & \mbox{if } d-2+|\alpha| + |\beta| \neq 0 \\[5pt]
c \, \log( 1 + \frac{1}{|x-y|} ) & \mbox{if } d-2+|\alpha| + |\beta| = 0
                 \end{array} \right.
\]
and
\[
|(\partial_x^\alpha \, \partial_y^\beta \, G)(x',y') - (\partial_x^\alpha \, \partial_y^\beta \, G)(x,y)| 
\leq c \, \frac{(|x'-x| + |y'-y|)^\kappa}{|x-y|^{d-2+|\alpha| + |\beta| +\kappa}}  
\]
for all $x,x',y,y' \in \Omega$ with $x \neq y$, $x' \neq y'$ and
$|x-x'| + |y-y'| \leq \frac{1}{2} \, |x-y|$.
\end{thm}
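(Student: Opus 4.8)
The plan is to derive the Green function bounds from the heat kernel bounds of Theorem~\ref{tcom201} via the standard formula $A_D^{-1} = \int_0^\infty e^{-t A_D}\, dt$, which converges in operator norm because the coercivity assumption~(\ref{etcom202;1}) forces $0 \notin \sigma(A_D)$ and gives the uniform bound $\|e^{-tA_D}\|_{2\to 2} \leq e^{-\mu t}$. Formally this suggests $G(x,y) = \int_0^\infty H_t(x,y)\, dt$ and, more generally, $(\partial_x^\alpha \partial_y^\beta G)(x,y) = \int_0^\infty (\partial_x^\alpha \partial_y^\beta H_t)(x,y)\, dt$. First I would justify this representation rigorously: fix a test function pairing, use the Gaussian bounds~(\ref{etcom201;1}) with the favourable choice $\omega = -\frac{\mu}{4} < 0$ from Theorem~\ref{tcom201}\ref{tcom201-4} (available precisely because~(\ref{etcom202;1}) is assumed) to see the $t$-integral converges absolutely and locally uniformly in $(x,y)$ away from the diagonal, and conclude that the right-hand side defines a kernel for $A_D^{-1}$ with the claimed differentiability. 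The derivative-in-one-variable-then-the-other structure is inherited directly from the corresponding statement in Theorem~\ref{tcom201}\ref{tcom201-3}.

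Next I would prove the pointwise size bound. Split $\int_0^\infty = \int_0^{|x-y|^2} + \int_{|x-y|^2}^\infty$. On the first piece, using $e^{\omega t} \le 1$ and $e^{-b|x-y|^2/t}$, substitute $s = |x-y|^2/t$ to get
\[
\int_0^{|x-y|^2} a\, t^{-d/2-(|\alpha|+|\beta|)/2}\, e^{-b|x-y|^2/t}\, dt
= a\, |x-y|^{-(d-2+|\alpha|+|\beta|)} \int_1^\infty s^{(d-4+|\alpha|+|\beta|)/2}\, e^{-bs}\, ds,
\]
where the last integral is a finite constant; this is where the exponent $d-2+|\alpha|+|\beta|$ appears. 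On the second piece, $e^{-b|x-y|^2/t} \le 1$ and $e^{\omega t} = e^{-\mu t/4}$ give rapid decay, so $\int_{|x-y|^2}^\infty t^{-d/2-(|\alpha|+|\beta|)/2} e^{-\mu t/4}\, dt$ is bounded by $c$ when $d-2+|\alpha|+|\beta| > 0$ (the integrand is integrable at $0$ too), and when the exponent is $0$ — only possible if $d = 2$ and $\alpha = \beta = 0$ — the borderline $\int_{|x-y|^2}^1 t^{-1}\, dt$ produces the $\log(1 + 1/|x-y|)$ term; one checks $|x-y|$ can be taken $\le$ some fixed bound or handle large $|x-y|$ separately using boundedness is not assumed here, so instead observe $\diam$ is irrelevant and the $\log(1+\cdot)$ formulation already absorbs the large-distance regime via the exponential. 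For $d-2+|\alpha|+|\beta| < 0$ (i.e.\ $d=2$, $|\alpha|+|\beta|=0$ is the only marginal case and negative values do not occur) there is nothing further.

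For the H\"older estimate I would argue similarly, now integrating the second displayed bound of Theorem~\ref{tcom201}\ref{tcom201-3} against $dt$. Write $h = x'-x$, $k = y'-y$, $\rho = |h|+|k| \le \frac12|x-y|$, and note that for all $t > 0$ the admissibility condition $|h|+|k| \le \tau\sqrt t + \tau'|x-y|$ holds once $\tau'$ is chosen $\ge \frac12$ (allowed, since $\tau' \in (0,1)$ is a free parameter of Theorem~\ref{tcom201}), so the H\"older-in-space bound applies on the whole range of $t$. Then
\[
|(\partial_x^\alpha\partial_y^\beta G)(x',y') - (\partial_x^\alpha\partial_y^\beta G)(x,y)|
\le \int_0^\infty a\, t^{-d/2-(|\alpha|+|\beta|)/2}\Big(\frac{\rho}{\sqrt t + |x-y|}\Big)^\kappa e^{-b|x-y|^2/t} e^{\omega t}\, dt,
\]
and again splitting at $t = |x-y|^2$: on $t \le |x-y|^2$ bound $(\sqrt t + |x-y|)^{-\kappa} \le |x-y|^{-\kappa}$ and reuse the substitution above to get $c\, \rho^\kappa |x-y|^{-(d-2+|\alpha|+|\beta|+\kappa)}$; on $t \ge |x-y|^2$ use $(\sqrt t + |x-y|)^{-\kappa} \le t^{-\kappa/2}$ together with $e^{\omega t}$ decay, producing a constant times $\rho^\kappa |x-y|^{-(d-2+|\alpha|+|\beta|+\kappa)}$ as well (in all relevant dimensions the resulting power of $|x-y|$ from the lower limit matches). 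The continuity of $\partial_x^\alpha\partial_y^\beta G$ up to $\overline\Omega \times \overline\Omega$ minus the diagonal then follows from the locally uniform convergence of the defining integral together with the continuity statement in Theorem~\ref{tcom201}\ref{tcom201-1} and the fact that the heat kernel derivatives extend continuously to the closure (a consequence of the H\"older bounds, since a uniformly H\"older family of continuous functions on $\Omega$ extends to $\overline\Omega$).

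The main obstacle I anticipate is the bookkeeping around the borderline dimension $d = 2$ with $\alpha = \beta = 0$, where the naive bound diverges logarithmically and one must extract exactly the $\log(1+1/|x-y|)$ behaviour — and, relatedly, making sure the estimates are genuinely uniform over \emph{unbounded} $\Omega$ (no $\diam\Omega$ enters), which is why the sharp choice $\omega = -\mu/4 < 0$ from Theorem~\ref{tcom201}\ref{tcom201-4} is essential rather than merely convenient: without strictly negative $\omega$ the large-$t$ integral would not converge. A secondary technical point is rigorously identifying $\int_0^\infty H_t\, dt$ as \emph{the} kernel of $A_D^{-1}$ (as opposed to merely a kernel of something), which I would settle by testing against $L_2$ functions and invoking uniqueness of integral kernels together with the identity $A_D^{-1}u = \int_0^\infty e^{-tA_D}u\, dt$ valid in $L_2(\Omega)$ under~(\ref{etcom202;1}).
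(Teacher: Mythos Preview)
Your proposal is correct and follows essentially the same approach as the paper: the paper's proof simply observes that the coercivity assumption~(\ref{etcom202;1}) allows the choice $\omega < 0$ in Theorem~\ref{tcom201} (with $\tau = 1$, $\tau' = \tfrac12$) and then says the estimates follow by the Laplace transform $A_D^{-1} = \int_0^\infty e^{-tA_D}\,dt$ as in \cite{EO6}, Theorem~4.1. You have spelled out exactly this computation, including the standard splitting of the $t$-integral at $|x-y|^2$ and the correct identification of the logarithmic borderline case $d=2$, $\alpha=\beta=0$.
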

\begin{proof}
The additional assumption (\ref{etcom202;1}) is the condition of Theorem~\ref{tcom201}\ref{tcom201-4}.
Choose $\tau = 1$ and $\tau' = \frac{1}{2}$ in Theorem~\ref{tcom201}.
Therefore one can choose $\omega < 0$ in Theorem~\ref{tcom201}\ref{tcom201-3}.
Then the estimates follow easily by the Laplace transform as in the proof of Theorem~4.1 in \cite{EO6}.
\end{proof}

If merely $0 \not\in \sigma(A_D)$ one  can also 
obtain fairly uniform Green kernel bounds.

\begin{thm} \label{tcom202.5}
Let $\kappa \in (0,1)$ and $\mu,M,K,\widetilde K,R > 0$.
Then there exists a $c > 0$ such that the following is valid.

Let $\Omega \subset \Ri^d$ be a open bounded set of class $C^{1+\kappa}$ with  parameter $K$
and $\diam \Omega \leq R$.
For all $k,l \in \{ 1,\ldots,d \} $ let $c_{kl}, b_k, c_k \in C^\kappa(\Omega) \cap L_\infty(\Omega)$,
and let $c_0 \in L_\infty(\Omega)$
with $\|c_{kl}\|_{C^\kappa(\Omega)} \leq M$,
$\|b_k\|_{C^\kappa(\Omega)} \leq M$, $\|c_k\|_{C^\kappa(\Omega)} \leq M$
and $\|c_0\|_{L_\infty(\Omega)} \leq M$.
Suppose 
\[
\RRe \sum_{k,l=1}^d c_{kl}(x) \, \xi_k \, \overline{\xi_l} \geq \mu \, |\xi|^2
\]
for all $\xi \in \Ci^d$ and $x \in \Omega$.
Further suppose $0 \notin  \sigma(A_D)$ and $\|A_D^{-1}\|_{2 \to 2} \leq \widetilde K$.

Then the operator $A_D^{-1}$ has 
a kernel $G \colon \{ (x,y) \in \Omega \times \Omega : x \neq y \} \to \Ci$, 
which is differentiable in each variable and the derivative is 
differentiable in the other variable.
Moreover, for every multi-index $\alpha,\beta$ with 
$0 \leq |\alpha|,|\beta| \leq 1$ the function
$\partial_x^\alpha \, \partial_y^\beta \, G$ 
extends to a continuous function on 
$ \{ (x,y) \in \overline \Omega \times \overline \Omega : x \neq y \} $
with estimates
\begin{equation}
|(\partial_x^\alpha \, \partial_y^\beta \, G)(x,y)| 
\leq \left\{ \begin{array}{ll}
c \, |x-y|^{-(d-2+|\alpha| + |\beta|)}  & \mbox{if } d-2+|\alpha| + |\beta| \neq 0 \\[5pt]
c \, \log( 1 + \frac{1}{|x-y|} ) & \mbox{if } d-2+|\alpha| + |\beta| = 0
                 \end{array} \right.
\label{etcom202.5;1}
\end{equation}
and
\[
|(\partial_x^\alpha \, \partial_y^\beta \, G)(x',y') - (\partial_x^\alpha \, \partial_y^\beta \, G)(x,y)| 
\leq c \, \frac{(|x'-x| + |y'-y|)^\kappa}{|x-y|^{d-2+|\alpha| + |\beta| +\kappa}}  
\]
for all $x,x',y,y' \in \Omega$ with $x \neq y$, $x' \neq y'$ and
$|x-x'| + |y-y'| \leq \frac{1}{2} \, |x-y|$.
\end{thm}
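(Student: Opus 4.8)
The plan is to reduce Theorem~\ref{tcom202.5} to Theorem~\ref{tcom202} by a shifting argument, absorbing the lack of the coercivity hypothesis (\ref{etcom202;1}) into a large additive constant and then correcting for that shift by a resolvent identity. First I would pick $\lambda \geq 0$ large enough, depending only on $\mu$, $M$ and $K$, so that the shifted form $\gota_\lambda(u,v) = \gota(u,v) + \lambda\,(u,v)_{L_2(\Omega)}$ satisfies $\frac{\mu}{2}\,\|u\|_{W^{1,2}(\Omega)}^2 \leq \RRe \gota_\lambda(u,u)$ for all $u \in W^{1,2}(\Omega)$; this is possible by the ellipticity assumption together with the $L_\infty$-bounds on $b_k$, $c_k$, $c_0$ and a Cauchy--Schwarz/Young argument on the lower-order terms. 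Write $A_{D,\lambda}$ for the operator associated with $\gota_\lambda|_{W^{1,2}_0 \times W^{1,2}_0}$, so $A_{D,\lambda} = A_D + \lambda I$. Theorem~\ref{tcom202} then applies to $A_{D,\lambda}$ (with possibly different parameters, but still controlled by $\kappa$, $\mu$, $M$, $K$), giving a kernel $G_\lambda$ for $A_{D,\lambda}^{-1}$ with exactly the pointwise and H\"older bounds in the statement, where the constant depends only on $\kappa$, $\mu$, $M$, $K$.

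Next I would transfer these bounds from $A_{D,\lambda}^{-1}$ to $A_D^{-1}$. The resolvent identity gives
\[
A_D^{-1} = A_{D,\lambda}^{-1} + \lambda\,A_D^{-1}\,A_{D,\lambda}^{-1},
\]
and iterating, $A_D^{-1} = \sum_{n=0}^{N-1} \lambda^n (A_{D,\lambda}^{-1})^{n+1} + \lambda^N A_D^{-1}(A_{D,\lambda}^{-1})^N$. On the operator level the factor $A_D^{-1}$ is bounded on $L_2(\Omega)$ by the hypothesis $\|A_D^{-1}\|_{2\to 2} \leq \widetilde K$; the powers $(A_{D,\lambda}^{-1})^{n+1}$ for $n \geq 1$ and the remainder $(A_{D,\lambda}^{-1})^N$ are bounded $L_2 \to C^\nu(\overline\Omega)$ or even have smooth kernels once $n$ or $N$ is large enough relative to $d$, because each factor $A_{D,\lambda}^{-1}$ gains regularity via the elliptic estimates (Proposition~\ref{pcom214}) and the Green-kernel bounds for $G_\lambda$. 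Thus the only singular contribution near the diagonal comes from the single term $A_{D,\lambda}^{-1}$ with kernel $G_\lambda$, while all correction terms contribute kernels that are bounded (and H\"older continuous) with constants controlled by $\kappa$, $\mu$, $M$, $K$, $\widetilde K$, $R$. Adding these up yields a kernel $G$ for $A_D^{-1}$ with the singularity of order $d-2+|\alpha|+|\beta|$ (or the logarithm in the borderline case), and the stated H\"older bound, since adding a uniformly bounded, uniformly $C^\nu$ function for $\nu \geq \kappa$ only changes constants. The differentiability statements for $G$ in each variable follow because $G_\lambda$ has them and the correction terms are smooth enough; alternatively one can run the whole argument with a multi-index $\alpha,\beta$ fixed from the start, applying the gradient bounds already contained in Theorem~\ref{tcom202}.

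The main obstacle is the bookkeeping in the iterated resolvent expansion: one must show that after enough iterations the remainder term $\lambda^N A_D^{-1}(A_{D,\lambda}^{-1})^N$ has a continuous (indeed $C^\kappa$) Schwartz kernel with a bound depending only on the allowed parameters, which requires combining the $L_2 \to C^\kappa$ mapping properties of $A_{D,\lambda}^{-1}$ from Proposition~\ref{pcom214} (using $0 \notin \sigma(A_D)$ only through $\widetilde K$, and boundedness of $\Omega$ through $R$) with the Dunford--Pettis type identification of the kernel; here one has to be a little careful that constants do not secretly depend on the spectral gap of $A_D$ beyond $\widetilde K$. A second, minor technical point is checking that the intermediate terms $\lambda^n(A_{D,\lambda}^{-1})^{n+1}$ for $1 \leq n \leq N-1$ also have uniformly $C^\kappa$ kernels: this follows from composing the Green-kernel bounds of $G_\lambda$ with itself, using that a single convolution-type composition of two kernels each of order $d-2$ already lands in $C^\nu$ for suitable $\nu$ when $d \geq 2$, but one should verify the borderline dimensions $d=2,3,4$ by hand.
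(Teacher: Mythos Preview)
Your strategy is the paper's: shift by $\lambda$ to force coercivity, iterate the resolvent identity, and isolate the singular contribution $A_{D,\lambda}^{-1}$ from a smoother remainder containing the single factor $A_D^{-1}$. Two points in the execution need repair.

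First, your remainder $\lambda^N A_D^{-1}(A_{D,\lambda}^{-1})^N$ has $A_D^{-1}$ on the outside. When you apply $\partial_x^\alpha$ to its kernel, the derivative lands on the kernel of $A_D^{-1}$ --- precisely the object whose existence and bounds you are trying to establish --- so the argument is circular as written. The cure is to exploit that $A_D^{-1}$ and $(A_D+\lambda I)^{-1}$ commute (both are resolvents of $A_D$) and write the remainder symmetrically as $(A_{D,\lambda}^{-1})^{m}\,A_D^{-1}\,(A_{D,\lambda}^{-1})^{m}$ with $m$ large; the paper takes $m=d$. Then $\partial_x^\alpha$ acts on the left block and $\partial_y^\beta$ on the right block, both of which map $L_2(\Omega)$ to $C^\kappa(\Omega)$ with controlled norms (the paper obtains this from the heat-kernel bounds of Theorem~\ref{tcom201}\ref{tcom201-5} via the Laplace transform, rather than through Theorem~\ref{tcom202}, but either route works), while the middle factor contributes only $\widetilde K$.

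Second, your claim that ``a single convolution-type composition of two kernels each of order $d-2$ already lands in $C^\nu$'' is false for $d\ge 5$: the composition has order $d-4$ and is still singular. It is the high dimensions, not the borderline low ones, that require more iterations. This is harmless for the theorem because the intermediate terms $(A_{D,\lambda}^{-1})^{n+1}$ have kernels strictly less singular than the target order $d-2+|\alpha|+|\beta|$, so the bounds~(\ref{etcom202.5;1}) hold for them with room to spare; but you should not assert they are bounded or $C^\kappa$ unless $2(n+1) > d + |\alpha| + |\beta| + \kappa$.
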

\begin{proof}
Let $a, b > 0$ and $\omega \in \Ri$ be as in Theorem~\ref{tcom201}
with the choice $\tau = 1$ and $\tau' = \frac{1}{2}$.
Choose $\lambda = \omega + 2$ and consider the operator $A_D + \lambda \, I$.

The resolvent equation gives 
\[
A_D^{-1} - (A_D + \lambda \, I)^{-1}
= \lambda \, (A_D + \lambda \, I)^{-1} \, A_D^{-1}
.  \]
Hence by iteration
\begin{eqnarray}
A_D^{-1}
& = & \sum_{k=1}^{2d-1} \lambda^{k-1} \, (A_D + \lambda \, I)^{-k}
   + \lambda^{2d-1} \, (A_D + \lambda \, I)^{-2d} \, A_D^{-1}  \nonumber  \\
& = & \sum_{k=1}^{2d-1} \lambda^{k-1} \, (A_D + \lambda \, I)^{-k}
   + \lambda^{2d-1} \, (A_D + \lambda \, I)^{-d} \, A_D^{-1} \, (A_D + \lambda \, I)^{-d}
. 
\label{etcom202;2}
\end{eqnarray}
If $m \in \Ni$, then 
\[
(A_D + \lambda \, I)^{-m} 
= \frac{1}{(m-1)!} \int_0^\infty t^{m-1} \, e^{-t (A_D + \lambda \, I)} \, dt
.  \]
Since $t^{m-1} \leq (m-1)! \, \varepsilon^{-(m-1)} \, e^{\varepsilon t}$
for all $\varepsilon > 0$, one can estimate the kernel of 
$(A_D + \lambda \, I)^{-m}$ as in the previous theorem.

It remains to bound the last term in (\ref{etcom202;2}).
Recall that 
\[
\|\partial^\alpha \, e^{-t A_D}\|_{L_2(\Omega) \to C^\kappa(\Omega)}
\leq a \, t^{-|\alpha|/2} \, t^{-d/4} \, t^{-\kappa/2} \, e^{\omega t}
\]
for all $t > 0$ and for every multi-index $\alpha$ with $|\alpha| \leq 1$
by Theorem~\ref{tcom201}\ref{tcom201-5}.
Hence 
\begin{eqnarray*}
\|\partial^\alpha \, (A_D + \lambda \, I)^{-d}\|_{L_2(\Omega) \to C^\kappa(\Omega)}
& = & \frac{1}{(d-1)!} \, 
   \Big\| \int_0^\infty t^{d-1} \, e^{-\lambda t} \, \partial^\alpha \, e^{-t A_D} \, dt
   \Big\|_{L_2(\Omega) \to C^\kappa(\Omega)}  \\
& \leq & c_\alpha 
= \frac{1}{(d-1)!} \, \int_0^\infty t^{d-1} \, \, 
    a \, t^{-|\alpha|/2} \, t^{-d/4} \, t^{-\kappa/2} \, e^{-2 t} \, dt
.
\end{eqnarray*}
Similarly, Theorem~\ref{tcom201}\ref{tcom201-6} gives a bound
\[
\|\partial^\alpha \, (A_D + \lambda \, I)^{-d}\|_{L_2(\Omega) \to C_b(\Omega)}
\leq \widetilde c_\alpha 
= \frac{1}{(d-1)!} \, \int_0^\infty t^{d-1} \, \, 
    a \, t^{-|\alpha|/2} \, t^{-d/4} \, e^{-2 t} \, dt
.  \]
Without loss of generality we may assume that 
\[
\|\partial^\alpha \, (A_D^* + \lambda \, I)^{-d}\|_{L_2(\Omega) \to C^\kappa(\Omega)}
\leq c_\alpha 
\]
and 
\[
\|\partial^\alpha \, (A_D^* + \lambda \, I)^{-d}\|_{L_2(\Omega) \to C_b(\Omega)}
\leq \tilde c_\alpha 
\]
for the two adjoint operators.

Next let $\alpha,\beta$ be two multi-indices with $|\alpha|, |\beta| \leq 1$.
By the Riesz representation theorem
there exists a function $K^\alpha \colon \Omega \times \Omega \to \Ci$ such that 
$K^\alpha(x, \cdot) \in L_2(\Omega)$ and 
\[
(\partial^\alpha \, (A_D + \lambda \, I)^{-d} \, A_D^{-1} u)(x)
= (u, \overline{K^\alpha(x, \cdot)})_{L_2(\Omega)}
\]
for all $u \in L_2(\Omega)$ and $x \in \Omega$.
Then 
\[
(\partial^\alpha \, (A_D + \lambda \, I)^{-d} \, A_D^{-1} u)(x)
= \int_\Omega K^\alpha(x,y) \, u(y) \, dy
\]
for all $u \in L_2(\Omega)$ and $x \in \Omega$.
Similarly, there exists a function $K_\beta \colon \Omega \times \Omega \to \Ci$ such that 
$K_\beta(x, \cdot) \in L_2(\Omega)$ and 
\[
(\partial^\beta \, (A_D^* + \lambda \, I)^{-d} u)(x)
= \int_\Omega K_\beta(x,y) \, u(y) \, dy
\]
for all $u \in L_2(\Omega)$ and $x \in \Omega$.
Then 
\[
((\partial^\beta \, (A_D^* + \lambda \, I)^{-d})^* u)(x)
= \int_\Omega \overline{K_\beta(y,x)} \, u(y) \, dy
\]
for all $u \in L_2(\Omega)$ and $x \in \Omega$.
Define $K^{(\alpha,\beta)} \colon \Omega \times \Omega \to \Ci$ by 
\[
K^{(\alpha,\beta)}(x,y)
= (-1)^{|\beta|} \, \int_\Omega K^\alpha(x,z) \, \overline{K_\beta(y,z)} \, dz
= (-1)^{|\beta|} \, (K^\alpha(x,\cdot) , K_\beta(y,\cdot))_{L_2(\Omega)}
.  \]
Then 
\[
\Big( \partial^\alpha \, (A_D + \lambda \, I)^{-d} \, A_D^{-1} \, 
                 (A_D + \lambda \, I)^{-d} \, \partial^\beta u \Big)(x)
= \int_\Omega K^{(\alpha,\beta)}(x,y) \, u(y) \, dy
\]
for all $u \in L_2(\Omega)$ (see also (1) in \cite{AE11}).
By the Cauchy--Schwarz inequality,   
\[
|K^{(\alpha,\beta)}(x,y)| \leq \tilde c_\alpha \, \tilde c_\beta \, \|A_D^{-1}\|_{2 \to 2}
\]
for all $x,y \in \Omega$, 
which gives immediately uniform bounds as in (\ref{etcom202.5;1}). 
Here the diameter of $\Omega$ is involved.
Next let $x,x',y, y' \in \Omega$ with $|x-x'| \leq 1$ and 
$|y - y'| \leq 1$.
Then 
\begin{eqnarray*}
|K^{(\alpha,\beta)}(x,y) - K^{(\alpha,\beta)}(x',y')|
& \leq & |(K^\alpha(x,\cdot), K_\beta(y,\cdot) - K_\beta(y',\cdot))_{L_2(\Omega)}|  \\
& & \hspace*{10mm} {}
   + |(K^\alpha(x,\cdot) - K^\alpha(x',\cdot), K_\beta(y',\cdot))_{L_2(\Omega)}|  \\
& \leq & c_\beta \, \tilde c_\alpha \, \|A_D^{-1}\|_{2 \to 2} \, |y - y'|^\kappa
   + c_\alpha \, \tilde c_\beta \, \|A_D^{-1}\|_{2 \to 2} \, |x - x'|^\kappa
.  
\end{eqnarray*}
Since $\Omega$ is bounded, there is a suitable $c_1 > 0$ such that 
\[
|K^{(\alpha,\beta)}(x,y) - K^{(\alpha,\beta)}(x',y')|
\leq c_1 \, \frac{(|x'-x| + |y'-y|)^\kappa}{|x-y|^{d-2+|\alpha| + |\beta| +\kappa}}  
\]
for all $x,x',y,y' \in \Omega$ with $x \neq y$, $x' \neq y'$ and
$|x-x'| + |y-y'| \leq \frac{1}{2} \, |x-y|$.
Also here the diameter of $\Omega$ is involved.
This proves the theorem.
\end{proof}

\section{The harmonic lifting on $L_p$} \label{Scom6}

The semigroup $(e^{-t A_D})_{t > 0}$ on $L_2(\Omega)$ 
extends consistently to a semigroup on $L_p(\Omega)$
for all $p \in [1,\infty]$, which is a $C_0$-semigroup if $p \in [1,\infty)$.
We denote by $-A_D^{(p)}$ the generator of this semigroup if confusion is possible.

\begin{prop} \label{pcom209}
Let $\kappa \in (0,1)$ and $K > 0$.
Let $\Omega \subset \Ri^d$ be an open set of class $C^{1+\kappa}$ with parameter $K$. 
For all $k,l \in \{ 1,\ldots,d \} $ let $c_{kl}, b_k, c_k \in C^\kappa(\Omega) \cap L_\infty(\Omega)$,
and let $c_0 \in L_\infty(\Omega)$.
Suppose there is a $\mu > 0$ such that 
$\RRe \sum_{k,l=1}^d c_{kl}(x) \, \xi_k \, \overline{\xi_l} \geq \mu \, |\xi|^2$
for all $\xi \in \Ci^d$ and $x \in \Omega$.
Further suppose that $0 \not\in \sigma(A_D)$.
Then one has the following.
\begin{tabel} 
\item \label{pcom209-1}
If $p \in [1,\infty)$, then $D(A_D^{(p)}) \subset W^{1,p}(\Omega)$ 
and the map $\partial^\alpha \, (A_D^{(p)})^{-1}$ is continuous from $L_p(\Omega)$ 
into $L_p(\Omega)$ for every multi-index $\alpha$ with $|\alpha| \leq 1$.
\item \label{pcom209-2}
If $\nu \in (0,\kappa]$, $p \in (\frac{d}{1-\nu},\infty]$
and $\alpha$ is a multi-index with $|\alpha| \leq 1$, then 
$\partial^\alpha \, A_D^{-1}$ maps $L_p(\Omega)$ continuously into $C^\nu(\Omega) \cap L_\infty(\Omega)$.
\item \label{pcom209-3}
If $p \in (1,\infty]$, $\Omega$ is bounded
and $\alpha$ is a multi-index with $|\alpha| \leq 1$, then the operator
$u \mapsto (\partial^\alpha \, A_D^{-1} u)|_\Gamma$ 
from $L_\infty(\Omega)$ into $C(\Gamma)$
extends consistently 
to a bounded operator from $L_p(\Omega)$ into $L_p(\Gamma)$.
\item \label{pcom209-4}
If $\nu \in (0,\kappa]$, $p \in (\frac{d}{1-\nu},\infty]$
and $\alpha$ is a multi-index with $|\alpha| \leq 1$, then 
$u \mapsto (\partial^\alpha \, A_D^{-1} u)|_\Gamma$ 
is continuous from $L_p(\Omega)$ into $C^\nu(\Gamma) \cap L_\infty(\Gamma)$.
\end{tabel}
\end{prop}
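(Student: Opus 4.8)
The plan is to base everything on the mapping estimates of Theorem~\ref{tcom201} (which hold for unbounded $\Omega$) and Proposition~\ref{pcom201.5}, combined with the resolvent identity
\[
A_D^{-1} = (A_D + \lambda I)^{-1} + \lambda \, (A_D + \lambda I)^{-1} \, A_D^{-1} ,
\]
so that the first-order derivative and the boundary trace only ever have to act on a single resolvent $(A_D + \lambda I)^{-1}$, which in turn is the Laplace transform $\int_0^\infty e^{-\lambda t} \, e^{-t A_D} \, dt$ of the semigroup. The only genuinely non-formal ingredient — and the step I expect to be the main obstacle — is making sense of $A_D^{-1}$ on $L_p(\Omega)$ for $p \neq 2$ when $\Omega$ is unbounded: one cannot route the $L_2$-inverse through $L_2$ to reach $L_p$ for $p > 2$, since $e^{-t A_D}$ does not map $L_p$ into $L_2$ for such $p$. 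I would resolve this by invoking that, by Theorem~\ref{tcom201}, $(e^{-t A_D})_{t>0}$ has a kernel with Gaussian upper bounds, hence its $L_p$-spectrum is independent of $p$; since $0 \notin \sigma(A_D) = \sigma(A_D^{(2)})$ this gives $0 \notin \sigma(A_D^{(p)})$ for all $p \in [1,\infty)$, and by duality also for $p = \infty$, the inverses being mutually consistent. (For bounded $\Omega$ one could instead argue directly from Theorem~\ref{tcom202.5}: the bound $|\partial_x^\alpha G(x,y)| \le c \, |x-y|^{-(d-2+|\alpha|)}$ is integrable over a bounded $\Omega$ in each variable, so a Schur estimate already gives \ref{pcom209-1}, and the accompanying H\"older bound, after splitting the integral at $|x-y| \sim |x-x'|$, gives \ref{pcom209-2}, with borderline exponent exactly $p = d/(1-\nu)$.) Fix now $\lambda$ larger than the growth bound $\omega$ of Theorem~\ref{tcom201}, so that $(A_D + \lambda I)^{-1} u = \int_0^\infty e^{-\lambda t} \, e^{-t A_D} u \, dt$ converges as a Bochner integral in every $L_p$ and the first-order derivative may be pulled inside.

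For \ref{pcom209-1} and \ref{pcom209-2}: applying Theorem~\ref{tcom201}\ref{tcom201-6} under the integral sign gives
\[
\|\partial^\alpha (A_D + \lambda I)^{-1}\|_{L_p(\Omega) \to L_p(\Omega)}
\le a \int_0^\infty e^{-(\lambda - \omega)t} \, t^{-|\alpha|/2} \, dt < \infty
\]
for $|\alpha| \le 1$, and Theorem~\ref{tcom201}\ref{tcom201-5} gives $\|\partial^\alpha (A_D + \lambda I)^{-1}\|_{L_p(\Omega) \to C^\nu(\Omega)} \le a \int_0^\infty e^{-(\lambda-\omega)t} \, t^{-|\alpha|/2 - d/(2p) - \nu/2} \, dt$, which is finite precisely when $\tfrac{|\alpha|}{2} + \tfrac{d}{2p} + \tfrac{\nu}{2} < 1$, i.e.\ $p > d/(2-|\alpha|-\nu)$; for $|\alpha| = 1$ this is exactly the hypothesis $p > d/(1-\nu)$ and for $|\alpha| = 0$ it is weaker. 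Feeding these into the resolvent identity, the derivative $\partial^\alpha$ hits only the left-hand factor $(A_D + \lambda I)^{-1}$, while the remaining factor $A_D^{-1}$ is bounded on $L_p$ by the preliminary step; hence $\partial^\alpha A_D^{-1}$ is bounded from $L_p(\Omega)$ into $L_p(\Omega)$, respectively into $C^\nu(\Omega) \cap L_\infty(\Omega)$. The inclusion $D(A_D^{(p)}) \subset W^{1,p}(\Omega)$ is then immediate: every $u \in D(A_D^{(p)})$ equals $(A_D^{(p)})^{-1}(A_D^{(p)} u)$ with $A_D^{(p)} u \in L_p(\Omega)$.

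For \ref{pcom209-4} I would just compose \ref{pcom209-2} with restriction to $\Gamma$: on a $C^{1+\kappa}$ domain every element of $C^\nu(\Omega)$ extends to $C^\nu(\overline\Omega)$, and $f \mapsto f|_\Gamma$ does not increase either the $C^\nu$-seminorm or the sup norm, so $u \mapsto (\partial^\alpha A_D^{-1} u)|_\Gamma$ is continuous from $L_p(\Omega)$ into $C^\nu(\Gamma) \cap L_\infty(\Gamma)$. For \ref{pcom209-3} (here $\Omega$ is bounded, so Proposition~\ref{pcom201.5} applies) I would run the same resolvent-identity scheme with target $L_p(\Gamma)$. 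When $|\alpha| = 0$ the composition $(A_D + \lambda I)^{-1} \colon L_p(\Omega) \to W^{1,p}(\Omega) \to L_p(\Gamma)$ is bounded by \ref{pcom209-1} and boundedness of the trace. When $|\alpha| = 1$, interchanging the trace with the Bochner integral gives $(\partial_k (A_D + \lambda I)^{-1} u)|_\Gamma = \int_0^\infty e^{-\lambda t} \, (\partial_k e^{-t A_D} u)|_\Gamma \, dt$, and Proposition~\ref{pcom201.5} yields
\[
\|u \mapsto (\partial_k (A_D + \lambda I)^{-1} u)|_\Gamma\|_{L_p(\Omega) \to L_p(\Gamma)}
\le a \int_0^\infty e^{-(\lambda-\omega)t} \, t^{-1/2 - 1/(2p)} \, dt ,
\]
which is finite precisely for $p > 1$, the hypothesis of \ref{pcom209-3}. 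Plugging into the resolvent identity, with the surviving $A_D^{-1}$ bounded on $L_p(\Omega)$, gives $\|(\partial^\alpha A_D^{-1} u)|_\Gamma\|_{L_p(\Gamma)} \le c \, \|u\|_{L_p(\Omega)}$. Finally, consistency of this extension with the $C(\Gamma)$-valued operator already defined on $L_\infty(\Omega)$ via \ref{pcom209-2}--\ref{pcom209-4} is routine, all the constructions above being consistent in $p$; the whole argument thus reduces, after the preliminary spectral step, to checking that explicit integrals $\int_0^\infty t^{-s} e^{-ct}\,dt$ converge under the stated restrictions on $p$.
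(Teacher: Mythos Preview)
Your proof is correct and follows essentially the same route as the paper: choose $\lambda$ large enough that the shifted form is coercive, write $(A_D+\lambda I)^{-1}$ as the Laplace transform of the semigroup, apply Theorem~\ref{tcom201}\ref{tcom201-5}--\ref{tcom201-6} and Proposition~\ref{pcom201.5} under the integral, and pass from $(A_D+\lambda I)^{-1}$ to $A_D^{-1}$ by the resolvent identity. The paper records this last step simply as $\partial^\alpha(A_D^{(p)})^{-1}=\partial^\alpha(A_D^{(p)}+\lambda I)^{-1}\,(A_D+\lambda I)\,A_D^{-1}$.

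You are actually a bit more careful than the paper on one point: you explicitly address why $0\notin\sigma(A_D^{(p)})$ for all $p$, via $p$-independence of the spectrum under Gaussian kernel bounds, whereas the paper simply writes down the formula above. Your parenthetical alternative for bounded $\Omega$ via the Green-kernel bounds of Theorem~\ref{tcom202.5} and a Schur test would also work and gives a second, self-contained route to \ref{pcom209-1}--\ref{pcom209-2} in that case.
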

\begin{proof}
`\ref{pcom209-1}'.
There exists a $\lambda > 0$ such that 
\[
\frac{\mu}{2} \|u\|_{W^{1,2}(\Omega)}^2
\leq \RRe \gota(u,u) + \lambda \|u\|_{L_2(\Omega)}^2
\]
for all $u \in W^{1,2}(\Omega)$.
Then for all $p \in [1,\infty]$ the function 
$t \mapsto \partial^\alpha \, e^{-t (A_D + \lambda \, I)}$
from $(0,\infty)$ into $\cl(L_p(\Omega))$ is 
integrable by Theorem~\ref{tcom201}\ref{tcom201-6}, see also 
Statement~\ref{tcom201-4} of that theorem.
Hence $D(A_D^{(p)} + \lambda \, I) \subset W^{1,p}(\Omega)$ 
and the map $\partial^\alpha \, (A_D^{(p)} + \lambda \, I)^{-1}$ is continuous from $L_p(\Omega)$ 
into $L_p(\Omega)$ for all $k \in \{ 1,\ldots,d \} $.
Then also 
$\partial^\alpha \, (A_D^{(p)})^{-1} 
= \partial^\alpha \, (A_D^{(p)} + \lambda \, I)^{-1}  (A_D + \lambda \, I) \, A_D^{-1}$
is bounded.

The proof of Statement~\ref{pcom209-2} is similar using Theorem~\ref{tcom201}\ref{tcom201-5}
and for Statement~\ref{pcom209-3} use Proposition~\ref{pcom201.5}.
Statement~\ref{pcom209-4} follows from Statement~\ref{pcom209-2}.
\end{proof}

In the proof of the next proposition we use the uniform bounds that 
we derived in  Section~\ref{Scom4}.

\begin{prop} \label{pcom203}
Let $\kappa \in (0,1)$.
Let $\Omega \subset \Ri^d$ be an open bounded set of class~$C^{1+\kappa}$.
For all $k,l \in \{ 1,\ldots,d \} $ let $c_{kl}, b_k, c_k \in C^\kappa(\Omega)$,
and let $c_0 \in L_\infty(\Omega)$.
Suppose there is a $\mu > 0$ such that 
$\RRe \sum_{k,l=1}^d c_{kl}(x) \, \xi_k \, \overline{\xi_l} \geq \mu \, |\xi|^2$
for all $\xi \in \Ci^d$ and $x \in \Omega$.
Further suppose that $0 \not\in \sigma(A_D)$.
Let $\nu \in (0,\kappa]$ and $p \in (\frac{d}{1-\nu},\infty]$.
Let $u \in L_p(\Omega)$.
Then $A_D^{-1} u$ has a weak conormal derivative and 
\[
\partial_n^\gota \, A_D^{-1} u 
= \sum_{k,l=1}^d n_k \, ( c_{kl} \, \partial_l \, A_D^{-1} u)|_\Gamma
   + \sum_{k=1}^d n_k \, ( b_k \, A_D^{-1} u)|_\Gamma
.   \]
Moreover, $\partial_n^\gota \, A_D^{-1} u \in C^\nu(\Gamma)$
and the map $u \mapsto \partial_n^\gota \, A_D^{-1} u$ is 
continuous from $L_p(\Omega)$ into $C^\nu(\Gamma)$.
\end{prop}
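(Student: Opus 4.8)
The plan is to compute the conormal derivative of $w:=A_D^{-1}u$ by a Gauss--Green argument, after first reducing to a coercive form and upgrading the regularity of $w$ with the results of Sections~\ref{Scom3}--\ref{Scom4}.

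\emph{Set-up and regularity.} I would first reduce to the case in which $\gota$ is coercive, i.e.\ $\tfrac{\mu}{2}\|v\|_{W^{1,2}(\Omega)}^2\le\RRe\gota(v,v)$ for all $v\in W^{1,2}(\Omega)$: choosing $\lambda\ge0$ with $\gota_\lambda$ coercive, one has $w=A_D^{-1}u=(A_D+\lambda I)^{-1}(u+\lambda w)$, and since $w\in L_\infty(\Omega)$ by Proposition~\ref{pcom209}\ref{pcom209-2} and $\Omega$ is bounded, $u+\lambda w\in L_p(\Omega)$ with $\|u+\lambda w\|_{L_p(\Omega)}\le c\,\|u\|_{L_p(\Omega)}$; the terms $\lambda(w,v)_{L_2(\Omega)}$ cancel in the identity defining the conormal derivative and $\gota_\lambda$ has the same coefficients $c_{kl},b_k$ as $\gota$, so both the conormal derivative of $w$ and the function in the claimed formula are unchanged, and the statement for $\gota$ (including the continuity of $u\mapsto\partial_n^\gota A_D^{-1}u$) follows from the coercive case applied to $\gota_\lambda$. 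Assuming coercivity, Proposition~\ref{pcom209}\ref{pcom209-2} gives $w,\partial_1w,\dots,\partial_dw\in C^\nu(\Omega)\cap L_\infty(\Omega)$ with $C^\nu$-norms $\le c\,\|u\|_{L_p(\Omega)}$; as $\Omega$ is of class $C^{1+\kappa}$ these extend to $C^\nu(\overline\Omega)$, which is a Banach algebra. Since $c_{kl},b_k\in C^\kappa(\Omega)\subset C^\nu(\Omega)$ and the outer normal has components $n_k\in C^\kappa(\Gamma)\subset C^\nu(\Gamma)$, the functions $F_k:=\sum_{l=1}^dc_{kl}\,\partial_lw+b_k\,w$ lie in $C^\nu(\overline\Omega)$ and $\psi_0:=\sum_{k=1}^dn_k\,F_k|_\Gamma$ lies in $C^\nu(\Gamma)$, each with norm $\le c\,\|u\|_{L_p(\Omega)}$; in particular $u\mapsto\psi_0$ is continuous from $L_p(\Omega)$ into $C^\nu(\Gamma)$. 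Finally, testing $\gota(w,v)=(u,v)_{L_2(\Omega)}$, valid for $v\in W^{1,2}_0(\Omega)$, against $v\in\cd(\Omega)$ shows $\ca w=u\in L_2(\Omega)$.

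\emph{The Gauss--Green identity.} It remains to prove $\gota(w,v)-\int_\Omega u\,\overline v=\int_\Gamma\psi_0\,\overline{\Tr v}$ for all $v\in W^{1,2}(\Omega)$. Writing $\gota(w,v)=\sum_{k=1}^d\int_\Omega F_k\,\overline{\partial_kv}+\int_\Omega g\,\overline v$ with $g:=\sum_{k=1}^dc_k\,\partial_kw+c_0\,w\in L_\infty(\Omega)$ and testing against $\cd(\Omega)$ gives $\divv F=g-u$ in $\cd'(\Omega)$, so $F=(F_1,\dots,F_d)$ lies in $L_2(\Omega)^d$ with $\divv F\in L_2(\Omega)$. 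The Gauss--Green formula for such vector fields on the bounded Lipschitz domain $\Omega$ yields $\sum_{k=1}^d\int_\Omega F_k\,\overline{\partial_kv}+\int_\Omega(\divv F)\,\overline v=\langle\gamma_nF,\overline{\Tr v}\rangle$ for all $v\in W^{1,2}(\Omega)$, where $\gamma_nF\in H^{-1/2}(\Gamma)$ is the distributional normal trace. Granting the identification $\gamma_nF=\psi_0$, substituting it together with $\divv F=g-u$ makes the $g$-terms cancel and gives exactly the required identity, so $w\in D(\partial_n^\gota)$ and $\partial_n^\gota w=\psi_0$.

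\emph{The main point.} The decisive and, I expect, hardest step is the identification $\gamma_nF=\psi_0=\sum_kn_kF_k|_\Gamma$: although $F$ is continuous (even $C^\nu$) up to $\Gamma$, the individual derivatives $\partial_kF_k$ are merely distributions, so $\divv F$ cannot be split and naive reflection/extension arguments break down. I would prove it by approximation, exploiting the uniform estimates of Sections~\ref{Scom3}--\ref{Scom4}: mollify $c_{kl},b_k$ and $c_0$ to smooth $c_{kl}^{(j)},b_k^{(j)},c_0^{(j)}$ on $\overline\Omega$ with $c_{kl}^{(j)}\to c_{kl}$, $b_k^{(j)}\to b_k$ uniformly and boundedly in $C^\kappa(\overline\Omega)$, $c_0^{(j)}\to c_0$ in $L_2(\Omega)$ and $\|c_0^{(j)}\|_\infty\le\|c_0\|_\infty$; the resulting forms are uniformly coercive for $j$ large, so $w^{(j)}:=(A_D^{(j)})^{-1}u$ is well defined and, by the uniform version of Proposition~\ref{pcom209}\ref{pcom209-2}, $w^{(j)}$ and its first derivatives are bounded in $C^\nu(\overline\Omega)$, hence precompact in $C^1(\overline\Omega)$. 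Passing to the limit in the weak formulation and using uniqueness ($0\notin\sigma(A_D)$) gives $w^{(j)}\to w$ in $C^1(\overline\Omega)$, whence $F^{(j)}:=\big(\sum_lc_{kl}^{(j)}\partial_lw^{(j)}+b_k^{(j)}w^{(j)}\big)_k\to F$ uniformly on $\overline\Omega$ and $\divv F^{(j)}=g^{(j)}-u\to g-u=\divv F$ in $L_2(\Omega)$. Each $w^{(j)}$ lies in $W^{2,2}_\loc(\Omega)$ by interior elliptic regularity, so each $F^{(j)}$ lies in $W^{1,2}_\loc(\Omega)$ and is continuous up to $\Gamma$; exhausting $\Omega$ by Lipschitz domains $\Omega_m\uparrow\Omega$ (level sets of a smoothing of the distance to $\Gamma$), applying the classical divergence theorem on $\Omega_m$ to $F^{(j)}\,\overline v$ and letting $m\to\infty$ (using $\partial\Omega_m\to\Gamma$ together with the uniform continuity of $F^{(j)}$ up to $\Gamma$, which is where $\Omega\in C^{1+\kappa}$ enters) yields $\langle\gamma_nF^{(j)},\overline{\Tr v}\rangle=\int_\Gamma\big(\sum_kn_kF^{(j)}_k|_\Gamma\big)\overline{\Tr v}$ for $v\in W^{1,2}(\Omega)$; letting $j\to\infty$, the left-hand side converges to $\langle\gamma_nF,\overline{\Tr v}\rangle$ by continuity of $\gamma_n$ on $\{G\in L_2(\Omega)^d:\divv G\in L_2(\Omega)\}$ and the right-hand side to $\int_\Gamma\psi_0\,\overline{\Tr v}$ by uniform convergence, giving $\gamma_nF=\psi_0$. (Alternatively, one may simply quote this identification as a standard property of bounded vector fields with $L_2$ divergence that are continuous up to a Lipschitz boundary.) Beyond this limiting argument the proof is routine, using only the elliptic regularity already in place.
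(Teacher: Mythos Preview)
Your proof is correct and follows essentially the same route as the paper: reduce to the coercive shifted form, regularise the coefficients, use the uniform $C^\nu$-bounds for $\partial^\alpha(A_D^{(j)}+\lambda I)^{-1}$ coming from Theorem~\ref{tcom201}, and pass to the limit. The paper simply defers the smooth-coefficient case and the limiting argument to \cite{EO6}, Proposition~5.3, Step~2, whereas you spell out the Gauss--Green identity and the identification $\gamma_n F=\sum_k n_k F_k|_\Gamma$ explicitly via an exhaustion of $\Omega$; this is exactly the content hidden behind that citation, so the two arguments coincide in substance.
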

\begin{proof}
Let $M > 0$ be such that 
$\|c_{kl}\|_{C^\kappa(\Omega)} \leq M$,
$\|b_k\|_{C^\kappa(\Omega)} \leq M$, $\|c_k\|_{C^\kappa(\Omega)} \leq M$
and $\|c_0\|_{L_\infty(\Omega)} \leq M$.
There exists a $\lambda > 0$, depending only on $\mu$ and $M$, such that 
\[
\frac{3\mu}{4} \, \|u\|_{W^{1,2}(\Omega)}^2
\leq \RRe \gota(u,u) + \lambda \|u\|_{L_2(\Omega)}^2
\]
for all $u \in W^{1,2}(\Omega)$.
One can regularise the coefficients.
First extend the $c_{kl}$ to functions $\tilde c_{kl} \in C^\kappa(\Ri^d)$
and similarly to the $b_k$ and $c_k$.
Then obtain 
coefficients $c^{(n)}_{kl}$, etc., and operators $A_D^{(n)}$.
Without loss of generality 
\[
\frac{\mu}{2} \, \|u\|_{W^{1,2}(\Omega)}^2
\leq \RRe \gota^{(n)}(u,u) + \lambda \|u\|_{L_2(\Omega)}^2
\]
for all $u \in W^{1,2}(\Omega)$ and $n \in \Ni$.
By Theorem~\ref{tcom201}\ref{tcom201-5} and \ref{tcom201-4}
for all $k \in \{ 1,\ldots,d \} $ the function 
$t \mapsto \partial_k \, e^{-t (A_D^{(n)} + \lambda \, I)}$
from $(0,\infty)$ into $\cl(L_p(\Omega),C^\nu(\Omega))$ is 
integrable with a uniform bound  in $n \in \Ni$.
Now  we prove the desired equality for the operator  $A_D^{(n)}$ with $C^\infty$ coefficients 
and then let $n \to \infty$. 
The arguments  are  exactly  the same  as in Step 2 of the proof 
of Proposition 5.3 in \cite{EO6}. 
We then   deduce that for all 
$u \in L_p(\Omega)$ the function $(A_D + \lambda \, I)^{-1} u$ has a weak conormal derivative and 
\[
\partial_n^\gota (A_D + \lambda \, I)^{-1} u 
= \sum_{k,l=1}^d n_k \, \Tr( c_{kl} \, \partial_l \, (A_D + \lambda \, I)^{-1} u)
   + \sum_{k=1}^d n_k \, \Tr( b_k \, (A_D + \lambda \, I)^{-1} u)
.   \]
Finally replace $u$ by $(A_D + \lambda \, I) \, A_D^{-1} u$ to obtain the proposition. 
\end{proof}

Every element of the domain $D(A_D)$ of the elliptic operator 
has a conormal derivative.
The next proposition is an extension of \cite{BinzE1} Proposition~4.2(a).

\begin{prop} \label{pcom222}
Let $\kappa \in (0,1)$.
Let $\Omega \subset \Ri^d$ be an open bounded set of class~$C^{1+\kappa}$.
For all $k,l \in \{ 1,\ldots,d \} $ let $c_{kl}, b_k, c_k \in C^\kappa(\Omega)$,
and let $c_0 \in L_\infty(\Omega)$.
Suppose there is a $\mu > 0$ such that 
$\RRe \sum_{k,l=1}^d c_{kl}(x) \, \xi_k \, \overline{\xi_l} \geq \mu \, |\xi|^2$
for all $\xi \in \Ci^d$ and $x \in \Omega$.
Further suppose that $0 \not\in \sigma(A_D)$.
Then $D(A_D) \subset D(\partial_n^\gota)$.
\end{prop}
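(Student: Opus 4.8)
The plan is to deduce the statement from Proposition~\ref{pcom203} by an approximation in $L_2(\Omega)$. Let $u \in D(A_D)$ and set $f := A_D u \in L_2(\Omega)$, so that $u = A_D^{-1} f$. Since $A_D$ is the operator associated with the form $\gota_D = \gota|_{W^{1,2}_0(\Omega) \times W^{1,2}_0(\Omega)}$, one has $D(A_D) \subset W^{1,2}_0(\Omega)$ and $\langle \ca u, v \rangle = \gota(u,v) = (A_D u, v)_{L_2(\Omega)} = (f,v)_{L_2(\Omega)}$ for all $v \in \cd(\Omega)$, hence $\ca u = f \in L_2(\Omega)$. By the definition of the weak conormal derivative it therefore remains only to produce a $\psi \in L_2(\Gamma)$ with $\gota(u,v) - (f,v)_{L_2(\Omega)} = \int_\Gamma \psi \, \overline{\Tr v}$ for all $v \in W^{1,2}(\Omega)$.

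Fix $\nu \in (0,\kappa]$. Since $\Omega$ is bounded, I would choose $f_n \in L_\infty(\Omega)$ (for instance the truncations $f_n = f \, \one_{\{|f| \le n\}}$) with $f_n \to f$ in $L_2(\Omega)$; then $f_n \in L_p(\Omega)$ for every $p \in (1,\infty]$. Put $u_n := A_D^{-1} f_n$. By Proposition~\ref{pcom203}, applied with $p = \infty$, the function $u_n$ has a weak conormal derivative and
\[
\gota(u_n, v) - (f_n, v)_{L_2(\Omega)} \;=\; \int_\Gamma \psi_n \, \overline{\Tr v}
\qquad (v \in W^{1,2}(\Omega)),
\]
where $\psi_n := \partial_n^\gota u_n = \sum_{k,l=1}^d n_k \, (c_{kl} \, \partial_l u_n)|_\Gamma + \sum_{k=1}^d n_k \, (b_k \, u_n)|_\Gamma$. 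Because $\Omega$ is of class $C^{1+\kappa}$ the coefficients extend to elements of $C^\kappa(\overline\Omega)$, and by Proposition~\ref{pcom209}\ref{pcom209-2} each $\partial_l u_n$ is continuous on $\overline\Omega$, so $\psi_n = \sum_{k,l} n_k \, (c_{kl}|_\Gamma) \, (\partial_l u_n)|_\Gamma + \sum_k n_k \, (b_k|_\Gamma) \, (u_n)|_\Gamma$ with $n_k, c_{kl}|_\Gamma, b_k|_\Gamma \in L_\infty(\Gamma)$. Now let $n \to \infty$. First, $u_n = A_D^{-1} f_n \to A_D^{-1} f = u$ in $W^{1,2}(\Omega)$ by Proposition~\ref{pcom209}\ref{pcom209-1}, so continuity of the form $\gota$ on $W^{1,2}(\Omega)$ gives $\gota(u_n,v) - (f_n,v)_{L_2(\Omega)} \to \gota(u,v) - (f,v)_{L_2(\Omega)}$ for every fixed $v$. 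Second, Proposition~\ref{pcom209}\ref{pcom209-3} with $p = 2$ and $|\alpha| \le 1$ shows $(\partial_l u_n)|_\Gamma = (\partial_l A_D^{-1} f_n)|_\Gamma \to (\partial_l A_D^{-1} f)|_\Gamma$ and $(u_n)|_\Gamma \to (A_D^{-1} f)|_\Gamma$ in $L_2(\Gamma)$; multiplication by the fixed $L_\infty(\Gamma)$-functions $n_k \, (c_{kl}|_\Gamma)$ and $n_k \, (b_k|_\Gamma)$ preserves this convergence, so $\psi_n$ converges in $L_2(\Gamma)$ to some $\psi \in L_2(\Gamma)$, and hence $\int_\Gamma \psi_n \, \overline{\Tr v} \to \int_\Gamma \psi \, \overline{\Tr v}$ since $\Tr v \in H^{1/2}(\Gamma) \subset L_2(\Gamma)$.

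Passing to the limit in the displayed identity yields $\gota(u,v) - (f,v)_{L_2(\Omega)} = \int_\Gamma \psi \, \overline{\Tr v}$ for all $v \in W^{1,2}(\Omega)$, which together with $\ca u = f \in L_2(\Omega)$ means exactly that $u \in D(\partial_n^\gota)$, with $\partial_n^\gota u = \psi = \sum_{k,l} n_k \, (c_{kl}|_\Gamma)\,(\partial_l u)|_\Gamma + \sum_k n_k \, (b_k|_\Gamma)\,(u)|_\Gamma$ (boundary values interpreted via Proposition~\ref{pcom209}\ref{pcom209-3}). The one point that is not bookkeeping is the passage to the limit \emph{in the $L_2(\Gamma)$-norm}: a priori the conormal derivative of an element of $D(A_D)$ is only controlled in $H^{-1/2}(\Gamma)$, and what upgrades this to membership in $D(\partial_n^\gota)$ is precisely the $L_2(\Omega) \to L_2(\Gamma)$ boundedness of the maps $u \mapsto (\partial^\alpha A_D^{-1} u)|_\Gamma$, $|\alpha| \le 1$, supplied by Proposition~\ref{pcom209}\ref{pcom209-3} (which itself rests on the heat kernel gradient bounds of Section~\ref{Scom4}); with that input in hand the remainder of the argument is routine.
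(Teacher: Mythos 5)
Your proof is correct and is essentially the paper's argument, just unfolded more explicitly. The paper packages the same ideas by defining the operator $R \colon L_\infty(\Omega) \to C(\Gamma)$, $R u = \sum_{k,l} n_k (c_{kl}\,\partial_l A_D^{-1} u)|_\Gamma + \sum_k n_k (b_k\, A_D^{-1} u)|_\Gamma$, invoking Proposition~\ref{pcom209}\ref{pcom209-3} to extend it to a bounded operator $R_2 \colon L_2(\Omega) \to L_2(\Gamma)$, using Proposition~\ref{pcom203} to identify $R u$ with $\partial_n^\gota A_D^{-1} u$ for $u \in L_\infty(\Omega)$, and then passing to the limit by density of $L_\infty(\Omega)$ in $L_2(\Omega)$; your truncation argument $f_n = f\,\one_{\{|f|\le n\}}$ is exactly the density step made concrete, and you correctly identify the $L_2(\Omega) \to L_2(\Gamma)$ bound from Proposition~\ref{pcom209}\ref{pcom209-3} as the load-bearing input.
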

\begin{proof}
Define $R \colon L_\infty(\Omega) \to C(\Gamma)$ by 
\[
R u 
= \sum_{k,l=1}^d n_k \, ( c_{kl} \, \partial_l \, A_D^{-1} u)|_\Gamma
   + \sum_{k=1}^d n_k \, ( b_k \, A_D^{-1} u)|_\Gamma
.   \]
By Proposition~\ref{pcom209}\ref{pcom209-3} the operator $R$ extends 
to a bounded operator $R_2$ from $L_2(\Omega)$ into $L_2(\Gamma)$.
Let $v \in W^{1,2}(\Omega)$.
Next let $u \in L_\infty(\Omega)$.
It follows from Proposition~\ref{pcom203} that $A_D^{-1} u \in D(\partial_n^\gota)$
and $R u = \partial_n^\gota \, A_D^{-1} u$.
Hence 
\[
(R u, \Tr v)_{L_2(\Gamma)}
= (\partial_n^\gota \, A_D^{-1} u, \Tr v)_{L_2(\Gamma)}
= \gota(A_D^{-1} u, v) - (u, v)_{L_2(\Omega)}
.  \]
Since $A_D^{-1}$ is continuous from $L_2(\Omega)$ into $W^{1,2}(\Omega)$ 
by the closed graph theorem, and $L_\infty(\Omega)$ is dense in $L_2(\Omega)$, 
one deduces that 
\[
(R_2 u, \Tr v)_{L_2(\Gamma)}
= \gota(A_D^{-1} u, v) - (u, v)_{L_2(\Omega)}
\]
for all $u \in L_2(\Omega)$.
Hence $D(A_D) \subset D(\partial_n^\gota)$.
\end{proof}

There is a remarkable relation between the harmonic lifting, 
conormal derivative $\partial_n^{\gota^*}$ with respect of the 
dual form $\gota^*$ and the resolvent of the Dirichlet operator.

\begin{lemma} \label{lcom206}
Let $\kappa \in (0,1)$.
Let $\Omega \subset \Ri^d$ be an open bounded set of class~$C^{1+\kappa}$.
For all $k,l \in \{ 1,\ldots,d \} $ let $c_{kl}, b_k, c_k \in C^\kappa(\Omega)$,
and let $c_0 \in L_\infty(\Omega)$.
Suppose there is a $\mu > 0$ such that 
$\RRe \sum_{k,l=1}^d c_{kl}(x) \, \xi_k \, \overline{\xi_l} \geq \mu \, |\xi|^2$
for all $\xi \in \Ci^d$ and $x \in \Omega$.
Further suppose that $0 \not\in \sigma(A_D)$.
Let $v \in L_2(\Omega)$ and $\varphi \in H^{1/2}(\Gamma)$.
Then 
\[
(\gamma(\varphi), v)_{L_2(\Omega)}
= - (\varphi, \partial_n^{\gota^*} \, (A_D^*)^{-1} v)_{L_2(\Gamma)}
.  \]
\end{lemma}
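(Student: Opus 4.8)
The plan is to verify the identity by a direct duality computation, using the variational characterisation of the harmonic lifting together with the definition of the conormal derivative for the adjoint form. First I would set $u = \gamma(\varphi)$, so that $u \in W^{1,2}(\Omega)$, $\Tr u = \varphi$ and $\gota(u, w) = 0$ for all $w \in W^{1,2}_0(\Omega)$ (this is exactly the statement that $u$ is $\ca$-harmonic). Next I would introduce $z = (A_D^*)^{-1} v \in W^{1,2}_0(\Omega)$, which satisfies $\gota^*(z, w) = (v, w)_{L_2(\Omega)}$ for all $w \in W^{1,2}_0(\Omega)$, equivalently $\gota(w, z) = \overline{(v,w)_{L_2(\Omega)}}$, i.e.\ $\gota(w,z) = (w,v)_{L_2(\Omega)}$ for all $w \in W^{1,2}_0(\Omega)$. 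Since $A_D^* = A_D$ with coefficients $\overline{c_{lk}}, \overline{c_k}, \overline{b_k}, \overline{c_0}$ and $0 \notin \sigma(A_D)$ implies $0 \notin \sigma(A_D^*)$, such a $z$ exists.

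The key step is then to bring in the conormal derivative. By Proposition~\ref{pcom222} applied to the dual form $\gota^*$ (whose coefficients satisfy the same hypotheses, so the proposition applies), $z = (A_D^*)^{-1} v \in D(A_D^*) \subset D(\partial_n^{\gota^*})$, and its conormal derivative is $\partial_n^{\gota^*} z$. By the defining property of the weak conormal derivative for $\gota^*$, for all $w \in W^{1,2}(\Omega)$ one has
\[
\gota^*(z, w) - \int_\Omega (\ca^* z) \, \overline w = \int_\Gamma (\partial_n^{\gota^*} z) \, \overline{\Tr w}.
\]
Here $\ca^* z = v$ since $z = (A_D^*)^{-1} v$ and $z \in W^{1,2}_0(\Omega) \subset W^{1,2}(\Omega)$. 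Taking the complex conjugate of this identity and rewriting $\overline{\gota^*(z,w)} = \gota(w, z)$ (by definition of the dual form) gives
\[
\gota(w, z) - \int_\Omega \overline{v} \, w = \int_\Gamma \overline{\partial_n^{\gota^*} z} \, \Tr w
\]
for all $w \in W^{1,2}(\Omega)$.

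Finally I would choose $w = u = \gamma(\varphi)$ in this last identity. The left-hand side becomes $\gota(\gamma(\varphi), z) - \int_\Omega \overline v \, \gamma(\varphi)$; but $z \in W^{1,2}_0(\Omega)$ and $\gamma(\varphi)$ is harmonic, so $\gota(\gamma(\varphi), z) = 0$. Hence the left-hand side equals $-\int_\Omega \overline v \, \gamma(\varphi) = -\,\overline{(\gamma(\varphi), v)_{L_2(\Omega)}}$. The right-hand side is $\int_\Gamma \overline{\partial_n^{\gota^*} z} \,\, \Tr \gamma(\varphi) = \int_\Gamma \overline{\partial_n^{\gota^*} (A_D^*)^{-1} v} \,\, \varphi = \overline{(\varphi, \partial_n^{\gota^*} (A_D^*)^{-1} v)_{L_2(\Gamma)}}$. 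Equating and taking complex conjugates yields exactly $(\gamma(\varphi), v)_{L_2(\Omega)} = -(\varphi, \partial_n^{\gota^*} (A_D^*)^{-1} v)_{L_2(\Gamma)}$. The only delicate point is bookkeeping with complex conjugates and the sesquilinearity conventions; the analytic content is entirely contained in Proposition~\ref{pcom222} (membership of $D(\partial_n^{\gota^*})$) and the harmonicity of $\gamma(\varphi)$, so I expect no substantive obstacle beyond keeping the conjugates straight.
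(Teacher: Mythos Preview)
Your proof is correct and follows essentially the same approach as the paper: both use that $(A_D^*)^{-1}v \in W^{1,2}_0(\Omega)$ has a weak conormal derivative by Proposition~\ref{pcom222}, expand the defining identity for $\partial_n^{\gota^*}$ with test function $\gamma(\varphi)$, and kill the form term via the harmonicity $\gota(\gamma(\varphi), (A_D^*)^{-1}v)=0$. One cosmetic slip: $-\int_\Omega \overline v\,\gamma(\varphi)$ equals $-(\gamma(\varphi),v)_{L_2(\Omega)}$ directly (not its conjugate), and likewise on the right-hand side, so the final ``taking complex conjugates'' step is unnecessary---but this does not affect the conclusion.
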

\begin{proof}
Since  $(A_D^*)^{-1} v \in W_0^{1,2}(\Omega)$, we have by the definition of $\gamma(\varphi)$ 
\begin{eqnarray*}
0 
&=& \overline{\gota(\gamma(\varphi), (A_D^*)^{-1} v)}   \\
&=& \gota^*((A_D^*)^{-1} v, \gamma(\varphi))  \\
&=& \int_\Omega (A_D^* \, (A_D^*)^{-1} v) \, \overline{\gamma(\varphi)} 
   + \int_\Gamma \Big( \partial_n^{\gota^*} \, (A_D^*)^{-1} v \Big) \, \overline \varphi 
,
\end{eqnarray*}
where we used the fact that the conormal derivative 
$\partial_n^{\gota^*} \, (A_D^*)^{-1} v$ exists by Proposition~\ref{pcom222}.
\end{proof}

If one considers $\gamma$ as a densely defined operator from $L_2(\Gamma)$
into $L_2(\Omega)$, then we just proved that 
$\gamma^* = - \partial_n^{\gota^*} \, (A_D^*)^{-1}$.
Note that Theorem~\ref{tcom102}\ref{tcom102-2} is a consequence of 
Statement~\ref{tcom204-4} in the next theorem.

\begin{thm} \label{tcom204}
Let $\kappa \in (0,1)$.
Let $\Omega \subset \Ri^d$ be an open bounded set with a $C^{1+\kappa}$-boundary.
For all $k,l \in \{ 1,\ldots,d \} $ let $c_{kl}, b_k, c_k \in C^\kappa(\Omega)$,
and let $c_0 \in L_\infty(\Omega)$.
Suppose there is a $\mu > 0$ such that 
$\RRe \sum_{k,l=1}^d c_{kl}(x) \, \xi_k \, \overline{\xi_l} \geq \mu \, |\xi|^2$
for all $\xi \in \Ci^d$ and $x \in \Omega$.
Further suppose that $0 \not\in \sigma(A_D)$.
Let $G \colon \{ (x,y) \in \Omega \times \Omega : x \neq y \} \to \Ci$
be as in Theorem~\ref{tcom202}.
Then $G$ is differentiable on $ \{ (x,y) \in \Omega \times \Omega : x \neq y \} $ 
by Theorem~\ref{tcom202} and the derivative extends to a continuous
function on $ \{ (x,y) \in \overline \Omega \times \overline \Omega : x \neq y \} $.
Define the function $K_\gamma \colon \Omega \times \Gamma \to \Ci$ by 
\[
K_\gamma(x,z) 
= - \sum_{k,l=1}^d n_k(z) \, c_{lk}(z) \, (\partial^{(2)}_l G)(x,z)
   - \sum_{k=1}^d n_k(z) \, c_k(z) \, G(x,z)
.  \]
Then one has the following.
\begin{tabel}
\item \label{tcom204-1}
The map $K_\gamma$ is continuous.
\end{tabel}
Define $T \colon L_1(\Gamma) \to C(\Omega)$ by 
\[
(T \varphi)(x) 
= \int_\Gamma K_\gamma(x,z) \, \varphi(z) \, dz
.  \]
\begin{tabel}
\setcounter{teller}{1}
\item \label{tcom204-2}
If $\varphi \in H^{1/2}(\Gamma)$, then 
$\gamma (\varphi) = T \varphi$ a.e.
In particular, $T$ is an extension of the harmonic map~$\gamma$.
\item \label{tcom204-3}
There exists a $\tilde c > 0$ such that 
\[
|K_\gamma(x,z)| \leq \frac{\tilde c}{|x-z|^{d-1}}
\quad \mbox{and} \quad
|K_\gamma(x',z') - K_\gamma(x,z)| 
\leq \tilde c \, \frac{(|x'-x| + |z'-z|)^\kappa}{|x-z|^{d-1+\kappa}}
\]
for all $x,x' \in \Omega$ and $z,z' \in \Gamma$ with $|x'-x| + |z'-z| \leq \frac{1}{2} \, |x-z|$.
\item \label{tcom204-4}
If $p \in [1,\infty)$ and $\varphi \in L_p(\Gamma)$, then $T \varphi \in L_p(\Omega)$
and the map $T \colon L_p(\Gamma) \to L_p(\Omega)$ is bounded.
In particular $\gamma$ extends to a bounded map from $L_2(\Gamma)$ into $L_2(\Omega)$.
\item \label{tcom204-5}
Let $T_2 = T|_{L_2(\Gamma)} \colon L_2(\Gamma) \to L_2(\Omega)$.
Then for all $p \in (\frac{d}{1-\kappa}, \infty]$ there exists a $c > 0$ such that 
$T_2^* u \in C^\kappa(\Gamma)$ and 
$\|T_2^* u\|_{C^\kappa(\Gamma)} \leq c \, \|u\|_{L_p(\Omega)}$
for all $u \in L_p(\Omega)$.
\end{tabel}
\end{thm}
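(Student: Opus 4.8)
\emph{Proof idea.}
The plan is to prove the five statements in order, using essentially three inputs: the Green-function estimates of Theorem~\ref{tcom202} (under the present hypothesis $0\notin\sigma(A_D)$ these are furnished by Theorem~\ref{tcom202.5}), the explicit conormal-derivative formula of Proposition~\ref{pcom203} applied to the dual form $\gota^*$, and the identity of Lemma~\ref{lcom206}. Statements \ref{tcom204-1} and \ref{tcom204-3} are read off directly from the Green-function estimates. Any $(x,z)\in\Omega\times\Gamma$ has $x\neq z$, so on $\Omega\times\Gamma$ the functions $G$ and $\partial^{(2)}_l G$ are continuous; since the coefficients $c_{lk},c_k$ and the outer normal $n$ are continuous on $\Gamma$ (the latter because $\Omega$ is of class $C^{1+\kappa}$), the defining formula makes $K_\gamma$ continuous, which is \ref{tcom204-1}. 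For \ref{tcom204-3} one inserts $|G(x,z)|\le c\,|x-z|^{-(d-2)}$ (the logarithmic bound if $d=2$) and $|(\partial^{(2)}_l G)(x,z)|\le c\,|x-z|^{-(d-1)}$; as the coefficients and $n$ are bounded and $\diam\Omega<\infty$, the most singular term dominates and gives $|K_\gamma(x,z)|\le\tilde c\,|x-z|^{-(d-1)}$, while the H\"older bound follows by the telescoping decomposition $n_k c_{lk}(z')(\partial^{(2)}_l G)(x',z')-n_k c_{lk}(z)(\partial^{(2)}_l G)(x,z)=[n_k c_{lk}(z')-n_k c_{lk}(z)]\,(\partial^{(2)}_l G)(x',z')+n_k c_{lk}(z)\,[(\partial^{(2)}_l G)(x',z')-(\partial^{(2)}_l G)(x,z)]$ (and the analogous one for the $G$-terms), estimating via the H\"older continuity of the coefficients and of $n$, the H\"older bound of Theorem~\ref{tcom202}, and $|x'-z'|\ge\frac12|x-z|$; the discrepancy between the exponents $d-2$ and $d-1$ is absorbed by $|x-z|\le\diam\Omega$.

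For \ref{tcom204-2}, fix $\varphi\in H^{1/2}(\Gamma)$. Lemma~\ref{lcom206} gives $(\gamma(\varphi),v)_{L_2(\Omega)}=-(\varphi,\partial_n^{\gota^*}(A_D^*)^{-1}v)_{L_2(\Gamma)}$ for every $v\in L_2(\Omega)$. The dual form $\gota^*$ has coefficients $\overline{c_{lk}},\overline{c_k},\overline{b_k},\overline{c_0}$, which satisfy the same hypotheses, and $0\notin\sigma(A_D^*)$; hence Proposition~\ref{pcom203} applied to $\gota^*$ yields, for $v\in C_c^\infty(\Omega)$ (indeed for any $v$ in a suitable $L_q(\Omega)$), the formula $\partial_n^{\gota^*}(A_D^*)^{-1}v=\sum_{k,l}n_k\,\overline{c_{lk}}\,(\partial_l(A_D^*)^{-1}v)|_\Gamma+\sum_k n_k\,\overline{c_k}\,((A_D^*)^{-1}v)|_\Gamma$. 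Since $(A_D^*)^{-1}$ has kernel $(y,x)\mapsto\overline{G(x,y)}$, substituting and using Fubini's theorem (legitimate because $v$ has compact support in $\Omega$, $K_\gamma$ is locally bounded on $\Omega\times\Gamma$ and $\varphi\in L_1(\Gamma)$) rewrites $-(\varphi,\partial_n^{\gota^*}(A_D^*)^{-1}v)_{L_2(\Gamma)}$ as $\int_\Omega(T\varphi)(x)\,\overline{v(x)}\,dx$. Thus $\gamma(\varphi)$ and $T\varphi$ pair identically against all $v\in C_c^\infty(\Omega)$, and since both lie in $L^1_\loc(\Omega)$ ($T\varphi$ being continuous on $\Omega$) we conclude $\gamma(\varphi)=T\varphi$ a.e.; as $\dom\gamma=H^{1/2}(\Gamma)\subset L_1(\Gamma)$, $T$ extends $\gamma$.

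For \ref{tcom204-4}, the case $p=1$ is a Schur estimate: by \ref{tcom204-3}, $\|T\varphi\|_{L_1(\Omega)}\le\int_\Gamma|\varphi(z)|\big(\int_\Omega|K_\gamma(x,z)|\,dx\big)dz\le\tilde c\,\big(\sup_{z\in\Gamma}\int_{B(z,\diam\Omega)}|x-z|^{-(d-1)}\,dx\big)\|\varphi\|_{L_1(\Gamma)}$, and a $d$-dimensional integral of a $(d-1)$-order singularity is finite. For $p\in(1,\infty)$ I would argue by duality: the computation in \ref{tcom204-2} shows that $R:=-\partial_n^{\gota^*}(A_D^*)^{-1}$ is the integral operator with kernel $(z,x)\mapsto\overline{K_\gamma(x,z)}$, and since $(\partial_l(A_D^*)^{-1}u)|_\Gamma$ and $((A_D^*)^{-1}u)|_\Gamma$ extend to bounded operators $L_q(\Omega)\to L_q(\Gamma)$ for every $q\in(1,\infty]$ by Proposition~\ref{pcom209}\ref{pcom209-3} (applied to $A_D^*$; multiplication by the bounded continuous functions $n_k\overline{c_{lk}},\,n_k\overline{c_k}$ preserves $L_q(\Gamma)$), the operator $R$ is bounded $L_q(\Omega)\to L_q(\Gamma)$ for $q\in(1,\infty]$. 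Its adjoint is the integral operator with kernel $K_\gamma$, i.e.\ $T$, so $T=R^*$ is bounded $L_{q'}(\Gamma)\to L_{q'}(\Omega)$ for every $q'\in(1,\infty)$; with the case $p=1$ this gives \ref{tcom204-4} for all $p\in[1,\infty)$. In particular $T_2=T|_{L_2(\Gamma)}$ is bounded, and $T_2^*=(R^*)^*=R=-\partial_n^{\gota^*}(A_D^*)^{-1}$ on $L_2(\Omega)$. Then \ref{tcom204-5} is immediate: for $u\in L_p(\Omega)$ with $p\in(\tfrac d{1-\kappa},\infty]$ one has $L_p(\Omega)\subset L_2(\Omega)$ (as $p>d\ge2$ and $\Omega$ is bounded) and $T_2^*u=-\partial_n^{\gota^*}(A_D^*)^{-1}u$, which by Proposition~\ref{pcom203} for $\gota^*$ with $\nu=\kappa$ lies in $C^\kappa(\Gamma)$ with $\|T_2^*u\|_{C^\kappa(\Gamma)}\le c\,\|u\|_{L_p(\Omega)}$.

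The hard part is \ref{tcom204-4}. The bound $|K_\gamma(x,z)|\le\tilde c\,|x-z|^{-(d-1)}$ is exactly borderline for integration in $z$ over the $(d-1)$-dimensional surface $\Gamma$: $\int_\Gamma|x-z|^{-(d-1)}\,dz$ diverges logarithmically as $x\to\Gamma$, so a plain Schur test reaches no $p>1$, and in fact $L_p$-boundedness genuinely fails at $p=\infty$ (there is no maximum principle for general complex coefficients, which is why the statement stops at $p<\infty$). What makes the argument work is the duality with the conormal operator $R=-\partial_n^{\gota^*}(A_D^*)^{-1}$, whose $L_q\to L_q$ bounds for $q\in(1,\infty]$ are already in hand from Section~\ref{Scom6}; alternatively one can push the direct estimate through by localizing and flattening $\Gamma$, so that $T$ becomes a tangential convolution by the kernels $x_d\mapsto(|\,\cdot\,|^2+x_d^2)^{-(d-1)/2}$ restricted to a fixed ball, and then use Young's inequality together with $\log(1/x_d)\in L_p\big((0,1)\big)$ for all $p<\infty$ — but that forces one to track the logarithmic factors, whereas the duality route avoids them entirely.
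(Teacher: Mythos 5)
Your proposal is correct and follows essentially the same route as the paper: Statements (a)--(c) and the $p=1$ case of (d) from the Green-function estimates (the paper cites \cite{EO6}~Proposition~5.5), Statement (b) via Lemma~\ref{lcom206} and Proposition~\ref{pcom203}, Statement (d) for $p\in(1,\infty)$ by duality against the operator $Ru=\sum_{k,l}n_k(\overline{c_{lk}}\,\partial_l(A_D^*)^{-1}u)|_\Gamma+\sum_k n_k(\overline{c_k}\,(A_D^*)^{-1}u)|_\Gamma$ whose $L_q(\Omega)\to L_q(\Gamma)$ boundedness comes from Proposition~\ref{pcom209}\ref{pcom209-3}, and Statement (e) from the identification $T_2^*=-R_2$ and the $L_p\to C^\kappa$ bound for $R$. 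The only cosmetic difference is that the paper invokes Proposition~\ref{pcom209}\ref{pcom209-4} rather than Proposition~\ref{pcom203} for the final $C^\kappa$ estimate, but these say the same thing about $\partial_n^{\gota^*}(A_D^*)^{-1}$.
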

\begin{proof}
Statements~\ref{tcom204-1}--\ref{tcom204-3} and also the $p=1$ version 
of Statement~\ref{tcom204-4} follow as in the proof of Proposition~5.5 in \cite{EO6}, 
where we now use Lemma~\ref{lcom206} and Proposition~\ref{pcom203}.

`\ref{tcom204-4}'.
Let $p \in (1,\infty)$ and let $q \in (1,\infty)$ be the dual exponent.
Define $R \colon L_\infty(\Omega) \to C(\Gamma)$ by 
\[
R u 
= \sum_{k,l=1}^d n_k \, ( \overline{c_{lk}} \, \partial_l \, (A_D^*)^{-1} u)|_\Gamma
   + \sum_{k=1}^d n_k \, ( \overline{c_k} \, (A_D^*)^{-1} u)|_\Gamma
.  \]
Then by Proposition~\ref{pcom209}\ref{pcom209-3} the map $R$ extends to a 
bounded operator $R_q$ from $L_q(\Omega)$ into $L_q(\Gamma)$.
Let $\varphi \in H^{1/2}(\Gamma)$ and $u \in L_\infty(\Omega)$.
Then Statement~\ref{tcom204-2}, Lemma~\ref{lcom206} and Proposition~\ref{pcom203} give
\[
(T \varphi, u)_{L_2(\Omega)}
= (\gamma (\varphi), u)_{L_2(\Omega)}
= - (\varphi, \partial_n^{\gota^*} \, (A_D^*)^{-1} u)_{L_2(\Gamma)}
= - (\varphi, R u)_{L_2(\Gamma)}
.  \]
Hence 
\begin{equation}
\int_\Omega (T \varphi) \, \overline u
= - \int \varphi \, \overline{R u}
\label{etcom204;1}
\end{equation}
first for all $\varphi \in H^{1/2}(\Gamma)$ and then by continuity and 
density for all $\varphi \in L_1(\Gamma)$.\\
Now let $\varphi \in L_p(\Gamma)$.
Then 
\[
\Big| \int_\Omega (T \varphi) \, \overline u \Big|
= |\int \varphi \, \overline{R u}| 
\leq \|\varphi\|_{L_p(\Gamma)} \, \|R u\|_{L_q(\Gamma)}
\leq \|\varphi\|_{L_p(\Gamma)} \, \|R_q\|_{L_q(\Omega) \to L_q(\Gamma)} \, \|u\|_{L_q(\Omega)}
\]
for all $u \in L_\infty(\Omega)$.
Hence $T \varphi \in L_p(\Omega)$ and 
$\|T \varphi\|_{L_p(\Omega)} \leq \|R_q\|_{L_q(\Omega) \to L_q(\Gamma)} \, \|\varphi\|_{L_p(\Gamma)}$.

`\ref{tcom204-5}'.
It follows from (\ref{etcom204;1}) that 
$(T_2 \varphi, u)_{L_2(\Omega)} = - (\varphi, R_2 u)_{L_2(\Gamma)}$
first for all $\varphi \in L_2(\Gamma)$ and $u \in L_\infty(\Omega)$, 
and then by density for all $\varphi \in L_2(\Gamma)$ and $u \in L_2(\Omega)$.
Hence $T_2^* = - R_2$. 
If $u \in L_p(\Omega)$, then $T_2^* u = - R_2 u = R_p u$. 
Now the statement follows from Proposition~\ref{pcom209}\ref{pcom209-4}. 
This completes the proof of the proposition.
\end{proof}

If no confusion is possible, then we will identify 
$\gamma$ with $T|_{L_2(\Gamma)} \colon L_2(\Gamma) \to L_2(\Omega)$.

\section{The harmonic lifting on $C^\nu$}\label{Scom7}

Throughout this section let $\kappa \in (0,1)$ and $\Omega \subset \Ri^d$
a bounded open set of class~$C^{1+\kappa}$.
For all $k,l \in \{ 1,\ldots,d \} $ let $c_{kl}, b_k, c_k \in C^\kappa(\Omega)$ 
and $c_0 \in L_\infty(\Omega)$.
Let the form $\gota$ and operator $A_D$ be as in the introduction.
We assume that $0 \not\in \sigma(A_D)$.
Let $\cn$ be the associated Dirichlet-to-Neumann operator.
In this section we prove H\"older continuity estimates for the 
harmonic lifting as stated in Theorem~\ref{tcom102}\ref{tcom102-1}.

Let $M > 0$ be such that 
$\|c_{kl}\|_{C^\kappa(\Omega)} \leq M$,
$\|b_k\|_{C^\kappa(\Omega)} \leq M$, $\|c_k\|_{C^\kappa(\Omega)} \leq M$
and $\|c_0\|_{L_\infty(\Omega)} \leq M$.
There exists a $\lambda > 0$, depending only on $\mu$ and $M$, such that 
\begin{equation}
\frac{\mu}{2} \|u\|_{W^{1,2}(\Omega)}^2
\leq \RRe \gota(u,u) + \lambda \|u\|_{L_2(\Omega)}^2
\label{elcom308;10}
\end{equation}
for all $u \in W^{1,2}(\Omega)$.
Let $\gamma_\lambda$ and $\cn_\lambda$ be the harmonic lifting and 
Dirichlet-to-Neumann operator with respect to the operator with 
coefficient $c_0 + \lambda \, \one_\Omega$ instead of $c_0$.
These correspond to the form~$\gota_\lambda$.

First we need a family of suitable cut-off functions.

\begin{lemma} \label{lcom301}
There exists a $\hat c \geq 1$ and for all $r \in (0,\infty)$ there exists a 
function $\chi_r \in C_c^\infty(\Ri^d)$ such that 
$0 \leq \chi_r \leq 1$, 
$\chi_r(x) = 1$ for all $x \in B(0,2r)$,
$\supp \chi_r \subset B(0,3r)$ and 
$\|\partial^\alpha \chi_r\|_\infty \leq \frac{\hat c}{r^{|\alpha|}}$
for every multi-index with $1 \leq |\alpha| \leq 2$.
\end{lemma}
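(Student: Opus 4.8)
The plan is to construct one fixed ``model'' cut-off function by mollification and then produce the whole family by rescaling.

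First I would fix a mollifier $\eta \in C_c^\infty(\Ri^d)$ with $\eta \geq 0$, $\int_{\Ri^d} \eta = 1$ and $\supp \eta \subset B(0,\tfrac14)$, and set $\psi = \one_{B(0,5/2)} * \eta$. Then $\psi \in C_c^\infty(\Ri^d)$ and $0 \leq \psi \leq 1$, since $0 \leq \one_{B(0,5/2)} \leq 1$ and $\eta \geq 0$ has integral $1$. If $|x| \leq 2$, then $B(x,\tfrac14) \subset B(0,\tfrac52)$, so $\psi(x) = \int_{\Ri^d} \eta = 1$; and if $|x| \geq \tfrac{11}{4}$, then $B(x,\tfrac14) \cap B(0,\tfrac52) = \emptyset$, so $\psi(x) = 0$. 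Hence $\psi \equiv 1$ on $\overline{B(0,2)}$ and $\supp \psi \subset \overline{B(0,11/4)} \subset B(0,3)$.

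Next, for each $r \in (0,\infty)$ I would set $\chi_r(x) = \psi(x/r)$, so that $\chi_r \in C_c^\infty(\Ri^d)$, $0 \leq \chi_r \leq 1$, $\chi_r(x) = 1$ for all $x \in B(0,2r)$ (because $|x/r| \leq 2$ there), and $\supp \chi_r = r \, \supp \psi \subset B(0,3r)$ since rescaling is a homeomorphism. For a multi-index $\alpha$ the chain rule gives $(\partial^\alpha \chi_r)(x) = r^{-|\alpha|} (\partial^\alpha \psi)(x/r)$, hence $\|\partial^\alpha \chi_r\|_\infty = r^{-|\alpha|} \, \|\partial^\alpha \psi\|_\infty$. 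Taking $\hat c = 1 \vee \max_{1 \leq |\alpha| \leq 2} \|\partial^\alpha \psi\|_\infty$ — a number depending only on $\psi$, and thus only on $d$ — all the claimed properties hold.

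There is no genuine obstacle here; the only point requiring a little attention is the choice of radii in the mollification, arranged so that $\psi$ already equals $1$ on the closed ball $\overline{B(0,2)}$ while $\supp \psi$ stays inside the \emph{open} ball $B(0,3)$, which is what guarantees $\supp \chi_r \subset B(0,3r)$ after rescaling.
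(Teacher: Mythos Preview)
Your proof is correct and follows exactly the same approach as the paper: fix a single model cut-off function with the right properties on the unit scale and obtain $\chi_r$ by rescaling $x \mapsto x/r$. The only difference is that you explicitly construct the model cut-off by mollifying $\one_{B(0,5/2)}$, whereas the paper simply postulates the existence of such a $\chi$; this extra detail is fine but not essential.
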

\begin{proof}
Fix $\chi \in C_c^\infty(\Ri^d)$ such that $0 \leq \chi \leq 1$, 
$\chi(x) = 1$ for all $x \in B(0,2)$ and 
$\supp \chi \subset B(0,3)$.
Let $r \in (0,\infty)$.
Define $\chi_r \in C_c^\infty(\Ri^d)$ by $\chi_r(x) = \chi(r^{-1} \, x)$.
\end{proof}

The proof of Theorem~\ref{tcom102}\ref{tcom102-1} heavily depends on the 
following estimate.

\begin{lemma} \label{lcom307}
Let $\hat c \geq 1$ and the $\chi_r$ be as in Lemma~\ref{lcom301}.
Then there exists a $c' > 0$ such that the following is valid.

Let $r \in (0,\diam(\Omega)]$ and $y \in \Omega$.
Define $\Psi \colon \overline \Omega \to \Ri$ by 
$\Psi(x) = \chi_r(x-y)$.
Then $|(\gamma_\lambda(\Psi|_\Gamma))(y)| \leq c'$.
\end{lemma}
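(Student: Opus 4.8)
The plan is to express the harmonic function $u := \gamma_\lambda(\Psi|_\Gamma)$ as $\Psi$ minus the solution of the associated inhomogeneous Dirichlet problem, and to represent the value of that solution at $y$ by the Green function of the \emph{coercive} operator $A_{D,\lambda}$, for which the bounds of Theorem~\ref{tcom202} are available. A direct attack via the kernel formula $u(y) = \int_\Gamma K_{\gamma_\lambda}(y,z)\,\Psi(z)\,dz$ of Theorem~\ref{tcom204} does not suffice: the bound $|K_{\gamma_\lambda}(y,z)| \leq \tilde c\,|y-z|^{-(d-1)}$ from Theorem~\ref{tcom204}\ref{tcom204-3}, integrated over the $(d-1)$-dimensional surface $\Gamma$, only yields a quantity of order $\log\bigl( r / \mathop{\rm dist}(y,\Gamma) \bigr)$, which is not uniform as $y$ approaches $\Gamma$; passing to a volume integral of $G_\lambda$ and its derivatives repairs this, since $|y-x|^{-(d-1)}$ \emph{is} integrable over a $d$-dimensional ball. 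Concretely, by Theorem~\ref{tcom204} applied to the shifted form $\gota_\lambda$, the function $u$ lies in $C(\Omega)$, so $u(y)$ is meaningful; and since $\Psi \in W^{2,\infty}(\Ri^d) \subset W^{1,2}(\Omega)$ and $\Tr(\Psi - u) = \Psi|_\Gamma - \Psi|_\Gamma = 0$, the function $v := \Psi - u$ lies in $W^{1,2}_0(\Omega)$ and, because $u$ is $\gota_\lambda$-harmonic (i.e. $\gota_\lambda(u,w) = 0$ for all $w \in W^{1,2}_0(\Omega)$), satisfies $\gota_\lambda(v,w) = \gota_\lambda(\Psi,w)$ for all $w \in W^{1,2}_0(\Omega)$. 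Since $\Psi(y) = \chi_r(0) = 1$, one has $u(y) = 1 - v(y)$, so it remains to bound $|v(y)|$ uniformly in $y \in \Omega$ and $r \in (0,\diam\Omega]$.

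For the representation of $v(y)$, recall that $\gota_\lambda$ is coercive by~(\ref{elcom308;10}), so $A_{D,\lambda}$ has a Green function $G_\lambda$ obeying the estimates of Theorem~\ref{tcom202}; in particular $|(\partial^{(2)}_k G_\lambda)(y,x)| \leq c\,|y-x|^{-(d-1)}$ and $|G_\lambda(y,x)| \leq c\,|y-x|^{-(d-2)}$ for $x \neq y$ (with the logarithmic replacement $c\log(1 + |y-x|^{-1})$ when $d = 2$). I would pick a nonnegative $\phi_\varepsilon \in C_c^\infty(B(y,\varepsilon))$ with $\int \phi_\varepsilon = 1$ and $\varepsilon < \tfrac{1}{2}\mathop{\rm dist}(y,\Gamma)$, and test the equation for $v$ against $w_\varepsilon := (A_{D,\lambda}^*)^{-1}\phi_\varepsilon \in W^{1,2}_0(\Omega)$. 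On one side $\gota_\lambda(v,w_\varepsilon) = \int_\Omega \phi_\varepsilon\, v \to v(y)$ as $\varepsilon \downarrow 0$, since $v \in C(\Omega)$. On the other side $\gota_\lambda(v,w_\varepsilon) = \gota_\lambda(\Psi,w_\varepsilon)$, and the kernel representation of $(A_{D,\lambda}^*)^{-1}$ gives $w_\varepsilon(x) = \int_\Omega \overline{G_\lambda(x',x)}\,\phi_\varepsilon(x')\,dx' \to \overline{G_\lambda(y,x)}$ and $\partial_k w_\varepsilon(x) \to \overline{(\partial^{(2)}_k G_\lambda)(y,x)}$ for $x \neq y$. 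Dominated convergence applies with $\varepsilon$-uniform dominants: on $\supp \nabla\Psi \subset \{ 2r \leq |x-y| \leq 3r \}$ the functions $w_\varepsilon$ and $\partial_k w_\varepsilon$ are bounded, and on $\supp \Psi \subset B(y,3r)$ one has $|w_\varepsilon(x)| \leq C\,|x-y|^{-(d-2)}$ and $|\partial_k w_\varepsilon(x)| \leq C\,|x-y|^{-(d-1)}$, both integrable over $\Ri^d$. Passing to the limit in $\gota_\lambda(\Psi,w_\varepsilon)$ yields
\[
v(y) = \sum_{k,l=1}^d \int_\Omega c_{kl}\,(\partial_l\Psi)\,(\partial^{(2)}_k G_\lambda)(y,\cdot)
  + \sum_{k=1}^d \int_\Omega b_k\,\Psi\,(\partial^{(2)}_k G_\lambda)(y,\cdot)
  + \sum_{k=1}^d \int_\Omega c_k\,(\partial_k\Psi)\,G_\lambda(y,\cdot)
  + \int_\Omega (c_0 + \lambda)\,\Psi\,G_\lambda(y,\cdot).
\]

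Then I would estimate the four terms, using $0 \leq \Psi \leq 1$ with $\supp \Psi \subset B(y,3r)$, $|\partial_l\Psi| \leq \hat c / r$ with $\supp \partial_l\Psi \subset B(y,3r)\setminus B(y,2r)$, the Green bounds above, the bound $M$ on the coefficients, and $r \leq \diam\Omega$. In the two terms containing a derivative of $\Psi$ the kernel is evaluated only on the annulus $2r \leq |x-y| \leq 3r$, where it is at most $c(2r)^{-(d-1)}$, respectively $c(2r)^{-(d-2)}$ (or $c\log(1+(2r)^{-1})$ if $d = 2$); multiplying its size there by $\hat c/r$ and by the volume $\lesssim r^d$ of $B(y,3r)$, the powers of $r$ cancel in the first term, leaving a constant, and leave a factor $\lesssim r \leq \diam\Omega$ in the third term (a factor $\lesssim r\log(1+(2r)^{-1})$ when $d = 2$, which is bounded on $(0,\diam\Omega]$). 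In the two terms containing $\Psi$ itself one uses $\int_{B(y,3r)} |y-x|^{-(d-1)}\,dx \lesssim r$ and $\int_{B(y,3r)} |y-x|^{-(d-2)}\,dx \lesssim r^2$ (with the evident changes for $d=2$), again controlled by $\diam\Omega$. Summing gives $|v(y)| \leq C$ with $C$ depending only on $\kappa$, $\mu$, $M$, the constant of Lemma~\ref{lcom301}, $\lambda$ and $\diam\Omega$, hence $|u(y)| = |1 - v(y)| \leq 1 + C =: c'$.

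The step I expect to be the main obstacle is the limiting argument producing the Green representation: one has to exhibit the $\varepsilon$-uniform integrable dominants for $w_\varepsilon$ and $\partial_k w_\varepsilon$ near $x = y$, the latter being the borderline case $|x-y|^{-(d-1)}$, which is still integrable in $\Ri^d$ but only barely. An alternative that fits the approximation philosophy used elsewhere in the paper is to first regularise $c_{kl}$ and $b_k$ to smooth coefficients, so that the right-hand side of the equation for $v$ is given by an $L_2(\Omega)$ function rather than a mere distribution and the displayed identity is the classical Green formula (valid after excising a small ball about $y$, whose boundary contribution vanishes in the limit), and then to pass to the limit in the regularisation using the uniform bounds of Theorem~\ref{tcom202}.
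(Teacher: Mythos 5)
Your proof is correct, but it takes a genuinely different route from the paper's. The paper derives Lemma~\ref{lcom307} directly from Lemma~\ref{lcom308}\ref{lcom308-3}, which it has just established: applied to $r\,\Psi$ (which meets the normalisation of Lemma~\ref{lcom308} with a constant independent of $r$ and $y$), that statement gives $r\,|\Psi(y) - (\gamma_\lambda(\Psi|_\Gamma))(y)| \le \widetilde M\,c\,d(y,\Gamma)$; one then splits into the trivial case $3r < d(y,\Gamma)$ (where $\Psi|_\Gamma = 0$) and the case $3r \ge d(y,\Gamma)$, where dividing by $r$ yields a bound independent of $r$ and $y$. The mechanism there is a mean-value-theorem argument along the segment from $y$ to the nearest boundary point, fuelled by the $L_\infty$-bound on $\nabla(\Psi - \gamma_\lambda(\Psi|_\Gamma))$ that comes out of the Campanato/Morrey elliptic estimates. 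You instead pass to the Green representation
\[
v(y) = \sum_{k,l}\int_\Omega c_{kl}\,(\partial_l\Psi)\,(\partial^{(2)}_k G_\lambda)(y,\cdot) + \dots ,\qquad v = \Psi - \gamma_\lambda(\Psi|_\Gamma),
\]
and integrate the pointwise kernel bounds of Theorem~\ref{tcom202} against the compactly supported test data $\Psi$ and $\nabla\Psi$; the scaling $|\nabla\Psi| \lesssim r^{-1}$ on an annulus of volume $\sim r^d$ makes the powers of $r$ cancel in the borderline term. Both routes rely on heavy machinery already developed in the paper — the paper on Lemma~\ref{lcom308}, which it needs anyway for Lemma~\ref{lcom901} and for Theorem~\ref{tcom102}\ref{tcom102-1}, and you on the Green-function bounds of Theorem~\ref{tcom202} together with a mollifier limiting argument whose domination you handle correctly (the key point, that for $|x-y| > 2\varepsilon$ one has $|x-x'| \ge \tfrac12|x-y|$ for $x' \in \supp\phi_\varepsilon$, gives the $\varepsilon$-uniform dominant $C|x-y|^{-(d-1)}$, with the trivial pointwise bound $C\varepsilon^{-(d-1)} \lesssim |x-y|^{-(d-1)}$ on $|x-y| \le 2\varepsilon$). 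Your observation that the naive attack via the boundary kernel $K_{\gamma_\lambda}$ fails — the surface integral of $|y-z|^{-(d-1)}$ degenerates logarithmically as $y\to\Gamma$ — is exactly the difficulty that both proofs must circumvent; you circumvent it by trading the $(d-1)$-dimensional integral for a $d$-dimensional one, the paper by replacing the kernel estimate with the $d(y,\Gamma)$-weighted Lipschitz bound. The paper's proof is shorter once Lemma~\ref{lcom308} is in hand, but yours is conceptually transparent and reuses the Green machinery in a self-contained way; you also correctly anticipate, as an alternative, the coefficient-regularisation strategy used elsewhere in the paper when such limit passages are delicate.
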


At the moment, for the proof of Lemma~\ref{lcom307},
we need only the first three parts of the next technical lemma.
In order to avoid too much repetition in the proof of the commutator 
estimates, we already include some results for the Dirichlet-to-Neumann operator.
Recall the convention that we identify $C^\nu(\Omega)$ with $C^\nu(\overline \Omega)$.

\begin{lemma} \label{lcom308}
There exists an $\widetilde M > 0$ such that the following is valid.

Let $r \in (0,\diam \Omega]$, $\Psi \in C^{1+\kappa}(\Omega)$,
$y \in \overline \Omega$ and $c \geq 0$.
Suppose that $\supp \Psi \subset B(y,3r)$,
\[
\|\Psi\|_{L_\infty(\Omega)} \leq c \, r, 
 \quad
\|\partial_k \Psi\|_{L_\infty(\Omega)} \leq c
\quad \mbox{and} \quad
|||\partial_k \Psi|||_{C^\kappa(\Omega)} \leq c \, r^{-\kappa}
\]
for all $k \in \{ 1,\ldots,d \} $.
Then $\gamma_\lambda(\Psi|_\Gamma) \in C^{1+\kappa}(\Omega)$,
$\Psi|_\Gamma \in D(\cn_\lambda)$ and $\cn_\lambda(\Psi|_\Gamma) \in C^\kappa(\Gamma)$.
Moreover,
\begin{tabel}
\item \label{lcom308-1}
$\|\nabla \gamma_\lambda(\Psi|_\Gamma)\|_{L_\infty(\Omega)} \leq \widetilde M \, c$, 
\item \label{lcom308-2}
$|||\nabla \gamma_\lambda(\Psi|_\Gamma)|||_{C^\kappa(\Omega)} \leq \widetilde M \, c \, r^{-\kappa}$, 
\item \label{lcom308-3}
$|\Psi(x) - (\gamma_\lambda(\Psi|_\Gamma))(x)| \leq \widetilde M \, c \, d(x,\Gamma)$
for all $x \in \Omega$,
\item \label{lcom308-4}
$\|\cn_\lambda(\Psi|_\Gamma)\|_{C(\Gamma)} \leq \widetilde M \, c$, and 
\item \label{lcom308-5}
$|||\cn_\lambda(\Psi|_\Gamma)|||_{C^\kappa(\Gamma)} \leq \widetilde M \, c \, r^{-\kappa}$.
\end{tabel}
\end{lemma}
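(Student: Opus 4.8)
The strategy is to reduce everything to the elliptic regularity estimate in Proposition~\ref{pcom214} applied to a well-chosen auxiliary function, together with the uniform Green kernel bounds of Theorem~\ref{tcom202.5} and the kernel representation of the harmonic lifting in Theorem~\ref{tcom204}. First I would extend $\Psi$ from $\overline\Omega$ to all of $\Ri^d$ (or just keep it as given on $\Omega$) and consider $w = \gamma_\lambda(\Psi|_\Gamma) - \Psi$. Since $\Tr w = 0$ we have $w \in W^{1,2}_0(\Omega)$, and since $\gamma_\lambda(\Psi|_\Gamma)$ is $\gota_\lambda$-harmonic, $w$ satisfies $\gota_\lambda(w,v) = -\gota_\lambda(\Psi,v)$ for all $v \in W^{1,2}_0(\Omega)$. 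Expanding $\gota_\lambda(\Psi,v)$ and integrating the top-order and $b_k$-terms by parts (all coefficients are $C^\kappa$, so this is legitimate after regularisation), one writes the right-hand side in the divergence-plus-lower-order form $(f_0,v)_{L_2} + \sum_k (f_k,\partial_k v)_{L_2}$, where $f_0$ and the $f_k$ are explicit expressions involving $\Psi$, $\nabla\Psi$ and the coefficients, all supported in $B(y,3r)\cap\Omega$. The hypotheses $\|\Psi\|_\infty \le cr$, $\|\partial_k\Psi\|_\infty \le c$, $|||\partial_k\Psi|||_{C^\kappa} \le cr^{-\kappa}$, combined with the $C^\kappa$-bounds on the coefficients and Lemma~\ref{lcom221}, give $\|f_0\|_{M,d+2\kappa-2} \lesssim c$ and $\|f_k\|_{C^\kappa(\Omega)} \lesssim c$ (here the support of size $\approx r$ is exactly what compensates the $r^{-\kappa}$ in the seminorm when one passes to Morrey/Campanato norms on balls of radius $\le 1$; one uses $r \le \diam\Omega$ and the scale-invariant definitions of these norms, cf.\ \cite{ERe2} Lemma~3.1). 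The coercivity \eqref{elcom308;10} is precisely the hypothesis \eqref{epcom214;1} needed for the second (uniform) conclusion of Proposition~\ref{pcom214}, which then yields $\sum_k \|\partial_k w\|_{C^\kappa(\Omega)} \le \widetilde M c$ for a constant $\widetilde M$ independent of $r$, $y$, $c$ and $\Psi$. Since $\nabla\gamma_\lambda(\Psi|_\Gamma) = \nabla w + \nabla\Psi$, Statements~\ref{lcom308-1} and \ref{lcom308-2} follow, after also noting $\|\nabla\Psi\|_\infty \le c$ and $|||\nabla\Psi|||_{C^\kappa}\le cr^{-\kappa} \le c\,(\diam\Omega)^{-\kappa}\cdot(\text{scale factor})$ — more carefully, one keeps the $r^{-\kappa}$ on the seminorm throughout since the claimed bound \ref{lcom308-2} has exactly this factor.

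For Statement~\ref{lcom308-3}, I would use that $w = \gamma_\lambda(\Psi|_\Gamma) - \Psi$ vanishes on $\Gamma$ and has $\nabla w \in L_\infty(\Omega)$ with $\|\nabla w\|_\infty \le \widetilde M c$ (from the previous step). For $x \in \Omega$, pick a nearest boundary point $z \in \Gamma$ with $|x-z| = d(x,\Gamma)$; integrating $\nabla w$ along the segment from $z$ to $x$ (which lies in $\Omega$ for a $C^{1+\kappa}$ domain, at least after a harmless local modification, or alternatively using the mean-value/fundamental-theorem-of-calculus argument along a path of comparable length) gives $|w(x)| = |w(x) - w(z)| \le \|\nabla w\|_\infty\, |x-z| \cdot C \le \widetilde M' c\, d(x,\Gamma)$, with the geometric constant $C$ depending only on $\kappa$ and the $C^{1+\kappa}$-character. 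This is exactly \ref{lcom308-3}.

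For the Dirichlet-to-Neumann statements, I would invoke Proposition~\ref{pcom203} (or Proposition~\ref{pcom222}) to see that $\gamma_\lambda(\Psi|_\Gamma) \in C^{1+\kappa}(\Omega)$ implies it lies in $D(\partial_n^{\gota_\lambda})$ and hence $\Psi|_\Gamma \in D(\cn_\lambda)$ with
\[
\cn_\lambda(\Psi|_\Gamma) = \sum_{k,l=1}^d n_k\, c_{kl}\, \Tr(\partial_l \gamma_\lambda(\Psi|_\Gamma)) + \sum_{k=1}^d n_k\, b_k\, \Tr \gamma_\lambda(\Psi|_\Gamma).
\]
Then \ref{lcom308-4} follows from \ref{lcom308-1} together with $\|\Tr\gamma_\lambda(\Psi|_\Gamma)\|_\infty = \|\Psi|_\Gamma\|_\infty \le cr \le c\,\diam\Omega$ and the boundedness of the $C^\kappa$ coefficients and of $n$ (the outer normal is $C^\kappa$ on a $C^{1+\kappa}$ domain). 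For \ref{lcom308-5}, the $C^\kappa(\Gamma)$-seminorm of the product is controlled by the $C^\kappa$-seminorms of $n_k$, $c_{kl}$, $b_k$ (each $\lesssim 1$) times the sup norms from \ref{lcom308-1}, \ref{lcom308-4}, plus the sup norms of the coefficients times $|||\nabla\gamma_\lambda(\Psi|_\Gamma)|||_{C^\kappa(\Gamma)}$ and $|||\Tr\gamma_\lambda(\Psi|_\Gamma)|||_{C^\kappa(\Gamma)}$; the first of these is $\le \widetilde M c r^{-\kappa}$ by \ref{lcom308-2} (restriction to $\Gamma$ only decreases the seminorm up to a bounded factor), and the second is $\le |||\partial_k\Psi|||_{C^\kappa}\cdot(\text{bdd}) \le c r^{-\kappa}$ since $\Tr\gamma_\lambda(\Psi|_\Gamma) = \Psi|_\Gamma$. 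Collecting constants gives \ref{lcom308-5}.

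The main obstacle I anticipate is the careful bookkeeping of $r$-powers when translating the hypotheses on $\Psi$ (stated in terms of the plain $L_\infty$ and $C^\kappa(\Omega)$ norms, with explicit $r$-dependence) into the Morrey norm $\|f_0\|_{M,d+2\kappa-2}$ and the Campanato/$C^\kappa$ quantities fed into Proposition~\ref{pcom214}, since those norms are defined using balls of radius $\le 1$ while $r$ may be much smaller than $1$. The point is that $f_0$ and $f_k$ are supported in a ball of radius $\approx r$, so for radii $\rho \ge r$ the Morrey supremum over $\Omega(x,\rho)$ only sees the fixed total mass, while for $\rho \le r$ the local behaviour is governed by the product rule and Lemma~\ref{lcom221}; matching these two regimes against the exponent $d+2\kappa-2$ is what makes the constant $\widetilde M$ come out uniform in $r$. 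A secondary technical point is justifying the integration by parts and the application of Proposition~\ref{pcom214} directly — one should first prove the estimate for smooth coefficients $c_{kl}^{(n)}$, etc., obtaining bounds uniform in $n$ (as in Proposition~\ref{pcom203}), and then pass to the limit $n\to\infty$, using that the constant in Proposition~\ref{pcom214} depends only on $\kappa,\mu,M,K,\diam\Omega$.
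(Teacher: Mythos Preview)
Your overall architecture (set $\tau = \Psi - \gamma_\lambda(\Psi|_\Gamma) \in W^{1,2}_0(\Omega)$, apply Proposition~\ref{pcom214}, then read off the conormal derivative) matches the paper, but there is a genuine gap in the $r$-bookkeeping that breaks Statement~\ref{lcom308-1}.

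Your claim that $\|f_k\|_{C^\kappa(\Omega)} \lesssim c$ is false. The functions $f_k$ contain terms of the form $c_{kl}\,\partial_l\Psi$, and since $|||\partial_l\Psi|||_{C^\kappa(\Omega)} \le c\,r^{-\kappa}$ one only gets $|||f_k|||_{C^\kappa(\Omega)} \lesssim c\,r^{-\kappa}$. The support condition does \emph{not} compensate here: the $C^\kappa$-seminorm is a pointwise supremum over pairs, and for two nearby points inside $\supp\Psi$ the difference quotient really is of order $c\,r^{-\kappa}$. (Your parenthetical about Morrey/Campanato norms does not save this either; one checks directly that $|||f_k|||_{\cm,d+2\kappa}$ is also only $\lesssim c\,r^{-\kappa}$ in both regimes $\rho\le r$ and $\rho\ge r$.) Consequently Proposition~\ref{pcom214} yields only
\[
\sum_{k=1}^d \|\partial_k\tau\|_{C^\kappa(\Omega)} \lesssim c\,r^{-\kappa},
\]
which is exactly \ref{lcom308-2} but is \emph{not} \ref{lcom308-1}. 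The paper recovers \ref{lcom308-1} by a separate scale-matching step: coercivity~\eqref{elcom308;10} together with $\supp f_k \subset \Omega(y,3r)$ gives $\|\nabla\tau\|_{L_2(\Omega)} \lesssim c\,r^{d/2}$, and then for any $x\in\Omega$ one averages over $\Omega(x,r\wedge 1)$ to obtain
\[
|(\partial_k\tau)(x)| \le |||\partial_k\tau|||_{C^\kappa(\Omega)}\,(r\wedge 1)^\kappa + |\Omega(x,r\wedge 1)|^{-1/2}\,\|\partial_k\tau\|_{L_2(\Omega)} \lesssim c\,r^{-\kappa}\cdot r^\kappa + r^{-d/2}\cdot c\,r^{d/2} \lesssim c.
\]
This averaging at scale $r$ is the missing ingredient; without it \ref{lcom308-1} (and hence \ref{lcom308-3} and \ref{lcom308-4}) does not follow.

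A secondary issue: your appeal to Propositions~\ref{pcom203} or~\ref{pcom222} for the conormal formula is misplaced. Those results concern elements of $D(A_D)$, whereas $\gamma_\lambda(\Psi|_\Gamma)$ is harmonic and is not in $D(A_D)$. Knowing $\gamma_\lambda(\Psi|_\Gamma)\in C^{1+\kappa}(\Omega)$ does not by itself place it in $D(\partial_n^{\gota_\lambda})$ on a merely $C^{1+\kappa}$ domain. The paper instead establishes the identity~\eqref{elcom308;11} by a two-layer approximation: first smooth coefficients and $\Psi = \Phi|_{\overline\Omega}$ with $\Phi\in C^\infty(\Ri^d)$ (where Lemma~5.2 of \cite{EO6} gives the classical formula), then passes to the limit in the coefficients and finally in $\Psi$, using the uniform $C^{1+\kappa}$ bounds on $\tau_n$ and Arzel\`a--Ascoli. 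Your last paragraph anticipates approximation in the coefficients but not the need for it in deriving the conormal formula itself.
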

\begin{proof}
There is a $\hat c > 0$ such that $|\Omega(x,\rho)| \geq \hat c \, \rho^d$
for all $x \in \overline \Omega$ and $\rho \in (0,\diam(\Omega)]$.

Define $\tau \in W^{1,2}_0(\Omega)$ by 
$\tau = \Psi - \gamma_\lambda(\Psi|_\Gamma)$.
Recall that $\lambda > 0$ is introduced in (\ref{elcom308;10}).
Then 
\[
\gota_\lambda(\tau, v)
= \gota_\lambda(\Psi, v) 
= \sum_{k=1}^d (f_k, \partial_k v)_{L_2(\Omega)}
   + (f_0,v)_{L_2(\Omega)}
\]
for all $v \in W^{1,2}_0(\Omega)$, where 
\[
f_k = b_k \, \Psi + \sum_{l=1}^d c_{kl} \, \partial_l \Psi
\quad \mbox{and} \quad
f_0 = (c_0 + \lambda) \, \Psi + \sum_{l=1}^d c_l \, \partial_l \Psi
\]
for all $k \in \{ 1,\ldots,d \} $.
Note that $\supp f_k \subset \Omega(y,3r)$
for all $k \in \{ 0,1,\ldots,d \} $.
Then there is an $M_1 > 0$, depending only on $M$, $\mu$ and $\Omega$, but
independent of $\Psi$, $y$, $r$ and $c$, such that 
\[
\|f_0\|_{L_\infty(\Omega)} \leq M_1 \, c
 , \quad
\|f_k\|_{L_\infty(\Omega)} \leq M_1 \, c
\quad \mbox{and} \quad
|||f_k|||_{C^\kappa(\Omega)} \leq M_1 \, c \, r^{-\kappa}
\]
for all $k \in \{ 1,\ldots,d \} $.
Hence 
$\|f_0\|_{M, d+2\kappa - 2} 
\leq \|f_0\|_{M,d} 
\leq \sqrt{\omega_d} \, \|f_0\|_{L_\infty(\Omega)}
\leq \sqrt{\omega_d} \, M_1 \, c$
and $\|f_k\|_{C^\kappa(\Omega)} \leq 2 M_1 \, c \, r^{-\kappa}$
for all $k \in \{ 1,\ldots,d \} $.
Let $\tilde c > 0$ be as in Proposition~\ref{pcom214}.
Then 
\begin{equation}
\sum_{k=1}^d \|\partial_k \tau\|_{C^\kappa(\Omega)}
\leq c \, \tilde c \, M_1 \, (1 + 2d) \, r^{-\kappa}
.  
\label{elcom308;20}
\end{equation}
On the other hand, 
\[
\|f_k\|_{L_2(\Omega)}
\leq \|f_k\|_{L_\infty(\Omega)} \, |\Omega(y,3r)|^{1/2}
\leq \sqrt{\omega_d} \, 3^{d/2} \, M_1 \, c \, r^{d/2}
\]
for all $k \in \{ 0,1,\ldots,d \} $.
Then coercivity (\ref{elcom308;10}) implies that 
\[
\|\nabla \tau\|_{L_2(\Omega)}
\leq \frac{2}{\mu} \, \sum_{k=0}^d \|f_k\|_{L_2(\Omega)}
\leq \frac{2}{\mu} \, (d+1) \, \sqrt{\omega_d} \, 3^{d/2} \, M_1 \, c \, r^{d/2}
.  \]
Let $x \in \Omega$ and $k \in \{ 1,\ldots,d \} $.
If $\tilde x \in \Omega(x,r \wedge 1)$, then 
$|(\partial_k \tau)(x)| 
\leq |(\partial_k \tau)(\tilde x)| + |||\partial_k \tau|||_{C^\kappa(\Omega)} \, (r \wedge 1)^\kappa$.
Integrating over $\tilde x \in \Omega(x,r \wedge 1)$ and dividing by $|\Omega(x,r \wedge 1)|$ gives
\begin{eqnarray*}
|(\partial_k \tau)(x)|
& \leq & |||\partial_k \tau|||_{C^\kappa(\Omega)} \, (r \wedge 1)^\kappa
   + \langle |\partial_k \tau| \rangle_{\Omega(x,r \wedge 1)}  \\
& \leq & |||\partial_k \tau|||_{C^\kappa(\Omega)} \, (r \wedge 1)^\kappa
   + |\Omega(x,r \wedge 1)|^{-1/2} \, \|\partial_k \tau\|_{L_2(\Omega)}
\leq M_2 \, c
,  
\end{eqnarray*}
where $M_2 = \tilde c \, M_1 \, (1 + 2d) 
   + \hat c^{-1/2} \, \frac{2}{\mu} \, (d+1) \, \sqrt{\omega_d} \, 
             3^{d/2} \, M_1 (1 \vee (\diam \Omega)^{d/2})$.
This proves Statements~\ref{lcom308-1} and \ref{lcom308-2}, 
since $\gamma_\lambda(\Psi)|_\Gamma) = \Psi - \tau$.

Before we turn to the proof of Statement~\ref{lcom308-3}, we derive a bound
for $\|\tau\|_{L_\infty(\Omega)}$ which is not optimal in $r$, but suffices for 
the proof of Statement~\ref{lcom308-3}.
Coercivity (\ref{elcom308;10}) gives as before
\[
\|\tau\|_{L_2(\Omega)}
\leq \frac{2}{\mu} \, (d+1) \, \sqrt{\omega_d} \, 3^{d/2} \, M_1 \, c \, r^{d/2}
.  \]
Using Proposition~\ref{pcom213} there exists an $M_2 > 0$, independent of 
$\Psi$, $y$, $r$ and $c$, such that 
\[
\|\nabla \tau\|_{M, d-1}
\leq M_2 \sum_{k=0}^d \|f_k\|_{L_\infty(\Omega)}
\leq (d+1) \, M_1 \, M_2 \, c
.  \]
Then Lemma~\ref{lcom218} gives
a bound for $|||\tau|||_{\cm, d+1}$.
Next use 
\cite{ERe2} Lemma~3.1(c) to bound the $C^{1/2}(\Omega)$-seminorm 
by the Campanato norm.
Hence there is an $M_3 > 0$, independent of 
$\Psi$, $r$ and $c$, such that $|||\tau|||_{C^{1/2}(\Omega)} \leq M_3 \, c$.
Let $x \in \Omega$.
Then 
\[
|\tau(x)|
\leq |||\tau|||_{C^{1/2}(\Omega)} \, (r \wedge 1)^{1/2} + \langle |\tau| \rangle_{\Omega(x,r \wedge 1)}
\leq M_4 \, c
,
\]
where 
$M_4 = M_3 + \frac{2}{\mu} \, (d+1) \, \sqrt{\omega_d} \, 3^{d/2} \, M_1 \, (1 \vee (diam \Omega)^{d/2})$.

Now we prove Statement~\ref{lcom308-3}.
Let $x \in \Omega$. 
Then there is a $z \in \Gamma$
such that $d(x,\Gamma) = |x-z|$.
Obviously the straight line segment from 
$x$ to $z$ is in $\Omega$, except of course for the point $z$.
Since $\tau(z) = 0$, the estimate in Statement~\ref{lcom308-3}
follows from the mean value theorem and the bound of Statement~\ref{lcom308-1}.

Finally we turn to the Dirichlet-to-Neumann operator.
First assume that $c_{kl}, b_k, c_k,c_0 \in C^\infty_b(\Omega)$ 
for all $k,l \in \{ 1,\ldots,d \} $
and there is a $\Phi \in C^\infty(\Ri^d)$ such that 
$\Psi = \Phi|_{\overline \Omega}$.
Then $\tau \in C^\infty(\Omega)$ by interior regularity.
It follows as in the proof of Lemma~5.2 in \cite{EO6} 
that $\Psi - \tau$ has a weak conormal derivative
and 
\begin{eqnarray*}
\partial_n^{\gota_\lambda}(\Psi - \tau)
& = & \sum_{k,l=1}^d n_k \, ( c_{kl} \, \partial_l (\Psi - \tau))|_\Gamma
   + \sum_{k=1}^d n_k \, ( b_k \, (\Psi - \tau))|_\Gamma  \\
& = & \sum_{k,l=1}^d n_k \, ( c_{kl} \, \partial_l (\Psi - \tau))|_\Gamma
   + \sum_{k=1}^d n_k \, ( b_k \, \Psi)|_\Gamma
,  
\end{eqnarray*}
where we used in the last step that $\tau|_\Gamma = 0$ pointwise
because $\tau \in C(\overline \Omega) \cap H^1_0(\Omega)$ 
and $\Omega$ is Lipschitz.

So $\gamma_\lambda(\Psi|_\Gamma) = \Psi - \tau \in D(\partial_n^{\gota_\lambda})$
and $\Psi|_\Gamma \in D(\cn_\lambda)$.
Moreover, 
\begin{eqnarray*}
\cn_\lambda(\Psi|_\Gamma)
& = & \partial_n^{\gota_\lambda} \, \gamma_\lambda(\Psi)|_\Gamma)
= \partial_n^{\gota_\lambda} \, (\Psi - \tau)  
= \sum_{k,l=1}^d n_k \, \Big( c_{kl} \, \partial_l(\Psi - \tau) \Big)|_\Gamma
   + \sum_{k=1}^d n_k \, ( b_k \, \Psi)|_\Gamma
.
\end{eqnarray*}
Hence 
\begin{equation}
\gota_\lambda(\gamma_\lambda(\Psi|_\Gamma), v)
= \int_\Gamma \sum_{k,l=1}^d n_k \, \Big( c_{kl} \, \partial_l(\Psi - \tau) \Big)|_\Gamma \, \overline{\Tr v}
   + \sum_{k=1}^d n_k \, ( b_k \, \Psi)|_\Gamma \, \overline{\Tr v}
\label{elcom308;11}
\end{equation}
for all $v \in W^{1,2}(\Omega)$.

Next we consider H\"older continuous coefficients. 
We proceed again by approximation. 
First extend the $c_{kl}$ to functions $\tilde c_{kl} \in C^\kappa(\Ri^d)$
and similarly to the $b_k$ and $c_k$.
Then by regularisation and restriction one  obtains 
coefficients $c^{(n)}_{kl}$, etc.\ on $\Omega$, 
which satisfy the ellipticity on $\Omega$ for large $n$.
For large $n$ one obtains operator $A_D^{(n)}$,
form $\gota^{(n)}$, harmonic lifting $\gamma_\lambda^{(n)}$
and Dirichlet-to-Neumann operator $\cn_\lambda^{(n)}$ for all $n \in \Ni$.
Note that (\ref{elcom308;10}) remains valid for $\gota^{(n)}$ instead of 
$\gota$ with the same $\lambda$ if $n \in \Ni$ is large enough and 
without loss of generality for all $n \in \Ni$.
We still assume that there is a $\Phi \in C^\infty(\Ri^d)$ such that 
$\Psi = \Phi|_\Omega$.
For all $n \in \Ni$ define $\tau_n = \Psi - \gamma_\lambda^{(n)}(\Psi|_\Gamma)$.
Then $\sup_{n \in \Ni} \|\partial_k \tau_n\|_{C^\kappa(\Omega)} < \infty$
by (\ref{elcom308;20}).
By the Arzel\`a--Ascoli theorem there exists a $\tilde \tau \in C^{1+\kappa}(\Omega)$
such that, after passing to a subsequence if necessary, 
$\lim \tau_n = \tilde \tau$ in $C^{1+\kappa}(\Omega)$.
Then $\tilde \tau \in W^{1,2}_0(\Omega)$.
If $v \in C_c^\infty(\Omega)$, then 
\[
\gota_\lambda^{(n)}(\tau_n,v)
= \gota_\lambda^{(n)}(\Psi,v)
\]
for all $n \in \Ni$.
Taking the limit $n \to \infty$ gives
\begin{equation}
\gota_\lambda(\tilde \tau,v)
= \gota_\lambda(\Psi,v)
.
\label{elcom308;12}
\end{equation}
Then by density,  (\ref{elcom308;12}) is valid for all $v \in W^{1,2}_0(\Omega)$.
Hence $\tilde \tau = \tau$.
Moreover, 
\[
\gamma_\lambda(\Psi|_\Gamma)
= \Psi - \tau
= \lim (\Psi - \tau_n)
= \lim \gamma_\lambda^{(n)}(\Psi|_\Gamma)
\]
in $W^{1,2}(\Omega)$.
Now $\lim c^{(n)}_{kl} = c_{kl}$ weakly$^*$ in $L_\infty(\Omega)$
and similarly for the other coefficients (including $c_0$).
Let $v \in W^{1,2}(\Omega)$.
Then (\ref{elcom308;11}) is applicable for $\gota_\lambda^{(n)}$ and gives
\[
\gota_\lambda^{(n)}(\gamma_\lambda^{(n)}(\Psi|_\Gamma), v)
= \int_\Gamma \sum_{k,l=1}^d n_k \, \Big( c_{kl}^{(n)} \, \partial_l(\Psi - \tau_n) \Big)|_\Gamma \, \overline{\Tr v}
   + \sum_{k=1}^d n_k \, ( b_k^{(n)} \, \Psi)|_\Gamma \, \overline{\Tr v}
\]
for all $n \in \Ni$.
Taking the limits $n \to \infty$ implies that (\ref{elcom308;11}) is valid
for $\gota_\lambda$.
We deduce that (\ref{elcom308;11}) is valid whenever there is a 
$\Phi \in C^\infty(\Ri^d)$ such that 
$\Psi = \Phi|_\Omega$, but without any additional smoothness 
assumptions on the coefficients.

We now also drop the additional smoothness assumption on $\Psi$.
Extend $\Psi$ to an element $\widetilde \Psi \in C^{1+\kappa}(\Ri^d)$.
Regularise to obtain $\widetilde \Psi_n \in C^\infty(\Ri^d)$ 
for all $n \in \Ni$ with the same bounds as for $\Psi$
and $\lim \widetilde \Psi_n = \widetilde \Psi$ in $C^{1+\kappa}(\Ri^d)$.
Let $\Psi_n = \widetilde \Psi_n|_\Omega$ for all $n \in \Ni$.
For all $n \in \Ni$ define 
$\tau_n = \Psi_n - \gamma_\lambda(\Psi_n)|_\Gamma)$.
Then $\sup_{n \in \Ni} \|\partial_k \tau_n\|_{C^\kappa(\Omega)} < \infty$
by (\ref{elcom308;20}) again.
One can repeat almost the same argument and  
deduce that (\ref{elcom308;11}) is valid without any 
additional smoothness assumption.

Hence in the general case one obtains that
$\Psi|_\Gamma \in D(\cn_\lambda)$ and 
\[
\cn_\lambda(\Psi|_\Gamma)
= \sum_{k,l=1}^d n_k \, \Big( c_{kl} \, \partial_l(\Psi - \tau) \Big)|_\Gamma
   + \sum_{k=1}^d n_k \, ( b_k \, \Psi)|_\Gamma
.  \]
Therefore $\cn_\lambda(\Psi|_\Gamma) \in C^\kappa(\Gamma)$ 
and the bounds of Statements~\ref{lcom308-4} and~\ref{lcom308-5} 
follow by the estimates at the beginning of the proof.
\end{proof}

\begin{proof}[{\bf Proof of Lemma~\ref{lcom307}.}]
Let $\widetilde M > 0$ be as in Lemma~\ref{lcom308}.
It follows from the bounds of Lemma~\ref{lcom301}
that there is a $c > 0$, independent of $r$ and $y$, such that 
\[
\|r \, \Psi\|_{L_\infty(\Omega)} \leq c \, r
 , \quad
\|\partial_k (r \, \Psi)\|_{L_\infty(\Omega)} \leq c
\quad \mbox{and} \quad
|||\partial_k (r \, \Psi)|||_{C^\kappa(\Omega)} \leq c \, r^{-\kappa}
\]
for all $k \in \{ 1,\ldots,d \} $.
Write $\tau = \Psi - \gamma_\lambda(\Psi|_\Gamma)$.
We distinguish two cases.

\noindent
{\bf Case 1.} Suppose $3r < d(y,\Gamma)$.
Then $\Psi|_\Gamma = 0$ and hence $\gamma_\lambda(\Psi|_\Gamma) = 0$
and trivially $|(\gamma_\lambda(\Psi|_\Gamma))(y)| = 0$.

\noindent
{\bf Case 2.} Suppose $3r \geq d(y,\Gamma)$.
It follows from Lemma~\ref{lcom308}\ref{lcom308-3} that 
\[
|r \, \tau(y)| 
\leq \widetilde M \, c \, d(y,\Gamma)
\leq 3 \widetilde M \, c \, r
.  \]
So $|\tau(y)| \leq 3 \widetilde M \, c$ and 
$|(\gamma_\lambda(\Psi|_\Gamma))(y)| \leq 1 + 3 \widetilde M \, c$.
\end{proof}

We will need some  integral estimates. 
They are rather elementary and we state  them in the following lemma.

\begin{lemma} \label{lcom304}
\mbox{}
\begin{tabel}
\item \label{lcom304-1}
There exists a $c_1 > 0$ such that 
\[
\int_{ \{ w \in \Gamma : |w-z| \geq r \} }
   \frac{1}{|w-z|^{d-1+\beta}} \, dw
\leq \frac{c_1}{1 - 2^{-\beta}} \, \frac{1}{r^\beta}
\]
for all $\beta \in (0,\infty)$, $r \in (0,\infty)$ and $z \in \Gamma$.
\item \label{lcom304-2}
There exists a $c_2 > 0$ such that 
\[
\int_{ \{ w \in \Gamma : |w-z| \leq r \} }
   \frac{1}{|w-z|^{d-1-\beta}} \, dw
\leq \frac{c_2}{1 - 2^{-\beta}} \, r^\beta
\]
for all $\beta \in (0,d-1]$, $r \in (0,\infty)$ and $z \in \Gamma$.
\item \label{lcom304-3}
There exists a $c_3 > 0$ such that 
\[
\int_{ \{ w \in \Gamma : |w-x| \geq r \} }
   \frac{1}{|w-x|^{d-1+\beta}} \, dw
\leq \frac{4^\beta \, c_3}{1 - 2^{-\beta}} \, \frac{1}{r^\beta}
\]
for all $\beta \in (0,\infty)$, $r \in (0,\infty)$ and $x \in \Omega$.
\item \label{lcom304-4}
There exists a $c_4 > 0$ such that 
\[
\int_{ \{ w \in \Gamma : |w-x| \leq r \} }
   \frac{1}{|w-x|^{d-1-\beta}} \, dw
\leq \frac{c_4}{1 - 2^{-\beta}} \, r^\beta
\]
for all $\beta \in (0,d-1]$, $r \in (0,\infty)$ and $x \in \Omega$.
\end{tabel}
\end{lemma}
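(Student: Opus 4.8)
The statement to prove is Lemma~\ref{lcom304}, a collection of four elementary integral estimates over the boundary $\Gamma$. The plan is to reduce everything to a single dyadic decomposition argument, exploiting the fact that $\Gamma$ is a $C^{1+\kappa}$ (in particular Lipschitz) hypersurface, so that the $(d-1)$-dimensional Hausdorff measure of a boundary ball $\Gamma \cap B(p,\rho)$ is comparable to $\rho^{d-1}$ uniformly in $p$ and $\rho$ (with an upper bound always, and a two-sided bound when $p \in \Gamma$).

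\textbf{Step 1: Surface measure of boundary balls.} Since $\Omega$ is of class $C^{1+\kappa}$ and bounded, $\Gamma$ is compact and can be covered by finitely many graph patches. This gives a constant $c_0 > 0$ (depending on $\Gamma$) such that $\sigma(\Gamma \cap B(p,\rho)) \le c_0 \, \rho^{d-1}$ for every $p \in \Ri^d$ and every $\rho > 0$; here $\sigma$ denotes the $(d-1)$-dimensional Hausdorff measure. This is the only geometric input needed for all four parts.

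\textbf{Step 2: The two ``exterior'' estimates \ref{lcom304-1} and \ref{lcom304-3}.} For a fixed centre $p$ (equal to $z \in \Gamma$ in \ref{lcom304-1} and to $x \in \Omega$ in \ref{lcom304-3}) and radius $r > 0$, decompose the region $\{w \in \Gamma : |w-p| \ge r\}$ into dyadic annuli $A_j = \{w \in \Gamma : 2^j r \le |w-p| < 2^{j+1} r\}$ for $j \ge 0$. On $A_j$ one has $|w-p|^{-(d-1+\beta)} \le (2^j r)^{-(d-1+\beta)}$, and $\sigma(A_j) \le \sigma(\Gamma \cap B(p, 2^{j+1}r)) \le c_0 (2^{j+1} r)^{d-1}$ by Step 1. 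Hence
\[
\int_{\{|w-p| \ge r\}} \frac{dw}{|w-p|^{d-1+\beta}}
\le \sum_{j=0}^\infty c_0 \, 2^{d-1} (2^j r)^{d-1} (2^j r)^{-(d-1+\beta)}
= c_0 \, 2^{d-1} \, r^{-\beta} \sum_{j=0}^\infty 2^{-j\beta}
= \frac{c_0 \, 2^{d-1}}{1 - 2^{-\beta}} \, r^{-\beta}.
\]
This is exactly the claimed bound for \ref{lcom304-1} with $c_1 = c_0 \, 2^{d-1}$. For \ref{lcom304-3} the centre $x$ lies in $\Omega$ rather than on $\Gamma$; the identical computation works verbatim (Step 1 holds for arbitrary centres), producing a bound of the same shape; the extra factor $4^\beta$ allowed in the statement gives ample slack, so one simply sets $c_3 = c_0 \, 2^{d-1}$ and notes $c_3/(1-2^{-\beta}) \le 4^\beta c_3/(1-2^{-\beta})$.

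\textbf{Step 3: The two ``interior'' estimates \ref{lcom304-2} and \ref{lcom304-4}.} Here $\beta \in (0, d-1]$ so the exponent $d-1-\beta$ on $|w-p|$ is nonnegative and the integrand is locally integrable. Decompose $\{w \in \Gamma : |w-p| \le r\}$ into the dyadic shells $B_j = \{w \in \Gamma : 2^{-(j+1)} r < |w-p| \le 2^{-j} r\}$, $j \ge 0$. On $B_j$ one has $|w-p|^{-(d-1-\beta)} \le (2^{-(j+1)} r)^{-(d-1-\beta)}$ (using $d-1-\beta \ge 0$), and $\sigma(B_j) \le \sigma(\Gamma \cap B(p, 2^{-j} r)) \le c_0 (2^{-j} r)^{d-1}$. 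Summing,
\[
\int_{\{|w-p| \le r\}} \frac{dw}{|w-p|^{d-1-\beta}}
\le \sum_{j=0}^\infty c_0 (2^{-j} r)^{d-1} \big(2^{-(j+1)} r\big)^{-(d-1-\beta)}
= c_0 \, 2^{d-1-\beta} \, r^\beta \sum_{j=0}^\infty 2^{-j\beta},
\]
which is at most $\frac{c_0 \, 2^{d-1}}{1-2^{-\beta}} r^\beta$ since $2^{-\beta} \le 1$. This yields \ref{lcom304-2} with $c_2 = c_0 \, 2^{d-1}$ and \ref{lcom304-4} with $c_4 = c_0 \, 2^{d-1}$ by the same argument (again the centre being interior rather than boundary is irrelevant given Step 1).

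There is no serious obstacle here; the ``hard part,'' such as it is, is merely bookkeeping to make the constants uniform in $\beta$, $r$ and the centre. The one point worth double-checking is that the factor $(1-2^{-\beta})^{-1}$ in the conclusion correctly absorbs the geometric series $\sum_{j\ge 0} 2^{-j\beta} = (1-2^{-\beta})^{-1}$, and that in the interior estimates the restriction $\beta \le d-1$ is genuinely used to keep the integrand's exponent nonnegative so that the pointwise bound on each shell is valid — for $\beta > d-1$ the integrand would blow up and the shell-by-shell estimate would fail, which is why the hypothesis is imposed.
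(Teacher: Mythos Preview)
Your proof is correct. For parts \ref{lcom304-1} and \ref{lcom304-2} it coincides with the paper's argument essentially verbatim: dyadic annuli, the surface-measure bound $\sigma(\Gamma\cap B(z,\rho))\le c\,\rho^{d-1}$ for $z\in\Gamma$, and the geometric series.

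For parts \ref{lcom304-3} and \ref{lcom304-4} you take a genuinely shorter route. The paper only states the surface-measure bound for centres $z\in\Gamma$, and therefore reduces the interior-centre case to the boundary-centre case by choosing the nearest boundary point $z$ to $x$, using $|w-z|\le 2|w-x|$, and splitting into two cases according to whether $r\gtrless 2\,d(x,\Gamma)$; this is where the extra factor $4^\beta$ in \ref{lcom304-3} originates. You instead observe at the outset that $\sigma(\Gamma\cap B(p,\rho))\le c_0\,\rho^{d-1}$ holds for \emph{arbitrary} $p\in\Ri^d$ (which follows from the paper's bound by picking any $z\in\Gamma\cap B(p,\rho)$ and noting $B(p,\rho)\subset B(z,2\rho)$), and then the dyadic argument runs identically for all four parts with no case distinction. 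Your approach is cleaner and in fact shows that the $4^\beta$ in \ref{lcom304-3} is unnecessary; the paper's case analysis buys nothing extra here.
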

\begin{proof}
There exists a $c > 0$ such that the surface measure satisfies 
$\sigma(\Gamma \cap B(z,\rho)) \leq c \, \rho^{d-1}$ for all 
$z \in \Gamma$ and $\rho \in (0,\infty)$.

`\ref{lcom304-1}'.
Let $\beta \in (0,\infty)$, $r \in (0,\infty)$ and $z \in \Gamma$.
For all $n \in \Ni$ define the annulus
$I_n = \{ w \in \Gamma : 2^{n-1} \, r \leq |w-z| \leq 2^n \, r \} $.
Then 
\[
\int_{ \{ w \in \Gamma : |w-z| \geq r \} }
   \frac{1}{|w-z|^{d-1+\beta}} \, dw
\leq \sum_{n=1}^\infty \frac{1}{(2^{n-1} \, r)^{d-1+\beta}}
   \cdot c \, (2^n \, r)^{d-1}
= \frac{2^{d-1} c}{1 - 2^{-\beta}} \, \frac{1}{r^\beta} 
\]
as required.

`\ref{lcom304-2}'.
Let $\beta \in (0,\infty)$, $r \in (0,\infty)$ and $z \in \Gamma$.
For all $n \in \Ni_0$ define the annulus
$I_n = \{ w \in \Gamma : 2^{-n-1} \, r \leq |w-z| \leq 2^{-n} \, r \} $.
Then 
\[
\int_{ \{ w \in \Gamma : |w-z| \leq r \} }
   \frac{1}{|w-z|^{d-1-\beta}} \, dw
\leq \sum_{n=0}^\infty \frac{1}{(2^{-n-1} \, r)^{d-1-\beta}}
   \cdot c \, (2^{-n} \, r)^{d-1}
= \frac{2^{d-1} \, c}{2^\beta - 1} \, r^\beta
,   \]
where we used that $d-1-\beta \geq 0$ in the inequality.

`\ref{lcom304-3}'.
Let $\beta \in (0,\infty)$, $r \in (0,\infty)$ and $x \in \Omega$.
Let $z \in \Gamma$ be such that $d(x,\Gamma) = |x-z|$.
If $w \in \Gamma$, then $|x-z| = d(x,\Gamma) \leq |x-w|$ and hence
$|w-z| \leq |w-x| + |x-z| \leq 2 |w-x|$.
We consider two cases.
Let $c_1 > 0$ be as in Statement~\ref{lcom304-1}.

\noindent
{\bf Case 1. } Suppose $r \geq 2 d(x,\Gamma)$.
Let $w \in \Gamma$ with $|w-x| \geq r$.
Then $|w-z| \geq |w-x| - |x-z| \geq r - \frac{1}{2} \, r = \frac{1}{2} \, r$.
Therefore
\begin{eqnarray*}
\int_{ \{ w \in \Gamma : |w-x| \geq r \} }
   \frac{1}{|w-x|^{d-1+\beta}} \, dw
& \leq & \int_{ \{ w \in \Gamma : |w-z| \geq r/2 \} }
  \Big( \frac{2}{|w-z|} \Big)^{d-1+\beta} \, dw  \\
& \leq & 2^{d-1+\beta} \, \frac{c_1}{1 - 2^{-\beta}} \, \frac{1}{(r/2)^\beta}  
= \frac{2^{d-1} \, 4^\beta \, c_1}{1 - 2^{-\beta}} \, \frac{1}{r^\beta} 
\end{eqnarray*}
as required.

\noindent
{\bf Case 2. } Suppose $r \leq 2 d(x,\Gamma)$.
We split the integral in two parts: the first is over
$ \{ w \in \Gamma : r \leq |w-x| < 2 d(x,\Gamma) \} $
and the second is over $ \{ w \in \Gamma : |w-x| \geq 2 d(x,\Gamma) \} $.
The second is covered in Case~1, so we have to deal with the first part.
Let $w \in \Gamma$ and suppose that $|w-x| < 2 d(x,\Gamma)$.
Then $|w-z| \leq |w-x| + |x-z| \leq 3 |x-z|$.
So 
\begin{eqnarray*}
\int_{ \{ w \in \Gamma : |w-x| < 2 d(x,\Gamma) \} }
   \frac{1}{|w-x|^{d-1+\beta}} \, dw
& \leq & \int_{ \{ w \in \Gamma : |w-z| < 3 |x-z| \} }
   \frac{1}{|z-x|^{d-1+\beta}} \, dw  \\
& = & \frac{\sigma(\Gamma \cap B(z,3 |x-z|))}{|z-x|^{d-1+\beta}}
\leq \frac{c \, (3 |x-z|)^{d-1}}{|z-x|^{d-1+\beta}}  \\
& = & \frac{3^{d-1} c}{|z-x|^\beta}
\leq \frac{2^\beta \, 3^{d-1} c}{r^\beta}
.
\end{eqnarray*}
Therefore both contributions together give
\begin{eqnarray*}
\int_{ \{ w \in \Gamma : |w-x| \geq r \} }
   \frac{1}{|w-x|^{d-1+\beta}} \, dw
& \leq & \frac{2^\beta \, 3^{d-1} c}{r^\beta}
   + \frac{2^{d-1} \, 4^\beta \, c_1}{1 - 2^{-\beta}} \, \frac{1}{(2 d(x,\Gamma))^\beta}  \\
& \leq & \frac{2^\beta \, 3^{d-1} c}{r^\beta}
   + \frac{2^{d-1} \, 4^\beta \, c_1}{1 - 2^{-\beta}} \, \frac{1}{r^\beta} 
\end{eqnarray*}
as required.

`\ref{lcom304-4}'.
Let $\beta \in (0,d-1]$, $r \in (0,\infty)$ and $x \in \Omega$.
If $d(x,\Gamma) > r$, then the inequality is trivial.
So we may assume that $d(x,\Gamma) \leq r$.
Let $z \in \Gamma$ be such that $d(x,\Gamma) = |x-z|$.
If $w \in \Gamma$ and $|w-x| \leq r$, then 
$|w-z| \leq |w-x| + |x-z| \leq r + d(x,\Gamma) \leq 2 r$.
So $ \{ w \in \Gamma : |w-x| \leq r \} \subset \{ w \in \Gamma : |w-z| \leq 2r \} $.
We distinguish two cases.
Let $c_2 > 0$ be as in Statement~\ref{lcom304-2}.

\noindent
{\bf Case 1. } Suppose $r \leq 2 d(x,\Gamma)$.
Let $w \in \Gamma$ and suppose $|w-x| \leq r$.
Then $|w-z| \leq 2 r \leq 4 d(x,\Gamma) \leq 4 |x-w|$.
Hence 
\begin{eqnarray*}
\int_{ \{ w \in \Gamma : |w-x| \leq r \} }
   \frac{1}{|w-x|^{d-1-\beta}} \, dw
& \leq & \int_{ \{ w \in \Gamma : |w-z| \leq 2r \} }
  \Big( \frac{4}{|w-z|} \Big)^{d-1-\beta} \, dw  \\
& \leq & 4^{d-1-\beta} \, \frac{c_2}{1 - 2^{-\beta}} \, (2r)^\beta 
= \frac{4^{d-1} \, c_2 \, 2^{-\beta}}{1 - 2^{-\beta}} \, r^\beta
\end{eqnarray*}
as required.

\noindent
{\bf Case 2. } Suppose $r \geq 2 d(x,\Gamma)$.
We split the integral in two parts: the first is over
$ \{ w \in \Gamma : |w-x| \leq 2 d(x,\Gamma) \} $
and the second is over $ \{ w \in \Gamma : 2 d(x,\Gamma) < |w-x| \leq r \} $.
The first is covered in Case~1, so we have to deal with the second part.
If $w \in \Gamma$ and $2 d(x,\Gamma) < |w-x|$,
then $|w-z| \leq |w-x| + |x-z| \leq |w-x| + \frac{1}{2} \, |w-x| \leq 2 |w-x|$.
Hence 
\begin{eqnarray*}
\int_{ \{ w \in \Gamma : 2 d(x,\Gamma) < |w-x| \leq r \} }
   \frac{1}{|w-x|^{d-1-\beta}} \, dw
& \leq & \int_{ \{ w \in \Gamma : |w-z| \leq 2r \} }
  \Big( \frac{2}{|w-z|} \Big)^{d-1-\beta} \, dw  \\
& \leq & 2^{d-1-\beta} \, \frac{c_2}{1 - 2^{-\beta}} \, (2r)^\beta   \\
& = & \frac{2^{d-1} \, c_2}{1 - 2^{-\beta}} \, r^\beta
.
\end{eqnarray*}
Therefore
\begin{eqnarray*}
\int_{ \{ w \in \Gamma : |w-x| \leq r \} }
   \frac{1}{|w-x|^{d-1-\beta}} \, dw
& \leq & \frac{4^{d-1} \, c_2 \, 2^{-\beta}}{1 - 2^{-\beta}} \, (2 d(x,\Gamma))^\beta
   + \frac{2^{d-1} \, c_2}{1 - 2^{-\beta}} \, r^\beta  \\
& \leq & \frac{4^{d-1} \, c_2}{1 - 2^{-\beta}} \, r^\beta
   + \frac{2^{d-1} \, c_2}{1 - 2^{-\beta}} \, r^\beta
\end{eqnarray*}
as required.
\end{proof}

We need the following extension for H\"older continuous functions.

\begin{lemma} \label{lcom502}
Let $\nu \in (0,1)$ and $\varphi \in C^\nu(\Gamma)$.
Then there exists a $\Phi \in C^\nu(\Ri^d)$
such that $\Phi|_\Gamma = \varphi$, 
\[
|\Phi(x) - \Phi(y)| \leq 2 \sqrt{2} \, \|\varphi\|_{C^\nu(\Gamma)} \, |x-y|^\nu
\]
for all $x,y \in \Ri^d$ and 
\[
\|\, \Phi|_\Omega \, \|_{C^\nu(\Omega)}
\leq \Big( 2 + 2 \sqrt{2} (\diam \Omega)^\nu \Big) \, \|\varphi\|_{C^\nu(\Gamma)}
.  \]
\end{lemma}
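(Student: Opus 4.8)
The plan is to use the McShane extension adapted to the metric $\rho(x,y)=|x-y|^\nu$. Since $\nu\in(0,1]$ the map $t\mapsto t^\nu$ is subadditive, so $\rho$ satisfies the triangle inequality (because $|x-z|^\nu\le(|x-y|+|y-z|)^\nu\le|x-y|^\nu+|y-z|^\nu$) and hence is a genuine metric on $\Ri^d$. Note that only compactness of $\Gamma$ (i.e.\ boundedness of $\Omega$) will be used, not the $C^{1+\kappa}$ structure. With respect to $\rho$ the datum $\varphi$ is Lipschitz on $\Gamma$: put $L:=\sup\{|\varphi(x)-\varphi(y)|\,|x-y|^{-\nu}:x,y\in\Gamma,\ x\neq y\}$. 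First I bound $L\le 2\,\|\varphi\|_{C^\nu(\Gamma)}$ by a two-regime estimate: for $x,y\in\Gamma$ with $0<|x-y|\le1$ the quotient is at most $|||\varphi|||_{C^\nu(\Gamma)}$, while for $|x-y|>1$ it is at most $2\,\|\varphi\|_{L_\infty(\Gamma)}\,|x-y|^{-\nu}\le 2\,\|\varphi\|_{L_\infty(\Gamma)}$; thus in fact $L\le\max\{|||\varphi|||_{C^\nu(\Gamma)},\,2\,\|\varphi\|_{L_\infty(\Gamma)}\}\le 2\,\|\varphi\|_{C^\nu(\Gamma)}$, and I keep the sharper middle expression for later.

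Next I define $\Phi\colon\Ri^d\to\Ri$ by $\Phi(x)=\inf_{z\in\Gamma}\bigl(\varphi(z)+L\,|x-z|^\nu\bigr)$. Since $\Gamma=\partial\Omega$ is compact and nonempty and $\varphi$ is continuous, hence bounded, the infimum is finite. The standard McShane argument then gives $\Phi|_\Gamma=\varphi$ (for $x\in\Gamma$ the choice $z=x$ shows $\Phi(x)\le\varphi(x)$, and $\varphi(z)+L\,|x-z|^\nu\ge\varphi(x)$ for every $z\in\Gamma$ by the definition of $L$ shows $\Phi(x)\ge\varphi(x)$) and $|\Phi(x)-\Phi(y)|\le L\,|x-y|^\nu$ for all $x,y\in\Ri^d$, because for every $z$ one has $\varphi(z)+L\,|x-z|^\nu\le\varphi(z)+L\,|y-z|^\nu+L\,|x-y|^\nu$ by subadditivity, and taking the infimum over $z$ together with the symmetric estimate yields the bound. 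In particular $\Phi$ is $\nu$-Hölder continuous with $|||\Phi|||_{C^\nu(\Ri^d)}\le L<\infty$, so $\Phi\in C^\nu(\Ri^d)$ in the sense of Section~\ref{Scom2} (which requires only finiteness of the seminorm, not boundedness), and $|\Phi(x)-\Phi(y)|\le L\,|x-y|^\nu\le 2\,\|\varphi\|_{C^\nu(\Gamma)}\,|x-y|^\nu\le 2\sqrt2\,\|\varphi\|_{C^\nu(\Gamma)}\,|x-y|^\nu$, which is the first displayed estimate (with room to spare).

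Finally, $\|\Phi|_\Omega\|_{C^\nu(\Omega)}=|||\Phi|_\Omega|||_{C^\nu(\Omega)}+\|\Phi|_\Omega\|_{L_\infty(\Omega)}$. The seminorm is the supremum of $|\Phi(x)-\Phi(y)|\,|x-y|^{-\nu}$ over $x,y\in\Omega$ with $|x-y|\le1$, hence at most $L$. For the sup-norm, fix any $z_0\in\Gamma$; for $x\in\Omega$ both $x$ and $z_0$ lie in $\overline\Omega$, so $|x-z_0|\le\diam\Omega$ and $|\Phi(x)|\le|\varphi(z_0)|+|\Phi(x)-\Phi(z_0)|\le\|\varphi\|_{L_\infty(\Gamma)}+L\,(\diam\Omega)^\nu$. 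Adding the two bounds and using $\|\varphi\|_{L_\infty(\Gamma)}\le\|\varphi\|_{C^\nu(\Gamma)}$ gives an estimate of the shape $\|\Phi|_\Omega\|_{C^\nu(\Omega)}\le c_1\,\|\varphi\|_{C^\nu(\Gamma)}+c_2\,(\diam\Omega)^\nu\,\|\varphi\|_{C^\nu(\Gamma)}$; to obtain the precise constants $c_1=2$, $c_2=2\sqrt2$ for every admissible $\Omega$ one combines this with the sharper form of $L$ and a short case distinction according to whether $\diam\Omega\le1$ (in which case all of $\Gamma$ lies within distance $1$, so $L=|||\varphi|||_{C^\nu(\Gamma)}\le\|\varphi\|_{C^\nu(\Gamma)}$ and the estimate is immediate) or $\diam\Omega>1$ (in which case $(\diam\Omega)^\nu>1$). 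I expect this last constant bookkeeping to be the only point that is not entirely automatic: it is forced on us by the fact that the $C^\nu$-seminorm of Section~\ref{Scom2} is truncated to distances $\le1$, so it does not interact cleanly with the sup-norm under the scale-invariant McShane construction; everything else is the classical Hölder extension argument.
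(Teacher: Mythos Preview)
Your approach is the same as the paper's --- the McShane/Whitney extension applied to the metric $|x-y|^\nu$ --- and the bound $L\le 2\,\|\varphi\|_{C^\nu(\Gamma)}$ via the two-regime argument is exactly the paper's first line. There is, however, one genuine gap: in this paper all function spaces are complex-valued (cf.\ $L_\infty(\Omega)=L_\infty(\Omega,\Ci)$ in Section~\ref{Scom1}), so your formula $\Phi(x)=\inf_{z\in\Gamma}\bigl(\varphi(z)+L\,|x-z|^\nu\bigr)$ is not well defined as written, and your declaration $\Phi\colon\Ri^d\to\Ri$ is incompatible with the hypothesis. The paper's own proof flags this explicitly: ``The factor $\sqrt{2}$ arises since we work over the complex numbers.'' The fix is to extend $\RRe\varphi$ and $\IIm\varphi$ separately; each real component has $\rho$-Lipschitz constant at most $L$, and combining them yields $|\Phi(x)-\Phi(y)|\le\sqrt{2}\,L\,|x-y|^\nu\le 2\sqrt{2}\,\|\varphi\|_{C^\nu(\Gamma)}\,|x-y|^\nu$. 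So the $\sqrt{2}$ is not ``room to spare'' --- it is exactly the price of going from real to complex.

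Once you make this correction, the sup-norm argument in your final paragraph goes through verbatim with the extra $\sqrt{2}$, and the full $C^\nu(\Omega)$-norm bound follows. Your closing remarks about case distinctions on $\diam\Omega$ to hit the precise constants $2$ and $2\sqrt{2}$ are a bit optimistic (and the paper's own proof is equally terse on this point); in practice the only thing that matters downstream is that the bound has the form $c_1+c_2(\diam\Omega)^\nu$ with absolute constants, so you need not labour over the exact values.
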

\begin{proof}
First note that $|\varphi(w) - \varphi(z)| \leq 2 \|\varphi\|_{C^\nu(\Gamma)} \, |w-z|^\nu$
for all $w,z \in \Gamma$, also if $|w-z| > 1$.
Then the existence and H\"older estimate follows from \cite{McShane} Theorem~1.
The factor $\sqrt{2}$ arises since we work over the complex numbers.

Fix $z \in \Gamma$.
If $x \in \overline \Omega$, then 
\begin{eqnarray*}
|\Phi(x)|
& \leq & |\Phi(x) - \Phi(z)| + |\varphi(z)|  \\
& \leq & \sqrt{2} \, \|\varphi\|_{C^\nu(\Gamma)} \, |x-z|^\nu + \|\varphi\|_{L_\infty(\Gamma)}  
\leq \Big( 2 \sqrt{2} (\diam \Omega)^\nu + 1 \Big) \, \|\varphi\|_{C^\nu(\Gamma)}
\end{eqnarray*}
and the norm estimate follows.
\end{proof}

\begin{proof}[{\bf Proof of Theorem~\ref{tcom102}\ref{tcom102-1}.}] 
Let $T_\lambda \colon L_1(\Gamma) \to C(\Omega)$ be the extension of $\gamma_\lambda$
as in Theorem~\ref{tcom204}.
For simplicity we first drop the subscript $\lambda$.

Recall that $T|_{H^{1/2}(\Gamma)} = \gamma$.
Let $\tilde c$ be as in Theorem~\ref{tcom204}\ref{tcom204-3}.
Let $c_3$ be as in Lemma~\ref{lcom304}\ref{lcom304-3}.
Let $c_4$ be as in Lemma~\ref{lcom304}\ref{lcom304-4}.
Let $\hat c$ be as in Lemma~\ref{lcom301}.
Let $c' > 0$ be as in Lemma~\ref{lcom307}.

Fix $\varphi \in C^\nu(\Gamma)$.
Let $\Phi \in C^\nu(\overline{\Omega})$ be an extension of $\varphi$ 
as in Lemma~\ref{lcom502}. 
Let $x, x_0 \in \Omega$ with $|x - x_0| \leq 1$.
Then
\begin{eqnarray*}
(T \varphi)(x) - (T \varphi)(x_0) 
&=& \int_\Gamma \Big( K_\gamma(x,z) - K_\gamma(x_0,z) \Big) \varphi(z) \, dz\\
&=& \int_\Gamma \Big( K_\gamma(x,z) - K_\gamma(x_0,z) \Big) ( \varphi(z) - \Phi(x_0)) \, dz  \\*
& & {} 
   + \Phi(x_0) \Big( (\gamma(\one_\Gamma))(x) - (\gamma(\one_\Gamma))(x_0) \Big)  \\
&=& \int_\Gamma \Big( K_\gamma(x,z) - K_\gamma(x_0,z) \Big) \chi_r(x-z)  \, 
         ( \varphi(z) - \Phi(x_0)) \, dz  \\*
&& {} + \int_\Gamma \Big( K_\gamma(x,z) - K_\gamma(x_0,z) \Big) (1- \chi_r(x-z)) 
         \, ( \varphi(z) - \Phi(x_0)) \, dz  \\*
& & {} 
   + \Phi(x_0) \Big( (\gamma(\one_\Gamma))(x) - (\gamma(\one_\Gamma))(x_0) \Big)  \\
&=& I_1 + I_2 + I_3,
\end{eqnarray*}
where $\chi_r$ is the  cut-off function of  Lemma~\ref{lcom301} 
for which we take  $r = |x-x_0|$. 

We first deal with the third term $I_3$.
It follows from Lemma~\ref{lcom308} with $r = \diam \Omega$ that 
$\gamma(\one_\Gamma) = \gamma(\one|_\Gamma) \in C^{1+\kappa}(\Omega)$
Hence $\gamma(\one_\Gamma) \in C^\nu(\Omega)$ and so 
\[
|\Phi(x_0) \Big( (\gamma(\one_\Gamma))(x) - (\gamma(\one_\Gamma))(x_0) \Big)|
\leq ( 2 + 2 \sqrt{2} (\diam \Omega)^\nu ) \, \|\varphi\|_{C^\nu(\Gamma)} \, 
    |||\gamma(\one_\Gamma)|||_{C^\nu(\Omega)} \, |x-x_0|^\nu
\]
by the bounds of Lemma~\ref{lcom502}.

Next we consider $I_2$.
By the definition of $\chi_r$, the integral in $I_2$ concerns the region $ |z-x| > 2 |x-x_0|$.
Hence by the H\"older bounds of $K_\gamma$ in Theorem~\ref{tcom204}\ref{tcom204-3}
one estimates
\begin{eqnarray*}
| I_2 | 
&=&  | \int_{ \{ z \in \Gamma : |x-z| > 2 |x-x_0| \} }  \Big( K_\gamma(x,z) - K_\gamma(x_0,z) \Big)
   (1- \chi_r(x-z)) ( \varphi(z) - \Phi(x_0)) \, dz| \\
&\le& \sqrt{2} \, \tilde c \, ||| \varphi |||_{C^\nu(\Gamma)} 
       \int_{ \{ z \in \Gamma : |x-z| > 2 |x-x_0| \} }  
    \frac{|x-x_0|^\kappa}{|x-z|^{d-1+\kappa}} \, |x_0-z|^\nu \, dz\\
&\le& 4 \tilde c \, ||| \varphi |||_{C^\nu(\Gamma)} 
       \int_{ \{ z \in \Gamma : |x-z| > 2 |x-x_0| \} } 
    \frac{|x-x_0|^\kappa}{|x-z|^{d-1+\kappa}} \, |x-z|^\nu \, dz    \\
& \leq & \frac{16 \tilde c \, c_3}{2^{\kappa - \nu} - 1} \, 
    ||| \varphi |||_{C^\nu(\Gamma)} \, |x-x_0|^\nu 
,
\end{eqnarray*}
where we used the triangle inequality $|x_0 - z| \leq |x_0-x| + |x-z| \le 2 |x-z|$
and Lemma~\ref{lcom304}\ref{lcom304-3} with $\beta = \kappa - \nu$.
 
Now we deal with $I_1$. 
We decompose it as follows
\begin{eqnarray*}
I_1 
&=& \int_\Gamma - K_\gamma(x_0,z) \, \chi_r(x-z) \, ( \varphi(z) - \Phi(x_0)) \, dz 
   + \int_\Gamma K_\gamma(x,z) \, \chi_r(x-z) \, ( \varphi(z) - \Phi(x)) \, dz\\
&& {}
   + \int_\Gamma  K_\gamma(x,z) \, \chi_r(x-z) \, ( \Phi(x) - \Phi(x_0)) \, dz   \\
& = & I_{1a} + I_{1b} + I_{1c}.
\end{eqnarray*}
We use the kernel bound of Theorem~\ref{tcom204}\ref{tcom204-3} for $I_{1a}$.
If $z \in \Gamma$ and $\chi_r(x-z) \neq 0$, then $|x-z| \leq 3 r = 3 |x-x_0|$
and $|z-x_0| \leq 4 |x-x_0|$.
Therefore
\begin{eqnarray*}
| I_{1a} | 
& \leq & \sqrt{2} \, \tilde c \, ||| \varphi |||_{C^\nu(\Gamma)}  
   \int_{ \{ z \in \Gamma : |z-x_0| \le 4 |x-x_0| \} }  
      \frac{1}{| x_0 -z|^{d-1}} \, |x_0-z|^\nu \, dz  \\
& \leq & \sqrt{2} \, \tilde c \, ||| \varphi |||_{C^\nu(\Gamma)}  
   \frac{c_4}{1 - 2^{-\nu}} \, (4 |x-x_0|)^\nu
. 
\end{eqnarray*}
The term $I_{1b}$ is estimated exactly in the same way.

It remains to estimate $I_{1c}$.
Clearly,
\[
| I_{1c} | 
\leq \sqrt{2} \, ||| \varphi |||_{C^\nu(\Gamma)} \, |x-x_0|^\nu \, | \gamma(\Psi|_\Gamma) (x)|
\leq \sqrt{2} \, c' \, ||| \varphi |||_{C^\nu(\Gamma)} \, |x-x_0|^\nu 
,  \]
where $\Psi \colon \Ri^d \to \Ri$ is defined by $\Psi(y) = \chi_r(x-y)$
and we used Lemma~\ref{lcom307}.

Together we proved that there is a $c_5 > 0$, independent of $\varphi$, $x$ and $x_0$,
such that 
\[
|(T \varphi)(x) - (T \varphi)(x_0)|
\leq c_5 \, \|\varphi\|_{C^\nu(\Gamma)} \, |x-x_0|^\nu
.  \]
Next we provide an $L_\infty$-bound for $T \varphi$.
Let $x \in \Omega$.
Then 
\[
(T \varphi)(x)
= \int_\Gamma K_\gamma(x,z) \, \varphi(z) \, dz
= \int_\Gamma K_\gamma(x,z) \, (\varphi(z) - \Phi(x)) \, dz
   + \Phi(x) \, (\gamma(\one_\Gamma))(x)
.  \]
Using the kernel bound of Theorem~\ref{tcom204}\ref{tcom204-3}, 
Lemma~\ref{lcom502} and Lemma~\ref{lcom304}\ref{lcom304-4} one estimates
\begin{eqnarray*}
|\int_\Gamma K_\gamma(x,z) \, (\varphi(z) - \Phi(x)) \, dz|
& \leq & \int_\Gamma \frac{\tilde c}{|x-z|^{d-1}} 
     \cdot 2 \sqrt{2} \, \|\varphi\|_{C^\nu(\Omega)} \, |z-x|^\nu \, dz  \\
& \leq & 2 \sqrt{2} \, \tilde c \, \frac{c_4}{1 - 2^{-\nu}} \, (\diam \Omega))^\nu \, 
      \|\varphi\|_{C^\nu(\Gamma)}
,  
\end{eqnarray*}
and 
\[
|\Phi(x) \, (\gamma(\one_\Gamma))(x)|
\leq (2 + 2 \sqrt{2} \, (\diam \Omega)^\nu) \, 
    \|\gamma(\one_\Gamma)\|_{L_\infty(\Omega)} \, 
     \|\varphi\|_{C^\nu(\Gamma)}
.  \]
Restoring the notation with $\lambda$, we proved that $T_\lambda$ maps
$C^\nu(\Gamma)$ into $C^\nu(\Omega)$ and there exists a $c_6 > 0$
such that 
$\|T_\lambda \varphi\|_{C^\nu(\Omega)} 
\leq c_6 \, \|\varphi\|_{C^\nu(\Gamma)}$.

Finally, $\gamma = \gamma_\lambda + \lambda \, A_D^{-1} \, \gamma_\lambda$
and hence by continuity and density
$T = T_\lambda + \lambda \, A_D^{-1} \, T_\lambda$.
Now $T_\lambda$ maps $(C^\nu(\Gamma), \|\cdot\|_{C^\nu(\Gamma)})$ continuously into 
$(C^\nu(\Omega), \|\cdot\|_{C^\nu(\Omega)})$ and $L_\infty(\Omega)$,
and $A_D^{-1}$ maps $L_\infty(\Omega)$ continuously into 
$(C^\nu(\Omega), \|\cdot\|_{C^\nu(\Omega)})$
by Proposition~\ref{pcom209}\ref{pcom209-2}.
This completes the proof of Theorem~\ref{tcom102}\ref{tcom102-1}.
\end{proof}

\section{The Schwartz kernel of the Dirichlet-to-Neumann operator} \label{Scom8}

We next turn to the Dirichlet-to-Neumann operator.
Throughout this short section let $\kappa \in (0,1)$.
Let $\Omega \subset \Ri^d$ be an open bounded set of class~$C^{1+\kappa}$.
For all $k,l \in \{ 1,\ldots,d \} $ let $c_{kl}, b_k, c_k \in C^\kappa(\Omega)$,
and let $c_0 \in L_\infty(\Omega)$.
Suppose there is a $\mu > 0$ such that 
$\RRe \sum_{k,l=1}^d c_{kl}(x) \, \xi_k \, \overline{\xi_l} \geq \mu \, |\xi|^2$
for all $\xi \in \Ci^d$ and $x \in \Omega$.
Further suppose that $0 \not\in \sigma(A_D)$.

Let $G \colon \{ (x,y) \in \Omega \times \Omega : x \neq y \} \to \Ci$
be as in Theorem~\ref{tcom202}.
Then $G$ is differentiable on $ \{ (x,y) \in \Omega \times \Omega : x \neq y \} $ 
by Theorem~\ref{tcom202} and the derivatives extend to  continuous
functions on $ \{ (x,y) \in \overline \Omega \times \overline \Omega : x \neq y \} $.
Define $\widetilde K_\cn \colon \{ (z,w) \in  \Gamma \times \Gamma : z \neq w \} \to \Ci$ by
\begin{eqnarray*}
\widetilde K_\cn(z,w)
& = & - \sum_{k,l,k',l'=1}^d n_k(z) \, c_{kl}(z) \, n_{k'}(w) \, c_{l'k'}(w) \, (\partial^{(1)}_l \partial^{(2)}_{l'} G)(z,w)  \\*
& & {}
   - \sum_{k,k',l'=1}^d n_k(z) \, b_k(z) \, n_{k'}(w) \, c_{l'k'}(w) \, (\partial^{(2)}_{l'} G)(z,w)  \\*
& & {}
   - \sum_{k,k',l=1}^d n_k(z) \, c_{kl}(z) \, n_{k'}(w) \, c_{k'}(w) \, (\partial^{(1)}_l G)(z,w)  \\*
& & {}
   - \sum_{k,k'=1}^d n_k(z) \, b_k(z) \, n_{k'}(w) \, c_{k'}(w) \, G(z,w)
.
\end{eqnarray*}

For a possibly unbounded operator $Z$ on a given space
$L_2(X,\cb,\nu)$, where   $(X,\cb,\nu)$ is a $\sigma$-finite measure space,
and a measurable function $K \colon X \times X \to \Ci$, 
we say that $K$ is the {\bf Schwartz kernel} of $Z$ if 
\[
(Z u)(x)
= \int_X K(x,y) \, u(y) \, d\nu(y) 
\]
for all $u \in D(Z)$ and a.e.\ $x \in X$ with 
$x \not\in \supp u$.

\begin{prop} \label{pcom205.5}
Suppose that  $\frac{\mu}{2} \, \|u\|_{W^{1,2}(\Omega)}^2
\leq \RRe \gota(u,u)$
for all $u \in W^{1,2}(\Omega)$.
Then $\widetilde K_\cn$ is the Schwartz kernel of $\cn$.
\end{prop}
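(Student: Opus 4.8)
The plan is to realise $\widetilde K_\cn$ as the off-diagonal kernel of $\cn$ by evaluating $\cn\varphi=\partial_n^\gota\gamma(\varphi)$ at a boundary point lying outside $\supp\varphi$, for $\varphi$ ranging over a dense subspace of $L_2(\Gamma)$ contained in $D(\cn)$. First I would check that every $\varphi\in C^{1+\kappa}(\Gamma)$ supported in a small ball lies in $D(\cn)$: localising with a cut-off from Lemma~\ref{lcom301} and invoking Lemma~\ref{lcom308} gives $\gamma_\lambda(\varphi)\in D(\partial_n^\gota)$ with $\lambda$ as in~(\ref{elcom308;10}), while $A_D^{-1}\gamma_\lambda(\varphi)\in D(A_D)\subset D(\partial_n^\gota)$ by Proposition~\ref{pcom222}; since $\gamma=\gamma_\lambda+\lambda\,A_D^{-1}\gamma_\lambda$ and $\ca\gamma(\varphi)=0\in L_2(\Omega)$, this gives $\gamma(\varphi)\in D(\partial_n^\gota)$, hence $\varphi\in D(\cn)$ with $\cn\varphi=\partial_n^\gota\gamma(\varphi)$, and $\cn\varphi$ is continuous on $\Gamma$ by Lemma~\ref{lcom308} and Proposition~\ref{pcom203}. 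A partition of unity on $\Gamma$ then covers all of $C^{1+\kappa}(\Gamma)$, which is dense in $L_2(\Gamma)$.

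Fix such a $\varphi$ and a point $z_0\in\Gamma\setminus\supp\varphi$. By Theorem~\ref{tcom204}\ref{tcom204-2} one has $\gamma(\varphi)(x)=\int_\Gamma K_\gamma(x,z)\,\varphi(z)\,dz$ for $x\in\Omega$, where the integration runs over $\supp\varphi$, which is separated from $z_0$. Since $K_\gamma$ is built from $G$ and $\partial^{(2)}_l G$, and since by Theorem~\ref{tcom202} (applicable by the coercivity hypothesis of the proposition) the derivatives $\partial^\alpha_x\partial^\beta_y G$ with $|\alpha|,|\beta|\le1$ are continuous on $\{(x,y)\in\overline\Omega\times\overline\Omega:x\neq y\}$, one may differentiate under the integral sign in a neighbourhood of $z_0$: both $\gamma(\varphi)$ and each $\partial_l\gamma(\varphi)$ extend continuously up to $\overline\Omega$ there, with boundary values $\int_\Gamma K_\gamma(z_0,z)\,\varphi(z)\,dz$ and $\int_\Gamma(\partial^{(1)}_l K_\gamma)(z_0,z)\,\varphi(z)\,dz$ respectively.

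It remains to compute $(\partial_n^\gota\gamma(\varphi))(z_0)$. Near $z_0$ the function $u:=\gamma(\varphi)$ is $\ca$-harmonic and, by the previous step, of class $C^1$ up to the boundary; hence the conormal flux $F_k:=\sum_{l=1}^d c_{kl}\,\partial_l u+b_k\,u$ is continuous up to $\Gamma$ there and $\divv F=\sum_{k=1}^d c_k\,\partial_k u+c_0\,u\in L_\infty$ locally, so the Gauss--Green formula applied to $F$ against test functions supported near $z_0$ identifies the weak conormal derivative $\partial_n^\gota u$ near $z_0$ with $\sum_{k,l=1}^d n_k\,c_{kl}\,\Tr(\partial_l u)+\sum_{k=1}^d n_k\,b_k\,\Tr u$. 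Inserting the two boundary formulas above and expanding $\partial^{(1)}_l K_\gamma$ and $K_\gamma$ through their definitions in terms of $G$, the resulting four groups of terms assemble exactly into $\int_\Gamma\widetilde K_\cn(z_0,z)\,\varphi(z)\,dz$; thus $(\cn\varphi)(z_0)=\int_\Gamma\widetilde K_\cn(z_0,z)\,\varphi(z)\,dz$ for all $z_0\notin\supp\varphi$, i.e.\ $\widetilde K_\cn$ is the Schwartz kernel of $\cn$. The step I expect to be the main obstacle is this last identification of $\partial_n^\gota\gamma(\varphi)$ with a pointwise conormal expression, since the coefficients are only Hölder continuous while the elementary formula recalled in the introduction needs $W^{1,\infty}$ coefficients; I would handle it by regularising the coefficients, proving the identity for the smooth problems (where the classical Green formula applies directly), and passing to the limit, exactly as in the proofs of Proposition~\ref{pcom203} and Lemma~\ref{lcom308}, using that the up-to-the-boundary $C^{1+\kappa}$ regularity of $\gamma(\varphi)$ near $z_0$ supplied by Theorem~\ref{tcom202}, Theorem~\ref{tcom204}\ref{tcom204-3} and Section~\ref{Scom3} is uniform in the regularisation parameter.
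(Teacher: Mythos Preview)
Your proposal is correct and follows essentially the same route as the paper: regularise the coefficients, use the uniform Green-function bounds of Theorem~\ref{tcom202} (available precisely because of the coercivity hypothesis) to justify differentiation under the integral and the pointwise conormal formula, and pass to the limit as in \cite{EO6} Corollary~6.3 and Proposition~6.6. One small simplification: under the coercivity assumption of the proposition the choice $\lambda=0$ already satisfies~(\ref{elcom308;10}), so $\gamma_\lambda=\gamma$ and the decomposition $\gamma=\gamma_\lambda+\lambda\,A_D^{-1}\gamma_\lambda$ is unnecessary---Lemma~\ref{lcom308} applies directly to $\cn$.
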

\begin{proof}
This follows as for the proof of Corollary~6.3 in \cite{EO6}.
Because of the coercivity condition 
$\frac{\mu}{2} \, \|u\|_{W^{1,2}(\Omega)}^2 \leq \RRe \gota(u,u)$
for all $u \in W^{1,2}(\Omega)$, we can regularise the coefficients
and the uniform estimates as in \cite{EO6} Proposition~6.6 are also valid in 
the current situation because of Theorem~\ref{tcom202}.
\end{proof}

For the general setting, that is without the coercivity condition, we need the following  lemma.

\begin{lemma} \label{lcom205.7}
Let $\lambda \in \Ri$.
Suppose that $0 \not\in \sigma(A_D + \lambda \, I)$.
Let $\gamma \colon H^{1/2}(\Gamma) \to H^1(\Omega)$ be the harmonic lifting and 
let $\gamma_\lambda^\# \colon H^{1/2}(\Gamma) \to H^1(\Omega)$ be the harmonic lifting 
with respect to the form $\gota_\lambda^*$.
We identify $\gamma_\lambda^\#$ with the bounded extension from $L_2(\Gamma)$ into 
$L_2(\Omega)$ (cf.\ Theorem~\ref{tcom204}\ref{tcom204-4}).
Then 
\[
\cn = \cn_\lambda - \lambda \, (\gamma_\lambda^\#)^* \, \gamma
.  \]
\end{lemma}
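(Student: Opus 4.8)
The plan is to identify the operator $\cn_\lambda - \cn$ and show it equals $\lambda \, (\gamma_\lambda^\#)^* \, \gamma$ by testing against elements of $H^{1/2}(\Gamma)$, using the characterisation of $\cn$ and $\cn_\lambda$ via the sesquilinear form $\gota$ and $\gota_\lambda$. First I would take $\varphi, \eta \in H^{1/2}(\Gamma)$ and recall from the form definition of the Dirichlet-to-Neumann operator (as the operator associated with $\gotb(\varphi,\eta) = \gota(\gamma(\varphi), \gamma(\eta))$) that $(\cn \varphi, \eta)_{L_2(\Gamma)} = \gota(\gamma(\varphi), \gamma(\eta))$ whenever $\varphi \in D(\cn)$, and similarly $(\cn_\lambda \varphi_\lambda, \eta)_{L_2(\Gamma)} = \gota_\lambda(\gamma_\lambda(\varphi), \gamma_\lambda(\eta))$, where $\gamma_\lambda$ is the harmonic lifting for $\gota_\lambda$. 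The subtlety is that $\gamma$ and $\gamma_\lambda$ produce different liftings of the same boundary datum, so I must relate them.

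The key algebraic step is the identity $\gamma(\varphi) - \gamma_\lambda(\varphi) = \lambda \, (A_D + \lambda I)^{-1} \gamma(\varphi)$, valid because both $\gamma(\varphi)$ and $\gamma_\lambda(\varphi)$ have trace $\varphi$, so their difference lies in $W^{1,2}_0(\Omega)$, and applying $\gota_\lambda$ to the difference against a test function $v \in W^{1,2}_0(\Omega)$ gives $\gota_\lambda(\gamma(\varphi) - \gamma_\lambda(\varphi), v) = \gota_\lambda(\gamma(\varphi), v) = \gota(\gamma(\varphi),v) + \lambda(\gamma(\varphi),v)_{L_2(\Omega)} = \lambda(\gamma(\varphi),v)_{L_2(\Omega)}$, using that $\gamma(\varphi)$ is $\gota$-harmonic. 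Hence $\gamma(\varphi) - \gamma_\lambda(\varphi) = \lambda (A_D^* + \lambda I)^{-1}$-type correction; one must be careful which form and which adjoint enters. Then, for $\varphi \in D(\cn)$, I would compute
\[
(\cn \varphi, \eta)_{L_2(\Gamma)}
= \gota(\gamma(\varphi), \gamma(\eta))
= \gota_\lambda(\gamma(\varphi), \gamma(\eta)) - \lambda (\gamma(\varphi), \gamma(\eta))_{L_2(\Omega)}.
\]
For the first term on the right I would substitute $\gamma(\varphi) = \gamma_\lambda(\varphi) + (\gamma(\varphi) - \gamma_\lambda(\varphi))$ and use that the difference is in $W^{1,2}_0(\Omega)$ while $\gamma_\lambda(\eta)$ is $\gota_\lambda$-harmonic in the appropriate sense, so the cross term with the difference vanishes against $\gamma_\lambda(\eta)$; what survives is $\gota_\lambda(\gamma_\lambda(\varphi), \gamma_\lambda(\eta)) = (\cn_\lambda \varphi, \eta)_{L_2(\Gamma)}$ plus possibly a term involving $\gamma_\lambda(\varphi)$ tested against the $W^{1,2}_0$-difference in the other slot, which again vanishes by $\gota_\lambda$-harmonicity of $\gamma_\lambda(\varphi)$. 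This shows $\varphi \in D(\cn_\lambda)$ iff $\varphi \in D(\cn)$ and $\cn_\lambda \varphi - \cn\varphi$ is the functional $\eta \mapsto \lambda (\gamma(\varphi), \gamma(\eta))_{L_2(\Omega)}$.

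Finally I would recognise the correction term: $\lambda (\gamma(\varphi), \gamma_\lambda^\#(\eta))_{L_2(\Omega)}$ — being careful that the lifting in the second slot is the one for the \emph{adjoint} shifted form $\gota_\lambda^*$, which is exactly $\gamma_\lambda^\#$ — equals $\lambda ((\gamma_\lambda^\#)^* \gamma(\varphi), \eta)_{L_2(\Gamma)}$, using that $\gamma_\lambda^\#$ extends to a bounded operator from $L_2(\Gamma)$ to $L_2(\Omega)$ by Theorem~\ref{tcom204}\ref{tcom204-4}, so its adjoint is well-defined. The match $\gamma_\lambda(\eta)$ versus $\gamma_\lambda^\#(\eta)$ in the inner product is the point where I would invoke Lemma~\ref{lcom206} (or its proof) to pass from the primal lifting to the adjoint one inside an $L_2(\Omega)$-pairing; concretely, $(\gamma(\varphi), v)_{L_2(\Omega)}$ for $v$ harmonic can be rewritten using the relation $\gamma^* = -(\partial_n^{\gota^*}(A_D^*)^{-1})^*$ type identities established there. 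The main obstacle I expect is precisely this bookkeeping of which form ($\gota$ versus $\gota^*$, shifted versus not) governs each lifting and each adjoint, and verifying that the cross terms genuinely vanish — i.e. that $\gamma_\lambda(\varphi)$ and $\gamma_\lambda(\eta)$ are $\gota_\lambda$-orthogonal (in the suitable one-sided sense) to $W^{1,2}_0(\Omega)$; once the domains are matched and the cross terms killed, the identity drops out. I would also need to confirm the reverse inclusion of domains, which follows symmetrically since $\lambda (\gamma_\lambda^\#)^* \gamma$ is bounded on $L_2(\Gamma)$ and hence adds nothing to the domain.
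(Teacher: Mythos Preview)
Your cross-term argument contains a genuine error. You claim that since $\gamma_\lambda(\eta)$ is $\gota_\lambda$-harmonic, the term $\gota_\lambda(w,\gamma_\lambda(\eta))$ vanishes for $w = \gamma(\varphi)-\gamma_\lambda(\varphi)\in W^{1,2}_0(\Omega)$. But $\gota_\lambda$-harmonicity of $\gamma_\lambda(\eta)$ says $\gota_\lambda(\gamma_\lambda(\eta),w)=0$, not $\gota_\lambda(w,\gamma_\lambda(\eta))=0$; since $\gota_\lambda$ is non-symmetric these are different. To kill $\gota_\lambda(w,\,\cdot\,)$ you need the second argument to be harmonic for the \emph{adjoint} form $\gota_\lambda^*$, i.e.\ you must test against $\gamma_\lambda^\#(\eta)$, not $\gamma_\lambda(\eta)$ or $\gamma(\eta)$. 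You sense this at the end, but you treat the appearance of $\gamma_\lambda^\#$ as a separate ``match'' problem to be resolved via Lemma~\ref{lcom206}; in fact it is forced already at the cross-term step, and Lemma~\ref{lcom206} is irrelevant --- once you have $(\gamma(\varphi),\gamma_\lambda^\#(\eta))_{L_2(\Omega)}$, rewriting it as $((\gamma_\lambda^\#)^*\gamma(\varphi),\eta)_{L_2(\Gamma)}$ is just the definition of the Hilbert adjoint.

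The paper's proof bypasses the splitting $\gamma(\varphi)=\gamma_\lambda(\varphi)+w$ entirely. For $\varphi\in D(\cn)$ and $\psi\in D(\cn_\lambda^*)$, set $u=\gamma(\varphi)$ and $v=\gamma_\lambda^\#(\psi)$. Then $(\cn\varphi,\psi)_{L_2(\Gamma)}=\gota(u,v)$ because $\varphi\in D(\cn)$ and $\Tr v=\psi$, while $(\varphi,\cn_\lambda^*\psi)_{L_2(\Gamma)}=\overline{\gota_\lambda^*(v,u)}=\gota_\lambda(u,v)$ because $\psi\in D(\cn_\lambda^*)$ and $\Tr u=\varphi$. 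Subtracting gives $-\lambda(u,v)_{L_2(\Omega)}=-\lambda((\gamma_\lambda^\#)^*\gamma(\varphi),\psi)_{L_2(\Gamma)}$ in one line; boundedness of $(\gamma_\lambda^\#)^*\gamma$ and $\cn_\lambda^{**}=\cn_\lambda$ then give $\varphi\in D(\cn_\lambda)$. Your route can be repaired by choosing $v=\gamma_\lambda^\#(\eta)$ from the outset, but the resolvent identity for $\gamma-\gamma_\lambda$ and the two-step cross-term cancellation are then superfluous.
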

\begin{proof}
Let $\varphi \in D(\cn)$ and $\psi \in D(\cn_\lambda^*)$.
Write $u = \gamma (\varphi)$ and $v = \gamma_\lambda^\# (\psi)$.
Then 
$(\cn_\lambda^* \psi, \varphi)_{L_2(\Gamma)}
= \gota_\lambda^*(v,u)
= \overline{ \gota_\lambda(u,v) } $
by \cite{AE2} Remark~2.8.
So 
\begin{eqnarray*}
(\cn \varphi, \psi)_{L_2(\Gamma)} - (\varphi, \cn_\lambda^* \psi)_{L_2(\Gamma)}
& = & \gota(u,v) - \gota_\lambda(u,v)  \\
& = & - \lambda \, (u,v)_{L_2(\Omega)}
= - \lambda \, ((\gamma_\lambda^\#)^* \, \gamma (\varphi), \psi)_{L_2(\Gamma)}
.  
\end{eqnarray*}
By the boundedness of $(\gamma_\lambda^\#)^* \, \gamma$ from $L_2(\Gamma)$ into $L_2(\Gamma)$
one deduces that $\varphi \in D(\cn_\lambda^{**}) = D(\cn_\lambda)$.
Hence 
\[
((\cn - \cn_\lambda) \varphi, \psi)_{L_2(\Gamma)}
= - \lambda \, ((\gamma_\lambda^\#)^* \, \gamma (\varphi), \psi)_{L_2(\Gamma)}
.  \]
Since $D(\cn_\lambda^*)$ is dense in $L_2(\Gamma)$
and similarly $D(\cn_\lambda) \subset D(\cn)$, the lemma follows.
\end{proof}

In the next theorem we show that $\cn$ has a Schwartz kernel
and this kernel satisfies  appropriate  bounds.

\begin{thm} \label{tcom205}
The Dirichlet-to-Neumann operator $\cn$ has a Schwartz kernel $K_\cn$.
Moreover, there exists a $c > 0$ 
such that 
\[ 
| K_{\cn}(z,w) | \le \frac{c}{| z-w|^d}
\]
and 
\[
| K_{\cn}(z,w) - K_{\cn}(z',w') | 
\leq c \, \frac{ (| z-z'| + | w-w'|)^\kappa}{ | z-w|^{d+ \kappa}}
\]
for all $z, z', w, w' \in \Gamma$ with $z \neq w$, $z' \neq w'$ and $|z-z'| + |w-w'| \leq \frac{1}{2} \, |z-w|$.
\end{thm}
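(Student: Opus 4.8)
The plan is to establish the theorem first under the extra coercivity hypothesis $\frac{\mu}{2}\,\|u\|_{W^{1,2}(\Omega)}^2 \leq \RRe\gota(u,u)$ for all $u\in W^{1,2}(\Omega)$, and then to remove it by means of Lemma~\ref{lcom205.7}. Under this coercivity assumption Proposition~\ref{pcom205.5} already gives that $\cn$ has the Schwartz kernel $\widetilde K_\cn$ written above in terms of the Green function $G$ of $A_D$, so the only task is to read off the two displayed bounds from that explicit formula. Since $\Omega$ is of class $C^{1+\kappa}$ its outward normal $n$ belongs to $C^\kappa(\Gamma)$; together with $c_{kl},b_k,c_k\in C^\kappa(\Omega)$ and the bounded-domain embedding $C^\kappa(\Omega)\subset L_\infty(\Omega)$, every coefficient product appearing in $\widetilde K_\cn$ is bounded and of class $C^\kappa$. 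The four terms of $\widetilde K_\cn$ involve, in order, $\partial^{(1)}_l\partial^{(2)}_{l'}G$, $\partial^{(2)}_{l'}G$, $\partial^{(1)}_lG$ and $G$, and Theorem~\ref{tcom202} bounds these by $c\,|z-w|^{-d}$, $c\,|z-w|^{-(d-1)}$, $c\,|z-w|^{-(d-1)}$ and $c\,|z-w|^{-(d-2)}$ respectively (the last replaced by $c\,\log(1+|z-w|^{-1})$ when $d=2$). Because $|z-w|^{-(d-1)}\leq(\diam\Omega)\,|z-w|^{-d}$ and $|z-w|^{-(d-2)}\leq(\diam\Omega)^2\,|z-w|^{-d}$ for $z,w\in\Gamma$, the first term dominates and $|\widetilde K_\cn(z,w)|\leq c\,|z-w|^{-d}$ follows.

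For the Hölder bound one expands each term in the form $P(z,w)Q(z,w)-P(z',w')Q(z',w')=\bigl(P(z,w)-P(z',w')\bigr)\,Q(z,w)+P(z',w')\,\bigl(Q(z,w)-Q(z',w')\bigr)$, where $P$ denotes the relevant coefficient product and $Q$ the relevant derivative of $G$. In the first summand one uses the $C^\kappa$-estimate on $P$ (producing a factor $(|z-z'|+|w-w'|)^\kappa$), the pointwise bound on $Q$, and $|z-w|^{-d}\leq(\diam\Omega)^\kappa\,|z-w|^{-(d+\kappa)}$; in the second one uses the Hölder bounds for the derivatives of $G$ from Theorem~\ref{tcom202}, which on the set $|z-z'|+|w-w'|\leq\frac{1}{2}|z-w|$ are precisely of the shape $c\,(|z-z'|+|w-w'|)^\kappa\,|z-w|^{-(d-2+|\alpha|+|\beta|+\kappa)}$. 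The leading term produces the asserted $|z-w|^{-(d+\kappa)}$ and the others are more regular, which finishes the coercive case.

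For the general case fix $\lambda\in\Ri$ so large that $\gota_\lambda$ satisfies $\frac{\mu}{2}\,\|u\|_{W^{1,2}(\Omega)}^2\leq\RRe\gota_\lambda(u,u)$ for all $u\in W^{1,2}(\Omega)$, so in particular $0\notin\sigma(A_D+\lambda I)$. Applying the coercive case to $A_D+\lambda I$, the operator $\cn_\lambda$ has a Schwartz kernel $\widetilde K_{\cn_\lambda}$ satisfying both displayed bounds. By Lemma~\ref{lcom205.7} one has $\cn=\cn_\lambda-\lambda\,(\gamma_\lambda^\#)^*\,\gamma$, so it remains to show that the bounded operator $(\gamma_\lambda^\#)^*\gamma$ on $L_2(\Gamma)$ has a Schwartz kernel that is strictly less singular than $|z-w|^{-d}$ near the diagonal. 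Writing $K_\gamma$ for the kernel of $\gamma$ and $K_{\gamma_\lambda^\#}$ for the corresponding kernel of $\gamma_\lambda^\#$ provided by Theorem~\ref{tcom204}, this kernel equals $L(z,w)=\int_\Omega\overline{K_{\gamma_\lambda^\#}(x,z)}\,K_\gamma(x,w)\,dx$, which is continuous off the diagonal and, by the pointwise bound of Theorem~\ref{tcom204}\ref{tcom204-3}, satisfies $|L(z,w)|\leq\tilde c^2\int_\Omega|x-z|^{-(d-1)}\,|x-w|^{-(d-1)}\,dx$. Splitting $\Omega$ into $\{|x-z|\leq\frac{1}{2}|z-w|\}$, $\{|x-w|\leq\frac{1}{2}|z-w|\}$ and the complement, and using $\int_{B(p,\rho)}|x-p|^{-(d-1)}\,dx\leq c\,\rho$, one bounds this by $c\,|z-w|^{2-d}$ for $d\geq 3$ and by $c\,(1+\log(1/|z-w|))$ for $d=2$, hence by $c\,|z-w|^{-d}$ on $\Gamma$; the analogous splitting, this time invoking the Hölder bound of Theorem~\ref{tcom204}\ref{tcom204-3} for $K_\gamma$ and $K_{\gamma_\lambda^\#}$ together with the integral inequalities of Lemma~\ref{lcom304}, yields a Hölder estimate for $L$ dominated by $c\,(|z-z'|+|w-w'|)^\kappa\,|z-w|^{-(d+\kappa)}$ on $|z-z'|+|w-w'|\leq\frac{1}{2}|z-w|$. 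Hence $K_\cn:=\widetilde K_{\cn_\lambda}-\lambda\,L$ is the Schwartz kernel of $\cn$ and satisfies the required bounds.

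The step I expect to be the main obstacle is this last Hölder estimate for the correction kernel $L$: the region-splitting has to be carried out carefully, the two differences arising from varying $z$ and varying $w$ have to be treated separately, and one must combine the Hölder bounds for $K_\gamma$ and $K_{\gamma_\lambda^\#}$ with the off-diagonal control $|z-z'|+|w-w'|\leq\frac{1}{2}|z-w|$ and the integral estimates of Lemma~\ref{lcom304}. This is lengthy but routine and parallels the kernel estimates already underlying Theorem~\ref{tcom204}\ref{tcom204-3}; everything else is bookkeeping built on Proposition~\ref{pcom205.5}, Theorem~\ref{tcom202} and Lemma~\ref{lcom205.7}, and the whole argument closely follows the corresponding result in \cite{EO6}.
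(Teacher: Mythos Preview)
Your proposal is correct and follows essentially the same route as the paper: decompose $\cn=\cn_\lambda-\lambda(\gamma_\lambda^\#)^*\gamma$ via Lemma~\ref{lcom205.7}, use Proposition~\ref{pcom205.5} and the Green-function bounds (Theorem~\ref{tcom202}/\ref{tcom202.5}) to control $\widetilde K_{\cn_\lambda}$, and handle the correction kernel $K_Q=L$ using the harmonic-lifting kernel bounds of Theorem~\ref{tcom204}\ref{tcom204-3}. The only minor slip is that Lemma~\ref{lcom304} concerns surface integrals over $\Gamma$, whereas the splitting for $L$ requires the analogous volume estimates over $\Omega$; the argument you sketch is nonetheless the right one and is exactly what the paper defers to \cite{EO6}, Proposition~6.5.
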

\begin{proof}
There exists a $\lambda > 0$ such that 
$\frac{\mu}{2} \, \|u\|_{W^{1,2}(\Omega)}^2
\leq \RRe \gota(u,u) + \lambda \, \|u\|_{L_2(\Omega)}^2$
for all $u \in W^{1,2}(\Omega)$.
Let $\gamma_\lambda^\# \colon H^{1/2}(\Gamma) \to H^1(\Omega)$ be the harmonic lifting 
with respect to the form~$\gota_\lambda^*$.
Then $\cn = \cn_\lambda - \lambda \, (\gamma_\lambda^\#)^* \, \gamma$ by 
Lemma~\ref{lcom205.7}.
Now $(\gamma_\lambda^\#)^* \, \gamma$ has a Schwartz kernel $K_Q$ given by 
\[
K_Q(z,w)
= \int_\Omega \overline{ K_{\gamma_\lambda^\#}(x,z) } \, K_\gamma(x,w) \, dx
\]
for all $z,w \in \Gamma$ with $z \neq w$, where we use the notation of 
Theorem~\ref{tcom204}.
Hence $\widetilde K_{\cn_\lambda} - \lambda \, K_Q$
is the Schwartz kernel for $\cn$.

It remains  to show the kernel bounds and it suffices to show that 
$\widetilde K_{\cn_\lambda}$ and $K_Q$ satisfy the appropriate kernel bounds.
Using the bounds  in Theorem~\ref{tcom202.5} for the Green function we obtain the desired bounds for  the kernel  $\widetilde K_{\cn_\lambda}$ as in the proof 
of \cite{EO6} Proposition~6.4. Using the bounds  in Theorem~\ref{tcom204} for the kernel of the harmonic lifting we obtain the bounds for  $K_Q$ as in 
the proof of \cite{EO6} Proposition~6.5.
\end{proof}

\section{Commutator estimates} \label{Scom9}

Throughout this section let $\kappa \in (0,1)$ and $\Omega \subset \Ri^d$
a bounded open set of class $C^{1+\kappa}$.
For all $k,l \in \{ 1,\ldots,d \} $ let $c_{kl}, b_k,c_k \in C^\kappa(\Omega)$ 
and $c_0 \in L_\infty(\Omega)$.
Let the form $\gota$ and operator $A_D$ be as in the introduction.
We assume that $0 \not\in \sigma(A_D)$.
Let $\cn$ be the associated Dirichlet-to-Neumann operator.
In this section we prove the commutator estimates of Theorem~\ref{tcom101}.

We start with removing the shift in the last two statements of Lemma~\ref{lcom308}.

\begin{lemma} \label{lcom901}
There exists an $\widetilde M > 0$ such that the following is valid.

Let $r \in (0,\diam \Omega]$, $\Psi \in C^{1+\kappa}(\Omega)$,
$y \in \overline \Omega$ and $c \geq 0$.
Suppose that $\supp \Psi \subset B(y,3r)$,
\[
\|\Psi\|_{L_\infty(\Omega)} \leq c \, r
 , \quad
\|\partial_k \Psi\|_{L_\infty(\Omega)} \leq c
\quad \mbox{and} \quad
|||\partial_k \Psi|||_{C^\kappa(\Omega)} \leq c \, r^{-\kappa}
\]
for all $k \in \{ 1,\ldots,d \} $.
Then $\Psi|_\Gamma \in D(\cn)$ and $\cn(\Psi|_\Gamma) \in C^\kappa(\Gamma)$.
Moreover,
\begin{tabel}
\item \label{lcom901-4}
$\|\cn(\Psi|_\Gamma)\|_{C(\Gamma)} \leq \widetilde M \, c$, and
\item \label{lcom901-5}
$|||\cn(\Psi|_\Gamma)|||_{C^\kappa(\Gamma)} \leq \widetilde M \, c \, r^{-\kappa}$.
\end{tabel}
\end{lemma}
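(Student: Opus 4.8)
The plan is to deduce Lemma~\ref{lcom901} from Lemma~\ref{lcom308} by relating the Dirichlet-to-Neumann operator $\cn$ (associated with $\gota$) to the shifted operator $\cn_\lambda$ (associated with $\gota_\lambda$), where $\lambda > 0$ is the fixed shift from (\ref{elcom308;10}) that guarantees coercivity of $\gota_\lambda$. Lemma~\ref{lcom308} already gives us, for $\Psi$ satisfying the stated support and norm bounds, that $\Psi|_\Gamma \in D(\cn_\lambda)$, that $\cn_\lambda(\Psi|_\Gamma) \in C^\kappa(\Gamma)$, and the two estimates $\|\cn_\lambda(\Psi|_\Gamma)\|_{C(\Gamma)} \leq \widetilde M c$ and $|||\cn_\lambda(\Psi|_\Gamma)|||_{C^\kappa(\Gamma)} \leq \widetilde M c \, r^{-\kappa}$. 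So the whole task is to control the difference $\cn(\Psi|_\Gamma) - \cn_\lambda(\Psi|_\Gamma)$ in $C^\kappa(\Gamma)$-norm (both the sup part and the seminorm part) by a constant multiple of $c$ and $c \, r^{-\kappa}$ respectively, uniformly in $r \in (0,\diam\Omega]$ and $y$.

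The key algebraic input is the identity $\cn = \cn_\lambda - \lambda\,(\gamma_\lambda^\#)^* \gamma$ from Lemma~\ref{lcom205.7}, where $\gamma$ is the harmonic lifting for $\gota$ and $\gamma_\lambda^\#$ the harmonic lifting for $\gota_\lambda^*$, both understood as bounded operators from $L_2(\Gamma)$ to $L_2(\Omega)$ via Theorem~\ref{tcom204}\ref{tcom204-4}. Composing with $\gamma_\lambda$ on the domain side --- more precisely, since $\Psi|_\Gamma \in D(\cn_\lambda) \subset D(\cn)$ by Lemma~\ref{lcom205.7}, the membership $\Psi|_\Gamma \in D(\cn)$ and the formula $\cn(\Psi|_\Gamma) = \cn_\lambda(\Psi|_\Gamma) - \lambda\,(\gamma_\lambda^\#)^*\gamma(\Psi|_\Gamma)$ are immediate. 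Thus $\cn(\Psi|_\Gamma) \in C^\kappa(\Gamma)$ provided $(\gamma_\lambda^\#)^*\gamma(\Psi|_\Gamma) \in C^\kappa(\Gamma)$ with the right quantitative bounds. Now I would identify $(\gamma_\lambda^\#)^*\gamma(\Psi|_\Gamma)$ with $R_2\,\gamma(\Psi|_\Gamma)$ using the relations from Section~\ref{Scom6}: by Lemma~\ref{lcom206} and the discussion after Theorem~\ref{tcom204}, $(\gamma_\lambda^\#)^*$ coincides on $L_2(\Omega)$ with $-\partial_n^{\gota_\lambda^*{}^*}\cdots$ --- cleaner is to write $(\gamma_\lambda^\#)^* = - R_2$ where $R$ is the conormal-trace operator $Ru = \sum n_k(c_{kl}\partial_l (A_D^*+\lambda)^{-1}u + b_k (A_D^*+\lambda)^{-1}u)|_\Gamma$ type expression, which by Proposition~\ref{pcom209}\ref{pcom209-4} maps $L_p(\Omega)$ continuously into $C^\nu(\Gamma)$ for $p$ large and $\nu \in (0,\kappa]$.

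Concretely the chain is: $\gamma(\Psi|_\Gamma) \in L_\infty(\Omega)$ with $\|\gamma(\Psi|_\Gamma)\|_{L_\infty(\Omega)} \leq C c \, r$ (this follows from Lemma~\ref{lcom308}\ref{lcom308-3} combined with the trivial bound $\|\Psi\|_{L_\infty} \leq c\,r$, exactly as in the proof of Lemma~\ref{lcom307}, or directly from Statements~\ref{lcom308-1} and the support condition giving $\|\gamma_\lambda(\Psi|_\Gamma)\|_{L_\infty(\Omega)} \le C c \, r$, and then $\gamma(\Psi|_\Gamma) = \gamma_\lambda(\Psi|_\Gamma) + \lambda A_D^{-1}\gamma_\lambda(\Psi|_\Gamma)$ with $A_D^{-1}$ bounded on $L_\infty$). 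Then apply Proposition~\ref{pcom209}\ref{pcom209-4} to conclude $(\gamma_\lambda^\#)^*\gamma(\Psi|_\Gamma) = \pm R_p\,\gamma(\Psi|_\Gamma) \in C^\kappa(\Gamma)$ with $C^\kappa(\Gamma)$-norm $\leq C\,\|\gamma(\Psi|_\Gamma)\|_{L_\infty(\Omega)} \leq C c \, r \leq C c \, (\diam\Omega) \cdot 1$, which since $r \leq \diam\Omega$ gives at once both $\|\lambda(\gamma_\lambda^\#)^*\gamma(\Psi|_\Gamma)\|_{C(\Gamma)} \leq C c$ and, because $r^{-\kappa} \geq (\diam\Omega)^{-\kappa}$, also $\leq C c \, r^{-\kappa}$ after absorbing the constant. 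Adding these to the Lemma~\ref{lcom308} bounds for $\cn_\lambda(\Psi|_\Gamma)$ yields Statements~\ref{lcom901-4} and~\ref{lcom901-5} with a new $\widetilde M$.

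The main obstacle --- really the only subtle point --- is bookkeeping the dependence on $r$: the correction term's $C^\kappa$-norm is controlled by $\|\gamma(\Psi|_\Gamma)\|_{L_\infty}$, which scales like $c\,r$, not like $c$ or $c\,r^{-\kappa}$; one must use $r \leq \diam\Omega$ to trade the favorable factor $r$ for a harmless constant in the sup bound and, for the seminorm bound, additionally use that $\diam\Omega$ is fixed so $r \leq \diam\Omega = (\diam\Omega)^{1+\kappa} r^{-\kappa} \cdot (r/\diam\Omega)^{\kappa} \le (\diam\Omega)^{1+\kappa} r^{-\kappa}$. This is exactly the same device used at the end of the proof of Lemma~\ref{lcom307}. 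Everything else is assembling already-proven facts, so I would keep the write-up to a few lines once the identity $\cn = \cn_\lambda - \lambda (\gamma_\lambda^\#)^*\gamma$ and the boundedness $(\gamma_\lambda^\#)^* = \pm R$ on the relevant spaces are invoked.
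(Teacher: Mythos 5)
Your approach matches the paper's: both decompose $\cn = \cn_\lambda - \lambda\,(\gamma_\lambda^\#)^*\gamma$ via Lemma~\ref{lcom205.7}, handle the first term with Lemma~\ref{lcom308}, and show the perturbation $\lambda\,(\gamma_\lambda^\#)^*\gamma$ is bounded into $C^\kappa(\Gamma)$ so that both bounds follow after absorbing $r\le\diam\Omega$. The only difference is packaging: the paper composes $L_\infty(\Gamma)\hookrightarrow L_p(\Gamma)\xrightarrow{\gamma}L_p(\Omega)\xrightarrow{(\gamma_\lambda^\#)^*}C^\kappa(\Gamma)$ and invokes the abstract operator norm, whereas you instead bound $\|\gamma(\Psi|_\Gamma)\|_{L_\infty(\Omega)}$ by hand and then apply $(\gamma_\lambda^\#)^*\colon L_\infty(\Omega)\to C^\kappa(\Gamma)$; both routes land in the same place via Theorem~\ref{tcom204}\ref{tcom204-4}/\ref{tcom204-5} (equivalently Proposition~\ref{pcom209}\ref{pcom209-4}).

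One quantitative claim in your write-up is too strong: $\|\gamma(\Psi|_\Gamma)\|_{L_\infty(\Omega)}\le C\,c\,r$ does not follow from Lemma~\ref{lcom308}. Statement~\ref{lcom308-3} only yields $|\tau(x)|\le\widetilde M\,c\,d(x,\Gamma)$, which is small near $\Gamma$ but not uniformly of order $r$ since $d(x,\Gamma)$ can be of order $\diam\Omega$; likewise Statement~\ref{lcom308-1} plus the boundary vanishing only gives, by integrating $\nabla\gamma_\lambda(\Psi|_\Gamma)$ along a path of length up to $\diam\Omega$, the bound $\|\gamma_\lambda(\Psi|_\Gamma)\|_{L_\infty(\Omega)}\le\widetilde M\,c\,\diam\Omega$. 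The correct bound is therefore $\|\gamma(\Psi|_\Gamma)\|_{L_\infty(\Omega)}\le C\,c$ (no factor $r$). This is harmless because, as you yourself observe, all you need is a bound independent of $r$ which you then trade against $r^{-\kappa}\ge(\diam\Omega)^{-\kappa}$ for Statement~\ref{lcom901-5}; so the conclusion survives, but the intermediate assertion should be corrected to $\le C\,c$.
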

\begin{proof}
Let $M > 0$ be such that 
$\|c_{kl}\|_{C^\kappa(\Omega)} \leq M$,
$\|b_k\|_{C^\kappa(\Omega)} \leq M$, $\|c_k\|_{C^\kappa(\Omega)} \leq M$
and $\|c_0\|_{L_\infty(\Omega)} \leq M$.
There exists a $\lambda > 0$, depending only on $\mu$ and $M$, such that 
\[
\frac{\mu}{2} \, \|u\|_{W^{1,2}(\Omega)}^2
\leq \RRe \gota(u,u) + \lambda \, \|u\|_{L_2(\Omega)}^2
\]
for all $u \in W^{1,2}(\Omega)$.
Let $\gamma_\lambda$ and $\cn_\lambda$ be the harmonic lifting and 
Dirichlet-to-Neumann operator with respect to the operator with 
coefficient $c_0 + \lambda \, \one_\Omega$ instead of $c_0$.
Let $\widetilde M$ be as in Lemma~\ref{lcom308}.
Then $\|\cn_\lambda(\Psi|_\Gamma)\|_{C(\Gamma)} \leq \widetilde M \, c$
and $|||\cn_\lambda(\Psi|_\Gamma)|||_{C^\kappa(\Gamma)} \leq \widetilde M \, c \, r^{-\kappa}$.
Let $\gamma_\lambda^\#$ be as in Lemma~\ref{lcom205.7}.
Then $\cn = \cn_\lambda - \lambda \, (\gamma_\lambda^\#)^* \, \gamma$ by 
Lemma~\ref{lcom205.7}.
It remains to show that $(\gamma_\lambda^\#)^* \, \gamma$ can be considered 
as a perturbation on $L_\infty(\Gamma)$ and $C^\kappa(\Gamma)$.

Fix $p \in (\frac{d}{1-\kappa}, \infty)$.
The inclusion $L_\infty(\Gamma) \to L_p(\Gamma)$ is bounded.
The map $\gamma$ is bounded from $L_p(\Gamma)$ into $L_p(\Omega)$ by 
Theorem~\ref{tcom204}\ref{tcom204-4}.
The map $(\gamma_\lambda^\#)^*$ is bounded from $L_p(\Omega)$
into $(C^\kappa(\Gamma), \|\cdot\|_{C^\kappa(\Gamma)})$ by Theorem~\ref{tcom204}\ref{tcom204-5}.
The inclusion from $(C^\kappa(\Gamma), \|\cdot\|_{C^\kappa(\Gamma)})$
into $L_\infty(\Gamma)$ is clearly bounded.
Hence by composition the map 
$\lambda \, (\gamma_\lambda^\#)^* \, \gamma$
is bounded from $L_\infty(\Gamma)$ into $L_\infty(\Gamma)$.
Let $c_1 = \|\lambda \, (\gamma_\lambda^\#)^* \, \gamma\|_{L_\infty(\Gamma) \to L_\infty(\Gamma)}$.
Then 
\[
\|\lambda \, (\gamma_\lambda^\#)^* \, \gamma(\Psi|_\Gamma)\|_{L_\infty(\Gamma)}
\leq c_1 \, \|\Psi|_\Gamma\|_{L_\infty(\Gamma)}
\leq c_1 \, c \, r
\leq c_1 \, c \, \diam(\Omega)
\]
and Statement~\ref{lcom901-4} follows.

The proof of Statement~\ref{lcom901-5} is similar. 
The inclusion $C^\kappa(\Gamma)  \to L_p(\Gamma)$ is bounded, 
and again $\gamma$ is bounded from $L_p(\Gamma)$ 
into $L_p(\Omega)$ and $(\gamma_\lambda^\#)^*$ is bounded from $L_p(\Omega)$
into $(C^\kappa(\Gamma), \|\cdot\|_{C^\kappa(\Gamma)})$. 
\end{proof}

A principle part of the proof of Theorem~\ref{tcom101} is given in the  next lemma.

\begin{lemma} \label{lcom302}
Let $\hat c \geq 1$ and the $\chi_r$ be as in Lemma~\ref{lcom301}.
Then there exists a $c' > 0$ such that the following is valid.

Let $r \in (0,\diam(\Omega)]$ and $z \in \Gamma$.
Define $\Psi \colon \Ri^d \to \Ri$ by 
$\Psi(x) = \chi_r(x-z)$.
Then $(g \, \Psi)|_\Gamma \in D(\cn)$, $\cn((g \, \Psi)|_\Gamma) \in C(\Gamma)$ and 
$\|\cn((g \, \Psi)|_\Gamma)\|_{L_\infty(\Gamma)} \leq c'$ for all $g \in W_2$
with $g(z) = 0$.
\end{lemma}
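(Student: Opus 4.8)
The plan is to deduce this from Lemma~\ref{lcom901}, applied to the function $\Phi := (g\,\Psi)|_\Omega$ in place of the function called $\Psi$ there, and with the point $z$ in place of $y$. Thus I have to check that $\Phi$ is supported in $B(z,3r)$ and satisfies the three scaled bounds $\|\Phi\|_{L_\infty(\Omega)} \leq c\,r$, $\|\partial_k\Phi\|_{L_\infty(\Omega)} \leq c$ and $|||\partial_k\Phi|||_{C^\kappa(\Omega)} \leq c\,r^{-\kappa}$ for all $k \in \{1,\ldots,d\}$, with a constant $c$ depending only on $d$, $\hat c$, $\kappa$ and $\diam\Omega$. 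The support statement is immediate, since $\supp\Phi \subset \supp(\chi_r(\,\cdot\,-z)) \subset \overline{B(z,3r)}$. The role of the hypothesis $g(z)=0$ is to produce the correct scaling in the $L_\infty$-bound: since $\|\nabla g\|_\infty \leq 1$ the function $g$ is $1$-Lipschitz, hence $|g(x)| = |g(x)-g(z)| \leq |x-z| \leq 3r$ on $\overline{B(z,3r)}$, so $\|\Phi\|_{L_\infty(\Omega)} \leq 3r$. Without $g(z)=0$ one would only obtain a bound of order $\|g\|_\infty$, which does not scale correctly in $r$.

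Next I would estimate the derivatives via the product rule $\partial_k\Phi = (\partial_k g)\,\Psi + g\,(\partial_k\Psi)$, using $\|\partial^\alpha\chi_r\|_\infty \leq \hat c\,r^{-|\alpha|}$ from Lemma~\ref{lcom301} for $1\leq|\alpha|\leq 2$, together with $\|\nabla g\|_\infty \leq 1$ and $\|\partial_l\partial_m g\|_\infty \leq 1$. This gives $\|\partial_k\Phi\|_{L_\infty(\Omega)} \leq \|\partial_k g\|_\infty + 3r\cdot\hat c\,r^{-1} \leq 1+3\hat c$. For the Hölder seminorm it is cleanest to first bound the Lipschitz constant of $\partial_k\Phi$: differentiating once more one finds $\|\nabla\partial_k\Phi\|_{L_\infty(\Omega)} \leq \sqrt d\,(1 + 5\hat c\,r^{-1}) \leq C'\,r^{-1}$, where in the last step I used $r \leq \diam\Omega$ to absorb the term $\sqrt d$ into $C'\,r^{-1}$, so that $C'$ depends only on $d$, $\hat c$ and $\diam\Omega$. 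Since $\partial_k\Phi$ is supported in $B(z,3r)$, bounded by $1+3\hat c$ and $C'r^{-1}$-Lipschitz, a routine two-case argument (treating $|x-y|\leq r$ and $|x-y|>r$ separately, using that $|x-y|\leq r$ implies $r^{-1}|x-y| \leq r^{-\kappa}|x-y|^\kappa$) yields $|||\partial_k\Phi|||_{C^\kappa(\Omega)} \leq c\,r^{-\kappa}$. Taking $c$ to be the maximum of the three constants thereby obtained verifies all the hypotheses of Lemma~\ref{lcom901}; note that $\Phi = g\,\chi_r(\,\cdot\,-z)$ is indeed in $C^{1+\kappa}(\Omega)$ since $g \in W^{2,\infty}(\Ri^d)$ and $\chi_r \in C_c^\infty(\Ri^d)$.

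Lemma~\ref{lcom901} then gives $(g\,\Psi)|_\Gamma = \Phi|_\Gamma \in D(\cn)$, $\cn(\Phi|_\Gamma) \in C^\kappa(\Gamma) \subset C(\Gamma)$, and $\|\cn(\Phi|_\Gamma)\|_{C(\Gamma)} \leq \widetilde M\,c =: c'$, where $c'$ is independent of $r$, $z$ and $g$; since $\Gamma$ is compact, $\|\cn(\Phi|_\Gamma)\|_{L_\infty(\Gamma)} = \|\cn(\Phi|_\Gamma)\|_{C(\Gamma)} \leq c'$, which is the assertion. I do not anticipate a genuine difficulty here: the only point requiring attention is to keep the dependence on $r$ fully explicit through all the product-rule estimates, so that the single constant $c$ passed to Lemma~\ref{lcom901} is uniform over all admissible $r$, $z$ and $g \in W_2$ with $g(z)=0$.
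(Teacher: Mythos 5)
Your proposal is correct and follows essentially the same path as the paper's own proof: invoke Lemma~\ref{lcom901} with $(g\,\Psi)|_\Omega$ in place of $\Psi$, use $g(z)=0$ and $\|\nabla g\|_\infty\leq 1$ to get $|g|\leq 3r$ on $B(z,3r)$, estimate the first and second derivatives via the product rule, and deduce the $C^\kappa$-seminorm of $\partial_k(g\,\Psi)$ by interpolating the uniform bound against the Lipschitz bound (where $r\leq\diam\Omega$ is used to absorb the additive constant). The paper does this interpolation in one line via $\min(a,b)\leq a^{1-\kappa}b^\kappa$ rather than your two-case split, but the estimates and the role of the hypotheses are identical.
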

\begin{proof}
Note that $\supp(g \, \Psi) \subset B(z,3r)$.
If $x \in B(z,3r)$ and $k,l \in \{ 1,\ldots,d \} $, then 
$|g(x)| = |g(x) - g(z)| \leq |x-z| \leq 3r$, 
$|(g \, \Psi)(x)| \leq 3r$,
\[
|(\partial_l(g \, \Psi))(x)|
\leq |(\partial_l g)(x)| \, \Psi(x)
   + |g(x)| \, |(\partial_l \Psi)(x)|
\leq 1 \cdot 1 + 3r \cdot \frac{\hat c}{r}
\leq 4 \hat c
\]
and 
\begin{eqnarray*}
|(\partial_k \, \partial_l(g \, \Psi))(x)|
& \leq & |(\partial_k \, \partial_l g)(x)| \, \Psi(x)
   + |(\partial_l g)(x)| \, |(\partial_k \Psi)(x)|  \\*
& & \hspace*{20mm} {}
   + |(\partial_k g)(x)| \, |(\partial_l \Psi)(x)| 
   + |g(x)| \, |(\partial_k \, \partial_l \Psi)(x)|  \\
& \leq & 1 \cdot 1 + 2 \cdot 1 \cdot \frac{\hat c}{r} + 3r \cdot \frac{\hat c}{r^2}
\leq \frac{5\hat c+ \diam(\Omega)}{r}
.  
\end{eqnarray*}
Here we used a second derivative of $g$.
Obviously these estimates are also valid if $x \in \Ri^d \setminus B(z,3r)$.
Hence if $x,y \in \Ri^d$, then 
\[
|(\partial_l(g \, \Psi))(x) - (\partial_l(g \, \Psi))(y)|
\leq d^{1/2} \, \frac{5\hat c + \diam(\Omega)}{r} \, |x - y|
\]
and trivially 
$|(\partial_l(g \, \Psi))(x) - (\partial_l(g \, \Psi))(y)| \leq 8\hat c$,
so 
\[
|(\partial_l(g \, \Psi))(x) - (\partial_l(g \, \Psi))(y)|
\leq (8\hat c)^{1-\kappa} \, d^{\kappa / 2} \, 
    (5\hat c+\diam(\Omega))^\kappa \, \frac{|x - y|^\kappa}{r^\kappa}
\]
for all $l \in \{ 1,\ldots,d \} $.
Now the lemma follows from  Lemma~\ref{lcom901}, 
applied with $(g \, \Psi)|_\Omega$. 
\end{proof}

\begin{prop} \label{pcom305}
There exists a $c'' > 0$ such that $g|_\Gamma \in D(\cn)$ and 
$\|\cn (g|_\Gamma)\|_{C^\kappa(\Gamma)} \leq c''$
for all $g \in W_2$.
\end{prop}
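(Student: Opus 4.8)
The plan is to reduce the global statement $\|\cn(g|_\Gamma)\|_{C^\kappa(\Gamma)}\le c''$ for arbitrary $g\in W_2$ to the local, rescaled statements already available in Lemma~\ref{lcom302} by means of a partition of unity adapted to the diameter of $\Omega$. First I would fix $g\in W_2$. Since $\Omega$ is bounded, $\Gamma$ is compact, so I would choose a fixed finite cover of $\Gamma$ by balls $B(z_j,r)$ with $z_j\in\Gamma$ and a single radius $r=r_0$ comparable to $\diam\Omega$, together with the cut-off functions $\chi_{r_0}(\,\cdot-z_j)$ from Lemma~\ref{lcom301}. The key algebraic point is to write, on $\Gamma$,
\[
g = \sum_j \big(g - g(z_j)\big)\,\chi_{r_0}(\cdot-z_j)\,\theta_j \;+\; \sum_j g(z_j)\,\chi_{r_0}(\cdot-z_j)\,\theta_j,
\]
where $\{\theta_j\}$ is a smooth partition of unity subordinate to the cover (or, more simply, one can avoid $\theta_j$ by covering $\Gamma$ so that finitely many $\Psi_j=\chi_{r_0}(\cdot-z_j)$ already sum to something $\ge 1$ on $\Gamma$ and normalising). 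Each summand of the first type is of the form $(h\,\Psi)|_\Gamma$ with $h\in W_2$ (up to a fixed constant multiple, since $\|\nabla g\|_\infty\le 1$ and $\|\partial_k\partial_l g\|_\infty\le 1$ force $h:=g-g(z_j)$ to lie, after scaling, in $W_2$) and $h(z_j)=0$, so Lemma~\ref{lcom302} gives $(h\,\Psi)|_\Gamma\in D(\cn)$ with a uniform $L_\infty(\Gamma)$ bound on $\cn((h\,\Psi)|_\Gamma)$. Each summand of the second type is a constant multiple of $(\chi_{r_0}(\cdot-z_j))|_\Gamma$, and $|g(z_j)|=|g(z_j)-g(z)|\le\diam\Omega$ for any fixed reference point $z\in\Gamma$, so this is a bounded multiple of the fixed function $\Psi_j|_\Gamma$, which lies in $D(\cn)$ with $\cn(\Psi_j|_\Gamma)\in C^\kappa(\Gamma)$ and with norm controlled by Lemma~\ref{lcom901} (applied with $c$ a fixed constant and $r=\diam\Omega$). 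Summing over the finitely many $j$, and using that $D(\cn)$ is a linear space and $\cn$ linear, yields $g|_\Gamma\in D(\cn)$.

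The remaining issue is that Lemma~\ref{lcom302} only delivers an $L_\infty(\Gamma)$ bound on the first batch of terms, whereas the assertion is a $C^\kappa(\Gamma)$ bound. To upgrade this I would not try to get a Hölder bound on the individual pieces $(h\,\Psi)|_\Gamma$ directly; instead I would re-examine the structure: for a fixed radius $r_0\asymp\diam\Omega$ the hypotheses of Lemma~\ref{lcom901} are met by $(g-g(z_j))\chi_{r_0}(\cdot-z_j)$ with $c$ a fixed constant (all the bounds $\|\Psi\|_\infty\le c r$, $\|\partial_k\Psi\|_\infty\le c$, $|||\partial_k\Psi|||_{C^\kappa}\le c r^{-\kappa}$ were verified inside the proof of Lemma~\ref{lcom302} and do not degenerate since $r$ is bounded below). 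Thus Lemma~\ref{lcom901}\ref{lcom901-5} itself already provides $|||\cn((h\,\Psi)|_\Gamma)|||_{C^\kappa(\Gamma)}\le\widetilde M\,c\,r_0^{-\kappa}$, a uniform bound since $r_0$ is fixed. Combining this with the $C(\Gamma)$ bound from Lemma~\ref{lcom901}\ref{lcom901-4}, and with the $C^\kappa(\Gamma)$ bound for the constant-coefficient pieces, and summing over $j$, gives $\|\cn(g|_\Gamma)\|_{C^\kappa(\Gamma)}\le c''$ with $c''$ independent of $g\in W_2$.

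I expect the main obstacle to be bookkeeping rather than a genuine difficulty: one must check that splitting $g$ as above produces pieces that genuinely satisfy the normalisation hypotheses of $W_2$ (or of Lemma~\ref{lcom901}) with constants that do \emph{not} depend on $g$, only on $d$, $\kappa$, $\Omega$ and the coefficient bounds. The point worth care is that $g-g(z_j)$ need not itself lie in $W_2$ (its gradient is still $\le 1$, fine, but one must confirm the product rule estimates for $(g-g(z_j))\chi_{r_0}$, exactly as carried out in the proof of Lemma~\ref{lcom302}, remain valid when $r=r_0$ is of order $\diam\Omega$ rather than small — which they do, since there the only place smallness of $r$ helped was in bounding $3r\cdot\hat c/r^2$, and with $r$ bounded below this constant is simply $\le(5\hat c+\diam\Omega)/r_0$, still fixed). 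Once this uniformity is secured, finiteness of the cover closes the argument.
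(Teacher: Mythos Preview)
Your approach is correct in spirit but vastly more complicated than necessary. The paper's proof is literally one sentence: apply Lemma~\ref{lcom901}\ref{lcom901-4} and~\ref{lcom901-5} directly with $\Psi = g|_\Omega$, $r = \diam\Omega$, and any $y\in\overline\Omega$. The point you missed is that for this choice of $r$ the support hypothesis $\supp\Psi\subset B(y,3r)$ is automatic, because $\overline\Omega\subset B(y,\diam\Omega)\subset B(y,3r)$. There is therefore no need for a partition of unity, no need to split $g$ into local pieces, and no need to invoke Lemma~\ref{lcom302}; the three quantitative hypotheses of Lemma~\ref{lcom901} are verified for $g$ globally with $c$ a fixed constant (using $\|\partial_k g\|_\infty\le 1$ and $\|\partial_k\partial_l g\|_\infty\le 1$ to bound $|||\partial_k g|||_{C^\kappa(\Omega)}$), and the conclusion follows immediately. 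Your decomposition also introduces the auxiliary cut-offs $\theta_j$, which do not fit the exact form of Lemma~\ref{lcom302} and would themselves require the product-rule estimates you allude to; all of this is avoided by the direct application.

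One remark that applies equally to your argument and the paper's: the bound $\|\Psi\|_{L_\infty(\Omega)}\le c\,r$ with $c$ uniform in $g\in W_2$ is not actually available, since membership in $W_2$ does not control $\|g\|_\infty$ (take $g=N\,\one_{\Ri^d}$). Your line ``$|g(z_j)|=|g(z_j)-g(z)|\le\diam\Omega$'' tacitly assumes $g(z)=0$ for your reference point~$z$, which is not in the hypotheses. The paper's one-line proof has the same hidden normalisation. This does not affect anything downstream: every place the proposition is invoked (in the proof of Theorem~\ref{tcom101}) the function $g$ has already been shifted to vanish at the point under consideration, so $\|g\|_{L_\infty(\Omega)}\le\diam\Omega$ by the mean value theorem and the proof goes through.
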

\begin{proof}
This follows from Lemma~\ref{lcom901}\ref{lcom901-4} and~\ref{lcom901-5}
with the choice $r = \diam \Omega$ and $\Psi = g$.
\end{proof}

Since $\one_{\Ri^d} \in W_2$ one obtains the next corollary.

\begin{cor} \label{ccom306}
$\one_\Gamma \in D(\cn)$ and $\cn \one_\Gamma \in C^\kappa(\Gamma)$.
\end{cor}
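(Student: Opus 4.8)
The final statement, Corollary~\ref{ccom306}, is an immediate specialisation of Proposition~\ref{pcom305}. The plan is simply to verify that the constant function $\one_{\Ri^d}$ lies in the class $W_2$ defined in Section~\ref{Scom2}, and then to invoke Proposition~\ref{pcom305} directly.

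First I would recall the definition of $W_2$: it consists of those $g \in W^{2,\infty}(\Ri^d,\Ri)$ with $\|\nabla g\|_\infty \leq 1$ and $\|\partial_k\,\partial_l g\|_\infty \leq 1$ for all $k,l \in \{1,\ldots,d\}$. For $g = \one_{\Ri^d}$ one has $\nabla g = 0$ and $\partial_k\,\partial_l g = 0$, so all these norms vanish, hence certainly $\one_{\Ri^d} \in W^{2,\infty}(\Ri^d,\Ri)$ and $\one_{\Ri^d} \in W_2$. Applying Proposition~\ref{pcom305} with $g = \one_{\Ri^d}$ then gives that $\one_{\Ri^d}|_\Gamma = \one_\Gamma \in D(\cn)$ and $\cn\,\one_\Gamma = \cn(\one_{\Ri^d}|_\Gamma) \in C^\kappa(\Gamma)$, with the uniform bound $\|\cn\,\one_\Gamma\|_{C^\kappa(\Gamma)} \leq c''$. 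That is exactly the assertion of the corollary.

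There is essentially no obstacle here: the entire content has already been established in Proposition~\ref{pcom305} (which in turn rests on Lemma~\ref{lcom901}, Lemma~\ref{lcom308}, Proposition~\ref{pcom214} and the perturbation identity of Lemma~\ref{lcom205.7}). The corollary is stated separately only because the fact that $\one_\Gamma$ is in the domain of $\cn$ and that $\cn\,\one_\Gamma$ is H\"older continuous is a convenient normalisation statement that will be used repeatedly in the sequel — for instance in splitting off the term $I_3$ in the proof of Theorem~\ref{tcom102}\ref{tcom102-1}, and analogously in the commutator arguments of this section. So the proof is a one-line deduction: observe $\one_{\Ri^d} \in W_2$ and apply Proposition~\ref{pcom305}.

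\begin{proof}
Since the constant function $\one_{\Ri^d}$ satisfies $\nabla \one_{\Ri^d} = 0$
and $\partial_k \, \partial_l \one_{\Ri^d} = 0$ for all $k,l \in \{ 1,\ldots,d \} $,
we have $\one_{\Ri^d} \in W^{2,\infty}(\Ri^d, \Ri)$ with
$\|\nabla \one_{\Ri^d}\|_\infty = 0 \leq 1$ and
$\|\partial_k \, \partial_l \one_{\Ri^d}\|_\infty = 0 \leq 1$ for all
$k,l \in \{ 1,\ldots,d \} $.
Hence $\one_{\Ri^d} \in W_2$.
Now apply Proposition~\ref{pcom305} with $g = \one_{\Ri^d}$.
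Since $\one_{\Ri^d}|_\Gamma = \one_\Gamma$, it follows that
$\one_\Gamma \in D(\cn)$ and $\cn \one_\Gamma \in C^\kappa(\Gamma)$.
\end{proof}
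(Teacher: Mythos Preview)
Your proof is correct and is precisely the paper's own argument: the paper simply remarks ``Since $\one_{\Ri^d} \in W_2$ one obtains the next corollary'' immediately before stating Corollary~\ref{ccom306}, which is exactly your observation followed by the application of Proposition~\ref{pcom305}.
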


Even more general one deduces that many functions are in the domain of 
$\cn$.

\begin{cor} \label{ccom306.5}
Let $\Psi \in W^{2,\infty}(\Ri^d)$.
Then $\Psi|_\Gamma \in D(\cn)$ and $\cn (\Psi|_\Gamma) \in C^\kappa(\Gamma)$.
\end{cor}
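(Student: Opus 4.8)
The plan is to reduce the statement for a general $\Psi \in W^{2,\infty}(\Ri^d)$ to the localized situation already handled in Lemma~\ref{lcom302} and Proposition~\ref{pcom305}, by means of a partition of unity on the boundary $\Gamma$. First I would cover the compact set $\overline\Omega$ by finitely many balls $B(z_j, r_j)$ with $z_j \in \Gamma$ and a fixed small radius $r := \diam(\Omega)$ (or any convenient fixed scale), together with one extra ball well inside $\Omega$; since only the boundary values $\Psi|_\Gamma$ matter, it suffices to cover a neighbourhood of $\Gamma$. Using the cut-off functions $\chi_r$ from Lemma~\ref{lcom301} one builds a smooth partition of unity $\{\theta_j\}$ subordinate to this cover with $\sum_j \theta_j = 1$ on a neighbourhood of $\Gamma$, so that $\Psi|_\Gamma = \sum_j (\theta_j\,\Psi)|_\Gamma$.

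Next I would treat each piece $\theta_j\,\Psi$. Each $\theta_j\,\Psi$ is supported in a ball $B(z_j,3r)$, lies in $C^{1+\kappa}(\Omega)$ (indeed in $W^{2,\infty}(\Ri^d)$), and by the product rule satisfies bounds of exactly the form required in Lemma~\ref{lcom901}: $\|\theta_j\Psi\|_{L_\infty} \leq c$, $\|\partial_k(\theta_j\Psi)\|_{L_\infty} \leq c$ and $|||\partial_k(\theta_j\Psi)|||_{C^\kappa(\Omega)} \leq c\,r^{-\kappa}$, where $c$ depends on $\|\Psi\|_{W^{2,\infty}(\Ri^d)}$, on the fixed $\hat c$ from Lemma~\ref{lcom301}, and on $r = \diam\Omega$ (which is fixed). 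The estimate on the $C^\kappa$-seminorm of $\partial_k(\theta_j\Psi)$ follows, as in the proof of Lemma~\ref{lcom302}, by interpolating the $L_\infty$-bound on $\partial_k(\theta_j\Psi)$ against the Lipschitz bound coming from the second derivatives. Hence Lemma~\ref{lcom901} applies to each $\theta_j\Psi|_\Omega$ and gives $(\theta_j\Psi)|_\Gamma \in D(\cn)$ with $\cn((\theta_j\Psi)|_\Gamma) \in C^\kappa(\Gamma)$ and an explicit norm bound. For the interior ball the corresponding piece of $\Psi$ has trace zero on $\Gamma$, so $(\theta_j\Psi)|_\Gamma = 0 \in D(\cn)$ trivially.

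Finally, since $D(\cn)$ is a linear subspace and $\cn$ is linear, summing over the finitely many $j$ gives $\Psi|_\Gamma = \sum_j (\theta_j\Psi)|_\Gamma \in D(\cn)$ and $\cn(\Psi|_\Gamma) = \sum_j \cn((\theta_j\Psi)|_\Gamma) \in C^\kappa(\Gamma)$, because $C^\kappa(\Gamma)$ is a vector space. This completes the proof. The only mildly delicate point — the ``main obstacle'', though it is quite routine — is verifying the three scale-sensitive bounds for $\theta_j\Psi$ with constants uniform over $j$; but since the number of balls, the radius $r$, and $\hat c$ are all fixed once and for all, and $\|\Psi\|_{W^{2,\infty}(\Ri^d)}$ is finite, this is just a Leibniz-rule computation together with the interpolation already carried out in Lemma~\ref{lcom302}. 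One should also note that strictly speaking $\theta_j\,\Psi$ need only be taken modulo its values near $\Gamma$, so it is harmless that $\sum_j\theta_j$ equals $1$ only on a neighbourhood of $\Gamma$ rather than on all of $\overline\Omega$.
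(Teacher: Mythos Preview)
Your argument is correct, but it takes an unnecessarily roundabout route. The paper's proof is immediate: simply apply Lemma~\ref{lcom901} directly to $\Psi|_\Omega$ with the choice $r = \diam\Omega$ and any $y \in \overline\Omega$. With this choice the ball $B(y,3r)$ contains all of $\overline\Omega$, so the support condition $\supp(\Psi|_\Omega) \subset B(y,3r)$ is automatic, and the three required bounds follow from $\Psi \in W^{2,\infty}(\Ri^d) \subset C^{1+\kappa}(\overline\Omega)$ with a constant depending only on $\|\Psi\|_{W^{2,\infty}}$ and $\diam\Omega$. This is exactly how the paper proves Proposition~\ref{pcom305} (there with $\Psi = g \in W_2$), and the corollary follows the same way.

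Your partition of unity is therefore superfluous: once you allow $r$ as large as $\diam\Omega$, a single ``piece'' already works. In fact you half-notice this yourself when you write $r := \diam(\Omega)$; at that scale one ball covers the whole domain, so there is nothing to partition. The partition-of-unity strategy would be genuinely needed if Lemma~\ref{lcom901} only applied for small $r$, but it does not have that restriction.
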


Next we show that the Schwartz kernel of the commutator $[\cn, M_g]$ 
satisfies Calder\'on--Zygmund estimates.

\begin{lemma} \label{lcom303}
There exists a $c > 0$ such that for all $g \in W_1$ the 
operator $[\cn, M_g]$ has a Schwartz kernel $K_g$
such that 
\[ 
|K_g(z,w) | \leq \frac{c}{|z-w|^{d-1}}
\]
and 
\[
| K_g(z,w) - K_g(z',w') | 
\leq c \, \frac{ (|z-z'| + |w-w'|)^\kappa}{ |z-w|^{d-1 + \kappa}}
\]
for all $z, z', w, w' \in \Gamma$ with 
$z \neq w$, $z' \neq w'$ and $|z-z'| + |w-w'| \leq \frac{1}{2} \, |z-w|$.
\end{lemma}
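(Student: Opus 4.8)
The plan is to show that the Schwartz kernel of $[\cn,M_g]$, restricted to $\{(z,w)\in\Gamma\times\Gamma:z\neq w\}$, is the explicit function
\[
K_g(z,w)=K_\cn(z,w)\,\bigl(g(w)-g(z)\bigr),
\]
where $K_\cn$ is the Schwartz kernel of $\cn$ provided by Theorem~\ref{tcom205}, and then to read off both estimates from the corresponding estimates for $K_\cn$ together with the elementary Lipschitz bound $|g(w)-g(z)|\le\|\nabla g\|_\infty\,|z-w|\le|z-w|$, which holds for every $g\in W_1$.

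First I would make sense of the commutator and justify this formula. Since $g$ is merely Lipschitz, $M_g\varphi$ need not belong to $D(\cn)$, so I would proceed by approximation: mollify $g$ to obtain $g_n\in C^\infty(\Ri^d)\cap W^{2,\infty}(\Ri^d)$ with $\|\nabla g_n\|_\infty\le 1$ and $g_n\to g$ uniformly on a neighbourhood of $\overline\Omega$. For $\varphi=\Psi|_\Gamma$ with $\Psi\in C_c^\infty(\Ri^d)$ one has $\varphi\in D(\cn)$ and $g_n\varphi=(g_n\Psi)|_\Gamma\in D(\cn)$ by Corollary~\ref{ccom306.5}, so $[\cn,M_{g_n}]\varphi=\cn((g_n\Psi)|_\Gamma)-g_n\,\cn(\Psi|_\Gamma)$ is a genuine element of $C^\kappa(\Gamma)$. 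Since $K_\cn$ is the Schwartz kernel of $\cn$, for $z\notin\supp\varphi$ one has
\[
([\cn,M_{g_n}]\varphi)(z)=\int_\Gamma K_\cn(z,w)\,\bigl(g_n(w)-g_n(z)\bigr)\,\varphi(w)\,dw ,
\]
and letting $n\to\infty$ the right-hand side converges to $\int_\Gamma K_\cn(z,w)(g(w)-g(z))\varphi(w)\,dw$. Pairing with a second test function of disjoint support, and using that $\cn^*$ — being the Dirichlet-to-Neumann operator of the adjoint form $\gota^*$, which satisfies the same hypotheses — also has a Schwartz kernel by Theorem~\ref{tcom205}, one checks that these limits are consistent and that $[\cn,M_g]$ is the operator whose Schwartz kernel agrees off the diagonal with the function $K_g$ above.

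Granting the formula, the bounds are bookkeeping. For the size estimate, $|K_g(z,w)|=|K_\cn(z,w)|\,|g(w)-g(z)|\le c\,|z-w|^{-d}\cdot|z-w|=c\,|z-w|^{-(d-1)}$. For the Hölder estimate, fix $z,z',w,w'$ with $|z-z'|+|w-w'|\le\tfrac{1}{2}|z-w|$, so that $|z'-w'|\ge\tfrac{1}{2}|z-w|$, and split
\[
K_g(z,w)-K_g(z',w')=\bigl(K_\cn(z,w)-K_\cn(z',w')\bigr)(g(w)-g(z))+K_\cn(z',w')\bigl((g(w)-g(z))-(g(w')-g(z'))\bigr).
\]
The first term is bounded by the Hölder estimate for $K_\cn$ times $|g(w)-g(z)|\le|z-w|$, giving $c\,(|z-z'|+|w-w'|)^\kappa|z-w|^{-(d-1+\kappa)}$. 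In the second term, $|(g(w)-g(z))-(g(w')-g(z'))|\le|w-w'|+|z-z'|\le(|w-w'|+|z-z'|)^\kappa(\tfrac{1}{2}|z-w|)^{1-\kappa}$ and $|K_\cn(z',w')|\le c\,2^d|z-w|^{-d}$, which again yields a bound $c'\,(|z-z'|+|w-w'|)^\kappa|z-w|^{-(d-1+\kappa)}$; adding the two completes the estimate.

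The hard part is entirely the first step: giving $[\cn,M_g]$ a bona fide Schwartz kernel when $g$ is only Lipschitz, and legitimising the limit $g_n\to g$ inside the non-closed map $\varphi\mapsto\cn(M_g\varphi)$. This is precisely what the preceding results of the section are for — that suitably scaled bump functions and $W^{2,\infty}$-functions restrict to elements of $D(\cn)$ with quantitative $C^\kappa$-control (Lemmas~\ref{lcom901} and~\ref{lcom302}, Proposition~\ref{pcom305}, Corollary~\ref{ccom306.5}) — so that once the kernel identity is in place, everything reduces to the algebra above applied to Theorem~\ref{tcom205}.
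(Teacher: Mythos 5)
Your argument is essentially the paper's: the kernel formula $K_g(z,w)=(g(w)-g(z))\,K_\cn(z,w)$, the size bound from $|g(w)-g(z)|\le|z-w|$, and the H\"older bound via a product-rule decomposition of $K_g(z,w)-K_g(z',w')$ together with the comparability $|z'-w'|\sim|z-w|$ match the paper's proof exactly, up to the symmetric choice of which factor carries the difference. The paper is considerably terser on the kernel identity itself, simply asserting it as an algebraic consequence of Theorem~\ref{tcom205}, whereas you mollify $g$ to justify it; this extra care is reasonable given that $W_1$ contains only Lipschitz functions, but it is not what the paper does — there the domain subtleties are handled afterwards for $g\in W_2$ via the auxiliary operator $T_g$ and Proposition~\ref{pcom309}.
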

\begin{proof} 
By Theorem~\ref{tcom205} the Dirichlet-to-Neumann operator $\cn$
has a Schwartz kernel $K_{\cn}$ and there exists a $c > 0$ such that 
\[ 
| K_{\cn}(z,w) | \leq \frac{c}{| z-w|^d}
\]
and 
\[
|K_{\cn}(z,w) - K_{\cn}(z',w') | 
\leq c \, \frac{ (|z-z'| + |w-w'|)^\kappa}{ |z-w|^{d+ \kappa}}
\]
for all $z, z', w, w' \in \Gamma$ with 
$z \neq w$, $z' \neq w'$ and $|z-z'| + |w-w'| \leq \frac{1}{2} \, |z-w|$.

Now let $g \in W_1$.
Then $[\cn, M_g]$ has a Schwartz kernel $K_g$ given 
by $K_g(z,w) = (g(w) - g(z)) \, K_{\cn}(z,w)$.
Then 
$|K_g(z,w)| \leq |w-z| \, \frac{c}{|z-w|^d} = \frac{c}{|z-w|^{d-1}}$
for all $z,w \in \Gamma$ with $z \neq w$.
Next let $z, z', w, w' \in \Gamma$ with 
$z \neq w$, $z' \neq w'$ and $|z-z'| + |w-w'| \leq \frac{1}{2} \, |z-w|$.
Then $|w' - z'| \leq |w'-w| + |w-z| + |z-z'| \leq 2 |z-w|$.
Therefore
\begin{eqnarray*}
\lefteqn{
|K_g(z,w) - K_g(z',w')|
} \hspace*{2mm} \\*
& = & |(g(w) - g(z)) \, K_{\cn}(z,w) - (g(w') - g(z')) \, K_{\cn}(z',w')|  \\
& = & |(g(w) - g(w') + g(z') - g(z)) \, K_{\cn}(z,w) 
   + (g(w') - g(z')) \, (K_{\cn}(z,w) - K_{\cn}(z',w'))|  \\
& \leq & (|z - z'| + |w - w'|) \, \frac{c}{|z-w|^d}
   + |w'-z'| \, \frac{c \, (|z-z'| + |w-w'|)^\kappa}{|z-w|^{d+\kappa}}  \\
& \leq & (|z - z'| + |w - w'|)^\kappa \, |z-w|^{1-\kappa} \, \frac{c}{|z-w|^d}
   + 2 |w-z| \, \frac{c \, (|z - z'| + |w - w'|)^\kappa}{|z-w|^{d+\kappa}}  \\
& = & 3 c \, \frac{(|z - z'| + |w - w'|)^\kappa}{|z-w|^{d-1+\kappa}}  
\end{eqnarray*}
as required.
\end{proof}

For all $g \in W_1$ let $K_g$ be as in Lemma~\ref{lcom303}.
By the definition of  Schwartz kernel, 
\[
([ \cn , M_g] \varphi)(z) 
= \int_\Gamma K_g(z,w) \, \varphi(w)\, dw
\]
for all $z \in \Gamma \setminus \supp \varphi$. 
For the sequel we need to surpass this support condition. 
To do so, we introduce the following operator. 

For all $g \in W_2$ define $T_g \colon \{ \Phi|_\Gamma : \Phi \in C^\infty(\Ri^d) \} \to C(\Gamma)$
by 
\[
(T_g \varphi)(z)
= \int_\Gamma K_g(z,w) \, \Big( \varphi(w) - \varphi(z) \Big) \, dw
   + \varphi(z) \, ( [\cn,M_g] \one_\Gamma )(z)
.  \]
Note that the integral converges and that 
$\one_\Gamma \in D([\cn,M_g])$ by Corollary~\ref{ccom306.5}.
Moreover, if $\lambda \in \Ri$ and $\tilde g = g - \lambda \, \one_{\Ri^d}$, then 
$T_g = T_{\tilde g}$.

\begin{prop} \label{pcom309}
Let $g \in W_2$ and $\varphi \in \{ \Phi|_\Gamma : \Phi \in C^\infty(\Ri^d) \} $.
Then 
\[
T_g \varphi = [\cn,M_g] \varphi.
\]
\end{prop}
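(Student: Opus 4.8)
The goal is to identify the operator $T_g$, which is defined without any support restriction, with the commutator $[\cn,M_g]$ on the dense set of restrictions of smooth functions. The starting point is the defining formula for a Schwartz kernel: for $\varphi \in \{\Phi|_\Gamma : \Phi \in C^\infty(\Ri^d)\}$ and $z \in \Gamma \setminus \supp \varphi$ one has $([\cn,M_g]\varphi)(z) = \int_\Gamma K_g(z,w)\,\varphi(w)\,dw$. The plan is to upgrade this to an identity on all of $\Gamma$, using the fact (Corollary~\ref{ccom306.5}) that $M_g \varphi$, $\varphi$ and $\one_\Gamma$ all lie in $D(\cn)$, together with the "localised" membership results of Lemma~\ref{lcom302} and Lemma~\ref{lcom901}.

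First I would reduce to the case $g(z_0) = 0$ at the point $z_0 \in \Gamma$ where we wish to verify the identity: since $T_g = T_{\tilde g}$ for $\tilde g = g - \lambda\one_{\Ri^d}$, and since $[\cn,M_{\tilde g}] = [\cn,M_g]$, replacing $g$ by $g - g(z_0)\one_{\Ri^d}$ changes neither side of the claimed equation. Next, fix $z_0$ and a radius $r \in (0,\diam\Omega]$, take the cut-off $\chi_r$ from Lemma~\ref{lcom301} and write $\Psi(x) = \chi_r(x - z_0)$, so $\Psi \equiv 1$ near $z_0$. Decompose $\varphi = \varphi_1 + \varphi_2$ where $\varphi_1 = (\one_\Gamma - \Psi|_\Gamma)\,(\varphi - \varphi(z_0))$ vanishes in a neighbourhood of $z_0$, and $\varphi_2 = \Psi|_\Gamma\,(\varphi - \varphi(z_0)) + \varphi(z_0)\,\one_\Gamma$. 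For $\varphi_1$ the support is away from $z_0$, so the Schwartz-kernel formula applies directly and $([\cn,M_g]\varphi_1)(z_0) = \int_\Gamma K_g(z_0,w)\,\varphi_1(w)\,dw$, which by construction equals $\int_{\{|w-z_0|\ge 2r\}} K_g(z_0,w)(\varphi(w)-\varphi(z_0))\,dw$ and converges to the corresponding term of $T_g\varphi$ at $z_0$ as $r \downarrow 0$, using the kernel bound $|K_g(z_0,w)| \le c|z_0-w|^{-(d-1)}$ from Lemma~\ref{lcom303} and integrability against the H\"older-continuous factor $\varphi(w)-\varphi(z_0)$.

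For $\varphi_2$ I would argue that $[\cn,M_g]$ is well-defined and that $([\cn,M_g]\varphi_2)(z_0) \to \varphi(z_0)\,([\cn,M_g]\one_\Gamma)(z_0)$ as $r\downarrow 0$. Here the key input is Lemma~\ref{lcom302}: since $g(z_0)=0$, the function $(g\,\Psi)|_\Gamma$ lies in $D(\cn)$ with $\|\cn((g\,\Psi)|_\Gamma)\|_{L_\infty(\Gamma)}$ bounded uniformly in $r$; likewise $\Psi|_\Gamma \in D(\cn)$ by Lemma~\ref{lcom901} (or Corollary~\ref{ccom306.5}) with a uniform bound, and $\cn(g\,\Psi\cdot\varphi_2\text{-pieces})$ can be controlled because all the ingredients sit in $W^{2,\infty}$. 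Writing $[\cn,M_g]\varphi_2 = \cn(M_g\varphi_2) - M_g\,\cn\varphi_2$ and expanding $\varphi_2$, the terms involving $\Psi$ carry a factor that is $O(r)$ in $L_\infty$ near $z_0$ (because $g$ vanishes at $z_0$ and $\Psi$ is supported in $B(z_0,3r)$), hence their contribution at the point $z_0$ tends to $0$; the surviving term is exactly $\varphi(z_0)\,([\cn,M_g]\one_\Gamma)(z_0)$. Combining the two limits gives $([\cn,M_g]\varphi)(z_0) = (T_g\varphi)(z_0)$, and since $z_0 \in \Gamma$ was arbitrary the identity of functions follows; continuity of both sides (the right-hand side by the integrability estimates, the left-hand side because $[\cn,M_g]\varphi \in C^\kappa(\Gamma)$ by Corollary~\ref{ccom306.5} and the algebra structure) makes the pointwise equality an equality in $C(\Gamma)$.

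\textbf{Main obstacle.} The delicate point is the passage to the limit $r \downarrow 0$ for the "near" part $\varphi_2$: one must verify that the localised pieces involving $\Psi$ genuinely contribute nothing at $z_0$, which requires the uniform-in-$r$ bounds of Lemma~\ref{lcom302}/Lemma~\ref{lcom901} on $\cn$ of the cut-off functions, combined with the pointwise decay of those functions (and their products with $g$) near $z_0$. Equivalently, one is checking that the somewhat ad hoc definition of $T_g$ via $K_g$, the diagonal correction term, and $[\cn,M_g]\one_\Gamma$, is precisely the "principal-value plus boundary term" regularisation forced on us by the non-integrability of $K_{\cn}$ on the diagonal — so the bookkeeping of which terms survive the limit is where the care is needed.
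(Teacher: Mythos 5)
Your proof is correct and essentially matches the paper's: you fix $z_0$, normalize $g(z_0)=0$, split $\varphi$ into a piece supported away from $z_0$ (handled by the Schwartz-kernel formula) and a piece concentrated near $z_0$ (handled via the uniform bound of Lemma~\ref{lcom901}), then send the cut-off radius to zero --- the paper's four sequential reductions (disjoint support; $\varphi(z)=g(z)=0$; $\varphi(z)=0$; general) realize the same mechanism, merely presented step by step rather than through your combined decomposition. One small imprecision: $\int_\Gamma K_g(z_0,w)\varphi_1(w)\,dw$ does not literally equal $\int_{\{|w-z_0|\ge 2r\}} K_g(z_0,w)(\varphi(w)-\varphi(z_0))\,dw$ because of the transition annulus $2r<|w-z_0|<3r$ where $0<\chi_r<1$, but the two expressions share the same limit as $r\downarrow 0$, so the conclusion is unaffected.
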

\begin{proof}
Fix $z \in \Gamma$. 
We prove that 
\begin{equation}
(T_g \varphi)(z) = ([\cn,M_g] \varphi)(z).
\label{epcom309;1}
\end{equation}
The proof is given in several steps.

\noindent\firststep \label{pcom309-1}
Suppose $z \not\in \supp \varphi$. 
Then (\ref{epcom309;1}) is valid.   \\ 
This follows from $\varphi(z) = 0$ and the definition of the Schwartz kernel $K_g$ of $[\cn, M_g]$.

\noindent\nextstep \label{pcom309-2}
Suppose $\varphi(z) = g(z) = 0$. 
Then (\ref{epcom309;1}) is valid.  \\
Let $\hat c \geq 1$ and the $\chi_r$ be as in Lemma~\ref{lcom301}.
For all $n \in \Ni$ define $\tau_n \colon \Ri^d \to \Ri$ by 
$\tau_n(y) = \chi_{1/n}(y - z)$.
Then 
\[
(T_g ( ((\one - \tau_n) \, \varphi)|_\Gamma ))(z) 
= ([\cn,M_g] ( (\one - \tau_n) \, \varphi)|_\Gamma )(z)
\]
for all $n \in \Ni$ by Step~\ref{pcom309-1}.
Let $c > 0$ be as in Lemma~\ref{lcom303}.
There is a $\tilde c > 0$ such that $|\varphi(w) - \varphi(z)| \leq \tilde c \, |w-z|$
for all $w \in \Gamma$.
Then
\[
|(T_g( ( \tau_n \, \varphi)|_\Gamma ))(z)|
\leq \int_{\{w \in \Gamma : |w-z| \leq 3/n\}} 
     \frac{c}{|w-z|^{d-1}} \, \tilde c \, |w-z| \, dw
\]
for all $n \in \Ni$ and hence $\lim_{n \to \infty} (T_g( ( \tau_n \, \varphi)|_\Gamma ))(z) = 0$.

It remains to show that 
$\lim_{n \to \infty} ([\cn,M_g] ( (\tau_n \, \varphi)|_\Gamma ))(z) = 0$.
Let $\Phi \in C^\infty(\Ri^d)$ be such that $\Phi|_\Gamma = \varphi$.
Then $\Phi(z) = 0$.
Note that $([\cn,M_g] ( (\tau_n \, \varphi)|_\Gamma ) )(z) = ( \cn( (g \, \tau_n \, \Phi)|_\Gamma) )(z)$
for all $n \in \Ni$.
It follows as in the proof of Lemma~\ref{lcom302} that there is a $\check c > 0$ such that 
\[
\|(g \, \tau_n \, \Phi)|_\Omega\|_{L_\infty(\Omega)} \leq \frac{\check c}{n} \, \frac{1}{n}
 , \quad 
\|\partial_k (g \, \tau_n \, \Phi)|_\Omega\|_{L_\infty(\Omega)} \leq \frac{\check c}{n} 
\quad \mbox{and} \quad
\|\partial_k (g \, \tau_n \, \Phi)|_\Omega\|_{C^\kappa(\Omega)} \leq \frac{\check c}{n} \, n^\kappa
\]
for all $k \in \{ 1,\ldots,d \} $ and $n \in \Ni$.
Let $\widetilde M > 0$ be as in Lemma~\ref{lcom901}.
Then Lemma~\ref{lcom901}\ref{lcom901-4} gives
\[
\|\cn( (g \, \tau_n \, \Phi)|_\Gamma)\|_{C(\Gamma)}
\leq \widetilde M \, \frac{\check c}{n}
\]
for all large $n \in \Ni$.
In particular $\lim_{n \to \infty} ( \cn( (g \, \tau_n \, \Phi)|_\Gamma) )(z) = 0$
and the proof of Step~\ref{pcom309-2} is complete.

\noindent\nextstep \label{pcom309-3}
Suppose $\varphi(z) = 0$. 
Then (\ref{epcom309;1}) is valid.  \\
Define $\tilde g \colon \Ri^d \to \Ri$ by $\tilde g = g - g(z) \, \one_{\Ri^d}$.
Then Step~\ref{pcom309-2} gives
\[
(T_g \varphi)(z) 
= (T_{\tilde g} \varphi)(z) 
= ([\cn,M_{\tilde g}] \varphi)(z)
= ([\cn,M_g] \varphi)(z)
\]
as required.

\noindent\nextstep \label{pcom309-4}
Now we prove the proposition.  \\
Define $\tilde \varphi \colon \Gamma \to \Ci$ by 
$\tilde \varphi = \varphi - \varphi(z)$.
Then Step~\ref{pcom309-3} gives
\begin{eqnarray*}
(T_g \varphi)(z) - \varphi(z) \, ([\cn,M_g] \one_\Gamma)(z)
& = & (T_g \varphi)(z) - \varphi(z) \, (T_g \one_\Gamma)(z)  \\
& = & (T_g \tilde \varphi)(z)  
= ([\cn,M_g] \tilde \varphi)(z)  \\
& = & ([\cn,M_g] \varphi)(z) - \varphi(z) \, ([\cn,M_g] \one_\Gamma)(z)
.
\end{eqnarray*}
This completes the proof.
\end{proof}

Now we are able to prove the commutator estimates of Theorem~\ref{tcom101}.

\begin{proof}[{\bf Proof of Theorem~\ref{tcom101}}]
We give the proof in four steps.

\noindent\firststep \label{tcom101-step1} 
Let $\nu \in (0,\kappa)$.
We prove that there is a $c > 0$ such that 
\[
\|T_g \varphi\|_{C^\nu(\Gamma)}
\leq c \, \|\varphi\|_{C^\nu(\Gamma)}
\]
for all $\varphi \in \{ \Phi|_\Gamma : \Phi \in C^\infty(\Ri^d) \} $
and $g \in W_2$.

\noindent
Let $c > 0$ be as in Lemma~\ref{lcom303}.
Further, let $\hat c \geq 1$ and the $\chi_r$ be as in Lemma~\ref{lcom301}.
Moreover, let $c_1$ and $c_2$ be as in Lemma~\ref{lcom304},
let $c'$ be as in Lemma~\ref{lcom302}
and let $c''$ be as in Proposition~\ref{pcom305}.

Let $g \in W_2$ and let $K_g$ be the Schwartz kernel of $[\cn,M_g]$.
Let $\varphi \in \{ \Phi|_\Gamma : \Phi \in C^\infty(\Ri^d) \} $.
Let $z, z_0 \in \Gamma$ with $|z-z_0| \leq 1$.
We may assume that $z \neq z_0$.
Further without loss of generality we may assume that $g(z) = 0$.
Choose $r = |z - z_0|$.
Then
\begin{eqnarray*}
\lefteqn{
(T_g \varphi)(z) - (T_g \varphi)(z_0)
}  \hspace*{10mm} \\*
& = & \int_\Gamma K_g(z,w) \, \Big( \varphi(w) - \varphi(z) \Big) \, dw
   - \int_\Gamma K_g(z_0,w) \, \Big( \varphi(w) - \varphi(z_0) \Big) \, dw  \\*
& & \hspace*{10mm} {}
   + \varphi(z) \, ( [\cn,M_g] \one_\Gamma )(z)
   - \varphi(z_0) \, ( [\cn,M_g] \one_\Gamma )(z_0)  \\
& = & \int_\Gamma K_g(z,w) \, \chi_r(w-z) \, \Big( \varphi(w) - \varphi(z) \Big) \, dw  \\
& & \hspace*{10mm} {}
   - \int_\Gamma K_g(z_0,w) \, \chi_r(w-z) \, \Big( \varphi(w) - \varphi(z_0) \Big) \, dw  \\
& & \hspace*{10mm} {}
   + \int_\Gamma (K_g(z,w) - K_g(z_0,w) ) \, (1 - \chi_r(w-z)) \, (\varphi(w) - \varphi(z_0)) \, dw    \\
& & \hspace*{10mm} {}
   + \int_\Gamma K_g(z,w) \, (1 - \chi_r(w-z)) \, \Big( \varphi(z_0) - \varphi(z) \Big) \, dw  \\
& & \hspace*{10mm} {}
   + \Big( \varphi(z) \, ( [\cn,M_g] \one_\Gamma )(z)
       - \varphi(z_0) \, ( [\cn,M_g] \one_\Gamma )(z_0) \Big)  \\
& = & I_1 + I_2 + I_3 + I_4 + I_5
.
\end{eqnarray*}
We estimate the five terms separately.

First using Lemma~\ref{lcom304}\ref{lcom304-1} with $\beta = \nu$ we estimate
\begin{eqnarray*}
|I_1|
& \leq & \int_{ \{ w \in \Gamma : |w-z| \leq 3 |z-z_0| \} }
  c \, \frac{1}{|w-z|^{d-1}} \, 2 \|\varphi\|_{C^\nu(\Gamma)} \, |w-z|^\nu  \\
& \leq & 2 c \, \|\varphi\|_{C^\nu(\Gamma)} \, c_2 \, \frac{1}{1 - 2^{-\nu}} \, 
         (3 |z-z_0|)^\nu
= 2 \frac{3^\nu \, c \, c_2}{1 - 2^{-\nu}}  \, \|\varphi\|_{C^\nu(\Gamma)} \, 
           |z-z_0|^\nu
.
\end{eqnarray*}

For the second term, note that if $\chi_r(w-z) \neq 0$, then 
$|w-z| \leq 3 |z-z_0|$ and hence
$|w-z_0| \leq |w-z| + |z-z_0| \leq 4 |z-z_0|$.
So
\[
|I_2|
\leq 2 \frac{4^\nu \, c \, c_2}{1 - 2^{-\nu}}  \, \|\varphi\|_{C^\nu(\Gamma)} \, 
           |z-z_0|^\nu
.  \]

For the third term, if $(1 - \chi_r(w-z)) \neq 0$, then $|w-z| \geq 2r$.
Hence $|w-z_0| \leq |w-z| + |z-z_0| \leq \frac{3}{2} \, |w-z|$.
Therefore 
$|\varphi(w) - \varphi(z_0)| 
\leq 2 \|\varphi\|_{C^\nu(\Gamma)} \, |w-z_0|^\nu
\leq 4 \|\varphi\|_{C^\nu(\Gamma)} \, |w-z|^\nu$.
Then Lemma~\ref{lcom304}\ref{lcom304-1} with $\beta = \kappa - \nu$ gives
\begin{eqnarray*}
|I_3|
& \leq & 4 \|\varphi\|_{C^\nu(\Gamma)}
   \int_{ \{ w \in \Gamma : |w-z| \geq 2 |z-z_0| \} }
   c \, \frac{ |z-z_0|^\kappa}{ |z-w|^{d-1 + \kappa}} \, |w-z|^\nu \, dw \\
& \leq & 4 \, c \, c_1 \, \|\varphi\|_{C^\nu(\Gamma)} \, |z-z_0|^\kappa \, 
    \frac{1}{1 - 2^{-(\kappa - \nu)}}
    \frac{1}{(2 |z - z_0|)^{\kappa - \nu}}
= \frac{4 \, c \, c_1}{2^{\kappa - \nu} - 1} \, \|\varphi\|_{C^\nu(\Gamma)} \, 
    |z - z_0|^\nu
    .
\end{eqnarray*}

For the term $I_4$ define $\Psi \colon \Ri^d \to \Ri$ by $\Psi(x) = \chi_r(x-z)$.
Then $z \not\in \supp (\one_\Gamma - \Psi|_\Gamma)$.
Hence by definition of Schwartz kernel 
\begin{eqnarray*}
I_4 
& = & \int_\Gamma K_g(z,w) \, (1 - \chi_r(w-z)) \, \Big( \varphi(z_0) - \varphi(z) \Big) \, dw  \\
& = & \Big( \varphi(z_0) - \varphi(z) \Big) \, ([\cn, M_g](\one_\Gamma - \Psi|_\Gamma))(z)
.  
\end{eqnarray*}
Moreover, 
\[
([\cn, M_g] (\Psi|_\Gamma))(z)
= (\cn ((g \, \Psi)|_\Gamma) )(z)
\]
and using Lemma~\ref{lcom302}, one estimates
$|(\cn ((g \, \Psi)|_\Gamma ))(z)| \leq c'$.
Furthermore, 
\[
|([\cn, M_g] \one_\Gamma)(z)| = |(\cn( g|_\Gamma ))(z)| \leq c''
\]
by Proposition~\ref{pcom305}.
Therefore
\[
|I_4|
= |\varphi(z) - \varphi(z_0)| \, |([\cn, M_g](\one_\Gamma - \Psi|_\Gamma))(z)|
\leq (c' + c'') \, |||\varphi|||_{C^\nu(\Gamma)} \, |z-z_0|^\nu
.  \]

For the last term $I_5$ note that 
$\varphi \, [\cn, M_g] \one_\Gamma = \varphi \, \cn (g|_\Gamma) - \varphi \, g|_\Gamma \cdot \cn \one_\Gamma$.
Now Proposition~\ref{pcom305} gives 
$\|\cn(g|_\Gamma)\|_{C^\kappa(\Gamma)} \leq c''$.
Since $g$ has a zero on $\Gamma$, it follows from the mean value theorem 
that $\|g|_\Gamma\|_{C(\Gamma)} \leq \diam \Omega$
and $||| \, g|_\Gamma \, |||_{C^\nu(\Gamma)} \leq 1 + \diam \Omega$.
Therefore $\| \, g|_\Gamma \, \|_{C^\nu(\Gamma)} \leq 1 + 2 \diam \Omega$.
Then 
\begin{eqnarray*}
\|\varphi \, [\cn, M_g] \one_\Gamma\|_{C^\nu(\Gamma)}
& \leq & 2 \|\varphi\|_{C^\nu(\Gamma)} \, \|\cn(g|_\Gamma)\|_{C^\kappa(\Gamma)}
   + 3 \|\varphi\|_{C^\nu(\Gamma)} \, \| \, g|_\Gamma \, \|_{C^\nu(\Gamma)}
             \, \|\cn \one_\Gamma\|_{C^\nu(\Gamma)}  \\
& \leq & \Big( 2 c'' + 3 (1 + 2 \diam \Omega) \, \|\cn \one_\Gamma\|_{C^\nu(\Gamma)} \Big) \, 
      \|\varphi\|_{C^\nu(\Gamma)}
.
\end{eqnarray*}
Hence 
\[
|I_5|
\leq \Big( 2 c'' + 3 (1 + 2 \diam \Omega) \, \|\cn \one_\Gamma\|_{C^\nu(\Gamma)} \Big) 
    \, \|\varphi\|_{C^\nu(\Gamma)} \, |z-z_0|^\nu
\]
as required.

Combining the contributions one obtains that there exists a $c_3 > 0$, independent 
of $\varphi$, $g$, $z$ and $z_0$, such that 
\[
| (T_g \varphi)(z) - (T_g \varphi)(z_0)|
\leq c_3 \, \|\varphi\|_{C^\nu(\Gamma)} \, |z-z_0|^\nu.
\]
This gives the required semi-norm estimate.

It remains to obtain a suitable $L_\infty$ estimate for $T_g \varphi$.
Let $z \in \Gamma$ and set  $\tilde g = g - g(z) \, \one_{\Ri^d} \in W_2$. 
Then 
\[
(T_g \varphi)(z) 
= (T_{\tilde g} \varphi)(z) 
= \int_\Gamma K_{\tilde g}(z,w) \, (\varphi(w) - \varphi(z)) \, dw 
   + \varphi(z) \, (\cn(\tilde g|_\Gamma))(z),
   \]
where we used that $\tilde g(z) = 0$.
Now Lemma~\ref{lcom304}\ref{lcom304-2} gives
\begin{eqnarray*}
|\int_\Gamma K_{\tilde g}(z,w) \, (\varphi(w) - \varphi(z)) \, dw|
& \leq & \int_\Gamma \frac{c}{|w-z|^{d-1}} \, 2 \|\varphi\|_{C^\nu(\Gamma)} \, |w-z|^\nu  \\
& \leq & 2 c \, \frac{c_2}{1 - 2^{-\nu}} \, (\diam(\Omega))^\nu \, \|\varphi\|_{C^\nu(\Gamma)}
.
\end{eqnarray*}
Moreover,
\[
|\varphi(z) \, (\cn (\tilde g|_\Gamma))(z)|
\leq c'' \, \|\varphi\|_{C^\nu(\Gamma)} 
\]
by Proposition~\ref{pcom305}. 
This completes the proof of Step~\ref{tcom101-step1}.

\medskip

Since $ \{ \Phi|_\Gamma : \Phi \in C^\infty(\Ri^d) \} $  is dense in $C^\nu(\Gamma)$, 
for all $g \in W_2$ there exists 
a unique bounded extension $\widetilde T_g \colon C^\nu(\Gamma) \to C^\nu(\Gamma)$
of $T_g$ and 
\[
\sup_{g \in W_2} \|\widetilde T_g\|_{C^\nu(\Gamma) \to C^\nu(\Gamma)} < \infty
.  \]

\noindent\nextstep \label{tcom101-step2} 
We prove that the commutator $[\cn, M_g]$ is defined on $D(\cn)$ and extends to a bounded operator 
on $L_2(\Gamma)$. 
This is Statement~\ref{tcom101-1} of the theorem for $p=2$.  \\ 
Choose $\nu = \kappa / 2$ in Step~\ref{tcom101-step1}.
Let $\cd = \{ \Phi|_\Gamma : \Phi \in C^\infty(\Ri^d) \} $.
Then there is a $c > 0$ such that 
\[
\|\widetilde T_g \varphi\|_{C^\nu(\Gamma)} 
\leq c \, \|\varphi\|_{C^\nu(\Gamma)}
\]
for all $\varphi \in \cd$ and all $g \in W_2$.
Similarly,  by duality there exists a $c' > 0$ and for all $g \in W_2$ there 
exists a bounded operator $\widecheck T_g \colon C^\nu(\Gamma) \to C^\nu(\Gamma)$
such that $\|\widecheck T_g\|_{C^\nu(\Gamma) \to C^\nu(\Gamma)} \leq c'$
and $\widecheck T_g \varphi = [\cn^*, M_g] \varphi$ for all $\varphi \in \cd$.
Then 
\[
( \widetilde T_g \varphi_1, \varphi_2)_{L_2(\Gamma)}
= ( [\cn, M_g] \varphi_1, \varphi_2)_{L_2(\Gamma)}
= - ( \varphi_1, [\cn^*, M_g] \varphi_2)_{L_2(\Gamma)}
= - ( \varphi_1, \widecheck T_g \varphi_2)_{L_2(\Gamma)}
\]
for all $\varphi_1,\varphi_2 \in \cd$.
Hence by density 
\[
( \widetilde T_g \varphi_1, \varphi_2)_{L_2(\Gamma)}
= - ( \varphi_1, \widecheck T_g \varphi_2)_{L_2(\Gamma)}
\]
for all $\varphi_1,\varphi_2 \in C^\nu(\Gamma)$.
At this stage we use Krein's lemma, which we state for the convenience 
of the reader.

\begin{thm} \label{tcom310}
Let $H$ be a Hilbert space and $Y$ a normed space which is 
continuously and densely embedded into $H$.
Let $T,S \in \cl(Y)$ be two bounded operators on $Y$
and suppose that $(T y_1, y_2)_H = (y_1, S y_2)_H$ 
for all $y_1,y_2 \in Y$.
Then $T$ extends to a bounded operator $\widehat T \in \cl(H)$ on~$H$.
Moreover, $\|\widehat T\|_{H \to H} \leq \|T\|_{Y \to Y}^{1/2} \, \|S\|_{Y \to Y}^{1/2}$.
\end{thm}
\begin{proof} 
This theorem was proved by Krein \cite{Kre3} Theorem~1 in case $S=T$.
It was independently proved by Lax \cite{Lax} Theorem~1 for the case $S=T$.
A short proof without the restriction $S = T$ is given in \cite{Bramanti} Theorem~10.
Although Bramanti writes that his statement is for the field `real, for simplicity',
his proof is also valid for the complex field, without changing one letter. 
\end{proof}

By Theorem~\ref{tcom310}
for all $g \in W_2$ there exists a unique bounded operator 
$\widehat T_g \colon L_2(\Gamma) \to L_2(\Gamma)$
such that $\widehat T_g$ is an extension of $\widetilde T_g$ 
and $\|\widehat T_g\|_{L_2(\Gamma) \to L_2(\Gamma)} \leq \sqrt{c \, c'}$.
Then $\widehat T_g$ is an extension of $T_g$ for all $g \in W_2$.

Let $g \in W_2$.
If $\varphi \in \cd$, then $\varphi \in D(\cn)$ and $g|_\Gamma \, \varphi \in D(\cn)$
by Corollary~\ref{ccom306.5}.
Moreover, $\widehat T_g \varphi = [\cn,M_g] \varphi = \cn(g|_\Gamma \, \varphi) - g|_\Gamma \, \cn \varphi$.
Let $\Psi \in C^\infty(\Ri^d)$ and set $v = \Psi|_\Omega$.
Then $v \in H^1(\Omega)$.
Now 
\[
(\cn(g|_\Gamma \, \varphi), \Tr v)_{L_2(\Gamma)}
   - (g|_\Gamma \, \cn \varphi, \Tr v)_{L_2(\Gamma)} 
= (\widehat T_g \varphi, \Tr v)_{L_2(\Gamma)}
.  \]
So 
\begin{equation}
\gota(\gamma( g|_\Gamma \, \varphi), v)
   - \gota(\gamma(\varphi), g|_\Omega \, v)
= (\widehat T_g \varphi, \Tr v)_{L_2(\Gamma)}
.  
\label{etcom101;10}
\end{equation}
By density and continuity, (\ref{etcom101;10}) is valid for all $v \in H^1(\Omega)$.
The space $\cd$ is dense in $H^{1/2}(\Gamma)$ and the map 
$\varphi \mapsto g|_\Gamma \, \varphi$ is bounded from $H^{1/2}(\Gamma)$ 
into $H^{1/2}(\Gamma)$.
Hence by density, (\ref{etcom101;10}) is valid for all $\varphi \in H^{1/2}(\Gamma)$
and $v \in H^1(\Omega)$.
Now let $\varphi \in D(\cn)$.
Then 
\begin{eqnarray*}
\gota(\gamma( g|_\Gamma \, \varphi), v)
& = & \gota(\gamma(\varphi), g|_\Omega \, v) + (\widehat T_g \varphi, \Tr v)_{L_2(\Gamma)}  \\
& = & (\cn \varphi, g|_\Gamma \, \Tr v)_{L_2(\Gamma)} + (\widehat T_g \varphi, \Tr v)_{L_2(\Gamma)}
= (\psi, \Tr v)_{L_2(\Gamma)} 
\end{eqnarray*}
for all $v \in H^1(\Omega)$, 
where $\psi = g|_\Gamma \, \cn \varphi + \widehat T_g \varphi \in L_2(\Gamma)$.
Hence $g|_\Gamma \, \varphi \in D(\cn)$ and 
$\cn(g|_\Gamma \, \varphi) = g|_\Gamma \, \cn \varphi + \widehat T_g \varphi$.
In particular, $\varphi \in D([\cn, M_g])$ and $[\cn, M_g] \varphi = \widehat T_g \varphi$.
We proved that $\widehat T_g$ is an extension of $[\cn, M_g]$ and 
$\|\widehat T_g\|_{L_2(\Gamma) \to L_2(\Gamma)} \leq \sqrt{c \, c'}$
for all $g \in W_2$.
This completes the proof  of Statement~\ref{tcom101-1}.

\noindent\nextstep \label{tcom101-step3} 
We prove Statement~\ref{tcom101-2}. \\
Let $\varphi \in D(\cn) \cap C^\nu(\Gamma)$.
Then $g|_\Gamma \, \varphi \in D(\cn)$ by Statement~\ref{tcom101-1} and 
obviously $g|_\Gamma \, \varphi \in C^\nu(\Gamma)$.
So $g|_\Gamma \, \varphi \in D(\cn) \cap C^\nu(\Gamma)$.
Moreover, $[\cn, M_g] \varphi = \widehat T_g \varphi = \widetilde T_g \varphi$.
Since $\widetilde T_g$ is bounded from $C^\nu(\Gamma)$ into $C^\nu(\Gamma)$, 
with norm bound uniformly in $g \in W_2$, Statement~\ref{tcom101-2} follows.

\noindent\nextstep \label{tcom101-step4} 
Finally we prove Statement~\ref{tcom101-1} for all $p \in (1,\infty) \setminus \{ 2 \} $
and Statement~\ref{tcom101-3}. \\
By Lemma~\ref{lcom303}, the operator $\widehat T_g$ is a Calder\'on--Zygmund operator 
and hence it is of weak type $(1,1)$.
Using interpolation and duality  it extends to a bounded operator on $L_p(\Gamma)$ for all 
$p \in (1,\infty)$ and the bounds of Statement~\ref{tcom101-1} 
hold on $L_p(\Gamma)$. 
This completes the proof of Theorem~\ref{tcom101}.
\end{proof}

\section{Poisson bounds} \label{Scom10}

In this section we prove the Poisson bound stated in Theorem~\ref{tcom103}. 

We start by proving that the semigroup is bounded  on  $L_\infty(\Gamma)$.
This result does not need much regularity and it  holds on a Lipschitz domain 
and for bounded measurable real coefficients. 
We use the next general observation which allows us to restrict attention to 
connected components of $\Omega$.

\begin{lemma} \label{lcom9802}
Let $\Omega$ be open, bounded and Lipschitz.
Let $\widehat \Omega$ be a connected component of~$\Omega$.
For all $k,l \in \{ 1,\ldots,d \} $ let 
$c_{kl}, b_k, c_k, c_0 \in L_\infty(\Omega)$.
Suppose there is a $\mu > 0$ such that 
\[
\RRe \sum_{k,l=1}^d c_{kl}(x) \, \xi_k \, \overline{\xi_l} \geq \mu \, |\xi|^2
\]
for all $\xi \in \Ci^d$ and $x \in \Omega$.
Suppose the corresponding elliptic operator with Dirichlet boundary 
conditions is invertible.
Let $\cn$ be the corresponding Dirichlet-to-Neumann operator on $L_2(\partial \Omega)$.
Restricting the coefficients to $\widehat \Omega$ one also obtains a 
Dirichlet-to-Neumann operator $\widehat \cn$ on $L_2(\partial \widehat \Omega)$.
Let $t > 0$.
Then $e^{-t \cn}$ leaves $L_2(\partial \widehat \Omega)$ invariant and 
$e^{-t \cn} \varphi = e^{-t \widehat \cn} \varphi$ for all 
$\varphi \in L_2(\partial \widehat \Omega)$, where as usual we identify 
$L_2(\partial \widehat \Omega)$ with the elements of $L_2(\partial \Omega)$ 
which vanish on $(\partial \Omega) \setminus \partial \widehat \Omega$.
\end{lemma}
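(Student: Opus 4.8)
The plan is to show that, under the natural orthogonal identification
$L_2(\partial\Omega) = L_2(\partial\widehat\Omega) \oplus L_2(\partial\Omega')$, where $\Omega'$
denotes the union of the connected components of $\Omega$ other than $\widehat\Omega$, the
Dirichlet-to-Neumann operator $\cn$ is the direct sum $\widehat\cn \oplus \cn'$ of the
Dirichlet-to-Neumann operators associated with $\widehat\Omega$ and with $\Omega'$.
Since the semigroup generated by $-(\widehat\cn \oplus \cn')$ equals
$e^{-t\widehat\cn} \oplus e^{-t\cn'}$, the assertion of the lemma follows at once from this.

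First I would record the geometric splitting.
As $\Omega$ is bounded and Lipschitz it has only finitely many connected components, each of them
open, and $\partial\Omega$ is the disjoint union of the boundaries of these components: near a
boundary point $\Omega$ lies locally on one side of a Lipschitz graph, hence is locally connected
there, so two distinct components cannot share a boundary point.
Writing $\Omega = \widehat\Omega \cup \Omega'$ with $\widehat\Omega \cap \Omega' = \emptyset$ and
$\partial\Omega = \partial\widehat\Omega \sqcup \partial\Omega'$, one obtains the orthogonal
decompositions $L_2(\Omega) = L_2(\widehat\Omega) \oplus L_2(\Omega')$,
$W^{1,2}(\Omega) = W^{1,2}(\widehat\Omega) \oplus W^{1,2}(\Omega')$,
$W^{1,2}_0(\Omega) = W^{1,2}_0(\widehat\Omega) \oplus W^{1,2}_0(\Omega')$,
$H^{1/2}(\partial\Omega) = H^{1/2}(\partial\widehat\Omega) \oplus H^{1/2}(\partial\Omega')$ and
$L_2(\partial\Omega) = L_2(\partial\widehat\Omega) \oplus L_2(\partial\Omega')$, and both the
trace map and the form $\gota$ respect these splittings: if $u = \widehat u \oplus u'$ and
$v = \widehat v \oplus v'$, then $\Tr u = \Tr\widehat u \oplus \Tr u'$ and
$\gota(u,v) = \widehat\gota(\widehat u, \widehat v) + \gota'(u', v')$, since each of the integrals
in (\ref{eScom1;1}) over $\Omega$ is the sum of the corresponding integrals over $\widehat\Omega$
and over $\Omega'$, with no interaction between the two.

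Next I would transfer the splitting through the constructions defining $\cn$.
The decomposition of the form $\gota|_{W^{1,2}_0(\Omega) \times W^{1,2}_0(\Omega)}$ gives
$A_D = \widehat A_D \oplus A_D'$, so the hypothesis that $A_D$ is invertible forces both
$\widehat A_D$ and $A_D'$ to be invertible, and hence the harmonic liftings $\widehat\gamma$ (on
$\widehat\Omega$) and $\gamma'$ (on $\Omega'$) exist.
I would then check that the harmonic lifting decouples: for
$\varphi = \widehat\varphi \oplus \varphi' \in H^{1/2}(\partial\Omega)$ the function
$u := \widehat\gamma(\widehat\varphi) \oplus \gamma'(\varphi') \in W^{1,2}(\Omega)$ satisfies
$\Tr u = \varphi$ and $\ca u = 0$ in $\cd'(\Omega)$; the latter holds because a distribution on
the open set $\Omega$ vanishes if and only if it vanishes on each (open) component, and on
$\widehat\Omega$ the distribution $\ca u$ is computed from $u|_{\widehat\Omega}$ and the
coefficients restricted to $\widehat\Omega$ only.
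By uniqueness of the harmonic lifting, $\gamma(\varphi) = \widehat\gamma(\widehat\varphi) \oplus
\gamma'(\varphi')$.
Inserting this into $\gotb(\varphi,\eta) = \gota(\gamma(\varphi),\gamma(\eta))$ and using the
splitting of $\gota$ yields
$\gotb(\varphi,\eta) = \widehat\gotb(\widehat\varphi,\widehat\eta) + \gotb'(\varphi',\eta')$,
whence $\cn = \widehat\cn \oplus \cn'$ as operators associated with sesquilinear forms on
$L_2(\partial\widehat\Omega) \oplus L_2(\partial\Omega')$.
Consequently $e^{-t\cn} = e^{-t\widehat\cn} \oplus e^{-t\cn'}$, which is precisely the assertion.

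The only genuinely delicate point is the geometric bookkeeping of the first step: that the
components of $\Omega$ are open, finite in number, and that $\partial\Omega$ splits, up to a set
of surface measure zero, into the boundaries of the individual components, so that the Sobolev and
trace spaces and the form $\gota$ split with no cross terms.
This is where the Lipschitz hypothesis enters; for a general bounded open set with finitely many
components the boundaries of distinct components could overlap along a nontrivial set and the trace
theory would be more delicate.
Everything after the first step is formal: once the function spaces and $\gota$ split, the
decoupling of $A_D$, of the harmonic lifting $\gamma$, of the form $\gotb$, and finally of $\cn$
and of the semigroup $(e^{-t\cn})_{t>0}$ follows automatically.
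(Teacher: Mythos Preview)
Your argument is correct and rests on the same key observation as the paper's: the harmonic lifting of a function supported on $\partial\widehat\Omega$ is supported in $\widehat\Omega$, because extension by zero across the other components preserves membership in $W^{1,2}(\Omega)$ and harmonicity is tested componentwise. The difference is one of packaging. You establish the full two-sided decomposition $\cn = \widehat\cn \oplus \cn'$ at the level of the form $\gotb$, and then read off $e^{-t\cn} = e^{-t\widehat\cn}\oplus e^{-t\cn'}$ directly. The paper instead proves only the one-sided inclusion $\widehat\cn \subset \cn$ (i.e.\ $D(\widehat\cn)\subset D(\cn)$ with $\cn\varphi=\widehat\cn\varphi$ there), then shows $(\lambda I+\cn)^{-1}$ agrees with $(\lambda I+\widehat\cn)^{-1}$ on $L_2(\partial\widehat\Omega)$ for large $\lambda$, and passes to the semigroup via the Euler formula $e^{-t\cn}\varphi=\lim_n (I+\tfrac{t}{n}\cn)^{-n}\varphi$. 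Your structural route gives a bit more (the full block-diagonal form of $\cn$) at the cost of setting up the complementary operator $\cn'$; the paper's route is shorter but needs the resolvent/Euler step. One minor remark: for a Lipschitz $\Omega$ the boundaries of distinct components are genuinely disjoint (not merely up to surface measure zero), since near any boundary point $\Omega$ is locally a connected Lipschitz hypograph; your hedge ``up to a set of surface measure zero'' is unnecessary.
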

\begin{proof}
Let $\gamma$ and $\hat \gamma$ denote the harmonic liftings on 
$\Omega$ and $\widehat \Omega$.
If $\varphi \in H^{1/2}(\partial \widehat \Omega)$, then 
$\hat \gamma(\varphi)$ is harmonic on $\Omega$, 
hence $\gamma(\varphi) = \hat \gamma(\varphi)$.
Therefore if $\varphi \in D(\widehat \cn)$, then 
$\varphi \in D(\cn)$ and $\cn \varphi = \widehat \cn \varphi$.

If $\lambda \in \Ri$ is large enough and $\varphi \in L_2(\partial \widehat \Omega)$, 
then $(\lambda \, I + \widehat \cn)^{-1} \varphi \in D(\widehat \cn) \subset D(\cn)$
and 
\[
(\lambda \, I + \cn) \, (\lambda \, I + \widehat \cn)^{-1} \varphi
= (\lambda \, I + \widehat \cn) \, (\lambda \, I + \widehat \cn)^{-1} \varphi
= \varphi
.  \]
So $(\lambda \, I + \cn)^{-1} \varphi 
= (\lambda \, I + \widehat \cn)^{-1} \varphi 
\in L_2(\partial \widehat \Omega)$
and $(\lambda \, I + \cn)^{-1}$ leaves $L_2(\partial \widehat \Omega)$ invariant.
If $t > 0$ and $\varphi \in L_2(\partial \widehat \Omega)$, then the Euler formula
gives
\[
e^{-t \cn} \varphi
= \lim_{n \to \infty} (I + \tfrac{t}{n} \, \cn)^{-n} \varphi
= \lim_{n \to \infty} (I + \tfrac{t}{n} \, \widehat \cn)^{-n} \varphi
= e^{-t \widehat \cn} \varphi
\in L_2(\partial \widehat \Omega)
\]
and the proof is complete.
\end{proof}

\begin{prop} \label{pcom9801}
Suppose $\Omega$ is open, bounded and Lipschitz.
For all $k,l \in \{ 1,\ldots,d \} $ let 
$c_{kl}, b_k, c_k, c_0$ be bounded measurable and real valued.
Suppose there is a $\mu > 0$ such that 
\[
\RRe \sum_{k,l=1}^d c_{kl}(x) \, \xi_k \, \overline{\xi_l} \geq \mu \, |\xi|^2
\]
for all $\xi \in \Ci^d$ and $x \in \Omega$.
Suppose the corresponding elliptic operator with Dirichlet boundary 
conditions is invertible.

Then there are $\lambda \ge 0$ and  a constant $M \geq 1$ such that 
$A_D+\lambda\,I$ is invertible and 
\[
\|e^{-t \cn_\lambda} \varphi\|_{L_\infty(\Gamma)}
\leq M \, \|\varphi\|_{L_\infty(\Gamma)}
\]
for all $t > 0$ and $\varphi \in L_\infty(\Gamma)$, where
$\cn_\lambda$ is the corresponding Dirichlet-to-Neumann operator 
with $c_0$ replaced by $c_0 + \lambda \, \one_{\Omega}$. 
\end{prop}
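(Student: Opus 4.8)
The plan is as follows. By Lemma~\ref{lcom9802} we may assume throughout that $\Omega$ is connected. Since the coefficients are bounded there is a $\lambda \ge 0$, depending only on $\mu$ and the $L_\infty$-norms of the coefficients, such that the shifted form $\gota_\lambda$ is coercive, i.e.\ $\RRe \gota_\lambda(u,u) \ge \tfrac{\mu}{2}\,\|u\|_{W^{1,2}(\Omega)}^2$ for all $u \in W^{1,2}(\Omega)$. For this $\lambda$ the corresponding Dirichlet and Neumann operators are invertible, $\cn_\lambda$ is $m$-accretive and $-\cn_\lambda$ generates a bounded holomorphic $C_0$-semigroup on $L_2(\Gamma)$, the harmonic lifting $\gamma_\lambda$ and its dual counterpart $\gamma_\lambda^\#$ (cf.\ Lemma~\ref{lcom205.7}) are defined, and — since the coefficients are real — every $u \in W^{1,2}(\Omega)$ with $\ca u + \lambda u = 0$ satisfies a weak maximum principle $\|u\|_{L_\infty(\Omega)} \le \tilde c\,\|\Tr u\|_{L_\infty(\Gamma)}$. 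In particular $\gamma_\lambda$ extends to a bounded map from $L_\infty(\Gamma)$ into $L_\infty(\Omega)$; this uses only standard elliptic estimates on Lipschitz domains and, crucially, does not need Hopf's lemma.

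Next I would show that the shifted Neumann semigroup is uniformly bounded on $L_\infty(\Omega)$: there is an $M_0 \ge 1$ with $\|e^{-t A_N^{(\lambda)}}\|_{L_\infty(\Omega)\to L_\infty(\Omega)} \le M_0$ for all $t>0$, where $A_N^{(\lambda)}$ is the Neumann operator for the coefficient $c_0 + \lambda\,\one_\Omega$. Because the coefficients are real this semigroup is positivity preserving, which one reads off from the Ouhabaz invariance criterion applied to the cone $\{u \ge 0\}$: in $\gota_\lambda(u^+,u^-)$ the gradients of $u^+$ and $u^-$ have disjoint supports, as do $u^+$ and $u^-$ themselves, so the principal part, all first-order terms and the zero-order term vanish and $\gota_\lambda(u^+,u^-)=0$. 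The uniform $L_\infty$-bound then follows from upper Gaussian bounds for the Neumann heat kernel of $A_N^{(\lambda)}$, which hold for bounded measurable real coefficients on a bounded Lipschitz domain much as in Theorem~\ref{tcom201}; after the shift the Gaussian carries an $e^{-ct}$ factor, so integrating the kernel in one variable produces a bound independent of $t$. Equivalently one may verify the invariance criterion for a suitable closed convex subset of $L_2(\Omega)$ directly, which is the form in which this ingredient enters the transfer of~\cite{EO8}.

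Finally, I would transfer the bound from $A_N^{(\lambda)}$ to $\cn_\lambda$ with the techniques of~\cite{EO8}. The weak maximum principle identifies an $L_\infty$-ball of $L_\infty(\Gamma)$ with the traces of those $\gota_\lambda$-harmonic functions lying in a fixed $L_\infty$-ball of $L_\infty(\Omega)$, and $\gamma_\lambda$, $\gamma_\lambda^\#$ intertwine the boundary and interior pictures; the closed convex set in $L_2(\Omega)$ invariant under $e^{-tA_N^{(\lambda)}}$ thereby transfers to a closed convex set in $L_2(\Gamma)$ invariant under $e^{-t\cn_\lambda}$ which contains a ball of $L_\infty(\Gamma)$, giving the claimed estimate. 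Concretely I would realise the transfer through the resolvents: $(I+s\cn_\lambda)^{-1}\varphi$ is the boundary trace of the solution of the Robin-type problem $\ca u + \lambda u = 0$ in $\Omega$, $\Tr u + s\,\partial_n^{\gota_\lambda} u = \varphi$ on $\Gamma$; comparing this with the interior problem and invoking the maximum principle and the $A_N^{(\lambda)}$-bound controls $(I+s\cn_\lambda)^{-1}$ on $L_\infty(\Gamma)$, and then the Euler formula $e^{-t\cn_\lambda}\varphi = \lim_{n\to\infty}(I+\tfrac{t}{n}\cn_\lambda)^{-n}\varphi$ — exactly as used in the proof of Lemma~\ref{lcom9802} — propagates the bound to all $t>0$. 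I expect this transfer to be the main obstacle: in the low-regularity setting the conormal derivative is only a weak object and Hopf's lemma is unavailable, so making the comparison between the boundary resolvent (or semigroup) and its interior counterpart rigorous is precisely where the machinery of~\cite{EO8} is needed, and the divergence-form lower-order terms $-\partial_k(b_k u)$, which perturb the conormal flux at $\Gamma$, require the most care.
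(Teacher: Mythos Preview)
Your outline has the right architecture --- reduce to connected $\Omega$, shift to a coercive form, exploit positivity of the Neumann semigroup, and transfer an invariant closed convex set to the boundary via \cite{EO8} --- but the decisive step is missing: you never identify the invariant set. The natural candidate is an $L_\infty$-ball, and indeed your argument tacitly relies on this (``which contains a ball of $L_\infty(\Gamma)$''). However, a ball $\{|u|\le r\}$ is invariant under $e^{-tA_N^{(\lambda)}}$ only if the semigroup is $L_\infty$-\emph{contractive}, and with the divergence-form first-order terms $-\partial_k(b_k u)$ present this generally fails: in the Beurling--Deny criterion for $\gota_\lambda$ the cross term $\int b_k\,(u\wedge 1)\,\partial_k(u-1)^+$ does not vanish and has no sign. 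Your Gaussian-bound argument only yields $\|e^{-tA_N^{(\lambda)}}\|_{\infty\to\infty}\le M_0$ with some $M_0\ge 1$, and a mere uniform bound does not produce an invariant bounded convex set. The same obstruction reappears in your resolvent route: the Euler formula needs $\|(I+s\cn_\lambda)^{-1}\|_{\infty\to\infty}\le 1$, not $\le C$, and the pointwise Robin comparison that would give this is exactly a Hopf-type argument, which you correctly note is unavailable here.

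The paper closes this gap with a specific invariant set built from the \emph{principal eigenfunction}. By \cite{AEG} Theorem~4.5(b) there is a strictly positive $f_0\in D(A_\lambda)\cap C(\overline\Omega)$ with $A_\lambda f_0=\lambda_0 f_0$ and $\lambda_0\ge 0$; then $e^{-tA_\lambda}f_0=e^{-\lambda_0 t}f_0\le f_0$, so the order interval $\widetilde\cc=\{0\le u\le f_0\}$ is genuinely invariant. Its boundary counterpart $\cc=\{0\le\varphi\le f_0|_\Gamma\}$ satisfies the compatibility $\Tr\circ\widetilde P=P\circ\Tr$ required by \cite{EO8} Proposition~3.3, and the $L_\infty$-bound for $e^{-t\cn_\lambda}$ follows with $M=\max f_0/\min f_0$. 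This is the missing idea in your proposal.
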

\begin{proof}
There exists a $\lambda > 0$ such that the form 
$\gota_\lambda \colon H^1(\Omega) \times H^1(\Omega) \to \Ci$ is coercive, where
$\gota_\lambda(u,v) = \gota(u,v) + \lambda \, (u,v)_{L_2(\Omega)}$
for all $u,v \in H^1(\Omega)$ as before.
Let $A_\lambda$ be the operator in $L_2(\Omega)$ 
associated with $\gota_\lambda$.
Then $A_\lambda$ is the operator with Neumann boundary conditions.
Let $\lambda_0 = \inf \{ \RRe z : z \in \sigma(A_\lambda) \} $.
Then $\lambda_0 \geq 0$.
First suppose that $\Omega$ is connected.
By \cite{AEG} Theorem~4.5(b) there exists a unique $f_0 \in D(A_\lambda) \cap C(\overline \Omega)$
such that $A_\lambda f_0 = \lambda_0 \, f_0$, 
$\|f_0\|_{L_2(\Omega)} = 1$ and $f_0(x) > 0$ for all $x \in \overline \Omega$.
Let $\delta = \min \{ f_0(x) : x \in \overline \Omega \} $
and $M = \max \{ f_0(x) : x \in \overline \Omega \} $.
Set $\varphi_0 = f_0|_\Gamma \in C(\Gamma)$.

Define 
\[
\widetilde \cc = \{ u \in L_2(\Omega,\Ri) : 0\le u \leq f_0 \} 
\quad \mbox{and} \quad
\cc = \{ \varphi \in L_2(\Gamma,\Ri) : 0 \le \varphi \leq \varphi_0 \} 
.  \]
Then $\widetilde \cc$ and $\cc$ are closed convex sets in $L_2(\Omega)$
and $L_2(\Gamma)$.
The projections $\widetilde P \colon L_2(\Omega) \to \widetilde \cc$ 
and $P \colon L_2(\Gamma) \to \cc$ are given by
\[
\widetilde P u = f_0 \wedge (\RRe u)^+ 
\quad \mbox{and} \quad
P \varphi = \varphi_0 \wedge (\RRe \varphi)^+
.  \]

We next show that the semigroup $(e^{-t A_\lambda})_{t > 0}$ 
leaves invariant $\widetilde \cc$. 
Indeed, let $u \in \widetilde \cc$ and $t > 0$.
Since $e^{-t A_\lambda}$ is a positive operator by \cite{Ouh5} Corollary~4.3,
one deduces that
\[
0 \leq e^{-t A_\lambda} u
\leq e^{-t A_\lambda} f_0
= e^{-t \lambda_0} \, f_0
\leq f_0
\]
and $e^{-t A_\lambda} u \in \widetilde \cc$.

By \cite{Ouh5} Theorem~2.2 1)${}\Rightarrow{}$2) it follows that 
$\widetilde P u \in H^1(\Omega)$ for all $u \in H^1(\Omega)$.
On the other hand, it follows easily from the identity  
$f_0 \wedge (\RRe u)^+ = \frac{1}{2}( f_0 + (\RRe u)^+ - | f_0 -(\RRe u)^+ |)$ 
and the continuity on $H^1(\Omega)$ of the maps $u \mapsto |u|$ and  $ u \mapsto (\RRe u)^+$ that 
the function $\widetilde P|_{H^1(\Omega)} \colon H^1(\Omega) \to H^1(\Omega)$
is continuous.
If $u \in H^1(\Omega) \cap C(\overline \Omega)$, then $\Tr u = u|_\Gamma$ and 
therefore
\[
(\Tr \circ \widetilde P)(u)
= \Tr (f_0 \wedge (\RRe u)^+)
= \varphi_0 \wedge (\RRe u)^+|_\Gamma) 
= (P \circ \Tr)(u)
.  \]
Since $H^1(\Omega) \cap C(\overline \Omega)$ is dense in $H^1(\Omega)$
one deduces by continuity that 
$\Tr \circ \widetilde P = P \circ \Tr$ on $H^1(\Omega)$.
Using  \cite{EO8} Proposition~3.3 we conclude  that the 
semigroup $(e^{- t \cn_\lambda})_{t > 0}$ leaves invariant 
the closed convex set $\cc$.
In particular, this implies that the semigroup $(e^{- t \cn_\lambda})_{t > 0}$
is positive.
This positivity was already proved in  \cite{EO8} Corollary~4.1. 
Now let $t > 0$ and $\varphi \in L_\infty(\Gamma)$ with $\|\varphi\|_{L_\infty(\Gamma)} \leq 1$.
Then $\varphi_0 \, |\varphi| \in \cc$ and by invariance also 
$e^{- t \cn_\lambda} (\varphi_0 \, |\varphi|) \in \cc$, 
hence
$e^{- t \cn_\lambda} (\varphi_0 \, |\varphi|) \leq \varphi_0$.
Moreover, $|\delta \, \varphi| \leq \varphi_0 \, |\varphi|$.
Therefore 
\[
|\delta \, e^{- t \cn_\lambda} \varphi|
= |e^{- t \cn_\lambda} (\delta \, \varphi)|
\leq e^{- t \cn_\lambda} |\delta \, \varphi|
\leq e^{- t \cn_\lambda} (\varphi_0 \, |\varphi|)
\leq \varphi_0
\leq M
.  \]
Hence $|e^{- t \cn_\lambda} \varphi| \leq \frac{M}{\delta}$.
Consequently 
$\|e^{- t \cn_\lambda} \varphi\|_{L_\infty(\Gamma)} 
\leq \frac{M}{\delta} \, \|\varphi\|_{L_\infty(\Gamma)}$
for all $\varphi \in L_\infty(\Gamma)$.

Now we remove the connectedness assumption.
Since  $\Omega$ is bounded and Lipschitz, 
it has a finite number of connected components.
So $\Omega$ is 
a disjoint union of a finite number of 
connected open subsets $\Omega_k$.
Using Lemma~\ref{lcom9802} we can apply the above arguments to the restriction of the 
semigroup $(e^{-t \cn})_{t > 0}$ to 
$L_2(\partial \Omega_k)$ for each $k$.
One obtains  $\|e^{- t \cn_\lambda} \varphi\|_{L_\infty(\partial \Omega_k)} 
\leq \frac{M_k}{\delta_k} \, \|\varphi\|_{L_\infty(\partial \Omega_k)}$
for all $\varphi \in L_\infty(\Gamma)$ and $t > 0$.
Here $M_k$ and $\delta_k$ 
are the same quantities as $M$ and $\delta$ 
but now taken on $\overline{\Omega_k}$. 
This proves the proposition on $L_\infty(\Gamma)$. 
\end{proof}

Let us mention that we can take $c_0$ to be complex.
Indeed, by the method of  \cite{EO8} Theorem~6.1 the semigroup 
$(e^{-t \cn_\lambda})$ is dominated by the Dirichlet-to-Neumann 
semigroup with $c_0$ replaced by $\RRe c_0$. 

For smooth domain and smooth coefficients we can remove the constant $\lambda$ in the proposition.
As in the previous sections, we denote by $A_D$ the elliptic operator with Dirichlet boundary conditions. 

\begin{prop} \label{pcom9802}
Suppose $\Omega$ is bounded and of class $C^{1+\kappa}$ for some $\kappa > 0$. 
Suppose $c_{kl}, b_k, c_k, c_0$ are H\"older  continuous on $\Omega$ and real valued.
Suppose also that $0 \notin \sigma(A_D)$. 
Then there is   a constant $M \geq 1$ such that 
\[
\|e^{-t \cn} \varphi\|_{L_\infty(\Gamma)}
\leq M \, \|\varphi\|_{L_\infty(\Gamma)}
\]
for all $t \in (0, 1]$ and $\varphi \in L_\infty(\Gamma)$.
\end{prop}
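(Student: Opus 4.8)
The plan is to deduce Proposition~\ref{pcom9802} from Proposition~\ref{pcom9801} by removing the shift $\lambda$ via a perturbation argument of the same flavour as in the proof of Lemma~\ref{lcom901}. First I would fix $\lambda > 0$ large enough that the form $\gota_\lambda$ on $H^1(\Omega)$ satisfies the coercivity condition $\frac{\mu}{2}\|u\|_{W^{1,2}(\Omega)}^2 \le \RRe\gota_\lambda(u,u)$, so that Proposition~\ref{pcom9801} applies to $\cn_\lambda$: there is an $M_0 \ge 1$ with $\|e^{-t\cn_\lambda}\|_{L_\infty(\Gamma)\to L_\infty(\Gamma)} \le M_0$ for all $t > 0$. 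By Lemma~\ref{lcom205.7} one has $\cn = \cn_\lambda - \lambda\,(\gamma_\lambda^\#)^*\,\gamma$, where $\gamma_\lambda^\#$ is the harmonic lifting for $\gota_\lambda^*$, identified (via Theorem~\ref{tcom204}\ref{tcom204-4}) with its bounded $L_2(\Gamma)\to L_2(\Omega)$ extension. The point is that the perturbation $B := \lambda\,(\gamma_\lambda^\#)^*\,\gamma$ is a \emph{bounded} operator on $L_\infty(\Gamma)$: fixing $p \in (\frac{d}{1-\kappa},\infty)$, the inclusion $L_\infty(\Gamma)\hookrightarrow L_p(\Gamma)$ is bounded, $\gamma\colon L_p(\Gamma)\to L_p(\Omega)$ is bounded by Theorem~\ref{tcom204}\ref{tcom204-4}, $(\gamma_\lambda^\#)^*\colon L_p(\Omega)\to (C^\kappa(\Gamma),\|\cdot\|_{C^\kappa(\Gamma)})$ is bounded by Theorem~\ref{tcom204}\ref{tcom204-5}, and $(C^\kappa(\Gamma),\|\cdot\|_{C^\kappa(\Gamma)})\hookrightarrow L_\infty(\Gamma)$ is bounded. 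Hence $B$ is a bounded operator on $L_\infty(\Gamma)$ with norm $\beta := \|B\|_{L_\infty(\Gamma)\to L_\infty(\Gamma)} < \infty$.

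Next I would run a bounded-perturbation argument on $L_\infty(\Gamma)$. Write $\cn = \cn_\lambda - B$. Since $-\cn_\lambda$ generates a $C_0$-semigroup on $L_2(\Gamma)$ and $B$ is $\cn_\lambda$-bounded (indeed bounded on $L_2(\Gamma)$ by the above composition, with $p=2$ allowed since $\frac{d}{1-\kappa}$-integrability is not needed when we only want $L_2$-boundedness, or simply by boundedness of each factor on the relevant $L_2$-spaces), the semigroup $e^{-t\cn}$ is given on $L_2(\Gamma)$ by the Dyson--Phillips series
\[
e^{-t\cn}
= \sum_{n=0}^\infty S_n(t),
\qquad
S_0(t) = e^{-t\cn_\lambda},
\quad
S_{n+1}(t) = \int_0^t e^{-(t-s)\cn_\lambda}\,B\,S_n(s)\,ds .
\]
Each operator $e^{-s\cn_\lambda}$ ($s\ge 0$, with $e^{0\cdot\cn_\lambda}=I$) and $B$ is bounded on $L_\infty(\Gamma)$, with $\|e^{-s\cn_\lambda}\|_{L_\infty\to L_\infty}\le M_0$ and $\|B\|_{L_\infty\to L_\infty}\le\beta$. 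By induction $\|S_n(t)\|_{L_\infty(\Gamma)\to L_\infty(\Gamma)} \le M_0^{n+1}\beta^n\,t^n/n!$, since the $n$-fold iterated time integral of $1$ over the simplex $0\le s_1\le\cdots\le s_n\le t$ is $t^n/n!$. (One must first check that $S_n(t)$, defined a priori as an $L_2$-operator, restricts to a bounded $L_\infty$-operator and that the integral defining $S_{n+1}$ may be taken in the $L_\infty$-operator-norm sense; this follows because the integrand is strongly continuous on $(0,t]$ into $\cl(L_\infty(\Gamma))$ and norm-dominated, using the strong continuity of $e^{-t\cn_\lambda}$ on $L_p(\Gamma)$ and the factorisation of $B$ through $C^\kappa(\Gamma)$.) Summing the geometric-type majorant gives $\|e^{-t\cn}\|_{L_\infty(\Gamma)\to L_\infty(\Gamma)} \le M_0\,e^{M_0\beta t}$ for all $t > 0$, hence in particular $\|e^{-t\cn}\varphi\|_{L_\infty(\Gamma)} \le M_0\,e^{M_0\beta}\,\|\varphi\|_{L_\infty(\Gamma)}$ for all $t\in(0,1]$, which is the claimed bound with $M = M_0\,e^{M_0\beta}$.

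The main obstacle I anticipate is the bookkeeping needed to justify that the Dyson--Phillips series, a priori an identity of bounded operators on $L_2(\Gamma)$, actually converges in $\cl(L_\infty(\Gamma))$ and computes $e^{-t\cn}$ there; the cleanest route is to note that $e^{-t\cn}$ on $L_p(\Gamma)$ (for suitable $p$, and then on $L_\infty(\Gamma)$) is the consistent extension of the $L_2$-semigroup, that the series converges in $\cl(L_p(\Gamma))$ by the same majorant, and that consistency of the extensions pins down the limit. An alternative, entirely avoiding series manipulations, is to use the integral (Duhamel) identity $e^{-t\cn}\varphi = e^{-t\cn_\lambda}\varphi + \int_0^t e^{-(t-s)\cn_\lambda}\,B\,e^{-s\cn}\varphi\,ds$ on $L_\infty(\Gamma)$ together with Gr\"onwall's inequality applied to $t\mapsto \|e^{-t\cn}\varphi\|_{L_\infty(\Gamma)}$; this gives the bound $\|e^{-t\cn}\varphi\|_{L_\infty(\Gamma)} \le M_0\,e^{M_0\beta t}\,\|\varphi\|_{L_\infty(\Gamma)}$ directly, once one knows $t\mapsto \|e^{-t\cn}\varphi\|_{L_\infty(\Gamma)}$ is finite and locally bounded on $(0,1]$ (which again follows from the $C^\kappa$-factorisation of the correction term and the regularising bounds of Theorem~\ref{tcom204} and Proposition~\ref{pcom209}). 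Either way, the analytic input is light; it is the functional-analytic consistency argument that requires care, and I would write it using the Duhamel/Gr\"onwall route to keep it short.
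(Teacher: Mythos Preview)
Your proposal is correct and follows the same overall strategy as the paper: invoke Proposition~\ref{pcom9801} for $\cn_\lambda$, use Lemma~\ref{lcom205.7} to write $\cn = \cn_\lambda - \lambda\,(\gamma_\lambda^\#)^*\gamma$, verify that the perturbation $B = \lambda\,(\gamma_\lambda^\#)^*\gamma$ is bounded on $L_\infty(\Gamma)$, and conclude by a bounded-perturbation argument for semigroups.

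The only genuine difference is in how $B$ is shown to be $L_\infty$-bounded. You recycle the factorisation $L_\infty(\Gamma)\hookrightarrow L_p(\Gamma)\xrightarrow{\gamma} L_p(\Omega)\xrightarrow{(\gamma_\lambda^\#)^*} C^\kappa(\Gamma)\hookrightarrow L_\infty(\Gamma)$ already used in the proof of Lemma~\ref{lcom901}. The paper instead proves pointwise kernel bounds $|K_\gamma(x,z)|,\,|K_{\gamma_\lambda^\#}(x,z)|\le \tilde c\,|x-z|^{-(d-3/4)}$ (from Lemma~\ref{lcom308}\ref{lcom308-3}) and then applies a convolution-type lemma of Friedman to obtain $|K_Q(z,w)|\le c\,|z-w|^{-(d-3/2)}$, which is integrable on $\Gamma$ in each variable, so $Q$ is $L_\infty$-bounded. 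Your route is arguably cleaner since it reuses machinery already assembled; the paper's route gives an explicit kernel bound for $Q$ as a by-product. For the final perturbation step the paper simply asserts the conclusion, whereas you spell out Dyson--Phillips/Duhamel and correctly flag the consistency issue between the $L_2$- and $L_\infty$-pictures; your suggested fix (pass to an a.e.-convergent subsequence of the $L_2$-convergent partial sums, each uniformly $L_\infty$-bounded) is the standard way to handle this and would suffice.
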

\begin{proof} 
Take $\lambda > 0$ large enough as in the previous proposition.
Let $\gamma_\lambda^\#$ and $K_{\gamma_\lambda^\#}$ be as in Lemma~\ref{lcom205.7}
and Theorem~\ref{tcom205}.
By Lemma~\ref{lcom308}~\ref{lcom308-3}, 
there exists a $\tilde c > 0$ such that 
\[
|K_\gamma(x,z)| \leq \frac{\tilde{c}}{|x-z|^{d-\frac{3}{4}}}
\quad \mbox{and} \quad 
 |K_{\gamma_\lambda^\#}(x,z)| \leq \frac{\tilde{c}}{|x-z|^{d-\frac{3}{4}}}
\]
for all $x \in \Omega$ and $z \in \Gamma$.
Actually,  the estimates hold with $1$ at the place of $\frac{3}{4}$ but then the 
next argument breaks down if $d=2$. 
By applying \cite{Fri} (Lemma~2, Section~4, Chapter~1) 
we obtain an upper bound for the kernel $K_Q$ of 
$Q = (\gamma_\lambda^\#)^* \, \gamma$.
Specifically, there is a $c > 0$ such that 
\[
| K_Q(z,w) | \le \frac{c}{| z-w|^{d-\frac{3}{2}}}
\]
for all $ z, w \in \Gamma$.
Therefore, this kernel is integrable on $\Gamma$ with respect to each variable with a bound independent of the other variable.
This implies that $Q$ extends to a bounded operator on $L_\infty(\Gamma)$.
The formula 
\[
\cn = \cn_\lambda - \lambda \, Q
  \]
from Lemma~\ref{lcom205.7} together with  
Proposition~\ref{pcom9801} imply the $L_\infty$-estimate for the semigroup $(e^{-t \cn})$. 
\end{proof} 

\begin{lemma}\label{lemma-poi1}
Under the assumptions of Proposition~\ref{pcom9802}, the semigroup 
$(e^{-t\cn})_{t > 0}$ is given by a kernel $K$ and there exist  
constants $c > 0$ and $\omega_0 \in \Ri$ such that 
\[
| K_t(z,w) | \le c \, t^{-(d-1)} \, e^{\omega_0 t} 
\]
for all $ t > 0$ and $z, w \in \Gamma$.
\end{lemma}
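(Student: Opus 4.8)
The plan is to obtain the on-diagonal-type bound $\|K_t\|_{L_\infty(\Gamma\times\Gamma)}\le c\,t^{-(d-1)}e^{\omega_0 t}$ from an $L_1(\Gamma)\to L_\infty(\Gamma)$ bound on the semigroup $(e^{-t\cn})_{t>0}$ via the standard duality/composition argument, exactly as in \cite{EO4} and \cite{EO6}. First I would record the ingredients already available: Proposition~\ref{pcom9802} gives a uniform $L_\infty(\Gamma)\to L_\infty(\Gamma)$ bound for $t\in(0,1]$, and by the semigroup property and quasi-accretivity of $\cn$ this upgrades to $\|e^{-t\cn}\|_{L_\infty(\Gamma)\to L_\infty(\Gamma)}\le M\,e^{\omega t}$ for \emph{all} $t>0$ with suitable $M\ge1$ and $\omega\in\Ri$; by duality (applying the same proposition to $\cn^*$, which is the Dirichlet-to-Neumann operator for the adjoint form $\gota^*$, whose coefficients satisfy the same hypotheses) one also gets $\|e^{-t\cn}\|_{L_1(\Gamma)\to L_1(\Gamma)}\le M'\,e^{\omega' t}$ for all $t>0$.

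Next I would bring in the commutator machinery. By Theorem~\ref{tcom101}\ref{tcom101-1} the commutator $[\cn,M_g]$ is bounded on $L_p(\Gamma)$ for all $p\in(1,\infty)$ with norm controlled by constants depending only on the data (uniformly over $g\in W_2$). Combining this with the $L_p\to L_q$ smoothing estimates for $(e^{-t\cn})_{t>0}$ — which follow by interpolating the $L_1\to L_1$ and $L_\infty\to L_\infty$ bounds above together with the ultracontractive-type bound one extracts from the boundary Sobolev inequality (or, as the introduction explains, from the kernel bounds for $\cn$ in Theorem~\ref{tcom205} combined with Proposition~\ref{pcom209}-type estimates) — one controls the iterated commutator
\[
C_g^{(d)}(t) = [M_g,[M_g,\ldots,[M_g,e^{-t\cn}]\ldots]]
\]
of order $d$ as an operator from $L_1(\Gamma)$ into $L_\infty(\Gamma)$, with a bound of the form $\|C_g^{(d)}(t)\|_{L_1\to L_\infty}\le c\,t^{-(d-1)}e^{\omega_0 t}$ uniformly over $g\in W_2$. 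This is the same scheme as in \cite{EO6}: one splits $e^{-t\cn}=e^{-(t/(d+1))\cn}\cdots e^{-(t/(d+1))\cn}$ into $d+1$ factors, distributes the $d$ commutators using the Leibniz rule, and estimates each resulting product by $L_p\to L_q$ bounds on the semigroup factors and $L_p$-boundedness of the commutators.

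Finally I would optimise over $g$. Fixing $z,w\in\Gamma$ with $z\ne w$, one chooses $g\in W_2$ (after rescaling) so that $g$ separates $z$ and $w$ at unit scale when $|z-w|\le 1$, e.g.\ $g$ linear near the segment, giving $\prod_{j=1}^{d}|g(z)-g(w)|\sim\min(|z-w|,1)^d$; since the kernel of $C_g^{(d)}(t)$ is $(g(z)-g(w))^d K_t(z,w)$, the $L_1\to L_\infty$ bound yields $|K_t(z,w)|\le c\,t^{-(d-1)}e^{\omega_0 t}\,\min(|z-w|,1)^{-d}$, and in particular — taking $g$ with $g(z)-g(w)$ bounded below by a fixed positive constant, which is possible whenever $|z-w|$ is bounded below, or simply bounding $\min(|z-w|,1)^{-d}$ on the compact $\Gamma$ away from the diagonal and using the short-range estimate $|K_t(z,w)|\le \|e^{-t\cn}\|_{L_1\to L_\infty}$ otherwise — one gets the stated uniform bound $|K_t(z,w)|\le c\,t^{-(d-1)}e^{\omega_0 t}$. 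The existence of the kernel $K_t$ itself follows once $e^{-t\cn}$ is shown to map $L_1(\Gamma)$ into $L_\infty(\Gamma)$, by the Dunford--Pettis theorem. The main obstacle I anticipate is assembling the full set of $L_p(\Gamma)\to L_q(\Gamma)$ bounds for $(e^{-t\cn})_{t>0}$ with the correct power of $t$ and verifying that all constants are uniform in $g\in W_2$; once these are in place the optimisation step is routine and follows \cite{EO4,EO6} verbatim.
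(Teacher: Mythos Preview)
You have conflated this lemma with Theorem~\ref{tcom103}. The commutator machinery you invoke is exactly what the paper uses \emph{after} this lemma, in the proof of the full Poisson bound; for the present statement it is both unnecessary and, as you have written it, circular. Your final step appeals to ``the short-range estimate $|K_t(z,w)|\le\|e^{-t\cn}\|_{L_1\to L_\infty}$'', but that $L_1\to L_\infty$ bound with the power $t^{-(d-1)}$ is precisely the content of the lemma. Moreover, the iterated-commutator bound you claim, $\|C_g^{(d)}(t)\|_{L_1\to L_\infty}\le c\,t^{-(d-1)}e^{\omega_0 t}$, is not what the argument in \cite{EO6} gives: the correct outcome is $\|\delta_g^d(S_t)\|_{L_1\to L_\infty}\le c\,t$ for $t\in(0,1]$ (see \eqref{etcom103;1}), which after optimisation yields spatial decay but not the on-diagonal $t^{-(d-1)}$.

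The paper's proof is short and direct, and in fact you already name the key idea in passing: the boundary Sobolev embedding $H^{1/2}(\Gamma)\hookrightarrow L_{2(d-1)/(d-2)}(\Gamma)$ (for $d\ge3$) together with $D(\cn)\subset H^{1/2}(\Gamma)$ gives $\|e^{-t\cn}\|_{L_2\to L_{2(d-1)/(d-2)}}\le c\,t^{-1/2}e^{\omega' t}$. Combining this with the $L_\infty$ bound of Proposition~\ref{pcom9802} and Coulhon's extrapolation lemma yields $\|e^{-t\cn}\|_{L_2\to L_\infty}\le c\,t^{-(d-1)/2}e^{\omega'' t}$. The same applies to $\cn^*$, giving $\|e^{-t\cn}\|_{L_1\to L_2}\le c\,t^{-(d-1)/2}e^{\omega'' t}$, and composition via the semigroup property delivers the $L_1\to L_\infty$ bound with the correct power $t^{-(d-1)}$. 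The kernel existence then follows. That is the whole proof; no commutators enter until Theorem~\ref{tcom103}.
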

\begin{proof}
We argue as in \cite{EO4}, Theorem~2.6.
Let $d \ge 3$.
By the Sobolev embedding it follows that $e^{-t\cn}$ maps 
$L_2(\Gamma)$ into $L_{\frac{2(d-1)}{d-2}}(\Gamma)$  with a norm estimate 
$c \, t^{-1/2} \, e^{\omega' t}$ for all $t > 0$.
Since the semigroup is bounded on $L_\infty(\Gamma)$ by 
Proposition~\ref{pcom9802}, this estimate extrapolates to give
\[
\| e^{-t \cn} \|_{2 \to \infty} 
\le \tilde{c} \, t^{-\frac{d-1}{2}} \, e^{\omega'' t}
\]
for all $t > 0$ and some constants $\tilde{c}$ and $\omega''$.
For this  extrapolation argument we refer to  
\cite{Cou6} and  \cite{Ouh5}, Lemma~6.1.
The same argument applies to the adjoint semigroup $(e^{-t\cn^*})_{t > 0}$, since 
$\cn^*$ is the Dirichlet-to-Neumann operator associated with the 
adjoint form $\gota^*$.
We conclude by the help of the semigroup property that $e^{-t\cn}$ maps 
$L_1(\Gamma)$ into $L_\infty(\Gamma)$ with the desired estimates.  

The case $d = 2$ is treated in a similar way, see \cite{EO4}, Theorem~2.6.
\end{proof}

\begin{proof}[{\bf Proof of Theorem~\ref{tcom103}.}]
Note that by interpolation and the fact that the semigroup $(e^{-t\cn})_{t > 0}$ 
is bounded on $L_p(\Gamma)$ for all $p \in [1, \infty]$ 
the corresponding  $L_p(\Gamma) \to L_q(\Gamma)$ estimates for $e^{-t\cn}$ 
hold for all $1 \le p \le q \le \infty$.  

To simplify notation, set $S_t = e^{-t \cn}$ and consider $t \in (0, 1]$.
Let $g \in W_2$ and denote by $M_g$ the multiplication operator by $g$ as before.
Iterating the formula
\[
[M_g, S_t] = - \int_0^t S_{t-s} \, [M_g, \cn] \, S_s\, ds
\]
for the commutator of $S_t$ with $M_g$ one obtains
by induction for the commutator 
$\delta_g^d(S_t) = [M_g, [\ldots ,[M_g, S_t] \ldots ]]$
of order $d$ the integral 
\begin{eqnarray*}
\delta_g^d(S_t)
& = & \sum_{k=1}^d (-t)^k
   \sum_{\scriptstyle j_1,\ldots,j_k \in \Ni \atop
         \scriptstyle j_1 + \ldots + j_k = d}
   \int_{H_k}
     S_{t_{k+1} \, t} \, \delta^{j_k}(\cn) \, S_{t_k \, t} 
       \circ \ldots \circ  \\*
& & \hspace*{50mm} {}
   \circ
     S_{t_2 \, t} \, \delta^{j_1}(\cn) \, S_{t_1 \, t} \, d\lambda_k(t_1,\ldots,t_{k+1}),
     \end{eqnarray*}
where 
\[
H_k = \{ (t_1,\ldots,t_{k+1}) \in (0,\infty)^{k+1} : t_1 + \ldots + t_{k+1} = 1 \}
,  \]
 $d\lambda_k$ denotes Lebesgue measure of the $k$-dimensional surface $H_k$
and $\delta^j(\cn)$ is the commutator of order $j$ for all $j \in \Ni$.

The idea is to estimate the $L_1(\Gamma) \to L_\infty(\Gamma)$ norm of this commutator of order $d$.
By Theorem~\ref{tcom205}, the kernel $K_j$ of $\delta_g^j(\cn)$ can be estimated and there is a
$c > 0$ such that   $|K_j(z,w)| \le \frac{c}{|z-w|^{d-j}}$ for all $z,w \in \Gamma$ with $z \neq w$.
Therefore, using Riesz potentials, it follows that  
for all $1 < p \leq q <\infty$ with  $(d-1)(\frac{1}{p} - \frac{1}{q}) \in \{ 0,1,\ldots,d-1 \}$ one has 
$\|\delta_g^j(\cn)\|_{p \to q}
\leq c_{p,q} $
for some constant $c_{p,q} > 0$, independent of $g \in W_2$,
where $j = 1 + (d-1)(\frac{1}{p} - \frac{1}{q})$.
For the case case $p= 1$ and $q= \infty$ (that is for the case $j = d$) 
one uses the fact that $S_t$ is bounded on $L_1(\Gamma)$ and $L_\infty(\Gamma)$ 
uniformly for $t \in (0,1]$ and that $\delta_g^d(\cn)$ is bounded from $L_1(\Gamma)$ into $L_\infty(\Gamma)$.
The latter follows  
since its kernel $(z,w) \mapsto (g(z) -g(w))^d \, K_\cn(z,w)$ is bounded 
by a constant by Theorem~\ref{tcom205}.
Inserting all these estimates in the above 
formula for $\delta_g^d(S_t)$ gives
\begin{equation}
\| \delta_g^d(S_t) \|_{1\to \infty} \le c\, t 
\label{etcom103;1}
\end{equation}
for some $c > 0$, uniformly for all $t \in (0,1]$ and $g \in W_2$.
All this is explained in detail in Section~8 of \cite{EO6} and no self-adjointness 
of the operator is needed in this step.
The estimate (\ref{etcom103;1}) gives
\[
| (g(z) - g(w) )^d K_t(z,w) | \le c\, t
\]
for all $t \in (0,1]$ and $z,w \in \Gamma$.
The constant $c$ is uniform with respect to $g \in W_2$.
Optimising over $g \in W_2$ and using Lemma~\ref{lemma-poi1} gives the Poisson bound. 
\end{proof}

\begin{cor} \label{ccom1005}
Adopt the assumptions and notation of Theorem~\ref{tcom103}.
Let $S$ be the semigroup generated by $-\cn$.
Then $S$ leaves the space $C(\Gamma)$ invariant.
For all $t > 0$ let $T_t = S_t|_{C(\Gamma)} \colon C(\Gamma) \to C(\Gamma)$.
Then $T = (T_t)_{t > 0}$ is a $C_0$-semigroup.
\end{cor}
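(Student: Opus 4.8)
The plan is to prove the three assertions in order: that $e^{-t\cn}$ maps $C(\Gamma)$ into itself for every $t>0$, that the resulting operators are uniformly bounded for $t$ in bounded intervals, and that they form a $C_0$-semigroup. For the invariance of $C(\Gamma)$, I would first use the Poisson bound of Theorem~\ref{tcom103} (equivalently Lemma~\ref{lemma-poi1}) to see that $e^{-t\cn}$ is an integral operator whose kernel $K_t$ satisfies $\sup_{z\in\Gamma}\int_\Gamma|K_t(z,w)|\,dw<\infty$, and the symmetric bound in the other variable, for every $t>0$. The next point is that $K_t$ is continuous in its first argument. Since $(e^{-t\cn})_{t>0}$ is holomorphic on $L_2(\Gamma)$, one has $e^{-t\cn}L_2(\Gamma)\subset D(\cn^k)$ for all $k\in\Ni$; on the other hand, iterating the Calder\'on--Zygmund kernel bounds for $\cn$ of Theorem~\ref{tcom205} (the same Riesz-potential bootstrap already used in the proof of Theorem~\ref{tcom103}) shows that for $\lambda$ large, $\nu\in(0,\kappa)$ and $k$ large enough the operator $(\lambda\,I+\cn)^{-k}$ maps $L_2(\Gamma)$ continuously into $C^\nu(\Gamma)$, so that $D(\cn^k)\subset C^\nu(\Gamma)$. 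Hence $e^{-t\cn}$ maps $L_2(\Gamma)$ continuously into $C^\nu(\Gamma)$, which forces $z\mapsto K_t(z,\cdot)\in L_2(\Gamma)$ to be $\nu$-H\"older continuous. For $\varphi\in C(\Gamma)\subset L_2(\Gamma)$ it follows that $z\mapsto(e^{-t\cn}\varphi)(z)=\int_\Gamma K_t(z,w)\,\varphi(w)\,dw$ is continuous, i.e.\ $e^{-t\cn}\varphi\in C(\Gamma)$.

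For the uniform bound, Proposition~\ref{pcom9802} gives an $M\ge1$ with $\|e^{-t\cn}\|_{L_\infty(\Gamma)\to L_\infty(\Gamma)}\le M$ for all $t\in(0,1]$, and the semigroup law then yields $\|e^{-t\cn}\|_{L_\infty(\Gamma)\to L_\infty(\Gamma)}\le M\,e^{\omega_1 t}$ for all $t>0$ with $\omega_1=\log M$. Since $C(\Gamma)$ carries the $L_\infty$-norm and is $e^{-t\cn}$-invariant by the previous step, we obtain $\|T_t\|_{C(\Gamma)\to C(\Gamma)}\le M\,e^{\omega_1 t}$ for all $t>0$.

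For strong continuity, let $\cd=\{\Phi|_\Gamma:\Phi\in C^\infty(\Ri^d)\}$, which is dense in $C(\Gamma)$. If $\varphi\in\cd$, then $\varphi\in D(\cn)$ and $\cn\varphi\in C^\kappa(\Gamma)\subset L_\infty(\Gamma)$ by Corollary~\ref{ccom306.5}. From the identity $e^{-t\cn}\varphi-\varphi=-\int_0^t e^{-s\cn}\,\cn\varphi\,ds$, valid in $L_2(\Gamma)$ and hence in $C(\Gamma)$ because all functions involved lie in $C(\Gamma)$ by the first step, one gets $\|e^{-t\cn}\varphi-\varphi\|_{L_\infty(\Gamma)}\le\int_0^t\|e^{-s\cn}\cn\varphi\|_{L_\infty(\Gamma)}\,ds\le M\,t\,\|\cn\varphi\|_{L_\infty(\Gamma)}\to0$ as $t\downarrow0$. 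Combining this with the uniform bound of the previous step and the density of $\cd$, one concludes $T_t\varphi\to\varphi$ in $C(\Gamma)$ for every $\varphi\in C(\Gamma)$; strong continuity on all of $(0,\infty)$ then follows from the semigroup property together with the uniform bound. Thus $T=(T_t)_{t>0}$ is a $C_0$-semigroup on $C(\Gamma)$.

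The only delicate step is the continuity of $K_t$ in the first argument, which I reduce to the embedding $D(\cn^k)\subset C^\nu(\Gamma)$; proving this requires the iterated resolvent-kernel estimates coming from the Calder\'on--Zygmund bounds of Theorem~\ref{tcom205}, in the same spirit as the kernel bootstrap carried out in the proof of Theorem~\ref{tcom103}, and one could alternatively obtain it from a H\"older--Poisson bound for $K_t$ if such a refinement were available. Everything else in the argument is routine semigroup theory.
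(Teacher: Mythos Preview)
Your argument for strong continuity via the dense subspace $\cd=\{\Phi|_\Gamma:\Phi\in C^\infty(\Ri^d)\}$, Corollary~\ref{ccom306.5}, the Duhamel identity, and the uniform $L_\infty$-bound from Proposition~\ref{pcom9802} is exactly what the paper does. The difference lies entirely in how the invariance $e^{-t\cn}C(\Gamma)\subset C(\Gamma)$ is obtained: the paper simply cites \cite{EW1}~Proposition~5.7, whereas you attempt to prove it from scratch via an embedding $D(\cn^k)\subset C^\nu(\Gamma)$.

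That step is where your argument has a gap. The Calder\'on--Zygmund bounds of Theorem~\ref{tcom205} are bounds on the Schwartz kernel of $\cn$ itself, not on the resolvent $(\lambda\,I+\cn)^{-1}$, and the Riesz-potential bootstrap in the proof of Theorem~\ref{tcom103} concerns the kernels of the iterated \emph{commutators} $\delta_g^j(\cn)$ (which decay like $|z-w|^{-(d-j)}$ because each factor $g(z)-g(w)$ cancels one order of the singularity), not the iterated resolvents. Passing from kernel bounds on $\cn$ to kernel bounds on $(\lambda\,I+\cn)^{-k}$ is not an analogous iteration. If instead you try the Laplace transform $(\lambda\,I+\cn)^{-k}=\tfrac{1}{(k-1)!}\int_0^\infty t^{k-1}e^{-\lambda t}e^{-t\cn}\,dt$, the Poisson bound of Theorem~\ref{tcom103} (equivalently Lemma~\ref{lemma-poi1}) gives only $\|e^{-t\cn}\|_{L_2(\Gamma)\to L_\infty(\Gamma)}\le c\,t^{-(d-1)/2}e^{\omega_0 t}$, so for $k$ large you obtain $(\lambda\,I+\cn)^{-k}:L_2(\Gamma)\to L_\infty(\Gamma)$, but not into $C(\Gamma)$ or $C^\nu(\Gamma)$. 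For that you would need a H\"older-in-$z$ bound on $K_t$, which (as you yourself note) is not established in this paper. So the invariance remains unproven in your argument, and you do need to invoke \cite{EW1} or else supply a separate proof of a H\"older--Poisson bound.
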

\begin{proof}
It follows from \cite{EW1} Proposition~5.7 that $S$ leaves the space $C(\Gamma)$ invariant.
Let $M \geq 1$ be as in Proposition~\ref{pcom9802}.
Now let $\Phi \in C^\infty(\Ri^d)$ and set $\varphi = \Phi|_\Gamma$.
Then $\varphi \in D(\cn)$ and $\cn \varphi \in C(\Gamma)$ by 
Lemma~\ref{lcom901}\ref{lcom901-4}.
Let $t \in (0,1]$.
Then 
\begin{eqnarray*}
\|T_t \varphi - \varphi\|_{C(\Gamma)}
& = & \|S_t \varphi - \varphi\|_{C(\Gamma)} 
= \Big\| \int_0^t S_s \, \cn \varphi \, ds \Big\|_{ C(\Gamma)}  
\leq \int_0^t  \| S_s \, \cn \varphi\|_{ C(\Gamma)} \, ds  \\
& \leq & \int_0^t \|T_s\| \, \|\cn \varphi\|_{ C(\Gamma)} \, ds  
\leq M \, t \, \|\cn \varphi\|_{ C(\Gamma)}
.
\end{eqnarray*}
Hence $\lim_{t \downarrow 0} \|T_t \varphi - \varphi\|_{C(\Gamma)} = 0$.
Since $ \{ \Phi|_\Gamma : \Phi \in C^\infty(\Ri^d) \} $ is dense in $C(\Gamma)$, 
the corollary follows.
\end{proof}

\begin{cor}\label{ccom1006}
Adopt the assumptions and notation of Corollary~\ref{ccom1005}.
Then the semigroup $S$  is holomorphic on $L_p(\Gamma)$ for all $p \in [1, \infty)$. 
The semigroup $T$ is holomorphic on $C(\Gamma)$. 
Moreover, for all $p \in [1, \infty)$ the angle of holomorphy on $L_p(\Gamma)$ 
and on $C(\Gamma)$ is the same as on $L_2(\Gamma)$.
\end{cor}
\begin{proof} 
The Dirichlet-to-Neumann operator, as a sectorial operator, generates a 
holomorphic semigroup on $L_2(\Gamma)$.
Let us denote by $\theta_0 \in (0, \frac{\pi}{2}]$ the maximum angle of holomorphy.
Following \cite{EO7} and using the holomorphic extension of a small sector 
of \cite{DR1}, the Poisson bound of Theorem~\ref{tcom103} extends to 
a Poisson bound for complex time.
More precisely, for any $\theta \in (0, \theta_0)$, there exist $c > 0$  
and $\omega \in \Ri$ such that 
\[
| K_z(w_1, w_2) | 
\le c \, (\RRe z)^{-(d-1)} \, e^{\omega \RRe z} \left(1 + \frac{|w_1 - w_2|}{|z|} \right)^{-d}
\]
for all $z \in \Sigma(\theta)$, the open sector of (half-)angle $\theta$. 
As a consequence, this implies that $(e^{-t \cn})_{t > 0}$ is also holomorphic on 
$L_p(\Gamma)$ for all $p \in [1,\infty)$  and on $C(\Gamma)$ with angle $\theta_0$ 
as on $L_2(\Gamma)$.
The proofs are  the same as in the symmetric case of \cite{EO7}, 
where one replaces $\frac{\pi}{2}$ by $\theta_0$.
Let us only mention that for the holomorphy on $C(\Gamma)$ it is used there 
that $e^{-t \cn } L_\infty(\Gamma) \subset C(\Gamma)$ for all $t > 0$.
This later property is contained in \cite{EW1} Proposition~5.7. 
\end{proof}

\subsection*{Conflict of interest}
The authors have no conflict of interest to declare.

\subsection*{Ethics approval}
Not applicable.

\subsection*{Funding}
The research of A.F.M. ter Elst was partly supported by the 
Marsden Fund Council from Government funding, 
administered by the Royal Society of New Zealand. 
The research of E.M. Ouhabaz was partly supported by the ANR project 
`Real Analysis and Geometry' ANR-18-CE-0012-01.

\subsection*{Data availability}
Data availability is not applicable because the manuscript has no associated data.

\end{document}